\definecolor{rouge}{rgb}{0.85,0.1,.4}
\definecolor{bleu}{rgb}{0.1,0.2,0.9}
\definecolor{violet}{rgb}{0.7,0,0.8}
\DeclareMathAlphabet{\mathpzc}{OT1}{pzc}{m}{it}
\theoremstyle{plain}
\newtheorem{theorem}{Theorem}[section]
\newtheorem{lemma}[theorem]{Lemma}
\newtheorem{coro}[theorem]{Corollary}
\newtheorem{prop}[theorem]{Proposition}
\newtheorem{theo}[theorem]{Theorem}
\theoremstyle{definition}
\theoremstyle{remark}
\newtheorem{rema}[theorem]{Remark}
\def\quest_alpha{\Alph{quest_alpha}}
\def\g{{\mathfrak{g}}}
\def\k{{\Bbbk}}
\def\rg{\ell}               
\def\x{{\rm x}}         
\def\r{{\rm reg}}         
\def\rs{{\rm reg,ss}}          
\def\loc{{\mathrm {sm}}}
\def\geq{\geqslant}
\def\leq{\leqslant}
\def\poie#1#2#3#4#5#6#7#8#9{\def\un{#5#6#7#8#9}\def\deux{#6#7#8#9}\def\trois{#2#4#8#9}
\def\quatre{#8#9}\def\cinq{#5#6#7}\def\six{#6#7}\def\sept{#2#4}
\ifx\un\empty {#1}_{#2}{#3 \hskip 0.15em}{#1}_{#4} \else \ifx\deux\empty 
{#5}(#1_{#2}){#3 \hskip 0.15em}{#5}(#1_{#4})
\else \ifx\trois\empty {#5}_{#6}(#1){#3 \hskip 0.15em}{#5}_{#7}(#1) 
\else \ifx\quatre\empty {#5}_{#6}(#1_#2){#3 \hskip 0.15em}{#5}_{#7}(#1_#4) 
\else \ifx\cinq\empty {#1}_{#2}^{#8}{#3 \hskip 0.15em}#1_#4^{#9} 
\else \ifx\six\empty {#5}(#1_{#2}^{#8}){#3 \hskip 0.15em}{#5}(#1_{#4}^{#9}) 
\else \ifx\sept\empty {#5}_{#6}^{#8}(#1){#3 \hskip 0.15em}{#5}_{#7}^{#9}(#1) \else
{#5}_{#6}(#1_{#2}^{#8})^{#9}{#3 \hskip 0.15em}{#5}_{#7}(#1_{#4}^{#8})^{#9} 
\fi \fi \fi \fi \fi \fi \fi}
\def\poi#1#2#3#4#5#6#7{\def\un{#5#6#7}\def\deux{#6#7}
\def\trois{#2#4} \def\cinq{#3#4#5}
\ifx\un\empty {#1}_{#2}{#3 \hskip 0.15em}{#1}_{#4} \else
\ifx\deux\empty {#5}(#1_{#2}){#3 \hskip 0.15em}{#5}(#1_{#4}) \else
\ifx\trois\empty {#5}_{#6}(#1){#3 \hskip 0.15em}{#5}_{#7}(#1) \else
{#5_{#6}}(#1_{#2}){#3 \hskip 0.15em}{#5_{#7}}(#1_{#4}) \fi \fi \fi}
\def\rond{\raisebox{.3mm}{\scriptsize$\circ$}}
\def\mul{\raisebox{.3mm}{\scriptsize\hskip 0.15em$\times$\hskip 0.15em}}
\def\tens{\raisebox{.3mm}{\scriptsize$\otimes$}}
\def\dv#1#2{\langle {#1}\,,{#2}\rangle}
\def\tk#1#2{{#2}\otimes _{#1}}
\def\ec#1#2#3#4#5{\def\un{#3#4#5}\def\deux{#3#5}\def\trois{#3}
\def\four{#2#4#5}\def\five{#2#5}\def\six{#2}\def\seven{#3#4}
\def\eight{#2#4} \def\nine{#2#3#4}
\ifx\nine\empty {\rm #1}_{#5} \else
\ifx\un\empty {\rm #1}({\goth #2}) \else
\ifx\deux\empty {\rm #1}({\goth #2}_{#4}) \else
\ifx\trois\empty {\rm #1}_{#5}({\goth #2}_{#4}) \else
\ifx\four\empty {\rm #1}(#3) \else
\ifx\five\empty {\rm #1}(#3_{#4}) \else
\ifx\six\empty {\rm #1}_{#5}(#3_{#4}) \else
\ifx\seven\empty {\rm #1}_{#5} ({\goth#2})\else
\ifx\eight\empty {\rm #1}_{#5}({#3})
\fi \fi \fi \fi \fi \fi \fi \fi \fi}
\def\hec#1#2#3#4#5{\def\un{#3#4#5}\def\deux{#3#5}\def\trois{#3}
\def\four{#2#4#5}\def\five{#2#5}\def\six{#2}\def\seven{#3#4}
\def\eight{#2#4} \def\nine{#2#3#4}
\ifx\nine\empty \hat{{\rm #1}}_{#5} \else
\ifx\un\empty \hat{{\rm #1}}({\goth #2}) \else
\ifx\deux\empty \hat{{\rm #1}}({\goth #2}_{#4}) \else
\ifx\trois\empty \hat{{\rm #1}}_{#5}({\goth #2}_{#4}) \else
\ifx\four\empty \hat{{\rm #1}}(#3) \else
\ifx\five\empty \hat{{\rm #1}}(#3_{#4}) \else
\ifx\six\empty \hat{{\rm #1}}_{#5}(#3_{#4}) \else
\ifx\seven\empty \hat{{\rm #1}}_{#5} ({\goth#2})  \else
\ifx\eight\empty \hat{{\rm #1}}_{#5}({#3})
\fi \fi \fi \fi \fi \fi \fi \fi \fi}
\def\e#1#2{\ec {#1}#2{}{}{}}
\def\es#1#2{\ec {#1}{}{#2}{}{}}
\def\ai#1#2#3{\def\deux{#2#3} \def\trois{#3} \def\quatre{#2} 
\ifx\deux\empty \es S{{\goth #1}}^{{\goth #1}} \else
\ifx\trois\empty \es S{{\goth #1}^{#2}}^{{\goth #1}^{#2}} \else
\ifx\quatre\empty \es S{{\goth #1}_{#3}}^{{\goth #1}_{#3}} \else
\es S{{\goth #1}_{#3}^{#2}}^{{\goth #1}_{#3}^{#2}} \fi \fi \fi}
\def\aii#1#2#3#4{\def\deux{#2#3} \def\trois{#3} \def\quatre{#2} 
\ifx\deux\empty \sy {#4}{{\goth #1}}^{{\goth #1}} \else
\ifx\trois\empty \sy {#4}{{\goth #1}^{#2}}^{{\goth #1}^{#2}} \else
\ifx\quatre\empty \sy {#4}{{\goth #1}_{#3}}^{{\goth #1}_{#3}} \else
\sy {#4}{{\goth #1}_{#3}^{#2}}^{{\goth #1}_{#3}^{#2}} \fi \fi \fi}
\def\hhom{\mathscr {H}\hskip -.15em om}
\def\Bbb{\mathbb}
\def\goth{\mathfrak}
\def\cal{\mathcal}
\def\gi#1#2#3#4{\def\trois{#3#4} \def\quatre{#4}\def\cinq{#3}\ifx\trois\empty {\rm i}_{#1,{\goth #2}}
\else \ifx\quatre\empty {\rm i}_{#1_{#3},{\goth #2}} \else\ifx\cinq\empty {\rm i}_{#1,{\goth #2}_{#4}} \else {\rm i}_{#1_{#3},{\goth #2}_{#4}} \fi \fi \fi}
\def\j#1#2{\def\deux{#2} \ifx\deux\empty {\rm rk}\hskip .125em{{\goth #1}} \else {\rm rk}\hskip .125em{{\goth #1}_{#2}} \fi}
\def\aj#1#2{\def\deux{#2} \ifx\deux\empty {\rm j}_{{\goth #1}} \else {\rm j}_{{\goth #1}_{#2}} \fi}
\def\an#1#2{\def\deux{#2} \ifx\deux\empty {\cal O}_{#1} \else {\cal O}_{#1,#2} \fi }
\def\han#1#2{\def\deux{#2} \ifx\deux\empty {\hat{{\cal O}}}_{#1} \else {\hat{{\cal O}}}_{#1,#2} \fi }
\def\dim{{\rm dim}\hskip .125em}
\def\dd{{\rm d}}
\def\ad{{\rm ad}\hskip .1em}
\def\n{{\rm n}}
\def\s{{\rm s}}
\def\sy#1#2{{\rm S}^{#1}(#2)}
\def\ex #1#2{\bigwedge ^{#1}#2}
\def\b#1{{\mathrm {b}}_{{\mathfrak{#1}}}}
\def\sqx#1#2{{#1}\times _{B}{#2}}
\def\sqxx#1#2{G^{#1}\times _{B^{#1}}{#2}}
\def\mycom#1#2{\genfrac{}{}{0pt}{}{#1}{#2}}
\begin{document}

\title
[Commuting variety]
{On the generalized commuting varieties of a reductive Lie algebra.}

\author[Jean-Yves Charbonnel]{Jean-Yves Charbonnel}
\address{Jean-Yves Charbonnel, Universit\'e Paris Diderot - CNRS \\
Institut de Math\'ematiques de Jussieu - Paris Rive Gauche\\
UMR 7586 \\ Groupes, repr\'esentations et g\'eom\'etrie \\
B\^atiment Sophie Germain \\ Case 7012 \\ 
75205 Paris Cedex 13, France}
\email{jean-yves.charbonnel@imj-prg.fr}

\author[M. Zaiter]{Mouchira Zaiter}
\address{Mouchira Zaiter, Universit\'e Libanaise Al- Hadath\\
Facult\'e des sciences, branche I\\
Beyrouth Liban}

\email{zaiter.mouchira@hotmail.fr}

\subjclass
{14A10, 14L17, 22E20, 22E46 }

\keywords
{polynomial algebra, complex, commuting variety, desingularization, Gorenstein, 
Cohen-Macaulay, rational singularities, cohomology}

\date\today

\begin{abstract}
The generalized commuting and isospectral commuting varieties of a reductive Lie algebra 
have been introduced in a preceding article. In this note, it is proved that their 
normalizations are Gorenstein with rational singularities. Moreover, their 
canonical modules are free of rank $1$. In particular, the usual commuting variety is 
Gorenstein with rational singularities and its canonical module is free of rank $1$.  
\end{abstract}

\maketitle

\tableofcontents

\section{Introduction} \label{i}
In this note, the base field $\k$ is algebraically closed of characteristic $0$, 
${\goth g}$ is a reductive Lie algebra of finite dimension, $\rg$ is its rank,
$\dim {\goth g}=\rg + 2n$ and $G$ is its adjoint group. As usual, ${\goth b}$ denotes a 
Borel subalgebra of ${\goth g}$, ${\goth h}$ a Cartan subalgebra of ${\goth g}$, 
contained in ${\goth b}$, and $B$ the normalizer of ${\goth b}$ in $G$.

\subsection{Main results.} \label{int1}
By definition, for $k\geq 1$, the generalized commuting variety ${\cal C}^{(k)}$ is the 
closure in ${\goth g}^{k}$ of the set of elements whose components are in a same 
Cartan subalgebra. Denoting by ${\cal B}^{(k)}$ the subset of elements of ${\goth g}^{k}$
whose components are in a same Borel subalgebra and by ${\cal B}_{\n}^{(k)}$ its 
normalization, the generalized isospectral commuting variety ${\cal C}_{\x}^{(k)}$ is 
above ${\cal C}^{(k)}$ and under the inverse image of ${\cal C}^{(k)}$ in 
${\cal B}_{\n}^{(k)}$. For $k=2$, ${\cal C}^{(2)}$ is the commuting variety of 
${\goth g}$ and ${\cal C}_{\x}^{(2)}$ is the isospectral commuting variety considered by 
V. Ginzburg in \cite{Gi}. According to \cite[Proposition 5.6]{CZ}, ${\cal C}_{\x}^{(k)}$ 
is an irreducible variety. For studying these varieties, it is very useful to consider 
the closure in the grassmannian $\ec {Gr}g{}{}{\rg}$ of the orbit of ${\goth h}$ under 
the action of $B$ in $\ec {Gr}g{}{}{\rg}$. Denoting by $X$ this variety, $G.X$ is the 
closure of the orbit of ${\goth h}$ under $G$. Let ${\cal E}_{0}$ and ${\cal E}$ be the 
restrictions to $X$ and $G.X$ of the tautological vector bundle over $\ec {Gr}g{}{}{\rg}$
respectively. Denoting by ${\cal E}^{(k)}$ the fiber product over $G.X$ of $k$ copies of 
${\cal E}$, ${\cal E}^{(k)}$ is a subbundle of $G.X\times {\goth g}^{k}$ and 
${\cal C}^{(k)}$ is the image of ${\cal E}^{(k)}$ by the canonical projection 
$\xymatrix{G.X\times {\goth g}^{k}\ar[r] & {\goth g}^{k}}$. Analogously, denoting by 
${\cal E}_{0}^{(k)}$ the restriction of ${\cal E}^{(k)}$ to $X$, the image 
${\goth X}_{0,k}$ of ${\cal E}_{0}^{(k)}$ by the projection 
$\xymatrix{X\times {\goth g}^{k}\ar[r] & {\goth g}^{k}}$ is the 
closure in ${\goth b}^{k}$ of the set of elements whose components are in a same 
Cartan subalgebra. The fiber bundle $\sqx G{{\cal E}_{0}^{(k)}}$ is a vector bundle of 
rank $\rg$ over the fiber bundle $\sqx GX$ over $G/B$. As for ${\cal C}^{(k)}$, there is
a surjective morphism from $\sqx G{{\cal E}_{0}^{(k)}}$ onto ${\cal C}_{\x}^{(k)}$. As a 
matter of fact, the three morphisms:
$$ \xymatrix{ {\cal E}_{0}^{(k)} \ar[rr]^{\tau _{0,k}} && {\goth X}_{0,k}}, \qquad 
\xymatrix{ {\cal E}^{(k)} \ar[rr]^{\tau _{k}} && {\cal C}^{(k)}}, \qquad 
\xymatrix{ \sqx G{{\cal E}_{0}^{(k)}} \ar[rr]^{\tau _{*,k}} && {\cal C}_{\x}^{(k)}}$$
are projective and birational. According to~\cite[Theorem 1.2]{CZ}, $G.X$ is 
smooth in codimension $1$ so that so is ${\cal E}^{(k)}$. By~\cite[Theorem 1.1]{C1},
$X$ is normal and Gorenstein then so are ${\cal E}_{0}^{(k)}$ and 
$\sqx G{{\cal E}_{0}^{(k)}}$. Denoting by $(G.X)_{\n}$ the normalization of
$G.X$, the pullback bundle of ${\cal E}^{(k)}$ over $(G.X)_{\n}$ is the normalization of 
${\cal E}^{(k)}$. Denoting it by ${\cal E}_{\n}^{(k)}$ we have projective birational 
morphisms:
$$ \xymatrix{ {\cal E}_{0}^{(k)} \ar[rr]^{\tau _{\n,0,k}} && 
\widetilde{{\goth X}_{0,k}}}, \qquad 
\xymatrix{ {\cal E}_{\n}^{(k)} \ar[rr]^{\tau _{\n,k}} && 
\widetilde{{\cal C}^{(k)}}}, \qquad 
\xymatrix{ \sqx G{{\cal E}_{0}^{(k)}} \ar[rr]^{\tau _{\n,*,k}} && 
\widetilde{{\cal C}_{\x}^{(k)}}} ,$$
with $\widetilde{{\goth X}_{0,k}}$, $\widetilde{{\cal C}^{(k)}}$, 
$\widetilde{{\cal C}_{\x}^{(k)}}$ the normalizations of ${\goth X}_{0,k}$, 
${\cal C}^{(k)}$, ${\cal C}_{\x}^{(k)}$ respectively. According 
to~\cite[Proposition 4.6]{C1}, for some smooth big open subset $O_{0}$ of 
${\goth X}_{0,k}$, there exists a regular differential form of top degree without zero. 
Moreover, the restriction of $\tau _{0,k}$ to $\tau _{0,k}^{-1}(O_{0})$ is an isomorphism 
onto $O_{0}$. By a simple argument, ${\cal C}^{(k)}$ and ${\cal C}_{\x}^{(k)}$ are smooth
in codimension $1$. Moreover, for some smooth big open subsets $O$ and $O_{*}$ in 
${\cal C}^{(k)}$ and ${\cal C}_{\x}^{(k)}$ respectively, the restrictions
of $\tau _{k}$ and $\tau _{*,k}$ to $\tau _{k}^{-1}(O)$ and $\tau _{*,k}^{-1}(O_{*})$ are 
isomorphisms onto $O$ and $O_{*}$ respectively. The main observation of this note is that 
there are regular differential forms of top degree on $O$ and $O_{*}$ without zero. As a 
result, we have the following theorem:

\begin{theo}\label{tint}
The varieties $\widetilde{{\goth X}_{0,k}}$, $\widetilde{{\cal C}^{(k)}}$, 
$\widetilde{{\cal C}_{\x}^{(k)}}$ are Gorenstein with rational singularities and their 
canonical modules are free of rank $1$. Moreover, $(G.X)_{\n}$ is Gorenstein with 
rational singularities.
\end{theo}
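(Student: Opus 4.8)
The plan is to establish the statement for the three varieties in a uniform way, reducing everything to two facts about the birational models: that each desingularization (or rather, quasi-desingularization) source is Cohen–Macaulay with well-understood canonical sheaf, and that the top-degree differential form produced on the smooth big open subsets $O_0$, $O$, $O_*$ extends without zeros. First I would recall the Serre-type criterion for normality together with the fact already quoted from the excerpt: $\widetilde{{\goth X}_{0,k}}$, $\widetilde{{\cal C}^{(k)}}$, $\widetilde{{\cal C}_{\x}^{(k)}}$ are by construction the normalizations of ${\goth X}_{0,k}$, ${\cal C}^{(k)}$, ${\cal C}_{\x}^{(k)}$, and that ${\cal E}_{0}^{(k)}$, ${\cal E}_{\n}^{(k)}$, $\sqx G{{\cal E}_{0}^{(k)}}$ are normal Gorenstein (via \cite[Theorem 1.1]{C1} applied to $X$, plus the vector bundle structure, plus passing to the normalization $(G.X)_{\n}$). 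The morphisms $\tau_{\n,0,k}$, $\tau_{\n,k}$, $\tau_{\n,*,k}$ are projective and birational, and each restricts to an isomorphism over the smooth big open subset.

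The heart of the argument is the following: let $Y$ be one of the normalizations and $Z \to Y$ the corresponding projective birational morphism from the normal Gorenstein source, an isomorphism over a smooth big open $U \subset Y$. Because $Z$ is Gorenstein, its canonical sheaf $\omega_Z$ is a line bundle; I would identify $\omega_Z$ explicitly using the vector bundle structure of $Z$ over $\sqx GX$ (or over $X$, or over $(G.X)_{\n}$) together with the computation of the canonical sheaf of $X$ from \cite{C1}. The key input from the excerpt is that on $U$ there is a regular top-degree differential form \emph{without zero}; pulling it back via the isomorphism $Z|_{\tau^{-1}(U)} \xrightarrow{\sim} U$ gives a nowhere-vanishing section of $\omega_Z$ over a big open subset of $Z$, hence (since $Z$ is normal and $\omega_Z$ is a line bundle, so sections extend across codimension $\geq 2$) a global trivialization $\omega_Z \cong {\cal O}_Z$. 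Then I would push forward: $\tau_* \omega_Z = \omega_Y$ for the normalization $Y$ (this is where one uses that $Y$ is normal, that $\tau$ is birational projective, and a Grauert–Riemenschneider / direct-image comparison of dualizing sheaves), so $\omega_Y \cong \tau_*{\cal O}_Z$. Normality of $Y$ and the big-open isomorphism force $\tau_*{\cal O}_Z = {\cal O}_Y$, giving $\omega_Y$ free of rank $1$. Finally, rational singularities of $Y$ follow because $Z$ is Cohen–Macaulay with trivial canonical sheaf (so has rational singularities by Elkik/Kempf-type criteria) and $R^i\tau_*{\cal O}_Z = 0$ for $i>0$ (Grauert–Riemenschneider, since $\omega_Z$ is trivial hence its higher direct images vanish, and $Z$ being Gorenstein with rational singularities pushes down); Gorenstein-ness of $Y$ then follows from $Y$ being Cohen–Macaulay (consequence of rational singularities) with invertible dualizing sheaf.

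For $(G.X)_{\n}$ specifically, the same scheme applies with $Z = $ an appropriate resolution or with $X$-level data: $(G.X)_{\n}$ is the normalization of $G.X$, which is smooth in codimension $1$ by \cite[Theorem 1.2]{CZ}; one transfers the Gorenstein-with-rational-singularities and freeness properties from the fiber bundle $\sqx GX$ and from $X$ (normal Gorenstein by \cite[Theorem 1.1]{C1}) through the projective birational map, using that $G$ acts and the construction is $G$-equivariant. The main obstacle I anticipate is the bookkeeping of the canonical sheaves under the fiber-product and induced-bundle constructions — i.e., verifying that $\omega_{{\cal E}^{(k)}_{\n}}$, $\omega_{\sqx G{{\cal E}_{0}^{(k)}}}$ are genuinely trivial and not merely invertible, which requires combining the relative canonical sheaf of a vector subbundle of a trivial bundle with the known canonical sheaf of the base $X$ (resp.\ $(G.X)_{\n}$), and checking the twists cancel; and, relatedly, justifying that the top-form on $O_0$ from \cite[Proposition 4.6]{C1} really does induce top-forms on $O$ and $O_*$ without zeros, which is presumably the ``simple argument'' alluded to but must be spelled out via the $G$-action and the projections relating ${\goth X}_{0,k}$, ${\cal C}^{(k)}$, ${\cal C}_{\x}^{(k)}$.
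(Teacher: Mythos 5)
Your strategy works for $\widetilde{{\goth X}_{0,k}}$ and $\widetilde{{\cal C}_{\x}^{(k)}}$, and for those two varieties it is essentially the paper's argument (Proposition~\ref{prs}): the sources ${\cal E}_{0}^{(k)}$ and $\sqx G{{\cal E}_{0}^{(k)}}$ are vector bundles over $X$ and $\sqx GX$, which are Gorenstein with rational singularities by \cite[Theorem 1.1]{C1}, and the nowhere-vanishing top form on the big smooth open subset, pulled back and extended to a genuine resolution, gives the conclusion via \cite[Lemma 2.3]{Hin}. But your uniform treatment is circular for $\widetilde{{\cal C}^{(k)}}$, and hence for $(G.X)_{\n}$. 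The source of $\tau _{\n,k}$ is ${\cal E}_{\n}^{(k)}$, a vector bundle over $(G.X)_{\n}$, and at the outset nothing is known about $(G.X)_{\n}$ beyond normality: $G.X$ is only smooth in codimension $1$ by \cite[Theorem 1.2]{CZ}, and \cite[Theorem 1.1]{C1} applies to $X$, not to $G.X$. So you cannot assert that ${\cal E}_{\n}^{(k)}$ is Gorenstein (or has rational singularities) ``via the vector bundle structure plus passing to the normalization'': the Gorenstein property of $(G.X)_{\n}$ is precisely one of the conclusions you are trying to prove. Nor is there a projective \emph{birational} map from $\sqx GX$ or from $X$-level data onto $(G.X)_{\n}$ through which to ``transfer'' the properties --- the natural map $\sqx GX\rightarrow G.X$ is generically finite of degree $\vert W({\cal R})\vert$, not birational. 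The paper breaks this circle by a different route: $\widetilde{{\cal C}^{(k)}}$ is the categorical quotient of $\widetilde{{\cal C}_{\x}^{(k)}}$ by $W({\cal R})$ (\cite[Proposition 5.8]{CZ}), so Boutot's theorem \cite{Bout} gives it rational singularities once $\widetilde{{\cal C}_{\x}^{(k)}}$ has them; freeness of the canonical module then comes from the nowhere-vanishing form and Lemma~\ref{l2ars}, the Gorenstein property from Lemma~\ref{l3ars}, and only \emph{then} does one deduce that ${\cal E}_{\n}^{(k)}$, and finally its base $(G.X)_{\n}$, are Gorenstein with rational singularities (Corollary~\ref{crs}, using Lemma~\ref{lsi},(ii) and (iv)).

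A secondary problem: you claim rational singularities of the source follow because it is ``Cohen--Macaulay with trivial canonical sheaf''. That implication is false in general, and Grauert--Riemenschneider requires a smooth source, which ${\cal E}_{0}^{(k)}$, $\sqx G{{\cal E}_{0}^{(k)}}$ and ${\cal E}_{\n}^{(k)}$ are not. The paper instead inherits rational singularities of the sources from $X$ through the bundle constructions (Lemma~\ref{lsi}) and, for the targets, passes to an honest desingularization $\widetilde{{\goth Z}}$ --- the bundle pulled back over a Hironaka resolution $\Gamma $ of the base ${\goth T}$ --- before invoking \cite[p.50]{KK} and \cite[Lemma 2.3]{Hin} to extend the form and conclude.
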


In particular, we give a new proof of a Ginzburg's result~[Theorem 1.3.4]\cite{Gi}. For
$k=2$, ${\cal C}^{(2)}$ is the commuting variety of ${\goth g}$ by \cite{Ric} and 
it is normal by~\cite[Theorem 1.1]{C}. So the commuting variety of ${\goth g}$ is 
Gorenstein with rational singularities and its canonical module is free of rank $1$.
Since $\widetilde{{\goth X}_{0,k}}$ has rational singularities, we get that some 
cohomological groups in positive degree are equal to $0$ and we deduce that 
${\goth X}_{0,k}$ is normal.

This note is organized as follows. In Section~\ref{xv}, the variety ${\cal X}$ is 
introduced and we prove that on the smooth loci of ${\cal X}$ and $\sqx G{{\goth b}}$, 
there are regular differential forms of top degree without zero. In Section~\ref{mv}, we 
recall some results about ${\cal E}$, $X$, $G.X$, $(G.X)_{\n}$. In 
Section~\ref{isc}, we give some results about ${\cal C}^{(k)}$ and ${\cal C}_{\x}^{(k)}$ 
and we prove the main result about regular differential forms of top degree on the smooth
loci of these varieties. As a result, we get the main result of 
the note in Section~\ref{rs}. The goal of Section~\ref{nor} is the normality of 
${\goth X}_{0,k}$. At last, in the appendix, some results are given to prove the 
normality of ${\goth X}_{0,k}$ and Theorem~\ref{tint}.

\subsection{Notations}\label{int2}
$\bullet$ An algebraic variety is a reduced scheme over $\k$ of finite type.

$\bullet$ For $V$ a vector space, its dual is denoted by $V^{*}$ and the augmentation 
ideal of its symmetric algebra $\es SV$ is denoted by $\ec S{}V{}+$. For $A$ a graded
algebra over ${\Bbb N}$, $A_{+}$ is the ideal generated by the homogeneous elements of
positive degree.

$\bullet$
All topological terms refer to the Zariski topology. If $Y$ is a subset of a topological
space $X$, denote by $\overline{Y}$ the closure of $Y$ in $X$. For $Y$ an open subset
of the algebraic variety $X$, $Y$ is called {\it a big open subset} if the codimension
of $X\setminus Y$ in $X$ is at least $2$. For $Y$ a closed subset of an algebraic 
variety $X$, its dimension is the biggest dimension of its irreducible components and its
codimension in $X$ is the smallest codimension in $X$ of its irreducible components. For 
$X$ an algebraic variety, $\an X{}$ is its structural sheaf, $X_{\loc}$ is its smooth 
locus, $\k[X]$ is the algebra of regular functions on $X$ and $\k(X)$ is the field of 
rational functions on $X$ when $X$ is irreducible. When $X$ is smooth and irreducible, 
the sheaf of regular differential forms of top degree on $X$ is denoted by $\Omega _{X}$.

$\bullet$
For $X$ an algebraic variety and for ${\cal M}$ a sheaf on $X$, $\Gamma (V,{\cal M})$
is the space of local sections of ${\cal M}$ over the open subset $V$ of $X$. For 
$i$ a nonnegative integer, ${\mathrm {H}}^{i}(X,{\cal M})$ is the $i$-th group of 
cohomology of ${\cal M}$. For example, 
${\mathrm {H}}^{0}(X,{\cal M})=\Gamma (X,{\cal M})$.

\begin{lemma}\label{lint}~\cite[Corollaire 5.4.3]{Gro}
Let $X$ be an irreducible affine algebraic variety and let $Y$ be a desingularization of 
$X$. Then ${\mathrm {H}}^{0}(Y,\an Y{})$ is the integral closure of $\k[X]$ in its 
fraction field. 
\end{lemma}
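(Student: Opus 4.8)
The plan is to reduce the statement to two standard facts: that a smooth variety is normal, and that the direct image of the structure sheaf under a proper morphism is coherent. Write $\pi\colon Y\to X$ for the desingularization; by definition $\pi$ is proper and birational, and $Y$ is smooth and irreducible. Set $B:={\mathrm {H}}^{0}(Y,\an Y{})$.

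First I would observe that $B$ is naturally a subring of $\k(X)$ which is integrally closed there. Indeed, since $Y$ is irreducible and $\pi$ is birational, $\k(Y)=\k(X)$; the restriction maps identify $B$ with the intersection $\bigcap_{y\in Y}\an Y{y}$ taken inside $\k(X)$. As $Y$ is smooth, each local ring $\an Y{y}$ is integrally closed in $\k(X)$, hence so is the intersection $B$. In particular $B$ is a domain with fraction field $\k(X)$, integrally closed in it.

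Next I would show that $B$ is integral over $\k[X]$. The pullback $\pi^{*}$ embeds $\k[X]$ into $B$, injectivity following from the dominance of $\pi$. Since $\pi$ is proper, $\pi_{*}\an Y{}$ is a coherent $\an X{}$-module, and because $X$ is affine, $B={\mathrm {H}}^{0}(X,\pi_{*}\an Y{})$ is a finitely generated $\k[X]$-module. A ring that is finite as a module over a subring is integral over that subring, so every element of $B$ is integral over $\k[X]$.

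Finally, combining the two steps: if $\overline{\k[X]}$ denotes the integral closure of $\k[X]$ in $\k(X)$, then $\k[X]\subseteq B\subseteq\overline{\k[X]}$ by the second step, while every element of $\overline{\k[X]}$ is integral over $\k[X]$ and a fortiori over $B$, hence lies in $B$ by the first step. Therefore $B=\overline{\k[X]}$, as asserted. The one point that is not purely formal is the finiteness of $\pi_{*}\an Y{}$; this is exactly where properness of the desingularization enters, and the statement would fail for an arbitrary smooth birational model of $X$.
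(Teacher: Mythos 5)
Your argument is correct. The paper gives no proof of this lemma at all -- it simply cites \cite[Corollaire 5.4.3]{Gro} -- so there is nothing to compare against line by line; what you have written is essentially the standard proof of the cited statement. Both halves check out: since $Y$ is integral (an irreducible smooth variety birational to the irreducible $X$), one has $\Gamma (Y,\an Y{})=\bigcap _{y\in Y}\an Yy$ inside $\k(Y)=\k(X)$, and an intersection of subrings of a field each integrally closed in that field is again integrally closed in it, so $B$ is integrally closed in $\k(X)$; the finiteness of $B$ as a $\k[X]$-module is exactly Grothendieck's coherence theorem for proper morphisms applied to $\pi _{*}\an Y{}$ over the affine base, giving integrality of $B$ over $\k[X]$. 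The sandwich $\k[X]\subseteq B\subseteq \overline{\k[X]}$ together with $\overline{\k[X]}\subseteq B$ then closes the argument, and you are right to single out properness as the non-formal input: for a non-proper smooth birational model (say the smooth locus of a cuspidal curve) the ring of global sections can be strictly larger than the integral closure.
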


$\bullet$
For $E$ a set and $k$ a positive integer, $E^{k}$ denotes its $k$-th cartesian power. If 
$E$ is finite, its cardinality is denoted by $\vert E \vert$. 

$\bullet$ For ${\goth a}$ reductive Lie algebra, its rank is denoted by $\j a{}$ and the
dimension of its Borel subalgebras is denoted by $\b a{}$. In particular, 
$\dim {\goth a}=2\b a{} - \j a{}$.

$\bullet$
If $E$ is a subset of a vector space $V$, denote by span($E$) the vector subspace of
$V$ generated by $E$. The grassmannian of all $d$-dimensional subspaces of $V$ is denoted
by Gr$_d(V)$. By definition, a {\it cone} of $V$ is a subset of $V$ invariant under the 
natural action of $\k^{*}:=\k\setminus \{0\}$ and a \emph{multicone} of $V^{k}$ is a
subset of $V^{k}$ invariant under the natural action of $(\k^{*})^{k}$ on $V^{k}$.

$\bullet$
The dual of ${\goth g}$ is denoted by ${\goth g}^{*}$ and it identifies with ${\goth g}$
by a given non degenerate, invariant, symmetric bilinear form $\dv ..$ on 
${\goth g}\times {\goth g}$, extending the Killing form of $[{\goth g},{\goth g}]$. 

$\bullet$
Let ${\goth b}$ be a Borel subalgebra of ${\goth g}$ and let ${\goth h}$ be a Cartan 
subalgebra of ${\goth g}$ contained in ${\goth b}$. Denote by ${\cal R}$ the root 
system of ${\goth h}$ in ${\goth g}$ and by ${\cal R}_{+}$ the positive root 
system of ${\cal R}$ defined by ${\goth b}$. The Weyl group of ${\cal R}$ is denoted by 
$W({\cal R})$ and the basis of ${\cal R}_{+}$ is denoted by $\Pi $. The neutral
elements of $G$ and $W({\cal R})$ are denoted by $1_{{\goth g}}$ and $1_{{\goth h}}$
respectively. For $\alpha $ in ${\cal R}$, the corresponding root subspace is denoted by 
${\goth g}^{\alpha }$ and a generator $x_{\alpha }$ of ${\goth g}^{\alpha }$ is chosen so
that $\dv {x_{\alpha }}{x_{-\alpha }} = 1$ for all $\alpha $ in ${\cal R}$. Let 
$H_{\alpha }$ be the coroot of $\alpha $.

$\bullet$ The normalizers of ${\goth b}$ and ${\goth h}$ in $G$ are denoted by $B$ and
$N_{G}({\goth h})$ respectively. For $x$ in ${\goth b}$, $\overline{x}$ is the element
of ${\goth h}$ such that $x-\overline{x}$ is in the nilpotent radical ${\goth u}$ of 
${\goth b}$.

$\bullet$ 
For $X$ an algebraic $B$-variety, denote by $\sqx GX$ the quotient of 
$G\times X$ under the right action of $B$ given by $(g,x).b := (gb,b^{-1}.x)$. More 
generally, for $k$ positive integer and for $X$ an algebraic $B^{k}$-variety, denote 
by $\sqxx kX$ the quotient of $G^{k}\times X$ under the right action of $B^{k}$ given by 
$(g,x).b := (gb,b^{-1}.x)$ with $g$ and $b$ in $G^{k}$ and $B^{k}$ respectively. 

\begin{lemma}\label{l2int}
Let $P$ and $Q$ be parabolic subgroups of $G$ such that $P$ is contained in $Q$. Let 
$X$ be a $Q$-variety and let $Y$ be a closed subset of $X$, invariant under $P$. Then 
$Q.Y$ is a closed subset of $X$. Moreover, the canonical map from 
$Q\times _{P}Y$ to $Q.Y$ is a projective morphism.
\end{lemma}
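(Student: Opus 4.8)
The plan is to reduce the statement to the properness of $G/P\to G/Q$ together with a standard fibre-bundle description of $Q\times_P Y$. First I would observe that since $P\subseteq Q$, the quotient $Q/P$ is a complete variety (it is a closed subvariety of the projective variety $G/P$, being the image of $Q$; alternatively $Q/P$ is itself projective because $P$ is parabolic in $Q$). Next I would identify the associated bundle $Q\times_P Y$: the first projection $Q\times_P Y\to Q/P$ is a locally trivial fibration with fibre $Y$, and since $Y$ is a closed subset of the $Q$-variety $X$, the map $(q,y)\mapsto q.y$ factors through a $Q$-equivariant morphism $\varphi\colon Q\times_P Y\to X$ whose image is exactly $Q.Y$. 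The key point is then that $\varphi$ is a proper morphism: it fits into a commutative square with the proper projection $Q\times_P X\to X$ (proper because $Q\times_P X\cong (Q/P)\times X$ via $(q,x)\mapsto (qP, q.x)$, and $Q/P$ is complete), and $Q\times_P Y$ is closed in $Q\times_P X$ because $Y$ is closed in $X$ (closedness can be checked after the faithfully flat base change $Q\times X\to Q\times_P X$, where $Y$ being $P$-stable and closed in $X$ makes $Q\times Y$ closed in $Q\times X$).

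With properness in hand, $Q.Y=\varphi(Q\times_P Y)$ is the image of a complete-fibred proper morphism, hence closed in $X$; that gives the first assertion. For the second assertion I would note that $\varphi$ factors as $Q\times_P Y\to Q.Y\hookrightarrow X$, and since $Q\times_P Y\to Q/P$ is projective (a Zariski-locally trivial $Y$-bundle over a projective base, with $Y$ quasi-projective as a locally closed subset of $X$) while $Q/P$ is projective, the composite $Q\times_P Y\to\operatorname{Spec}\k$ is projective; then the canonical map $Q\times_P Y\to Q.Y$ is projective as well, being a morphism to a variety from a variety that is projective over the base. Equivalently, one realizes $Q\times_P Y$ as a closed subscheme of $(Q/P)\times X$ and the canonical map as the restriction of the projection to $X$, which is projective because $Q/P$ is complete.

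The main obstacle is purely the bookkeeping of descent: making precise that $Q\times_P Y$ is a well-defined closed subvariety of $Q\times_P X$ and that the evaluation map descends, i.e. that forming $-\times_P-$ commutes with the closed immersion $Y\hookrightarrow X$. This is where I would be most careful, using that $Q\to Q/P$ is a locally trivial $P$-bundle (indeed Zariski-locally trivial since $P$ is a parabolic, so $Q/P$ is covered by big cells on which the bundle splits), so the construction is Zariski-local on the base and reduces to the trivial product case $U\times Y\hookrightarrow U\times X$. Everything else — completeness of $Q/P$, properness and projectivity of the resulting maps, closedness of images — is then formal.
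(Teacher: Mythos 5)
Your proposal follows essentially the same route as the paper: identify $Q\times_{P}X$ with $Q/P\times X$ via $(q,x)\mapsto (\overline{q},q.x)$, observe that $Q\times_{P}Y$ is a closed subvariety there, and conclude both the closedness of $Q.Y$ and the projectivity of $Q\times_{P}Y\to Q.Y$ from the completeness of $Q/P$; your extra care about descent (checking closedness of $Q\times_{P}Y$ after the base change $Q\times X\to Q\times_{P}X$) is a correct elaboration of a step the paper leaves implicit.

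One sub-argument in your second paragraph is wrong, though it does not damage the proof because you immediately supply the correct alternative. You claim that $Q\times_{P}Y\to Q/P$ is projective and hence that $Q\times_{P}Y\to {\mathrm{Spec}}(\k)$ is projective. A Zariski-locally trivial bundle with quasi-projective fibre $Y$ over a projective base is not a projective morphism unless $Y$ is itself complete: $Y$ is only assumed closed in the $Q$-variety $X$, which need not be complete (in the application, $Y={\goth b}^{k}$ is affine, and $Q\times_{P}Y$ is then certainly not a projective variety). The statement to prove is that the map to $Q.Y$ is projective, not that the source is projective over $\k$. Your final sentence --- realizing $Q\times_{P}Y$ as a closed subvariety of $(Q/P)\times X$ and the canonical map as the restriction of the (projective, since $Q/P$ is complete) projection onto $X$, which then factors through $(Q/P)\times Q.Y\to Q.Y$ --- is the correct argument and is exactly what the paper does; you should drop the detour through projectivity over ${\mathrm{Spec}}(\k)$ and keep only that formulation.
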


\begin{proof}
Since $P$ and $Q$ are parabolic subgroups of $G$ and since $P$ is contained in $Q$, 
$Q/P$ is a projective variety. Denote by $Q\times _{P}X$ and $Q\times _{P}Y$ the 
quotients of $Q\times X$ and $Q\times Y$ under the right action of $P$ given by 
$(g,x).p := (gp,p^{-1}.x)$. Let $g\mapsto \overline{g}$ be the quotient map from
$Q$ to $Q/P$. Since $X$ is a $Q$-variety, the map 
$$\begin{array}{ccc}
Q\times X \longrightarrow Q/P \times X && (g,x) \longmapsto (\overline{g},g.x)
\end{array}$$
defines through the quotient an isomorphism from $Q\times _{P}X$ to $Q/P\times X$. 
Since $Y$ is a $P$-invariant closed subset of $X$, $Q\times _{P}Y$ is a closed subset
of $Q\times _{P}X$ and its image by the above isomorphism equals $Q/P\times Q.Y$. Hence
$Q.Y$ is a closed subset of $X$ since $Q/P$ is a projective variety. From the commutative
diagram:
$$\xymatrix{ Q\times _{P}Y \ar[r] \ar[rd] & Q/P\times Q.Y \ar[d] \\ & Q.Y }$$
we deduce that the map $\xymatrix{Q\times _{P}Y \ar[r] & Q.Y}$ is a projective morphism.
\end{proof}

$\bullet$
For $k\geq 1$ and for the diagonal action of $B$ in ${\goth b}^{k}$, ${\goth b}^{k}$ is a
$B$-variety. The canonical map from $G\times {\goth b}^{k}$ to $\sqx G{{\goth b}^{k}}$ is
denoted by 
$(g,\poi x1{,\ldots,}{k}{}{}{})\mapsto \overline{(g,\poi x1{,\ldots,}{k}{}{}{})}$. Let 
${\cal B}^{(k)}$ be the image of $G\times {\goth b}^{k}$ by the map 
$(g,\poi x1{,\ldots,}{k}{}{}{})\mapsto (\poi x1{,\ldots,}{k}{g}{}{})$ so that 
${\cal B}^{(k)}$ is a closed subset of ${\goth g}^{k}$ by Lemma~\ref{l2int}. Let 
${\cal B}_{\n}^{(k)}$ be the normalization of ${\cal B}^{(k)}$ and $\eta $ the 
normalization morphism. We have the commutative diagram:
$$ \xymatrix{{\sqx G{{\goth b}^{k}}} \ar[rr]^{\gamma _{\n}} \ar[rd]_{\gamma} && 
{\cal B}_{\n}^{(k)} \ar[ld]^{\eta _{\n}}\\  & {\cal B}^{(k)} &  }  .$$

$\bullet$ Let $i_{k}$ be the injection $(\poi x1{,\ldots,}{k}{}{}{})\mapsto 
\overline{(1_{{\goth g}},\poi x1{,\ldots,}{k}{}{}{})}$ from ${\goth b}^{k}$ to
$\sqx G{{\goth b}^{k}}$. Then $\iota _{k}:= \gamma \rond i_{k}$ and 
$\iota _{\n,k} := \gamma _{\n}\rond i_{k}$
are closed embeddings of ${\goth b}^{k}$ into ${\cal B}^{(k)}$ and ${\cal B}_{\n}^{(k)}$ 
respectively. In particular, ${\cal B}^{(k)} = G.\iota _{k}({\goth b}^{k})$ and 
${\cal B}_{\n}^{(k)} = G.\iota _{\n,k}({\goth b}^{k})$.

$\bullet$
Let $e$ be the sum of the $x_{\beta }$'s, $\beta $ in $\Pi $, and let $h$ be the 
element of ${\goth h}\cap [{\goth g},{\goth g}]$ such that $\beta (h)=2$ for all $\beta $
in $\Pi $. Then there exists a unique $f$ in $[{\goth g},{\goth g}]$ such that $(e,h,f)$ 
is a principal ${\goth {sl}}_2$-triple. The one parameter subgroup of $G$ generated by 
$\ad h$ is denoted by $t\mapsto h(t)$. The Borel subalgebra containing $f$ is denoted by 
${\goth b}_{-}$ and its nilpotent radical is denoted by ${\goth u}_{-}$. Let $B_{-}$ be 
the normalizer of ${\goth b}_{-}$ in $G$ and let $U$ and $U_{-}$ be the unipotent 
radicals of $B$ and $B_{-}$ respectively.

\begin{lemma}\label{l3int}
Let $k\geq 2$ be an integer. Let $X$ be an affine variety and set 
$Y:= {\goth b}^{k}\times X$. Let $Z$ be a closed subset of $Y$ invariant under the 
action of $B$ given by 
$g.(\poi v1{,\ldots,}{k}{}{}{},x)=(\poi v1{,\ldots,}{k}{g}{}{},x)$ with 
$(g,\poi v1{,\ldots,}{k}{}{}{})$ in $B\times {\goth b}^{k}$ and $x$ in $X$. 
Then $Z\cap {\goth h}^{k}\times X$ is the image of $Z$ by the projection 
$(\poi v1{,\ldots,}{k}{}{}{},x)\mapsto (\overline{v_{1}},\ldots,\overline{v_{k}},x)$.  
\end{lemma}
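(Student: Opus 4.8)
The plan is to reduce the statement to the obvious inclusion. One direction is immediate: if $(\poi v1{,\ldots,}{k}{}{}{},x)$ lies in $Z$ and happens to have all $v_i$ in ${\goth h}$, then $\overline{v_i}=v_i$, so $Z\cap {\goth h}^{k}\times X$ is contained in the image of the projection $\pi\colon (\poi v1{,\ldots,}{k}{}{}{},x)\mapsto (\overline{v_1},\ldots,\overline{v_k},x)$. The content is the reverse inclusion: given $(\poi v1{,\ldots,}{k}{}{}{},x)\in Z$, one must produce a point of $Z$ lying in ${\goth h}^{k}\times X$ whose image under $\pi$ is $(\overline{v_1},\ldots,\overline{v_k},x)$. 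The natural candidate is $(\overline{v_1},\ldots,\overline{v_k},x)$ itself, so the real assertion is that this point already belongs to $Z$.

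First I would exploit the $B$-invariance of $Z$ together with the fact that $x$ is fixed by the action: for $g\in B$ we have $g.(\poi v1{,\ldots,}{k}{}{}{},x)=(g.v_1,\ldots,g.v_k,x)\in Z$, where $g.v_i=\Ad g(v_i)$. I would use the one-parameter subgroup $t\mapsto h(t)$ of $G$ generated by $\ad h$ (introduced just before the lemma). Since $h\in {\goth h}$, each $h(t)$ lies in $B$; and because $\beta(h)=2$ for all $\beta\in\Pi$, the subgroup $h(t)$ acts on each root space ${\goth g}^{\alpha}$ by a positive power of $t$ when $\alpha\in{\cal R}_+$, while acting trivially on ${\goth h}$. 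Hence for $v_i=\overline{v_i}+n_i$ with $n_i\in{\goth u}$, we get $h(t).v_i=\overline{v_i}+\sum_{\alpha\in{\cal R}_+} t^{m_\alpha} n_{i,\alpha}$ with all $m_\alpha\geq 1$, so $\lim_{t\to 0} h(t).v_i=\overline{v_i}$. Therefore $(\overline{v_1},\ldots,\overline{v_k},x)$ is the limit as $t\to 0$ of the curve $t\mapsto (h(t).v_1,\ldots,h(t).v_k,x)$, which lies entirely in $Z$ for $t\neq 0$. Since $Z$ is closed, the limit point lies in $Z$ as well, hence in $Z\cap {\goth h}^{k}\times X$, and its image under $\pi$ is the given $(\overline{v_1},\ldots,\overline{v_k},x)$. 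This proves the reverse inclusion and completes the proof.

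The main point requiring care is the limit argument: one should check that $t\mapsto h(t).v_i$ extends to a morphism from $\k$ (not just $\k^{*}$) into ${\goth b}$ with value $\overline{v_i}$ at $t=0$. This is precisely the positivity of the exponents $m_\alpha$ for $\alpha\in{\cal R}_+$, which follows from $\beta(h)=2>0$ for $\beta\in\Pi$ and the fact that every positive root is a nonnegative integer combination of simple roots with at least one positive coefficient. The hypothesis $k\geq 2$ is not actually needed for this argument; it is presumably imposed only because the lemma is applied in that range. No genericity or irreducibility of $Z$ is required, only that it is closed and $B$-stable.
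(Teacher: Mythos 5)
Your proof is correct and is essentially the paper's own argument: the paper's entire proof is the observation that $\overline{v}=\lim_{t\to 0}h(t)(v)$ for $v\in{\goth b}$, combined with the closedness and $B$-invariance of $Z$. You have simply spelled out the positivity of the weights that makes the limit exist, which is a worthwhile check but not a different route.
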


\begin{proof}
For all $v$ in ${\goth b}$, 
$$ \overline{v} = \lim _{t\rightarrow 0} h(t)(v)$$
whence the lemma since $Z$ is closed and $B$-invariant.
\end{proof}

$\bullet$ 
For $x \in \g$, let $x_{\s}$ and $x_{\n}$ be the semisimple and nilpotent components of 
$x$ in ${\goth g}$. Denote by ${\goth g}^x$ and $G^{x}$ the centralizers of $x$ in 
${\goth g}$ and $G$ respectively. For ${\goth a}$ a subalgebra of ${\goth g}$ and for $A$
a subgroup of $G$, set:
$$\begin{array}{ccc}
{\goth a}^{x} := {\goth a}\cap {\goth g}^{x} && A^{x} := A \cap G^{x} \end{array} .$$
The set of regular elements of $\g$ is 
$$\g_{\r} \ := \ \{ x\in \g \ \vert \ \dim \g^x=\rg \} .$$
Denote by ${\goth g}_{\rs}$ the set of regular semisimple elements of
${\goth g}$. Both $\g_{\r}$ and $\g_{\rs}$ are $G$-invariant dense open subsets of
${\goth g}$. Setting ${\goth h}_{\r} := {\goth h}\cap {\goth g}_{\r}$, 
${\goth b}_{\r} := {\goth b}\cap {\goth g}_{\r}$, 
${\goth g}_{\rs}=G({\goth h}_{\r})$ and ${\goth g}_{\r}=G({\goth b}_{\r})$. 

$\bullet$
Let $\poi p1{,\ldots,}{\rg}{}{}{}$ be some homogeneous polynomials generating the algebra
$\e Sg^{G}$ of invariant polynomials under $G$. For $i =1,\ldots,\rg$ and for  $x$ in 
$\g$, denote by $\varepsilon _i(x)$ the element of $\g$ given by
$$ \dv {\varepsilon _{i}(x)}y = \frac{\dd }{\dd t} p_{i}(x+ty) \left \vert _{t=0} \right.
$$
for all $y$ in ${\goth g}$. Thereby, $\varepsilon _{i}$ is an invariant element of 
$\tk {\k}{\e Sg}\g$ under the canonical action of $G$. According to 
\cite[Theorem 9]{Ko}, for $x$ in ${\goth g}$, $x$ is in ${\goth g}_{\r}$ if and only if 
$\poi x{}{,\ldots,}{}{\varepsilon }{1}{\rg}$ are linearly independent. In this case, 
$\poi x{}{,\ldots,}{}{\varepsilon }{1}{\rg}$ is a basis of ${\goth g}^{x}$. 

\section{On the varieties \texorpdfstring{${\cal X}$}{Lg} and 
\texorpdfstring{$\sqx G{{\goth b}}$}{Lg}} \label{xv}
Denote by $\pi _{{\goth g}} : {\goth g}\rightarrow {\goth g}//G$ and 
$\pi _{{\goth h}} : {\goth h} \rightarrow {\goth h}/W({\cal R)}$ the quotient maps,
i.e the morphisms defined by the invariants. Recall ${\goth g}//G={\goth h}/W({\cal R})$,
and let ${\cal X}$ be the following fiber product:
$$ \xymatrix{ {\cal X} \ar[rr]^{\overline{\chi }} \ar[d]_{\overline{\rho }} && 
{\goth g} \ar[d]^{\pi _{{\goth g}}} \\ {\goth h} \ar[rr]_{\pi _{{\goth h}}} &&
{\goth h}/W({\cal R)}} $$
where $\overline{\chi }$ and $\overline{\rho }$ are the restriction maps. The 
actions of $G$ and $W({\cal R})$ on ${\goth g}$ and ${\goth h}$ respectively induce
an action of $G\times W({\cal R})$ on ${\cal X}$. According to \cite[Lemma 2.4]{CZ}, 
${\cal X}$ is irreducible and normal. Moreover, 
${\cal X}_{\r} := {\goth g}_{\r}\times {\goth h}\cap {\cal X}$ is a smooth open
subset of ${\cal X}$, $\k[{\cal X}]$ is the space of global sections 
$\an {\sqx G{{\goth b}}}{}$ and $\k[{\cal X}]^{G}=\e Sh$. According to
\cite[Lemma 2.4]{CZ}, the map
$$ \xymatrix{ G\times {\goth b} \ar[rr] && {\cal X} }, \qquad 
(g,x) \longmapsto (g(x),\overline{x}) $$
defines through the quotient a projective birational morphism 
$$ \xymatrix{ \sqx G{{\goth b}} \ar[rr]^{\chi _{\n}} && {\cal X}} .$$ 

\begin{lemma}\label{lxv}
{\rm (i)} The set ${\goth b}_{\r}$ is a big open subset of ${\goth b}$.

{\rm (ii)} The set $\sqx G{{\goth b}_{\r}}$ is a big open subset of $\sqx G{{\goth b}}$.

{\rm (iii)} The restriction of $\chi _{\n}$ to $\sqx G{{\goth b}_{\r}}$ is an 
isomorphism onto ${\cal X}_{\r}$.

{\rm (iv)} The restriction of $\pi _{{\goth g}}$ to ${\goth g}_{\r}$ is a smooth 
morphism. 
\end{lemma}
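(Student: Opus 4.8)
The plan is to prove the four parts of Lemma~\ref{lxv} in order, since later parts rely on earlier ones together with the standard structure theory of reductive Lie algebras recalled in the introduction.

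\textbf{Part (i).} First I would show that ${\goth b}\setminus {\goth b}_{\r}$ has codimension at least $2$ in ${\goth b}$. The complement ${\goth b}\setminus {\goth b}_{\r}$ is the set of $x\in {\goth b}$ with $\dim {\goth g}^{x}>\rg$. Using the decomposition $x=\overline{x}+(x-\overline{x})$ with $\overline{x}\in {\goth h}$ and $x-\overline{x}\in {\goth u}$, and the identity $\dim {\goth g}^{x}=\dim {\goth g}^{\overline{x}}$ when $x-\overline{x}$ lies in the nilradical of the centraliser — more precisely, arguing via the limit $\overline{x}=\lim_{t\to 0}h(t)(x)$ from Lemma~\ref{l3int} which shows $\dim {\goth g}^{x}\le \dim {\goth g}^{\overline{x}}$ — one reduces to a stratification by the root hyperplanes. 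Concretely, ${\goth b}_{\r}$ contains the open set where $\overline{x}$ is regular in ${\goth h}$ together with the image of ${\goth h}_{\r}$; a cleaner approach is to note ${\goth b}=B({\goth h})$ up to closure is false, but ${\goth b}_{\r}=B({\goth h}_{\r})\cup(\text{nilpotent regular locus contributions})$. I expect the cleanest argument uses that the non-regular locus in ${\goth b}$ is a union of subvarieties each of the form $\{\overline{x}\in \ker\alpha\}\cap(\text{something})$, and a dimension count: for each root $\alpha$, the locus where $\alpha(\overline{x})=0$ has codimension $1$ in ${\goth h}\oplus{\goth u}$, but within that locus the generic element is still regular (e.g. regular nilpotent perturbations), so one must intersect at least two conditions to leave ${\goth b}_{\r}$. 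Alternatively, and most robustly, I would invoke Kostant's criterion from the last bullet of Section~\ref{int2}: $x\in{\goth g}_{\r}$ iff $\varepsilon_{1}(x),\dots,\varepsilon_{\rg}(x)$ are linearly independent, so ${\goth b}\setminus{\goth b}_{\r}$ is defined by the vanishing of all $\rg\times\rg$ minors of the matrix $(\varepsilon_{i}(x))$; this is a determinantal variety and one shows its codimension is $\ge 2$ by exhibiting, in the closure of ${\goth b}_{\r}$'s complement, enough independent directions — again reducing to the known fact that ${\goth g}\setminus{\goth g}_{\r}$ has codimension $\ge 3$ (Kostant) and transversality of ${\goth b}$.

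\textbf{Part (ii).} This follows formally from (i). The variety $\sqx G{{\goth b}}$ fibres over $G/B$ with fibre ${\goth b}$, and $\sqx G{{\goth b}_{\r}}$ is the corresponding open subbundle with fibre ${\goth b}_{\r}$. Since the complement $\sqx G{({\goth b}\setminus{\goth b}_{\r})}$ is a fibre bundle over $G/B$ with fibre of codimension $\ge 2$ by (i), its codimension in $\sqx G{{\goth b}}$ is $\ge 2$ as well, so $\sqx G{{\goth b}_{\r}}$ is a big open subset.

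\textbf{Part (iii).} Here I would use that $\chi_{\n}$ is the morphism induced by $(g,x)\mapsto(g(x),\overline{x})$, and that ${\cal X}_{\r}={\goth g}_{\r}\times{\goth h}\cap{\cal X}$. On the open set $\sqx G{{\goth b}_{\r}}$ the image lands in ${\goth g}_{\r}\times{\goth h}$, hence in ${\cal X}_{\r}$. For injectivity and to construct the inverse: given $(y,z)\in{\cal X}_{\r}$ with $y\in{\goth g}_{\r}$ and $\pi_{\goth g}(y)=\pi_{\goth h}(z)$, the regular element $y$ lies in $G({\goth b}_{\r})$ (recalled in Section~\ref{int2}), and choosing $g$ with $g^{-1}(y)\in{\goth b}_{\r}$, one has $\overline{g^{-1}(y)}$ conjugate under $W$ to $z$; the extra datum of $z$ rigidifies the choice so that the pair $\overline{(g,g^{-1}(y))}$ is well-defined, giving a morphism ${\cal X}_{\r}\to\sqx G{{\goth b}_{\r}}$ inverse to $\chi_{\n}$. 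The key point is that over the regular locus the fibre of $\pi_{\goth g}$ is a single $G$-orbit with $B$-stabiliser behaviour making $\sqx G{{\goth b}_{\r}}\to{\goth g}_{\r}$ étale of degree $|W|$, and the fibre product with ${\goth h}\to{\goth h}/W$ exactly cuts this down to an isomorphism. Since $\chi_{\n}$ is already known to be birational and ${\cal X}$ is normal, it suffices to check $\chi_{\n}$ is bijective over ${\cal X}_{\r}$ and that ${\cal X}_{\r}$ is smooth (which is given), then apply Zariski's main theorem.

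\textbf{Part (iv).} For smoothness of $\pi_{\goth g}$ restricted to ${\goth g}_{\r}$: the differential of $\pi_{\goth g}$ at $x$ sends $y\mapsto(\dv{\varepsilon_{1}(x)}{y},\dots,\dv{\varepsilon_{\rg}(x)}{y})$, which is surjective exactly when $\varepsilon_{1}(x),\dots,\varepsilon_{\rg}(x)$ are linearly independent, i.e. exactly when $x\in{\goth g}_{\r}$ by Kostant's criterion. Since source and target are smooth (${\goth g}_{\r}$ is open in the vector space ${\goth g}$, and ${\goth g}//G={\goth h}/W$ is smooth, being an affine space by Chevalley's theorem) and the differential is everywhere surjective on ${\goth g}_{\r}$, the restriction $\pi_{\goth g}|_{{\goth g}_{\r}}$ is a smooth morphism.

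\textbf{Main obstacle.} The technical heart is part (i): getting the codimension bound $\ge 2$ cleanly. The honest route is to relate ${\goth b}\setminus{\goth b}_{\r}$ to ${\goth g}\setminus{\goth g}_{\r}$ via the $B$-equivariant structure and the contraction $h(t)$, using that ${\goth g}\setminus{\goth g}_{\r}$ has codimension $3$ in ${\goth g}$ and that ${\goth b}$ meets it properly; I would expect to spend most of the effort checking that no codimension-$1$ component of the non-regular locus of ${\goth b}$ can arise, for instance by showing that a generic point of each candidate divisor $\{\alpha(\overline{x})=0\}\cap{\goth b}$ is still regular because one can add a suitable nilpotent part. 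Parts (ii)--(iv) are then essentially formal consequences of (i), Kostant's theorem, and the already-quoted properties of ${\cal X}$ from \cite[Lemma 2.4]{CZ}.
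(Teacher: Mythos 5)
Parts (ii)--(iv) of your plan essentially coincide with the paper's proof: (ii) is the fibre-bundle observation, (iv) is the surjectivity of the differential of $\pi _{{\goth g}}$ at regular points (the paper phrases the kernel as the orthogonal of ${\goth g}^{x}$ rather than via the $\varepsilon _{i}$, but it is the same computation), and (iii) is injectivity over the smooth locus ${\cal X}_{\r}$ followed by Zariski's Main Theorem. Two caveats on (iii): the assertion that $\sqx G{{\goth b}_{\r}}\rightarrow {\goth g}_{\r}$ is \'etale of degree $\vert W({\cal R})\vert$ is false (over a regular nilpotent element the fibre is a single point, since a regular nilpotent lies in a unique Borel subalgebra), though it is not load-bearing; and the statement that ``the extra datum of $z$ rigidifies the choice'' is precisely the non-trivial point. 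The paper proves it by conjugating the semisimple parts into ${\goth h}$, observing that the two resulting elements are regular in ${\goth g}^{\overline{x_{1}}}$ with regular nilpotent parts lying in the single Borel subalgebra ${\goth b}\cap {\goth g}^{\overline{x_{1}}}$ of ${\goth g}^{\overline{x_{1}}}$, and concluding that the conjugating element normalizes ${\goth b}$. You should supply this argument rather than assert it.

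The genuine gap is part (i), which you yourself flag as the main obstacle and do not close. None of the three routes you sketch is carried to completion, and the ``transversality with the codimension-$3$ locus ${\goth g}\setminus {\goth g}_{\r}$'' idea cannot work as stated: ${\goth b}$ does not meet that locus properly (the codimension of ${\goth b}\setminus {\goth b}_{\r}$ in ${\goth b}$ is $2$, not $3$, already for ${\goth {sl}}_{2}$ where it is $\{0\}$ in a plane), and a determinantal description gives no codimension lower bound by itself. The paper's actual argument is short and you were circling it with your last suggestion: let $\Sigma $ be an irreducible component of ${\goth b}\setminus {\goth b}_{\r}$; it is a $B$-invariant closed cone, so by the contraction $\overline{v}=\lim _{t\rightarrow 0}h(t)(v)$ of Lemma~\ref{l3int} one has $\Sigma \subseteq \overline{\Sigma }+{\goth u}$ with $\overline{\Sigma }=\Sigma \cap {\goth h}$ a closed cone of ${\goth h}$. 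If $\Sigma $ had codimension $1$, then since $\overline{\Sigma }\neq {\goth h}$ (as ${\goth h}_{\r}\neq \emptyset $) a dimension count forces $\Sigma =\overline{\Sigma }+{\goth u}$; because $\overline{\Sigma }$ is a closed cone it contains $0$, so ${\goth u}\subseteq \Sigma $, contradicting the existence of regular nilpotent elements in ${\goth u}$. Your proposal identifies the right mechanism (adding a nilpotent part to recover regularity) but does not turn it into a proof, so as written part (i) -- and with it the lemma -- is not established.
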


\begin{proof}
(i) Let $\Sigma $ be an irreducible component of ${\goth b}\setminus {\goth b}_{\r}$.
Then $\Sigma $ is a closed cone invariant under $B$ and 
$\overline{\Sigma } := \Sigma \cap {\goth h}$ is a closed cone of ${\goth h}$. According
to Lemma~\ref{l3int}, $\Sigma $ is contained in $\overline{\Sigma }+{\goth u}$. Suppose 
that $\Sigma $ has codimension $1$ in ${\goth b}$. A contradiction is expected. Then
$\overline{\Sigma }={\goth h}$ or $\overline{\Sigma }$ has codimension $1$ in ${\goth h}$.
The first case is impossible since ${\goth h}\cap {\goth b}_{\r}$ is not empty. Hence 
$\Sigma = \overline{\Sigma }+{\goth u}$ since $\Sigma $ is irreducible of codimension 
$1$ in ${\goth b}$. As a result, ${\goth u}$ is contained in $\Sigma $ since 
$\overline{\Sigma }$ is a closed cone, whence the contradiction since 
${\goth u}\cap {\goth b}_{\r}$ is not empty.

(ii) The complement of $\sqx G{{\goth b}_{\r}}$ in $\sqx G{{\goth b}}$ is equal to
$\sqx G{{\goth b}\setminus {\goth b}_{\r}}$. By (i), ${\goth b}\setminus {\goth b}_{\r}$,
is a $B$-invariant closed subset of ${\goth b}$ of dimension at most $\dim {\goth b}-2$.
Then $\sqx G{{\goth b}\setminus {\goth b}_{\r}}$ is a closed subset of 
$\sqx G{{\goth b}}$ of codimension at least $2$, whence the assertion.

(iii) By definition, ${\cal X}_{\r}=\chi _{\n}(\sqx G{{\goth b}_{\r}})$. Let 
$(g_{1},x_{1})$ and $(g_{2},x_{2})$ be in $G\times {\goth b}_{\r}$ such that 
$(g_{1}(x_{1}),\overline{x_{1}})=(g_{2}(x_{2}),\overline{x_{2}})$. For some $b_{1}$ and 
$b_{2}$ in $B$, 
$$b_{1}(x_{1})_{\s} = \overline{x_{1}} \quad  \text{and} \quad 
b_{2}(x_{2})_{\s} = \overline{x_{2}} = \overline{x_{1}}.$$
Setting:
$$ y_{1} := b_{1}(x_{1}) \quad  \text{and} \quad y_{2} := b_{2}(x_{2}) ,$$
$y_{2} = b_{2}g_{2}^{-1}g_{1}b_{1}^{-1}(y_{1})$ is a regular element of 
${\goth g}^{\overline{x_{1}}}$. In particular, $y_{2,\n}$ and $y_{1,\n}$ are regular 
nilpotent elements of ${\goth g}^{\overline{x_{1}}}$ and they are in the borel subalgebra
${\goth b}\cap {\goth g}^{\overline{x_{1}}}$ of ${\goth g}^{\overline{x_{1}}}$. Hence 
$b_{2}g_{2}^{-1}g_{1}b_{1}^{-1}$ is in $B$ and so is $g_{2}^{-1}g_{1}$. As a result, 
the restriction of $\chi _{\n}$ to $\sqx G{{\goth b}_{\r}}$ is injective. So, 
by Zariski's Main Theorem \cite[\S 9]{Mu}, the restriction of $\chi _{\n}$ to 
$\sqx G{{\goth b}_{\r}}$ is an isomorphism onto ${\cal X}_{\r}$ since ${\cal X}_{\r}$ is
a smooth variety.

(iv) Let $x$ be in ${\goth g}_{\r}$. The kernel of the differential of $\pi _{{\goth g}}$
at $x$ is the orthogonal complement of ${\goth g}^{x}$ so that the differential of 
$\pi _{{\goth g}}$ at $x$ is surjective whence the assertion by 
\cite[Ch. III, Proposition 10.4]{Ha}.
\end{proof}

\begin{prop}\label{pxv}
{\rm (i)} There exists a regular form of top degree, without zero on ${\cal X}_{\r}$.

{\rm (ii)} There exists a regular form of top degree, without zero on $\sqx G{{\goth b}}$.
\end{prop}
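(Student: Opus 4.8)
The plan is to prove (i) by presenting ${\cal X}_{\r}$ as a smooth family over an affine space whose relative canonical bundle is visibly trivial, and then to deduce (ii) from (i) by transporting the resulting form along the isomorphism of Lemma~\ref{lxv}(iii) and extending it across a closed subset of codimension at least $2$. For (i), recall that ${\cal X}={\goth g}\times _{{\goth h}/W({\cal R})}{\goth h}$, whence ${\cal X}_{\r}={\goth g}_{\r}\times _{{\goth h}/W({\cal R})}{\goth h}$, and the second projection $\overline{\rho }$ from ${\cal X}_{\r}$ to ${\goth h}$ is the base change by $\pi _{{\goth h}}$ of the restriction of $\pi _{{\goth g}}$ to ${\goth g}_{\r}$. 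By Lemma~\ref{lxv}(iv) this restriction is a smooth morphism, so $\overline{\rho }$ is smooth as well; moreover, since the sheaf of relative differentials is compatible with base change, the relative canonical bundle of $\overline{\rho }$ is the inverse image by $\overline{\chi }$ of the relative canonical bundle of $\pi _{{\goth g}}\vert _{{\goth g}_{\r}}$, so that
$$\Omega _{{\cal X}_{\r}}\ \simeq \ \overline{\rho }^{*}\Omega _{{\goth h}}\otimes \overline{\chi }^{*}\bigl (\Omega _{{\goth g}}\vert _{{\goth g}_{\r}}\otimes \pi _{{\goth g}}^{*}\Omega _{{\goth h}/W({\cal R})}^{-1}\bigr ).$$
Now ${\goth g}$ and ${\goth h}$ are affine spaces, so $\Omega _{{\goth g}}$ and $\Omega _{{\goth h}}$ are free of rank $1$, generated by nowhere-vanishing translation-invariant forms; by Chevalley's theorem, ${\goth h}/W({\cal R})={\rm Spec}\,\k[p_{1},\ldots ,p_{\rg}]$ is again an affine space, so $\Omega _{{\goth h}/W({\cal R})}$ is free of rank $1$, generated by $\dd p_{1}\wedge \cdots \wedge \dd p_{\rg}$. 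Every factor on the right-hand side above is therefore a trivial line bundle, hence $\Omega _{{\cal X}_{\r}}$ is trivial and a generator of it is a regular form of top degree without zero on ${\cal X}_{\r}$.

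For (ii), Lemma~\ref{lxv}(iii) gives an isomorphism from $\sqx G{{\goth b}_{\r}}$ onto ${\cal X}_{\r}$ induced by $\chi _{\n}$; pulling back along it the form constructed in (i) yields a regular form $\omega '$ of top degree without zero on $\sqx G{{\goth b}_{\r}}$. The variety $\sqx G{{\goth b}}$ is irreducible and smooth, being a vector bundle over $G/B$, and by Lemma~\ref{lxv}(ii) the complement of $\sqx G{{\goth b}_{\r}}$ in $\sqx G{{\goth b}}$ has codimension at least $2$; since $\Omega _{\sqx G{{\goth b}}}$ is an invertible sheaf on the normal variety $\sqx G{{\goth b}}$, the form $\omega '$ extends to a regular form $\omega $ of top degree on all of $\sqx G{{\goth b}}$. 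This $\omega $ is nonzero, since it restricts to $\omega '$ on $\sqx G{{\goth b}_{\r}}$; hence its zero locus is either empty or the support of a nonzero effective divisor, of codimension $1$. But that zero locus is contained in $\sqx G{{\goth b}}\setminus \sqx G{{\goth b}_{\r}}$, of codimension at least $2$. Therefore it is empty, $\omega $ has no zero, and (ii) follows.

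The step requiring the most care is the identification, in (i), of the relative canonical bundle of ${\cal X}_{\r}\rightarrow {\goth h}$ with the $\overline{\chi }$-pullback of that of $\pi _{{\goth g}}\vert _{{\goth g}_{\r}} : {\goth g}_{\r}\rightarrow {\goth h}/W({\cal R})$; this rests on the base-change property of the sheaf of relative differentials together with the smoothness statement of Lemma~\ref{lxv}(iv). Granting that, the proof reduces to the triviality of the canonical bundles of the three affine spaces ${\goth g}$, ${\goth h}$ and ${\goth h}/W({\cal R})$, and the extension argument used for (ii) is routine.
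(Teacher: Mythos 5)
Your proposal is correct and follows essentially the same route as the paper: part (i) rests on the smoothness of $\pi _{{\goth g}}$ over ${\goth g}_{\r}$ (Lemma~\ref{lxv},(iv)) together with the triviality of the canonical bundles of ${\goth g}$, ${\goth h}$ and ${\goth h}/W({\cal R})$ — the paper just phrases this explicitly, writing a volume form as $\alpha \wedge \poi {\dd p}1{\wedge \cdots \wedge }{\rg}{}{}{}$ and replacing the $\dd p_{i}$ by coordinates of ${\goth h}$, where you phrase it as triviality of the relative canonical bundle under base change. Part (ii) is identical in both: transport along the isomorphism of Lemma~\ref{lxv},(iii) and extend across the codimension~$\geq 2$ complement of $\sqx G{{\goth b}_{\r}}$, your direct divisor argument being exactly the content of Lemma~\ref{lars},(ii) which the paper invokes.
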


\begin{proof}
(i) Let $\omega $ be a volume form on ${\goth g}$. According to 
Lemma~\ref{lxv},(iv), the restriction of $\omega $ to ${\goth g}_{\r}$ is divisible by 
$\poi {\dd p}1{\wedge \cdots \wedge }{\rg}{}{}{}$ so that
$$ \omega  = \alpha \wedge \poi {\dd p}1{\wedge \cdots \wedge }{\rg}{}{}{}$$
with $\alpha $ a regular relative differential form of top degree with respect to 
$\pi _{{\goth g}}$. Denoting by $\poi v1{,\ldots,}{\rg}{}{}{}$ a basis of ${\goth h}$, 
$$ \omega ' := \alpha \wedge \poi {\dd v}1{ \wedge \cdots \wedge }{\rg}{}{}{}$$
is a regular form of top degree on ${\cal X}_{\r}$ since $\e Sg^{G}$ identifies 
with a subalgebra of $\e Sh$. As $\pi _{{\goth g}}$ and $\overline{\rho }$ have the same
fibers and $\omega $ has no zero so has $\omega '$.

(ii) By Lemma~\ref{lxv},(iii), $\chi _{\n}^{*}(\omega ')$ is a regular form of 
top degree on $\sqx G{{\goth b}_{\r}}$ without zero. Then by Lemma~\ref{lars},(ii) and 
Lemma~\ref{lxv},(ii), theres exists a regular form of top degree on $\sqx G{{\goth b}}$, 
without zero.
\end{proof}

\section{Main varieties and tautological vector bundles} \label{mv}
Denote by $X$ the closure in $\ec {Gr}g{}{}{\rg}$ of the orbit of ${\goth h}$ under 
$B$. Since $G/B$ is a projective variety, $G.X$ is the closure in $\ec {Gr}g{}{}{\rg}$ of
the orbit of ${\goth h}$ under $G$. Set:
$$ {\cal E}_{0} := \{(u,x) \in X \times {\goth b} \; \vert ; x \in u\}, \qquad
{\cal E} := \{(u,x) \in G.X \times {\goth g} \; \vert ; x \in u\} .$$ 
Then ${\cal E}_{0}$ and ${\cal E}$ are the restrictions to $X$ and $G.X$ respectively of 
the tautological vector bundle of rank $\rg$ over $\ec {Gr}g{}{}{\rg}$. Denote by 
$\pi _{0}$ and $\pi $ the bundle projections:
$$ \xymatrix{ {\cal E}_{0} \ar[rr]^{\pi _{0}} && X}, \qquad 
\xymatrix{ {\cal E} \ar[rr]^{\pi } && G.X} .$$
Since the map
$$ \xymatrix{ {\goth g}_{\r} \ar[rr] && \ec {Gr}g{}{}{\rg}}, \qquad 
x \longmapsto {\goth g}^{x}$$
is regular, for all $x$ in ${\goth g}_{\r}$, ${\goth g}^{x}$ is in $G.X$ and for all
$x$ in ${\goth b}_{\r}$, ${\goth g}^{x}$ is in $X$. Denoting by $X'$ the image of 
${\goth b}_{\r}$, $G.X'$ is the image of ${\goth g}_{\r}$ and according to 
\cite[Theorem 1.2]{CZ}, $X'$ and $G.X'$ are smooth big open subsets of $X$ and $G.X$ 
respectively.

Let $\tau _{0}$ and $\tau $ be the restrictions to ${\cal E}_{0}$ and ${\cal E}$ 
respectively of the canonical projection 
$\ec {Gr}g{}{}{\rg}\times {\goth g} \rightarrow {\goth g}$. Denote by $\pi _{*}$ and 
$\tau _{*}$ the morphisms
$$ \xymatrix{\sqx G{{\cal E}_{0}} \ar[rr]^{\pi _{*}} && \sqx GX}, \qquad  \text{and} \quad
\xymatrix{\sqx G{{\cal E}_{0}} \ar[rr]^{\tau _{*}} && {\cal X}}$$
defined through the quotients by the maps 
$$ \xymatrix{ G\times {\cal E}_{0} \ar[rr] && G\times X}, \qquad 
(g,u,x) \longmapsto (g,u) ,$$
$$ \xymatrix{ G\times {\cal E}_{0} \ar[rr] && {\cal X}}, \qquad 
(g,u,x) \longmapsto (g(x),\overline{x}) .$$

\begin{lemma}\label{lmv}
{\rm (i)} The morphism $\tau _{0}$ is a projective and birational morphism from 
${\cal E}_{0}$ onto ${\goth b}$.

{\rm (ii)} The morphism $\tau $ is a projective and birational morphism from ${\cal E}$ 
onto ${\goth g}$. 

{\rm (iii)} The morphism $\tau _{*}$ is a projective and birational morphism from 
$\sqx G{{\cal E}_{0}}$ onto ${\cal X}$.
\end{lemma}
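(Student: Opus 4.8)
The plan is to prove the three assertions of Lemma~\ref{lmv} in parallel, using the same strategy: identify a dense open subset of the target over which the relevant morphism restricts to an isomorphism, then invoke properness (coming from the grassmannian) for projectivity. First I would observe that ${\cal E}_0$ and ${\cal E}$ are closed subvarieties of $\ec{Gr}g{}{}{\rg}\times{\goth b}$ and $\ec{Gr}g{}{}{\rg}\times{\goth g}$ respectively; since $\ec{Gr}g{}{}{\rg}$ is projective, the projections to ${\goth b}$ and ${\goth g}$ are proper, hence so are their restrictions $\tau_0$ and $\tau$. For $\tau_*$, the map $\sqx G{{\cal E}_0}\to{\cal X}$ factors as the composition of the closed immersion into $\sqx G{(\ec{Gr}g{}{}{\rg}\times{\goth g})}\cong \ec{Gr}g{}{}{\rg}\times{\goth g}$ followed by projection; alternatively one uses that $\sqx G{{\cal E}_0}\to\sqx G{\goth b}$ is proper (grassmannian fibers over $G/B$) together with the projective birational morphism $\chi_\n\colon\sqx G{\goth b}\to{\cal X}$ from Section~\ref{xv}. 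In all three cases properness plus the fact that these varieties are irreducible of the same dimension as the target reduces the problem to showing each morphism is birational.

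For birationality of $\tau_0$, I would use the regular map $x\mapsto{\goth g}^x$ on ${\goth b}_\r$: this exhibits a section of $\tau_0$ over the big open subset ${\goth b}_\r$ of ${\goth b}$ (by Lemma~\ref{lxv},(i)), namely $x\mapsto({\goth g}^x,x)$, since $x\in{\goth g}^x$ when $x$ is regular. Thus $\tau_0$ is injective over ${\goth b}_\r$; because $X'=\{{\goth g}^x : x\in{\goth b}_\r\}$ is smooth and ${\goth b}_\r$ is smooth, Zariski's Main Theorem gives that $\tau_0$ restricts to an isomorphism $\tau_0^{-1}({\goth b}_\r)\xrightarrow{\sim}{\goth b}_\r$, hence $\tau_0$ is birational. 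The same argument with ${\goth g}_\r$ in place of ${\goth b}_\r$ (again using that $x\mapsto{\goth g}^x$ is regular on ${\goth g}_\r$, and that $G.X'$ is a smooth big open subset of $G.X$ by \cite[Theorem 1.2]{CZ}) proves (ii). Finally ${\cal E}_0$ and ${\cal E}$ have dimension $\dim{\goth b}$ and $\dim{\goth g}$ respectively — each is a rank-$\rg$ bundle over $X$, $G.X$ which have dimensions $\dim{\goth b}-\rg$, $\dim{\goth g}-\rg$ — so the targets are reached surjectively by properness and irreducibility, giving "onto ${\goth b}$" and "onto ${\goth g}$".

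For (iii), I would combine the birational morphism $\tau_*$ with $\chi_\n$: unwinding the definitions, $\tau_*$ sits in a commutative triangle with $\pi_*\colon\sqx G{{\cal E}_0}\to\sqx GX$ and the composite $\sqx GX\to\sqx G{\goth b}\xrightarrow{\chi_\n}{\cal X}$ is not quite right — rather, I would note directly that over ${\cal X}_\r$ the map $\chi_\n$ is an isomorphism onto ${\cal X}_\r$ by Lemma~\ref{lxv},(iii), and that $\tau_*$ factors compatibly so that $\tau_*^{-1}({\cal X}_\r)\cong\sqx G{{\goth b}_\r}$ with $\tau_*$ becoming $\chi_\n$. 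More cleanly: the section $x\mapsto({\goth g}^x,x)$ over ${\goth b}_\r$ is $B$-equivariant, hence induces a section of $\tau_*$ over the image of $\sqx G{{\goth b}_\r}$ in ${\cal X}$, which is ${\cal X}_\r$, showing $\tau_*$ is birational; properness comes from the grassmannian as above, and $\dim\sqx G{{\cal E}_0}=\dim(G/B)+\dim{\goth b}=\dim{\cal X}$ gives surjectivity.

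The main obstacle I anticipate is the bookkeeping for $\tau_*$ in part (iii): one must check carefully that the $B$-equivariant section $x\mapsto({\goth g}^x,x)$ of $\tau_0$ over ${\goth b}_\r$ descends to a well-defined section of $\tau_*$ over ${\cal X}_\r$ and that its image is exactly $\sqx G{{\cal E}_0\cap(X'\times{\goth b}_\r)}$, identified via $\pi_*$ with the open subset $\sqx G{{\goth b}_\r}$ over which $\chi_\n$ is already known to be an isomorphism by Lemma~\ref{lxv}. The other parts are routine once one has the smoothness and bigness statements for $X'$, $G.X'$ quoted from \cite[Theorem 1.2]{CZ} and the regularity of $x\mapsto{\goth g}^x$ on the regular loci, both of which are available in the excerpt; applying Zariski's Main Theorem then finishes each case exactly as in the proof of Lemma~\ref{lxv},(iii).
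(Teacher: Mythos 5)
Your proposal is correct and follows essentially the same route as the paper: projectivity from the projectivity of $X$, $G.X$ and $G/B$ (with $\tau_*$ factored through $\sqx G{{\goth b}}$ and $\chi_{\n}$), birationality from the fact that the fiber of $\tau$ over a regular $x$ is the single point ${\goth g}^{x}$ (equivalently, the section $x\mapsto ({\goth g}^{x},x)$ over the regular locus, which the paper packages as the morphism $\mu$ inverting $\tau_{*}$ over ${\cal X}_{\r}$), and surjectivity from closedness of the image together with density of the regular locus. The only cosmetic differences are your appeal to Zariski's Main Theorem in (i)--(ii), where the paper simply reads off the fibers, and your dimension count for surjectivity, which the paper replaces by the density argument.
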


\begin{proof}
(i) and (ii) Since $X$ and $G.X$ are projective varieties, $\tau _{0}$ and $\tau $ are 
projective morphisms. For $x$ in ${\goth g}_{\r}$, $\tau ^{-1}(x) = \{{\goth g}^{x}\}$. 
Hence $\tau _{0}$ and $\tau $ are birational and their images are ${\goth b}$ and 
${\goth g}$ since ${\goth g}_{\r}$ is an open subset of ${\goth g}$.

(iii) The morphism 
$$ \xymatrix{ G\times {\cal E}_{0} \ar[rr] && G\times {\goth b}}, \qquad
(g,u,x) \longmapsto (g,x)$$
defines through the quotient a morphism 
$$ \xymatrix { \sqx G{{\cal E}_{0}} \ar[rr]^{\tau _{1}} && \sqx G{{\goth b}}} .$$
The varieties $\sqx G{{\cal E}_{0}}$ and $\sqx G{{\goth b}}$ are embedded into 
$G/B\times {\cal E}$ and $G/B\times {\goth g}$ respectively as closed subsets and 
$\tau _{1}$ is the restriction to $\sqx G{{\cal E}_{0}}$ of 
${\mathrm {id}}_{G/B}\mul \tau $. Hence $\tau _{1}$ is a projective
morphism by (ii). As $\tau _{*}$ is the composition of $\tau _{1}$ and $\chi _{\n}$, 
$\tau _{*}$ is a projective morphism since so is $\chi _{\n}$. The map
$$ \xymatrix{ G\times {\goth b}_{\r} \ar[rr] && G\times {\cal E}_{0}}, \qquad 
(g,x) \longmapsto (g,{\goth g}^{x},x)$$
defines through the quotient a morphism 
$$ \xymatrix{ \sqx G{{\goth b}_{\r}} \ar[rr]^{\mu } && \sqx G{{\cal E}_{0}}} .$$
According to Lemma~\ref{lxv},(iii), the restriction of $\tau _{*}$ to 
$\tau _{*}^{-1}({\cal X}_{\r})$ is an isomorphism onto ${\cal X}_{\r}$ whose inverse is
$ \mu \rond \chi _{\n}^{-1}$. In particular, $\tau _{*}$ is birational.
\end{proof}

Denote by $(G.X)_{\n}$ the normalization of $G.X$. Let ${\cal E}_{\n}$ be the following 
fiber product:
$$ \xymatrix{ {\cal E}_{\n} \ar[rr] ^{\nu _{\n}} \ar[d]_{\pi_{\n}}&& 
{\cal E} \ar[d]^{\pi } \\ (G.X)_{\n} \ar[rr]_{\nu } && G.X} $$
with $\nu $ the normalization morphism, $\nu _{\n}$, $\pi _{\n}$ the restriction maps.

\begin{prop}\label{pmv}
{\rm (i)} The varieties ${\cal E}_{0}$ and $X$ are Gorenstein with rational 
singularities.

{\rm (ii)} The varieties ${\cal E}_{\n}$ and $X_{\n}$ are Gorenstein with rational 
singularities.

{\rm (iii)} The varieties $\sqx G{{\cal E}_{0}}$ and $\sqx G{X}$ are Gorenstein 
with rational singularities.
\end{prop}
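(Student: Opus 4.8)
The plan is to reduce everything to known facts about $X$ and then transfer along the bundle and quotient constructions. The key input is that $X$ is normal and Gorenstein with rational singularities --- this is \cite[Theorem 1.1]{C1}, quoted in the introduction. I would first establish (i) by the following chain of reductions. Since ${\cal E}_0$ is the total space of a vector bundle of rank $\rg$ over $X$, it is locally a product $U\times \k^{\rg}$ for $U$ open in $X$; hence $\pi_0$ is smooth, and smoothness of a morphism preserves both the Gorenstein property and rational singularities (the relative dualizing sheaf is a line bundle, and for a smooth morphism $f\colon Y\to X$ one has $\omega_Y\cong f^*\omega_X\otimes\omega_{Y/X}$, while $R^if_*\an Y{}$-type vanishing combined with a resolution of $X$ gives a resolution of $Y$ with the required cohomology vanishing). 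So ${\cal E}_0$ is Gorenstein with rational singularities as soon as $X$ is, which is \cite[Theorem 1.1]{C1}.

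For (ii), the point is that ${\cal E}_\n$ is the pullback bundle of ${\cal E}$ along the normalization $\nu\colon (G.X)_\n\to G.X$, i.e. the total space of a rank-$\rg$ vector bundle over $(G.X)_\n$; the fiber-product diagram in the excerpt exhibits this. Thus it suffices to know that $(G.X)_\n$ is Gorenstein with rational singularities, and then apply the same smooth-morphism argument as above. Now $G.X = G.\iota(X)$ is, up to the $G$-action, built from $X$; more precisely one has the locally trivial fibration $\sqx GX\to G/B$ with fiber $X$, and $\sqx GX$ maps properly and birationally onto $G.X$ (this is the content of Lemma~\ref{lmv}-type statements, here in the form of $\sqx G{\goth b}\to \cal X$ and its bundle version). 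By part (iii) --- which I take up next --- $\sqx GX$ is Gorenstein with rational singularities, and since it is smooth in codimension $1$ (as recorded from \cite[Theorem 1.2]{CZ}, $G.X$ hence ${\cal E}^{(k)}$ is smooth in codimension $1$) and maps properly birationally onto $G.X$, its Stein factorization through $(G.X)_\n$ exhibits $(G.X)_\n$ as the image; a variety admitting a proper birational morphism from a variety with rational singularities which is an isomorphism in codimension $1$ onto its normalization has rational singularities itself, and one deduces Gorenstein-ness from the existence of a nowhere-vanishing top-degree regular form on the smooth big open locus (cf. Proposition~\ref{pxv}). So (ii) will follow from (iii) together with the codimension-$1$ smoothness and a standard descent-of-rational-singularities lemma.

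For (iii), the variety $\sqx G{{\cal E}_0}$ fibers over $\sqx GX$ as a rank-$\rg$ vector bundle (the fiber bundle $\sqx G{{\cal E}_0^{(k)}}$ is described in the introduction as a vector bundle of rank $\rg$ over $\sqx GX$; the case $k=1$ is exactly this), so once more the smooth-morphism argument reduces (iii) to: $\sqx GX$ is Gorenstein with rational singularities. But $\sqx GX\to G/B$ is a Zariski-locally trivial fiber bundle with fiber $X$ and smooth base $G/B$, and both properties (Gorenstein, rational singularities) are local on the base and stable under taking products with a smooth variety; hence $\sqx GX$ inherits them from $X$, i.e. from \cite[Theorem 1.1]{C1}.

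The main obstacle I anticipate is (ii): unlike $X$, the variety $G.X$ is \emph{not} known to be normal a priori, so one cannot simply say "$G.X$ is Gorenstein with rational singularities" and pull back. The careful point is to run everything through the smooth variety $\sqx GX$ (whose good properties come for free from $X$ by the fibration over $G/B$), observe that $\sqx GX\to G.X$ is proper and birational, and that $G.X$ is smooth in codimension $1$; then a lemma of the type "if $f\colon Y\to Z$ is proper birational with $Y$ having rational singularities and $Z$ normal in codimension $1$, then the normalization $Z_\n$ has rational singularities" does the job, with Gorenstein-ness coming from the nowhere-vanishing canonical form supplied by Proposition~\ref{pxv}(ii) transported to $\sqx GX$ and down to $(G.X)_\n$ via Lemma~\ref{lint}. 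I would isolate that descent lemma (it is presumably among the appendix results alluded to in the introduction) and cite it, rather than reprove it here.
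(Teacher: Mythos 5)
Your parts (i) and (iii) are correct and coincide with the paper's own argument: both follow from \cite[Theorem 1.1]{C1} by transporting the Gorenstein property and rational singularities along the vector bundle ${\cal E}_{0}\rightarrow X$, the fiber bundle $\sqx GX\rightarrow G/B$ with fiber $X$, and the vector bundle $\sqx G{{\cal E}_{0}}\rightarrow \sqx GX$; this is exactly Lemma~\ref{lsi},(i), (iii) and (iv) in the appendix.

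Part (ii) contains a genuine gap. The descent statement you rely on --- that a normal variety receiving a proper birational morphism from a variety with rational singularities, an isomorphism in codimension $1$, itself has rational singularities --- is false. The affine cone over an elliptic plane cubic is normal (indeed a hypersurface, hence Gorenstein, with a nowhere-vanishing top-degree form on its smooth locus, which is a big open subset); its blow-up at the vertex is smooth and maps to it properly, birationally, and isomorphically off a codimension-$2$ set; yet the cone does not have rational singularities. So neither the proper birational map from $\sqx GX$ nor the nowhere-vanishing form on the smooth big open locus of $(G.X)_{\n}$ suffices. The correct criterion (\cite[Lemma 2.3]{Hin}, which the paper uses) requires the form to extend \emph{regularly to a desingularization} and to generate the pushed-forward canonical sheaf there, and producing that extension is precisely the hard point. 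This is why the paper defers (ii) to Corollary~\ref{crs}: it first establishes that $\sqx G{{\cal E}_{0}^{(k)}}$ and hence $\widetilde{{\cal C}_{\x}^{(k)}}$ are Gorenstein with rational singularities (Proposition~\ref{prs}, resting on the differential forms constructed in Section~\ref{isc}); it then applies Boutot's theorem \cite{Bout} to the categorical quotient $\widetilde{{\cal C}^{(k)}}=\widetilde{{\cal C}_{\x}^{(k)}}/W({\cal R})$ to obtain rational singularities for $\widetilde{{\cal C}^{(k)}}$; by \cite[p.50]{KK} this is what forces the canonical form to extend to a desingularization of ${\cal E}_{\n}^{(k)}$, whence ${\cal E}_{\n}^{(k)}$ is Gorenstein with rational singularities by Hinich's criterion; finally $(G.X)_{\n}$ and then ${\cal E}_{\n}$ inherit these properties by descending and ascending along vector bundles (Lemma~\ref{lsi},(ii) and (iv)). Some input of this kind --- here the $W({\cal R})$-quotient structure together with Boutot's theorem --- is indispensable and cannot be replaced by a general descent lemma along proper birational morphisms.
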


\begin{proof}
According to~\cite[Theorem 1.1]{C1}, $X$ is Gorenstein with rational singularities, then
by Lemma~\ref{lsi},(i) and (iv), so is ${\cal E}_{0}$ as a vector bundle over $X$. 
Furthermore, by Lemma~\ref{lsi},(i) and (iii), $\sqx GX$ is Gorenstein with rational 
singularities as a fiber bundle over a smooth variety whose fibers are Gorenstein with 
rational singularities. As a result, by Lemma~\ref{lsi},(i) and (iv), 
$\sqx G{{\cal E}_{0}}$ is Gorenstein with rational singularities as a vector bundle over 
$\sqx GX$.

Proposition~\ref{pmv},(ii) will be proved in Section~\ref{rs} (see Corollary~\ref{crs}).
\end{proof}

\section{On the generalized isospectral commuting variety} \label{isc}
Let $k\geq 2$ be an integer. The variety $\sqxx k{{\goth b}^{k}}$ identifies 
with $(\sqx G{\goth b})^{k}$. Denote by $\chi _{\n}^{(k)}$ the morphism 
$$ \xymatrix{ \sqxx k{{\goth b}^{k}} \ar[rr]^{\chi _{\n}^{(k)}} && {\cal X}^{(k)}},
\qquad (\poi x1{,\ldots,}{k}{}{}{}) \longmapsto 
(\poi x1{,\ldots,}{k}{\chi }{\n}{\n}) .$$
The varitey $G/B$ identifies with the diagonal $\Delta $ of $(G/B)^{k}$ so that 
$\sqx G{{\goth b}^{k}}$ identifies with the restriction to $\Delta $ of the vector
bundle $\sqxx k{{\goth b}^{(k)}}$ over $G/B$. Denote by $\gamma _{\x}$ the restriction of
$\chi _{\n}^{(k)}$ to $\sqx G{{\goth b}^{k}}$ and by ${\cal B}_{\x}^{(k)}$ its image, 
whence a commutative diagram
$$\xymatrix{\sqx G{{\goth b}^{k}} \ar[rr]^{\gamma _{\x}} \ar[rd]_{\gamma }
&& {\cal B}_{\x}^{(k)} \ar[ld]^{\eta } \\ & {\cal B}^{(k)} & }$$ 
with $\eta $ the restriction to ${\cal B}_{\x}^{(k)}$ of the canonical projection
$\xymatrix{{\cal X}^{k} \ar[r] & {\goth g}^{k}}$. 
Let $\iota _{\x,k}$ be the map given by
$$ \xymatrix{ {\goth b}^{k} \ar[rr]^{\iota _{\x,k}} && {\cal X}^{k}}, \qquad
(\poi x1{,\ldots,}{k}{}{}{}) \longmapsto (\poi x1{,\ldots,}{k}{}{}{},
\overline{x_{1}},\ldots,\overline{x_{k}}) .$$
According to \cite[Lemma 2.7,(i) and Corollary 2.8,(i)]{CZ}, $\iota _{\x,k}$ is a closed 
embedding of ${\goth b}^{k}$ into ${\cal B}_{\x}^{(k)}$ and $\gamma _{\x}$ is a 
projective birational morphism so that ${\cal B}_{\n}^{(k)}$ is the normalization of 
${\cal B}_{\x}^{(k)}$. Denote by ${\cal C}^{(k)}$ the closure of $G.{\goth h}^{k}$ in 
${\goth g}^{k}$ with respect to the diagonal action of $G$ in ${\goth g}^{k}$ and set 
${\cal C}_{\x}^{(k)} := \eta ^{-1}({\cal C}^{(k)})$. The 
varieties ${\cal C}^{(k)}$ and ${\cal C}_{\x}^{(k)}$ are called 
{\it generalized commuting variety} and {\it generalized isospectral commuting variety} 
respectively. For $k=2$, ${\cal C}_{\x}^{(k)}$ is the isospectral commuting variety 
considered by M. Haiman in~\cite[\S 8]{Ha1} and~\cite[\S 7.2]{Ha2}. According 
to~\cite[Proposition 5.6]{CZ}, ${\cal C}_{\x}^{(k)}$ is irreducible and equal to the 
closure of $G.\iota _{\x,k}({\goth h}^{k})$ in ${\cal B}_{\x}^{(k)}$.

\subsection{} \label{isc1}
We consider the diagonal action of $B$ in ${\goth b}^{k}$. Let ${\goth X}_{0,k}$ be the 
closure of $B.{\goth h}^{k}$ in ${\goth b}^{k}$. Set:
$$ {\cal E}^{(k)} := \{(u,\poi x1{,\ldots,}{k}{}{}{}) \in G.X\times {\goth g}^{\k} 
\ \vert \ x_{1}\in u,\ldots,x_{k}\in u \} \quad  \text{and} \qquad
{\cal E}_{0}^{(k)} := {\cal E}^{(k)} \cap X\times {\goth b}^{k}.$$
Then ${\cal E}_{0}^{(k)}$ and ${\cal E}^{(k)}$ are vector bundles over $X$ and $G.X$ 
respectively. Denote by $\pi _{0,k}$ and $\pi _{k}$ respectively their bundle 
projections. Let $\tau _{0,k}$ and $\tau _{k}$ be the restrictions to 
${\cal E}_{0}^{(k)}$ and ${\cal E}^{(k)}$ respectively of the canonical projection 
$\ec {Gr}g{}{}{\rg}\times {\goth g}^{k}\rightarrow {\goth g}^{k}$. Denote
by $\pi _{*,k}$ and $\tau _{*,k}$ the morphisms
$$ \xymatrix{\sqx G{{\cal E}_{0}^{(k)}} \ar[rr]^{\pi _{*,k}} && \sqx GX}, 
\qquad  \text{and} \qquad 
\xymatrix{\sqx G{{\cal E}_{0}^{(k)}} \ar[rr]^{\tau _{*,k}} && {\cal X}^{k}}$$
defined through the quotients by the maps 
$$ \xymatrix{ G\times {\cal E}_{0}^{(k)} \ar[rr] && G\times X}, 
\qquad (g,u,x) \longmapsto (g,u) ,$$
$$ \xymatrix{ G\times {\cal E}_{0}^{(k)} \ar[rr] && {\cal X}^{k}}, \qquad 
(g,u,\poi x1{,\ldots,}{k}{}{}{}) \longmapsto (\poi x1{,\ldots,}{k}{g}{}{},
\overline{x_{1}},\ldots,\overline{x_{k}}) .$$

\begin{lemma}\label{lisc1}
{\rm (i)} The morphism $\tau _{0,k}$ is a projective morphism from ${\cal E}_{0}^{(k)}$ 
onto ${\goth X}_{0,k}$.

{\rm (ii)} The morphism $\tau _{k}$ is a projective morphism from ${\cal E}^{(k)}$ onto 
${\cal C}^{(k)}$. 

{\rm (iii)} The morphism $\tau _{*,k}$ is a projective morphism from 
$\sqx G{{\cal E}_{0}^{(k)}}$ onto ${\cal C}_{\x}^{(k)}$.
\end{lemma}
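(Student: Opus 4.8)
The plan is to establish each of the three assertions by exploiting the birationality of the three morphisms together with the known geometry of the source varieties, which were analyzed in the earlier sections. I would first treat $\tau_{0,k}$. The key fact is that ${\cal E}_0^{(k)}$ is a vector bundle over $X$, hence (by Proposition~\ref{pmv},(i)) an irreducible variety which is Gorenstein with rational singularities; in particular it is normal and projective over ${\goth g}^k$ via $\tau_{0,k}$ since $X$ is a projective subvariety of the grassmannian. Projectivity of $\tau_{0,k}$ then follows from the fact that it is the restriction to the closed subvariety ${\cal E}_0^{(k)}$ of the projection $\ec {Gr}g{}{}{\rg}\times {\goth g}^k\to {\goth g}^k$, composed with the properness of $\ec {Gr}g{}{}{\rg}$. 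For surjectivity onto ${\goth X}_{0,k}$, I would note that the image of $\tau_{0,k}$ is a closed subset of ${\goth b}^k$ (being the image of a projective morphism into the affine space, it is closed) containing the image of ${\goth b}_{\r}^k$: indeed for $x_1,\dots,x_k$ lying in a common Cartan subalgebra with at least one of them regular, the common centralizer $\bigcap {\goth g}^{x_i}$ contains that Cartan subalgebra which is a point of $X$ lying over $(x_1,\dots,x_k)$. Since ${\goth X}_{0,k}$ is by definition the closure of $B.{\goth h}^k$ and every such point with one regular component is visibly in the image, the image is exactly ${\goth X}_{0,k}$.

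For $\tau_k$, I would argue in exactly the same way: ${\cal E}^{(k)}$ is a vector bundle over the projective variety $G.X$, hence irreducible, and the restriction of the projection $\ec {Gr}g{}{}{\rg}\times {\goth g}^k\to {\goth g}^k$ to the closed subvariety ${\cal E}^{(k)}$ is projective. For the image, an element $(x_1,\dots,x_k)$ of $G.{\goth h}^k$ with, say, $x_1$ regular semisimple has $\bigcap {\goth g}^{x_i}$ containing the Cartan subalgebra ${\goth g}^{x_{1,\s}}\supseteq$ the common Cartan, which after conjugation lies in $X$; so $\bigcap {\goth g}^{x_i}$ is a point of $G.X$, giving a preimage. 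Taking closures and using that the image of a projective morphism is closed, the image of $\tau_k$ is exactly ${\cal C}^{(k)}=\overline{G.{\goth h}^k}$.

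For $\tau_{*,k}$, I would use Lemma~\ref{l2int} together with the factorization already visible in the earlier discussion. The morphism $\tau_{*,k}$ is obtained through the quotient from $(g,u,x_1,\dots,x_k)\mapsto (x_1,\dots,x_k,g,\overline{x_1},\dots,\overline{x_k})$-type data, and it is the restriction of $\mathrm{id}_{G/B}\mul\tau_k$ followed by $\chi_{\n}^{(k)}$ to the closed subvariety $\sqx G{{\cal E}_0^{(k)}}$ of $G/B\times {\cal E}^{(k)}$; since $G/B$ is projective and $\tau_k$ is projective, $\tau_{*,k}$ is projective. For surjectivity onto ${\cal C}_{\x}^{(k)}=\eta^{-1}({\cal C}^{(k)})$, the point is that $\tau_{*,k}$ factors as $\eta\rond(\text{something})$ composed with the birational model, so its image is contained in ${\cal C}_{\x}^{(k)}$; conversely, since ${\cal C}_{\x}^{(k)}$ is irreducible (by~\cite[Proposition 5.6]{CZ}) and equals the closure of $G.\iota_{\x,k}({\goth h}^k)$, and every element of $G.\iota_{\x,k}({\goth h}^k)$ with a regular semisimple first component lies in the image by the same centralizer argument, closedness of the image of a projective morphism forces the image to be all of ${\cal C}_{\x}^{(k)}$.

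The main obstacle I expect is not the projectivity (which is formal from properness of the grassmannian and of $G/B$) but the surjectivity onto the correct target in part (iii), where one must be careful that the image lands in $\eta^{-1}({\cal C}^{(k)})$ and not in something larger inside ${\cal B}_{\x}^{(k)}$, and that it is dense there. The cleanest route is to first pin down that $\tau_{*,k}$ restricted over the regular-semisimple locus is an isomorphism onto its image (mirroring Lemma~\ref{lmv},(iii) via Lemma~\ref{lxv},(iii)), identify that image with $\eta^{-1}$ of the regular-semisimple locus of ${\cal C}^{(k)}$, and then take closures, using that ${\cal C}_{\x}^{(k)}$ is irreducible with $G.\iota_{\x,k}({\goth h}^k)$ dense in it so that no extra components can intervene. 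Birationality in all three cases then follows from the isomorphism over the regular (resp. regular semisimple) loci, which are dense.
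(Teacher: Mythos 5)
Your projectivity arguments are essentially the paper's (restriction of the projection from a product with the projective variety $X$, resp.\ $G.X$, and for (iii) the factorization through ${\mathrm {id}}_{G/B}\mul \tau _{k}$ followed by $\gamma _{\x}$). The gap is in the identification of the images in (i) and (ii): you prove only one of the two required inclusions. Your argument shows that the image of $\tau _{0,k}$ is closed and contains $B.{\goth h}^{k}$, hence contains its closure ${\goth X}_{0,k}$; but you never show that the image is \emph{contained} in ${\goth X}_{0,k}$. This is not automatic: $\tau _{0,k}({\cal E}_{0}^{(k)})$ is the set of all $k$-tuples lying in a common subspace $u\in X$, and $X$ contains degenerate members that are not Cartan subalgebras (for instance the subalgebras $V_{\alpha }={\goth h}_{\alpha }\oplus {\goth g}^{\alpha }$ occurring in Lemma~\ref{l2isc3}); one must argue that tuples inside such a $u$ are still limits of tuples inside genuine Cartan subalgebras. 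The same one-sided reasoning appears for $\tau _{k}$ (``taking closures \dots\ the image is exactly ${\cal C}^{(k)}$''), and the defect propagates to (iii), since your containment of the image of $\tau _{*,k}$ in $\eta ^{-1}({\cal C}^{(k)})$ rests on the unproved inclusion $\tau _{k}({\cal E}^{(k)})\subseteq {\cal C}^{(k)}$.

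The paper closes this by a dimension count: the image of $\tau _{0,k}$ is an irreducible closed set (image of the irreducible variety ${\cal E}_{0}^{(k)}$, a vector bundle over the irreducible $X$) containing ${\goth X}_{0,k}$, and $\dim {\cal E}_{0}^{(k)}=k\rg +\dim {\goth u}=\dim {\goth X}_{0,k}$, the latter computed from the injectivity on $U\times {\goth h}_{\r}^{k}$ of the action map (and, for (ii), from the $\rg$-dimensional fibers of the action map on $G\times {\goth h}_{\r}^{k}$); two nested irreducible closed sets of equal dimension coincide. Alternatively, you could observe that $\pi _{0,k}^{-1}(B.{\goth h})$ is a dense open subset of ${\cal E}_{0}^{(k)}$ whose image under $\tau _{0,k}$ is exactly $B.{\goth h}^{k}$, so that by continuity $\tau _{0,k}({\cal E}_{0}^{(k)})\subseteq \overline{B.{\goth h}^{k}}={\goth X}_{0,k}$. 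Either repair is short, but as written your proof asserts an equality of which only one inclusion is justified.
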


\begin{proof}
(i) Since $X$ is a projective variety, $\tau _{0,k}$ is a projective morphism. Then  
its image is an irreducible closed subset of ${\goth b}^{k}$ since ${\cal E}_{0}^{(k)}$ 
is irreducible as a vector bundle over an irreducible variety. Moreover, $B.{\goth h}^{k}$
is contained in $\tau _{0,k}({\cal E}_{0}^{(k)})$ since $\tau _{0,k}({\cal E}_{0}^{(k)})$
is invariant under $B$ and contains ${\goth h}^{k}$. As a vector bundle of rank $k\rg$ 
over $X$, ${\cal E}_{0}^{(k)}$ has dimension $k\rg + \dim {\goth u}$. Since the 
restriction to $U\times {\goth h}_{\r}^{k}$ of the map
$$ \xymatrix{B\times {\goth h}^{k} \ar[rr] && {\goth b}^{k}}, \qquad
(g,\poi x1{,\ldots,}{k}{}{}{}) \longmapsto (\poi x1{,\ldots,}{k}{g}{}{})$$
is injective, ${\goth X}_{0,k}$ has dimension $\dim u +k\rg$. Hence ${\goth X}_{0,k}$
is the image of ${\cal E}_{0}^{(k)}$ by $\tau _{0,k}$.  

(ii) Since $G.X$ is a projective variety, $\tau _{k}$ is a projective morphism. Then  
its image is an irreducible closed subset of ${\goth g}^{k}$ since ${\cal E}^{(k)}$ is 
irreducible as a vector bundle over an irreducible variety. Moreover, $G.{\goth h}^{k}$
is contained in $\tau _{k}({\cal E}^{(k)})$ since $\tau _{k}({\cal E}^{(k)})$ is 
invariant under $G$ and contains ${\goth h}^{k}$. As a vector bundle of rank $k\rg$ over 
$G.X$, ${\cal E}^{(k)}$ has dimension $k\rg + 2\dim {\goth u}$. Since the fibers of the 
restriction to $G \times {\goth h}_{\r}^{k}$ of the map
$$ \xymatrix{G\times {\goth h}^{k} \ar[rr] && {\goth g}^{k}}, \qquad
(g,\poi x1{,\ldots,}{k}{}{}{}) \longmapsto (\poi x1{,\ldots,}{k}{g}{}{})$$
have dimension $\rg$, ${\cal C}^{(k)}$ has dimension $2\dim {\goth u} + k\rg$. Hence 
${\cal C}^{(k)}$ is the image of ${\cal E}^{(k)}$ by $\tau _{k}$.  

(iii) The morphism 
$$ \xymatrix{ G\times {\cal E}_{0}^{(k)} \ar[rr] && G\times {\goth b}^{k}}, \qquad
(g,u,x) \longmapsto (g,x)$$
defines through the quotient a morphism 
$$ \xymatrix { \sqx G{{\cal E}_{0}^{(k)}} \ar[rr]^{\tau _{1,k}} && 
\sqx G{{\goth b}^{k}}} .$$
The varieties $\sqx G{{\cal E}_{0}^{(k)}}$ and $\sqx G{{\goth b}^{k}}$ are embedded into 
$G/B\times {\cal E}^{(k)}$ and $G/B\times {\goth g}^{k}$ respectively as closed subsets 
and $\tau _{1,k}$ is the restriction to $\sqx G{{\cal E}_{0}^{(k)}}$ of 
${\mathrm {id}}_{G/B}\mul \tau _{k}$. Hence $\tau _{1,k}$ is a projective
morphism by (ii). As $\tau _{*,k}$ is the composition of $\tau _{1,k}$ and 
$\gamma _{\x}$, $\tau _{*,k}$ is a projective morphism since so is $\gamma _{\x}$. 
Moreover, by (ii), the image of $\eta \rond \tau _{*,k}$ is equal to 
${\cal C}^{(k)}$. Hence ${\cal C}_{\x}^{(k)}$ is the image of $\tau _{*,k}$ since it is
irreducible and equal to $\eta ^{-1}({\cal C}^{(k)})$.
\end{proof}

\subsection{} \label{isc2} 
For $j=1,\ldots,k$, denote by $V_{0,j}^{(k)}$ the subset of elements of 
${\goth X}_{0,k}$ whose $j$-th component is in ${\goth b}_{\r}$ and by $V_{j}^{(k)}$ the 
subset of elements of ${\cal C}^{(k)}$ whose $j$-th component is in ${\goth g}_{\r}$. Let
$W_{j}^{(k)}$ be the inverse image of $V_{j}^{(k)}$ by $\eta $. 

Let $\sigma _{j}$ be the automorphism of ${\goth g}^{k}$ permuting the first and the 
$j$-th components of its elements. Then $\sigma _{j}$ is equivariant under the diagonal 
action of $G$ in ${\goth g}^{k}$ and ${\goth b}^{k}$ and ${\goth h}^{k}$ are invariant 
under $\sigma _{j}$. As a result, ${\goth X}_{0,k}$ is invariant under $\sigma _{j}$ and 
$\sigma _{j}(V_{0,1}^{(k)}) = V_{0,j}^{(k)}$. In the same way, ${\cal C}^{(k)}$ is 
invariant under $\sigma _{j}$ and $\sigma _{j}(V_{1}^{(k)}) = V_{j}^{(k)}$. The map
$$ \xymatrix{ G\times {\goth b}^{k} \ar[rr] && G\times {\goth b}^{k}}, \qquad
(g,x) \longmapsto (g,\sigma _{j}(x))$$
defines through the quotient an automorphism of $\sqx G{{\goth b}^{k}}$. Denote again by 
$\sigma _{j}$ this automorphism and the restriction to ${\cal X}^{k}$ of the automorphism 
$(x,y)\mapsto (\sigma _{j}(x),\sigma _{j}(y))$ of ${\goth g}^{k}\times {\goth h}^{k}$. 
Since ${\cal B}_{\x}^{(k)}$ is contained in ${\cal X}^{k}$ and $\gamma _{\x}$ is a 
morphism from $\sqx G{{\goth b}^{k}}$ to ${\cal X}^{k}$ such that 
$\gamma _{\x}\rond \sigma _{j} = \sigma _{j}\rond \gamma _{\x}$, 
${\cal B}_{\x}^{(k)}$ is invariant under $\sigma _{j}$. In the same way, 
$\sigma _{j}\rond \gamma = \gamma \rond \sigma _{j}$ and ${\cal B}^{(k)}$ is invariant 
under $\sigma _{j}$. As a result $\sigma _{j}\rond \eta = \eta \rond \sigma _{j}$, 
${\cal C}_{\x}^{(k)}$ is invariant under $\sigma _{j}$ and 
$\sigma _{j}(W_{1}^{(k)})=W_{j}^{(k)}$. 

\begin{lemma}\label{lisc2}
Let $j=1,\ldots,k$.

{\rm (i)} The set $V_{0,j}^{(k)}$ is a smooth open subset of ${\goth X}_{0,k}$. Moreover
there exists a regular differential form of top degree on $V_{0,j}^{(k)}$, without zero.

{\rm (ii) } The set $V_{j}^{(k)}$ is a smooth open subset of ${\cal C}^{(k)}$. Moreover
there exists a regular differential form of top degree on $V_{j}^{(k)}$, without zero.

{\rm (iii)} The set $W_{j}^{(k)}$ is a smooth open subset of ${\cal C}_{\x}^{(k)}$. 
Moreover there exists a regular differential form of top degree on $W_{j}^{(k)}$, without
zero.
\end{lemma}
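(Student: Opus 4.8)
The plan is to identify $V_{0,j}^{(k)}$, $V_{j}^{(k)}$ and $W_{j}^{(k)}$, up to isomorphism, with affine spaces and affine-space bundles over $\sqx G{{\goth b}_{\r}}$, using the sections $\varepsilon _{1},\ldots ,\varepsilon _{\rg}$ to trivialize the centralizer bundle over the regular locus; the required forms are then produced by wedging an already available form with a volume form of $\k^{(k-1)\rg}$. As noted before the statement, $\sigma _{j}$ is an automorphism of ${\goth X}_{0,k}$, ${\cal C}^{(k)}$ and ${\cal C}_{\x}^{(k)}$ carrying $V_{0,1}^{(k)}$, $V_{1}^{(k)}$, $W_{1}^{(k)}$ onto $V_{0,j}^{(k)}$, $V_{j}^{(k)}$, $W_{j}^{(k)}$, and the pullback by $\sigma _{j}$ of a regular form of top degree without zero is again such a form; so it is enough to treat $j=1$.

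Set $E:=\{(x_{1},\ldots ,x_{k})\in {\goth g}_{\r}\times {\goth g}^{k-1}\; \vert \; x_{i}\in {\goth g}^{x_{1}}$ for $2\leq i\leq k\}$ and $E_{0}:=\{(x_{1},\ldots ,x_{k})\in {\goth b}_{\r}\times {\goth b}^{k-1}\; \vert \; x_{i}\in {\goth g}^{x_{1}}$ for $2\leq i\leq k\}$. For $x_{1}$ regular, $\varepsilon _{1}(x_{1}),\ldots ,\varepsilon _{\rg}(x_{1})$ is a basis of ${\goth g}^{x_{1}}$, and if moreover $x_{1}\in {\goth b}_{\r}$ then ${\goth g}^{x_{1}}\in X$, hence ${\goth g}^{x_{1}}\subseteq {\goth b}$; so $(x_{1},(a_{i,j}))\mapsto \bigl(x_{1},\sum _{i}a_{i,2}\varepsilon _{i}(x_{1}),\ldots ,\sum _{i}a_{i,k}\varepsilon _{i}(x_{1})\bigr)$ defines isomorphisms ${\goth g}_{\r}\times \k^{(k-1)\rg}\to E$ and ${\goth b}_{\r}\times \k^{(k-1)\rg}\to E_{0}$, the latter being $B$-equivariant for the trivial action of $B$ on $\k^{(k-1)\rg}$ (by the $G$-equivariance of the $\varepsilon _{i}$). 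Next, for $x_{1}$ regular the only element of $G.X$ (resp.\ of $X$, when $x_{1}\in {\goth b}_{\r}$) containing $x_{1}$ is ${\goth g}^{x_{1}}$, by Lemma~\ref{lmv} and its proof; therefore $(u,x_{1},\ldots ,x_{k})\mapsto (x_{1},\ldots ,x_{k})$, with inverse $(x_{1},\ldots ,x_{k})\mapsto ({\goth g}^{x_{1}},x_{1},\ldots ,x_{k})$ (a morphism because $x\mapsto {\goth g}^{x}$ is regular on ${\goth g}_{\r}$ and on ${\goth b}_{\r}$), identifies $\tau _{k}^{-1}({\goth g}_{\r}\times {\goth g}^{k-1})$ with $E$ and $\tau _{0,k}^{-1}({\goth b}_{\r}\times {\goth b}^{k-1})$ with $E_{0}$, and under these identifications $\tau _{k}$ and $\tau _{0,k}$ become the inclusions of $E$ into ${\goth g}^{k}$ and of $E_{0}$ into ${\goth b}^{k}$. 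Since $\tau _{k}$ and $\tau _{0,k}$ are onto ${\cal C}^{(k)}$ and ${\goth X}_{0,k}$ (Lemma~\ref{lisc1}), the images of these inclusions are exactly $V_{1}^{(k)}={\cal C}^{(k)}\cap ({\goth g}_{\r}\times {\goth g}^{k-1})$ and $V_{0,1}^{(k)}={\goth X}_{0,k}\cap ({\goth b}_{\r}\times {\goth b}^{k-1})$; these are open, and equal (as varieties) to $E\cong {\goth g}_{\r}\times \k^{(k-1)\rg}$ and $E_{0}\cong {\goth b}_{\r}\times \k^{(k-1)\rg}$, hence smooth. Wedging a volume form of ${\goth g}$ (resp.\ of ${\goth b}$) with a volume form of $\k^{(k-1)\rg}$ gives the forms required in (ii) and (i).

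For (iii), the same computation identifies $\tau _{*,k}^{-1}(W_{1}^{(k)})$ with $\sqx G{E_{0}}$ via $(g,u,x)\mapsto (g,x)$, and the $B$-equivariant isomorphism above gives $\sqx G{E_{0}}\cong \sqx G{{\goth b}_{\r}}\times \k^{(k-1)\rg}$, which is smooth; the wedge of the form of Proposition~\ref{pxv}(ii), restricted to the open subset $\sqx G{{\goth b}_{\r}}$, with a volume form of $\k^{(k-1)\rg}$ is a regular form of top degree without zero on it. So it remains to prove that $\tau _{*,k}$ restricts to an isomorphism from $\sqx G{E_{0}}$ onto $W_{1}^{(k)}$. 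Surjectivity is Lemma~\ref{lisc1}(iii). For injectivity, if two points of $\sqx G{E_{0}}$ have the same image under $\tau _{*,k}$, then comparing first components and repeating the argument of the proof of Lemma~\ref{lxv}(iii) shows that the two elements of $G$ differ by an element of $B$, so the two points agree. To build the inverse morphism, I would send $(x_{1},\ldots ,x_{k},y_{1},\ldots ,y_{k})\in W_{1}^{(k)}$ to $(x_{1},y_{1})\in {\cal X}_{\r}$, then by $\chi _{\n}^{-1}$ (Lemma~\ref{lxv}(iii)) to $\overline{(g,x_{1}')}\in \sqx G{{\goth b}_{\r}}$, and finally to $\overline{(g,{\goth g}^{x_{1}'},g^{-1}(x_{1}),\ldots ,g^{-1}(x_{k}))}$, which is well defined ($x_{i}\in {\goth g}^{x_{1}}$ forces $g^{-1}(x_{i})\in {\goth g}^{x_{1}'}$) and independent of the chosen representative $g$. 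That $\tau _{*,k}$ sends this back to the original point comes down to the identities $\overline{g^{-1}(x_{i})}=y_{i}$: writing $(x_{1},\ldots ,x_{k},y_{1},\ldots ,y_{k})=(h(z_{1}),\ldots ,h(z_{k}),\overline{z_{1}},\ldots ,\overline{z_{k}})$ with $(h,z_{1},\ldots ,z_{k})\in G\times {\goth b}^{k}$ (possible as ${\cal C}_{\x}^{(k)}\subseteq {\cal B}_{\x}^{(k)}$), one has $g^{-1}(x_{i})=(g^{-1}h)(z_{i})$, the Lemma~\ref{lxv}(iii) argument applied to the first components gives $g^{-1}h\in B$, and then $\overline{g^{-1}(x_{i})}=\overline{z_{i}}=y_{i}$ since $\overline{b(v)}=\overline{v}$ for every $b\in B$ and $v\in {\goth b}$ ($U$ carries ${\goth b}$ into ${\goth u}$ and $T$ fixes ${\goth h}$ pointwise). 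Together with injectivity this yields the isomorphism, and hence (iii).

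The hard part will be this last isomorphism $\sqx G{E_{0}}\cong W_{1}^{(k)}$ — concretely, showing the candidate inverse is a well-defined morphism whose composite with $\tau _{*,k}$ is the identity, which reduces to the identities $\overline{g^{-1}(x_{i})}=y_{i}$ for $i\geq 2$. The rest is bundle bookkeeping once the descriptions of $V_{1}^{(k)}$ and $V_{0,1}^{(k)}$ are in place.
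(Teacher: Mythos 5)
Your proposal is correct and follows essentially the same route as the paper: reduce to $j=1$ via $\sigma _{j}$, describe each open set as the locus where the first component is regular and the remaining ones lie in its centralizer, trivialize via the Kostant sections $\varepsilon _{1},\ldots ,\varepsilon _{\rg}$ (and, for (iii), recover the $G/B$-datum via $\chi _{\n}^{-1}$ on the first ${\cal X}$-factor), and push forward volume forms. The only step you assert rather than prove is that the bijective morphisms ${\goth g}_{\r}\times \k^{(k-1)\rg}\to E$ and ${\goth b}_{\r}\times \k^{(k-1)\rg}\to E_{0}$ are isomorphisms of varieties; the paper supplies this by covering ${\goth g}_{\r}$ (resp.\ ${\goth b}_{\r}$) by open sets on which $\varepsilon _{1}(x),\ldots ,\varepsilon _{\rg}(x)$ extends to a basis of ${\goth g}$ (resp.\ ${\goth b}$), so that the coefficients $a_{i,j}$ are regular functions of the point.
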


\begin{proof}
According to the above remarks, we can suppose $j=1$.

(i) By definition, $V_{0,1}^{(k)}$ is the intersection of ${\goth X}_{0,k}$ and the open 
subset ${\goth b}_{\r}\times {\goth b}^{k-1}$ of ${\goth b}^{k}$. Hence $V_{0,1}^{(k)}$
is an open subset of ${\goth X}_{0,k}$. For $x_{1}$ in ${\goth b}_{\r}$, 
$(\poi x1{,\ldots,}{k}{}{}{})$ is in $V_{0,1}^{(k)}$ if and only if 
$\poi x2{,\ldots,}{k}{}{}{}$ are in ${\goth g}^{x_{1}}$ by Lemma~\ref{lisc1},(i) since 
${\goth g}^{x_{1}}$ is in $X$. According to~\cite[Theorem 9]{Ko}, for $x$ in 
${\goth b}_{\r}$, $\poi x{}{,\ldots,}{}{\varepsilon }{1}{\rg}$ is a basis of 
${\goth g}^{x}$ and ${\goth g}^{x}$ is contained in ${\goth b}$. Hence the map 
$$ \begin{array}{ccc} \xymatrix{{\goth b}_{\r}\times {\mathrm {M}_{k-1,\rg}}(\k)
\ar[rr]^{\theta _{0}} &&  V_{0,1}^{(k)}} ,\\ 
(x,(a_{i,j},1\leq i \leq k-1,1\leq j \leq \rg)) \longmapsto 
(x,\sum_{j=1}^{\rg} a_{1,j}\varepsilon _{j}(x),\ldots,
\sum_{j=1}^{\rg} a_{k-1,j}\varepsilon _{j}(x)) \end{array} $$
is a bijective morphism. The open subset ${\goth b}_{\r}$ has a cover by open subsets 
$V$ such that for some $\poi e1{,\ldots,}{n}{}{}{}$ in ${\goth b}$, 
$\poi x{}{,\ldots,}{}{\varepsilon }{1}{\rg},\poi e1{,\ldots,}{n}{}{}{}$ is a basis of 
${\goth b}$ for all $x$ in $V$. Then there exist regular functions 
$\poi {\varphi }1{,\ldots,}{\rg}{}{}{}$ on $V\times {\goth b}$ such that 
$$ v-\sum_{j=1}^{\rg} \varphi _{j}(x,v)\varepsilon _{j}(x) \in 
{\mathrm {span}}(\poi e1{,\ldots,}{n}{}{}{})$$
for all $(x,v)$ in $V\times {\goth b}$, so that the restriction of $\theta _{0}$ to 
$V\times {\mathrm {M}_{k-1,\rg}}(\k)$ is an isomorphism onto 
${\goth X}_{0,k}\cap V\times {\goth b}^{k-1}$ whose inverse is 
$$(\poi x1{,\ldots,}{k}{}{}{}) \longmapsto 
(x_{1},((\poi {x_{1},x_{i}}{}{,\ldots,}{}{\varphi }{1}{\rg}),i=2,\ldots,k)) .$$
As a result, $\theta _{0}$ is an isomorphism and $V_{0,1}^{(k)}$ is a smooth variety. 
Since ${\goth b}_{\r}$ is a smooth open subset of the vector space ${\goth b}$, there 
exists a regular differential form $\omega $ of top degree on 
${\goth b}_{\r}\times {\mathrm {M}_{k-1,\rg}}(\k)$, without zero. Then 
${\theta _{0}}_{*}(\omega )$ is a regular differential form of top degree on 
$V_{0,1}^{(k)}$, without zero.

(ii) By definition, $V_{1}^{(k)}$ is the intersection of ${\cal C}^{(k)}$ and the open 
subset ${\goth g}_{\r}\times {\goth g}^{k-1}$ of ${\goth g}^{k}$. Hence $V_{1}^{(k)}$ is 
an open subset of ${\cal C}^{(k)}$. For $x_{1}$ in ${\goth g}_{\r}$, 
$(\poi x1{,\ldots,}{k}{}{}{})$ is in $V_{1}^{(k)}$ if and only if 
$\poi x2{,\ldots,}{k}{}{}{}$ are in ${\goth g}^{x_{1}}$ by Lemma~\ref{lisc1},(ii) 
since ${\goth g}^{x_{1}}$ is in $G.X$. According to~\cite[Theorem 9]{Ko}, for $x$ in 
${\goth g}_{\r}$, $\poi x{}{,\ldots,}{}{\varepsilon }{1}{\rg}$ is a basis of 
${\goth g}^{x}$. Hence the map 
$$ \begin{array}{ccc} \xymatrix{{\goth g}_{\r}\times {\mathrm {M}_{k-1,\rg}}(\k) 
\ar[rr]^{\theta } && V_{1}^{(k)}} ,\\ 
(x,(a_{i,j},1\leq i \leq k-1,1\leq j \leq \rg)) \longmapsto 
(x,\sum_{j=1}^{\rg} a_{1,j}\varepsilon _{j}(x),\ldots,
\sum_{j=1}^{\rg} a_{k-1,j}\varepsilon _{j}(x)) \end{array} $$
is a bijective morphism. The open subset ${\goth g}_{\r}$ has a cover by open subsets 
$V$ such that for some $\poi e1{,\ldots,}{2n}{}{}{}$ in ${\goth g}$, 
$\poi x{}{,\ldots,}{}{\varepsilon }{1}{\rg},\poi e1{,\ldots,}{2n}{}{}{}$ is a basis of 
${\goth g}$ for all $x$ in $V$. Then there exist regular functions 
$\poi {\varphi }1{,\ldots,}{\rg}{}{}{}$ on $V\times {\goth g}$ such that 
$$ v-\sum_{j=1}^{\rg} \varphi _{j}(x,v)\varepsilon _{j}(x) \in 
{\mathrm {span}}(\poi e1{,\ldots,}{2n}{}{}{})$$
for all $(x,v)$ in $V\times {\goth g}$, so that the restriction of $\theta $ to 
$V\times {\mathrm {M}_{k-1,\rg}}(\k)$ is an isomorphism onto 
${\cal C}^{(k)}\cap V\times {\goth b}^{k-1}$ whose inverse is 
$$(\poi x1{,\ldots,}{k}{}{}{}) \longmapsto 
(x_{1},((\poi {x_{1},x_{i}}{}{,\ldots,}{}{\varphi }{1}{\rg}),i=2,\ldots,k)) .$$
As a result, $\theta $ is an isomorphism and $V_{1}^{(k)}$ is a smooth variety. Since
${\goth g}_{\r}$ is a smooth open subset of the vector space ${\goth g}$, there exists
a regular differential form $\omega $ of top degree on 
${\goth g}_{\r}\times {\mathrm {M}_{k-1,\rg}}(\k)$, without zero. Then 
$\theta _{*}(\omega )$ is a regular differential form of top degree on $V_{1}^{(k)}$, without zero.

(iii) Since ${\cal X}_{\r}$ is the inverse image of ${\goth g}_{\r}$ by the canonical
projection $\xymatrix{{\cal X}\ar[r] & {\goth g}}$, $W_{1}^{(k)}$ is the intersection of 
${\cal C}_{\x}^{(k)}$ and ${\cal X}_{\r}\times {\cal X}^{k-1}$. Hence $W_{1}^{(k)}$ is an
open subset of ${\cal C}_{\x}^{(k)}$. Moreover, $W_{1}^{(k)}$ is the image of 
$\sqx G{V_{0,1}^{(k)}}$ by $\gamma _{\x}$. Since the maps 
$\poi {\varepsilon }1{,\ldots,}{\rg}{}{}{}$ are $G$-equivariant, the map
\begin{eqnarray*}
\xymatrix{ G\times {\goth b}_{\r}\times {\mathrm {M}_{k-1,\rg}}(\k) \ar[rr] && 
W_{1}^{(k)}}, \\
(g,x,a_{i,j},1\leq i\leq k-1,1\leq j \leq \rg) \longmapsto 
\gamma _{\x}(\overline{g,\theta _{0}(x,a_{i,j},1\leq i\leq k-1,1\leq j \leq \rg)})
\end{eqnarray*}
defines through the quotient a surjective morphism 
$$ \xymatrix{\sqx G{{\goth b}_{\r}}\times 
{\mathrm {M}_{k-1,\rg}}(\k) \ar[rr]^{\theta _{\x}} && W_{1}^{(k)}} .$$
Let $\varpi _{2}$ be the canonical projection 
$$ \xymatrix{ {\goth g}_{\r}\times {\mathrm {M}_{k-1,\rg}}(\k) \ar[rr] && 
{\mathrm {M}_{k-1,\rg}}(\k)} .$$
According to Lemma~\ref{lxv},(iii), the restriction of $\chi _{\n}$ to 
$\sqx G{{\goth b}_{\r}}$ is an isomorphism onto ${\cal X}_{\r}$. So denote by 
$\varpi _{1}$ the morphism 
$$ \xymatrix{ W_{1}^{(k)} \ar[rr]^{\varpi _{1}} && \sqx G{{\goth b}_{\r}}}, \qquad
(\poi x1{,\ldots,}{k}{}{}{},\poi y1{,\ldots,}{k}{}{}{}) \longmapsto 
\chi _{\n}^{-1}(x_{1},y_{1}) .$$
Then $\theta _{\x}$ is an isomorphism whose inverse is given by 
$$ x \longmapsto (\varpi _{1}(x),\varpi _{2}\rond \theta ^{-1} \rond \eta (x))$$
since $\eta (W_{1}^{(k)}) = V_{1}^{(k)}$. In particular, $W_{1}^{(k)}$ is a smooth open 
subset of ${\cal C}_{\x}^{(k)}$. According to Proposition~\ref{pxv},(ii), there exists
a regular differential form $\omega $ of top degree on 
$\sqx G{{\goth b}_{\r}}\times M_{k-1,\rg}(k)$, without zero. Then 
${\theta _{\x}}_{*}(\omega )$ is a regular differential form of top degree on 
$W_{1}^{(k)}$, without zero.
\end{proof}

\begin{coro}\label{cisc2}
Let $j=1,\ldots,k$.

{\rm (i)} The morphism $\tau _{0,k}$ is birational. More precisely, the restriction of
$\tau _{0,k}$ to $\tau _{0,k}^{-1}(V_{0,j}^{(k)})$ is an isomorphism onto $V_{0,j}^{(k)}$.

{\rm (ii)} The morphism $\tau _{k}$ is birational. More precisely, the restriction of
$\tau _{k}$ to $\tau _{k}^{-1}(V_{j}^{(k)})$ is an isomorphism onto $V_{j}^{(k)}$.

{\rm (iii)} The morphism $\tau _{*,k}$ is birational. More precisely, the restriction of
$\tau _{*,k}$ to $\tau _{*,k}^{-1}(W_{j}^{(k)})$ is an isomorphism onto $W_{j}^{(k)}$.
\end{coro}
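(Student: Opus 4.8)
\textbf{Proof proposal for Corollary~\ref{cisc2}.}

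The plan is to exploit the explicit isomorphisms $\theta _{0}$, $\theta $, $\theta _{\x}$ produced in Lemma~\ref{lisc2}, combined with the factorizations of $\tau _{0,k}$, $\tau _{k}$, $\tau _{*,k}$ through the tautological vector bundles. Since each of the three statements reduces, by the $\sigma _{j}$-equivariance discussed in~\S\ref{isc2}, to the case $j=1$, I will only treat that case. The key point is that on the regular loci the map ``send a regular element $x$ to its centralizer ${\goth g}^{x}$'' gives a \emph{section} of the bundle projection, so the birational inverse of $\tau$ is explicit.

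For (i): by Lemma~\ref{lisc1},(i), $\tau _{0,k}$ is a projective morphism onto ${\goth X}_{0,k}$, and ${\cal E}_{0}^{(k)}$ is irreducible of the same dimension as ${\goth X}_{0,k}$, so it suffices to exhibit an open subset over which $\tau _{0,k}$ is an isomorphism. I claim $\tau _{0,k}^{-1}(V_{0,1}^{(k)})$ is such a subset. Indeed, an element of $\tau _{0,k}^{-1}(V_{0,1}^{(k)})$ is a pair $(u,(\poi x1{,\ldots,}{k}{}{}{}))$ with $x_{1}\in {\goth b}_{\r}$, $x_{i}\in u$ for all $i$, and $u\in X$; since $x_{1}\in u\cap {\goth g}_{\r}$ forces $u={\goth g}^{x_{1}}$ (the unique $\rg$-dimensional subspace containing a regular element being its centralizer, by~\cite[Theorem 9]{Ko}), the fibre of $\tau _{0,k}$ over each point of $V_{0,1}^{(k)}$ is a single reduced point, and the map $(\poi x1{,\ldots,}{k}{}{}{})\mapsto ({\goth g}^{x_{1}},(\poi x1{,\ldots,}{k}{}{}{}))$ is an inverse; its regularity follows from the regularity of $x\mapsto {\goth g}^{x}$ on ${\goth g}_{\r}$ used already in Section~\ref{mv}, together with the local description via the $\varepsilon _{j}$ and the functions $\varphi _{j}$ from the proof of Lemma~\ref{lisc2},(i). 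Hence $\tau _{0,k}$ restricts to an isomorphism onto $V_{0,1}^{(k)}$; since $V_{0,1}^{(k)}$ is a nonempty open subset of the irreducible variety ${\goth X}_{0,k}$, $\tau _{0,k}$ is birational. Part (ii) is identical with ${\goth b}_{\r}$, $X$, ${\goth X}_{0,k}$, Lemma~\ref{lisc1},(i) replaced by ${\goth g}_{\r}$, $G.X$, ${\cal C}^{(k)}$, Lemma~\ref{lisc1},(ii), using that ${\goth g}^{x}\in G.X$ for $x\in {\goth g}_{\r}$.

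For (iii): $\tau _{*,k}$ is projective onto ${\cal C}_{\x}^{(k)}$ by Lemma~\ref{lisc1},(iii), and ${\cal C}_{\x}^{(k)}$ is irreducible, so again it is enough to find an open subset over which $\tau _{*,k}$ is an isomorphism. Here $W_{1}^{(k)} = \eta ^{-1}(V_{1}^{(k)})$ and, as noted in the proof of Lemma~\ref{lisc2},(iii), $W_{1}^{(k)}$ is the image of $\sqx G{V_{0,1}^{(k)}}$ under $\gamma _{\x}$, with $\theta _{\x}$ an explicit isomorphism $\sqx G{{\goth b}_{\r}}\times {\mathrm M}_{k-1,\rg}(\k)\xrightarrow{\sim} W_{1}^{(k)}$. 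On the source, the morphism $\mu$ of Lemma~\ref{lmv},(iii) (sending $\overline{(g,x)}$ to $\overline{(g,{\goth g}^{x},x)}$) extends to a section $\overline{(g,x)}\mapsto \overline{(g,{\goth g}^{x},(x,\sum_{j}a_{1,j}\varepsilon _{j}(x),\ldots))}$ of $\tau _{1,k}$ over $\sqx G{{\goth b}_{\r}}\times {\mathrm M}_{k-1,\rg}(\k)$ landing in $\tau _{*,k}^{-1}(W_{1}^{(k)})$; composing with $\theta _{\x}^{-1}$ gives a regular inverse to $\tau _{*,k}$ over $W_{1}^{(k)}$. Concretely, the inverse is $x\mapsto$ the unique point of $\sqx G{{\cal E}_{0}^{(k)}}$ over $(\varpi _{1}(x),\varpi _{2}\rond \theta ^{-1}\rond \eta(x))$, using the coordinates from Lemma~\ref{lisc2},(iii). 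Therefore $\tau _{*,k}$ restricts to an isomorphism onto $W_{1}^{(k)}$ and, $W_{1}^{(k)}$ being a dense open subset of the irreducible ${\cal C}_{\x}^{(k)}$, $\tau _{*,k}$ is birational.

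The only mildly delicate point — and the main thing to get right — is checking that the set-theoretic inverses constructed above are genuinely \emph{morphisms}, i.e.\ that $x\mapsto {\goth g}^{x}$ and the passage to the bundle coordinates are regular; this is precisely what the local trivializations built from the $\varepsilon _{j}$'s and the auxiliary functions $\varphi _{j}$ in the proof of Lemma~\ref{lisc2} are for, so the verification is bookkeeping rather than a new idea. Everything else is a dimension/irreducibility count together with Lemma~\ref{lisc1}.
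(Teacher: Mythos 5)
Your proof is correct and follows essentially the same route as the paper: reduce to $j=1$ by the $\sigma _{j}$-equivariance, observe that regularity of $x_{1}$ forces $u={\goth g}^{x_{1}}$ (and, for (iii), uniqueness of the $G/B$-component via Lemma~\ref{lxv},(iii)) so that the fibres over $V_{0,1}^{(k)}$, $V_{1}^{(k)}$, $W_{1}^{(k)}$ are single points, and invoke Lemma~\ref{lisc1} for surjectivity. The only divergence is in the last step: the paper concludes by Zariski's Main Theorem using the smoothness of the target from Lemma~\ref{lisc2}, whereas you exhibit the inverse morphism explicitly through $x\mapsto {\goth g}^{x}$ and the trivializations; both are valid.
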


\begin{proof}
According to the above remarks, we can suppose $j=1$.

(i) For $(\poi x1{,\ldots,}{k}{}{}{})$ in $V_{0,1}^{(k)}$ and for $u$ in $X$ containing
$\poi x1{,\ldots,}{k}{}{}{}$, $u={\goth g}^{x_{1}}$ since $x_{1}$ is regular. Hence   
the restriction of $\tau _{0,k}$ to $\tau _{0,k}^{-1}(V_{0,1}^{(k)})$ is injective.
According to Lemma~\ref{lisc1},(i), $V_{0,1}^{(k)}$ is the image of 
$\tau _{0,k}^{-1}(V_{0,1}^{(k)})$ by $\tau _{0,k}$. So, by Lemma~\ref{lisc2},(i) and 
Zariski's Main Theorem ~\cite[\S 9]{Mu}, the restriction of $\tau _{0,k}$ to 
$\tau _{0,k}^{-1}(V_{0,1}^{(k)})$ is an isomorphism onto $V_{0,1}^{(k)}$.

(ii) For $(\poi x1{,\ldots,}{k}{}{}{})$ in $V_{1}^{(k)}$ and for $u$ in $G.X$ containing
$\poi x1{,\ldots,}{k}{}{}{}$, $u={\goth g}^{x_{1}}$ since $x_{1}$ is regular. Hence   
the restriction of $\tau _{k}$ to $\tau _{k}^{-1}(V_{1}^{(k)})$ is injective.
According to Lemma~\ref{lisc1},(ii), $V_{1}^{(k)}$ is the image of 
$\tau _{k}^{-1}(V_{1}^{(k)})$ by $\tau _{k}$. So, by Lemma~\ref{lisc2},(ii) and 
Zariski's Main Theorem ~\cite[\S 9]{Mu}, the restriction of $\tau _{k}$ to 
$\tau _{k}^{-1}(V_{1}^{(k)})$ is an isomorphism onto $V_{1}^{(k)}$.

(iii) The variety $\sqx G{{\cal E}_{0}^{(k)}}$ identifies to a closed subvariety of 
$G/B\times {\cal E}^{(k)}$. For $(\poi x1{,\ldots,}{k}{}{}{},\poi y1{,\ldots,}{k}{}{}{})$
in $W_{1}^{(k)}$ and $(u,v)$ in $G/B\times G.X$ such that 
$(u,v,\poi x1{,\ldots,}{k}{}{}{})$ is in $\sqx G{{\cal E}_{0}^{(k)}}$, $(x_{1},y_{1})$ is
in ${\cal X}_{\r}$, $\chi _{\n}(u,x_{1})=(x_{1},y_{1})$ and $v={\goth g}^{x_{1}}$ since 
$x_{1}$ is regular. Moreover, $u$ is unique by 
Lemma~\ref{lxv},(iii). Hence the restriction of $\tau _{*,k}$ to 
$\tau _{*,k}^{-1}(W_{1}^{(k)})$ is injective. According to Lemma~\ref{lisc1},(iii), 
$W_{1}^{(k)}$ is the image of $\tau _{*,k}^{-1}(W_{1}^{(k)})$ by $\tau _{*,k}$. So, by 
Lemma~\ref{lisc2},(iii) and Zariski's Main Theorem ~\cite[\S 9]{Mu}, the restriction of 
$\tau _{*,k}$ to $\tau _{*,k}^{-1}(W_{1}^{(k)})$ is an isomorphism onto $W_{1}^{(k)}$. 
\end{proof}

Set:
$$ V_{0}^{(k)} := V_{0,1}^{(k)} \cup V_{0,2}^{(k)}, \qquad 
V^{(k)} := V_{1}^{(k)} \cup V_{2}^{(k)}, \qquad
W^{(k)} := W_{1}^{(k)} \cup W_{2}^{(k)} .$$

\begin{lemma}\label{l2isc2}
{\rm (i)} The set $\tau _{0,k}^{-1}(V_{0}^{(k)})$ is a big open subset of 
${\cal E}_{0}^{(k)}$.

{\rm (ii)} The set $\tau _{k}^{-1}(V^{(k)})$ is a big open subset of 
${\cal E}^{(k)}$.

{\rm (i)} The set $\tau _{*,k}^{-1}(W^{(k)})$ is a big open subset of 
$\sqx G{{\cal E}_{0}^{(k)}}$.
\end{lemma}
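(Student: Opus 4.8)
The plan is to reduce everything to a dimension count on the complements. For part (i), note that $\tau_{0,k}^{-1}(V_0^{(k)})$ is open in ${\cal E}_0^{(k)}$ because $V_0^{(k)} = V_{0,1}^{(k)}\cup V_{0,2}^{(k)}$ is open in ${\goth X}_{0,k}$ (Lemma~\ref{lisc2},(i)) and $\tau_{0,k}$ is a morphism. Its complement is $\tau_{0,k}^{-1}({\goth X}_{0,k}\setminus V_0^{(k)})$, and since ${\cal E}_0^{(k)}$ is irreducible (it is a vector bundle over the irreducible variety $X$) I only need to bound the dimension of this closed subset by $\dim{\cal E}_0^{(k)} - 2 = k\rg + \dim{\goth u} - 2$. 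The key geometric input is that ${\goth X}_{0,k}\setminus V_0^{(k)}$ consists of tuples $(x_1,\dots,x_k)$ in ${\goth X}_{0,k}$ with both $x_1$ and $x_2$ in ${\goth b}\setminus{\goth b}_{\r}$; by Lemma~\ref{lxv},(i), ${\goth b}\setminus{\goth b}_{\r}$ has codimension at least $2$ in ${\goth b}$, and I want to leverage this through the fibre structure of $\tau_{0,k}$.

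First I would stratify by the first component. Over the big open set $V_{0,1}^{(k)}$ there is nothing to prove, so the issue is the part where $x_1\in{\goth b}\setminus{\goth b}_{\r}$; on ${\goth X}_{0,k}$, such tuples still have all components in a common element $u$ of $X$, but now $u$ need not be determined by $x_1$. The cleanest route is to pull back to ${\cal E}_0^{(k)}$ itself: a point of ${\cal E}_0^{(k)}$ is a pair $(u,x_1,\dots,x_k)$ with $u\in X$ and all $x_i\in u$, so the fibre of $\pi_{0,k}$ over $u$ is $u^k\cong\k^{k\rg}$. The locus I must bound is $\{(u,x_1,\dots,x_k) : x_1\notin{\goth b}_{\r},\ x_2\notin{\goth b}_{\r}\}$. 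Using the cover of ${\goth b}_{\r}$ and the $\varepsilon_j$'s as in the proof of Lemma~\ref{lisc2},(i), on $V_{0,1}^{(k)}$ one has the chart $\theta_0$; the analogous analysis shows that over a neighbourhood in $X$ of a generic point, requiring $x_1$ to lie in the codimension-$\geq 2$ set ${\goth b}\setminus{\goth b}_{\r}$ cuts down the fibre direction of $x_1$ by at least $2$, while the other $x_i$ range freely — so the bad locus has codimension at least $2$ in ${\cal E}_0^{(k)}$, provided the base contribution does not compensate. One must check that the image $X'$ of ${\goth b}_{\r}$ is big in $X$ (this is stated in Section~\ref{mv}, citing \cite[Theorem 1.2]{CZ}), so that the part of ${\cal E}_0^{(k)}$ over $X\setminus X'$ is already of codimension $\geq 2$, and over $X'$ one reduces to the affine chart computation.

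Parts (ii) and (iii) follow the same template: $\tau_k^{-1}(V^{(k)})$ and $\tau_{*,k}^{-1}(W^{(k)})$ are open by Lemma~\ref{lisc2},(ii),(iii) and the morphism property, the total spaces ${\cal E}^{(k)}$ and $\sqx G{{\cal E}_0^{(k)}}$ are irreducible, and one bounds the complement using Lemma~\ref{lxv},(i) (resp. its consequence for $\sqx G{{\goth b}}$ in Lemma~\ref{lxv},(ii)) together with the fibre-bundle structure — for (ii) over $G.X$ with $X'$ big (so $G.X'$ big), for (iii) over $\sqx GX$. The symmetry under $\sigma_j$ established in \S\ref{isc2} lets me phrase the bad locus symmetrically in the first two components, which is exactly what makes ``at least two bad components'' translate into ``codimension at least $2$''. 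The main obstacle I anticipate is bookkeeping the dimension count carefully on ${\cal E}_0^{(k)}$ near points lying over $X\setminus X'$, where the fibre $u$ is no longer of the form ${\goth g}^x$ for regular $x$ and the clean chart $\theta_0$ is unavailable; here I would fall back on the bigness of $X'$ in $X$ to discard that locus wholesale rather than analyze it directly.
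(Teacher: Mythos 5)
Your overall strategy coincides with the paper's: bound the complement of $\tau_{0,k}^{-1}(V_{0}^{(k)})$ fibrewise via the bundle projection $\pi_{0,k}$, discard what lies over $X\setminus X'$ using that $X'$ is big in $X$, and treat (ii) identically over $G.X$; for (iii) the paper reduces to (i) by writing a $G$-invariant component of the complement as $\sqx G{\Sigma _{0}}$ rather than redoing the count over $\sqx GX$, but that is a minor organizational difference. The problem is that the pivotal dimension count you actually wrote down is false. You claim that requiring $x_{1}$ to lie in the codimension-$\geq 2$ set ${\goth b}\setminus {\goth b}_{\r}$ "cuts down the fibre direction of $x_{1}$ by at least $2$, while the other $x_{i}$ range freely". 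The codimension of ${\goth b}\setminus {\goth b}_{\r}$ in ${\goth b}$ (Lemma~\ref{lxv},(i)) does not control the codimension of $u\cap ({\goth b}\setminus {\goth b}_{\r})$ in the $\rg$-dimensional fibre $u$: for $u={\goth h}$ this intersection is the union of the root hyperplanes, of codimension exactly $1$ in $u$. So the condition on $x_{1}$ alone cuts $u^{k}$ down by only $1$; were your count correct, $\tau_{0,k}^{-1}(V_{0,1}^{(k)})$ would already be big, which is false and would make the definition of $V_{0}^{(k)}$ as a union of two pieces pointless. Relatedly, the chart $\theta _{0}$ you invoke is defined only where the first component is regular, so there is no "analogous analysis" on the locus where $x_{1}$ is not regular.

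The count that actually works --- which the paper carries out, and which you only gesture at in your closing sentence on $\sigma _{2}$-symmetry --- is that for $u$ in $X'$ the set $u\setminus {\goth b}_{\r}$ is a \emph{proper} closed subset of the irreducible variety $u$, hence of dimension at most $\rg -1$, so the bad fibre $\{(\poi x1{,\ldots,}{k}{}{}{})\in u^{k}\ \vert \ x_{1},x_{2}\in u\setminus {\goth b}_{\r}\}$ has dimension at most $k\rg -2$: each of the two conditions contributes codimension $1$, and it is their conjunction that yields codimension $2$. (The paper phrases this as a contradiction: a codimension-$1$ component $\Sigma $ of the complement would have to dominate $X$ with fibres $\Sigma _{u}$ of codimension $1$ in $u^{k}$, forcing the second projection $\Sigma _{u}\rightarrow u$ to be surjective even though its image lies in $u\setminus {\goth b}_{\r}$.) Your proposal contains the right ingredients, but the step you designated as the engine of the proof would fail and must be replaced by this two-conditions-each-give-one argument.
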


\begin{proof}
(i) Let $\Sigma $ be an irreducible component of 
${\cal E}_{0}^{(k)}\setminus \tau _{0,k}^{-1}(V_{0}^{(k)})$. Since $V_{0}^{(k)}$ is a
$B$-invariant open cone, $\Sigma $ is a $B$-invariant closed subset of 
${\cal E}_{0}^{(k)}$ such that $\Sigma \cap \pi _{0,k}^{-1}(u)$ is a closed cone of 
$\pi _{0,k}^{-1}(u)$ for all $u$ in $\pi _{0,k}(\Sigma )$. As a result 
$\pi _{0,k}(\Sigma )$ is a closed subset of $X$. Indeed, 
$\pi _{0,k}(\Sigma )\times \{0\}=\Sigma \cap X\times \{0\}$. 
For $u$ in $\pi _{0,k}(\Sigma )$, denote by $\Sigma _{u}$ the closed subvariety of 
$u^{k}$ such that $\pi _{0,k}^{-1}(u)\cap \Sigma = \{u\}\times \Sigma _{u}$.

Suppose that $\Sigma $ has codimension $1$ in ${\cal E}_{0}^{(k)}$. A contradiction is 
expected. Then $\pi _{0,k}(\Sigma )$ has codimension at most $1$ in $X$. Since $X'$ is a 
big open subset of $X$, for all $u$ in a dense open subset of $\pi _{0,k}(\Sigma )$, 
$u\cap {\goth b}_{\r}$ is not empty. If $\pi _{0,k}(\Sigma )$ has 
codimension $1$ in $X$, then $\Sigma _{u} = u^{k}$ for all $u$ in $\pi _{0,k}(\Sigma )$. 
Hence $\pi _{0,k}(\Sigma )=X$ and for all $u$ in a dense open subset 
of $X'$, $\Sigma _{u}$ has dimension $k\rg -1$. For such $u$, the image of $\Sigma _{u}$
by the first projection onto $u$ is not dense in $u$ since $u\cap {\goth b}_{\r}$ is 
not empty. Hence the image of $\Sigma _{u}$ by the second projection is equal to $u$ 
since $\Sigma _{u}$ has codimension $1$ in $u^{k}$. It is impossible since this image 
is contained in $u\setminus {\goth b}_{\r}$.

(ii) Let $\Sigma $ be an irreducible component of 
${\cal E}^{(k)}\setminus \tau _{k}^{-1}(V^{(k)})$. Since $V^{(k)}$ is a
$G$-invariant open cone, $\Sigma $ is a $G$-invariant closed subset of ${\cal E}^{(k)}$ 
such that $\Sigma \cap \pi _{k}^{-1}(u)$ is a closed cone of $\pi _{k}^{-1}(u)$ for all 
$u$ in $\pi _{k}(\Sigma )$. As a result $\pi _{k}(\Sigma )$ is a closed subset of $G.X$. 
Indeed, $\pi _{k}(\Sigma )\times \{0\}=\Sigma \cap G.X\times \{0\}$. For $u$ in 
$\pi _{k}(\Sigma )$, denote by $\Sigma _{u}$ the closed subvariety of $u^{k}$ such that 
$\pi _{k}^{-1}(u)\cap \Sigma = \{u\}\times \Sigma _{u}$.

Suppose that $\Sigma $ has codimension $1$ in ${\cal E}^{(k)}$. A contradiction is 
expected. Then $\pi _{k}(\Sigma )$ has codimension at most $1$ in $X$. Since $G.X'$ is a 
big open subset of $G.X$, for all $u$ in a dense open subset of $\pi _{k}(\Sigma )$, 
$u\cap {\goth g}_{\r}$ is not empty. If $\pi _{k}(\Sigma )$ has codimension $1$ in $G.X$,
then $\Sigma _{u} = u^{k}$ for all $u$ in $\pi _{k}(\Sigma )$. Hence 
$\pi _{k}(\Sigma )=G.X$ and for all $u$ in a dense open subset of $G.X'$, $\Sigma _{u}$ 
has dimension $k\rg -1$. For such $u$, the image of $\Sigma _{u}$
by the first projection onto $u$ is not dense in $u$ since $u\cap {\goth g}_{\r}$ is 
not empty. Hence the image of $\Sigma _{u}$ by the second projection is equal to $u$ 
since $\Sigma _{u}$ has codimension $1$ in $u^{k}$. It is impossible since this image 
is contained in $u\setminus {\goth g}_{\r}$.

(iii) Let $\Sigma $ be an irreducible component of 
$\sqx G{{\cal E}_{0}^{(k)}}\setminus \tau _{*,k}^{-1}(W^{(k)})$. Since $W^{(k)}$ is a
$G$-invariant open cone, $\Sigma $ is a $G$-invariant closed subset of 
$\sqx G{{\cal E}_{0}^{(k)}}$. So, for some $B$-invariant closed subset $\Sigma _{0}$ of
${\cal E}_{0}^{(k)}$, $\Sigma = \sqx G{\Sigma _{0}}$. Moreover, $\Sigma _{0}$ is contained
in ${\cal E}_{0,k}\setminus \tau _{0,k}^{-1}(V_{0}^{(k)})$. According to (i), 
$\Sigma _{0}$ has codimension at least $2$ in ${\cal E}_{0}^{(k)}$. Hence $\Sigma $ has 
codimension at least $2$ in $\sqx G{{\cal E}_{0}^{(k)}}$.
\end{proof}

\subsection{} \label{isc3}
For $2\leq k'\leq k$, the projection
$$ \xymatrix{ {\goth g}^{k} \ar[rr] && {\goth g}^{k'}}, \qquad
(\poi x1{,\ldots,}{k}{}{}{}) \longmapsto (\poi x1{,\ldots,}{k'}{}{}{})$$
induces the projections 
$$ \xymatrix{ {\goth X}_{0,k} \ar[rr] && {\goth X}_{0,k'}}, \qquad
\xymatrix{ V_{0,j}^{(k)} \ar[rr] && V_{0,j}^{(k')}} .$$
Set:
$$ V_{0,1,2}^{(k)} := V_{0,1}^{(k)} \cap V_{0,2}^{(k)} .$$

\begin{lemma}\label{lisc3}
Let $\omega $ be a regular differential form of top degree on $V_{0,1}^{(k)}$, without 
zero. Denote by $\omega '$ its restriction to $V_{0,1,2}^{(k)}$.

{\rm (i)} For $\varphi $ in $\k[V_{0,1}^{(k)}]$, if $\varphi $ has no zero then $\varphi $
is in $\k^{*}$.

{\rm (ii)} For some invertible element $\psi $ of $\k[V_{0,1,2}^{(2)}]$, 
$\omega ' = \psi {\sigma _{2}}_{*}(\omega ')$.

{\rm (iii)} The function $\psi (\psi \rond \sigma _{2})$ on $V_{0,1,2}^{(k)}$ is equal 
to $1$. 
\end{lemma}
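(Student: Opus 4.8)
The plan is to treat the three parts in order, each building on the previous. For part~(i), the key point is that $V_{0,1}^{(k)}$ is a multicone: by definition it is the intersection of ${\goth X}_{0,k}$ with ${\goth b}_{\r}\times {\goth b}^{k-1}$, and ${\goth X}_{0,k}$ is a multicone of ${\goth b}^{k}$ since it is the closure of $B.{\goth h}^{k}$, which is stable under the $(\k^{*})^{k}$-action. Via the isomorphism $\theta _{0}$ of Lemma~\ref{lisc2},(i), $V_{0,1}^{(k)}$ is identified with ${\goth b}_{\r}\times {\mathrm M}_{k-1,\rg}(\k)$, and ${\goth b}_{\r}$ is a big open subset of the vector space ${\goth b}$ by Lemma~\ref{lxv},(i). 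Hence $\k[V_{0,1}^{(k)}] = \k[{\goth b}\times {\mathrm M}_{k-1,\rg}(\k)]$, a polynomial algebra; a polynomial function with no zero on the open dense set ${\goth b}_{\r}\times {\mathrm M}_{k-1,\rg}(\k)$ must be a nonzero constant, since it is nowhere vanishing on all of ${\goth b}\times {\mathrm M}_{k-1,\rg}(\k)$ (its zero locus, a closed subset, would otherwise meet the big open subset). This gives (i).

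For part~(ii), recall from \S\ref{isc2} that $\sigma _{2}$ is an automorphism of ${\goth X}_{0,k}$ carrying $V_{0,1}^{(k)}$ to $V_{0,2}^{(k)}$, hence carrying $V_{0,1,2}^{(k)}$ to itself. Both $\omega '$ and ${\sigma _{2}}_{*}(\omega ')$ are regular differential forms of top degree on the smooth irreducible variety $V_{0,1,2}^{(k)}$ without zero: indeed $\omega '$ is the restriction of the nowhere-vanishing $\omega$ to the open subset $V_{0,1,2}^{(k)}$, and ${\sigma _{2}}_{*}$ preserves this property since $\sigma_2$ is an automorphism. Therefore their ratio $\psi$ is a regular function on $V_{0,1,2}^{(k)}$ with no zero, and likewise $\psi^{-1}$ (the ratio the other way) is regular with no zero, so $\psi$ is an invertible element of $\k[V_{0,1,2}^{(k)}]$. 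I should double-check the index in the statement: the displayed claim reads $\psi\in\k[V_{0,1,2}^{(2)}]$, but the natural object here is $\k[V_{0,1,2}^{(k)}]$; I will write it with the superscript $(k)$, matching the definition just given.

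For part~(iii), I apply $\sigma_2$ to the identity $\omega ' = \psi\,{\sigma _{2}}_{*}(\omega ')$. Since $\sigma_2$ is an involution ($\sigma_2^2 = \mathrm{id}$ as it merely swaps the first two components), applying ${\sigma_2}_{*}$ to both sides gives ${\sigma_2}_{*}(\omega ') = (\psi\rond\sigma_2)\,{\sigma_2}_{*}({\sigma_2}_{*}(\omega ')) = (\psi\rond\sigma_2)\,\omega '$. Substituting back, $\omega ' = \psi(\psi\rond\sigma_2)\,\omega '$, and since $\omega '$ has no zero it can be cancelled, yielding $\psi(\psi\rond\sigma_2) = 1$ on $V_{0,1,2}^{(k)}$. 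The only mildly delicate point is the involutivity of the automorphism $\sigma_2$ on $V_{0,1,2}^{(k)}$ and the compatibility of pushforward of forms with composition, both of which are immediate from $\sigma_2$ being the restriction of the linear coordinate swap on ${\goth b}^k$; there is no real obstacle here, the argument is a short formal manipulation resting on parts (i)--(ii) and the absence of zeros.
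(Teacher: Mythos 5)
Parts (i) and (iii) of your proposal are correct and essentially coincide with the paper's own argument: (i) rests on the isomorphism $\theta _{0}$ of Lemma~\ref{lisc2},(i) together with $\k[{\goth b}_{\r}]=\k[{\goth b}]$ (Lemma~\ref{lxv},(i)), and (iii) is the same formal manipulation using that $\sigma _{2}$ restricts to an involution of $V_{0,1,2}^{(k)}$.

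The genuine gap is in (ii), at exactly the point you flagged and then dismissed. The superscript $(2)$ in the statement is not a misprint: the claim is that the invertible function $\psi $ lies in the subalgebra $\k[V_{0,1,2}^{(2)}]\subset \k[V_{0,1,2}^{(k)}]$ obtained by pulling back along the projection $V_{0,1,2}^{(k)}\rightarrow V_{0,1,2}^{(2)}$ introduced in Subsection~\ref{isc3}, i.e.\ that $\psi $ depends only on the first two components. Your argument only yields $\psi $ as an invertible element of $\k[V_{0,1,2}^{(k)}]$, which is strictly weaker, and the stronger statement is precisely what is used afterwards: the reduction ``we can suppose $k=2$'' at the start of the proof of Proposition~\ref{pisc3} rests on $\psi \in \k[V_{0,1,2}^{(2)}]$. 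The missing step is short but real: $\theta _{0}^{-1}(V_{0,1,2}^{(k)})$ is the set $O_{2}$ of points $(x,a_{i,j})$ with $a_{1,1}\varepsilon _{1}(x)+\cdots +a_{1,\rg }\varepsilon _{\rg }(x)\in {\goth b}_{\r}$, a condition involving only $x$ and the first row of the matrix; hence $O_{2}$ is the product of $\theta _{0}^{-1}(V_{0,1,2}^{(2)})$ (the case $k=2$) with the affine space ${\mathrm M}_{k-2,\rg }(\k)$, so that $\k[V_{0,1,2}^{(k)}]$ is a polynomial algebra over $\k[V_{0,1,2}^{(2)}]$, and an invertible element of a polynomial algebra over a domain already lies in the base ring. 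You need to add this argument (or an equivalent one) to complete (ii) as stated.
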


\begin{proof}
The existence of $\omega $ results from Lemma~\ref{lisc2},(i).

(i) According to Lemma~\ref{lisc2},(i), there is an isomorphism $\theta _{0}$ from 
${\goth b}_{\r}\times {\mathrm {M}_{k-1,\rg}}(\k)$ onto $V_{0,1}^{(k)}$. Since 
$\varphi $ is invertible, $\varphi \rond \theta _{0}$ is an invertible element of 
$\k[{\goth b}_{\r}]$. According to Lemma~\ref{lxv},(i), 
$\k[{\goth b}_{\r}]=\k[{\goth b}]$. Hence $\varphi $ is in $\k^{*}$.

(ii) The open subset $V_{0,1,2}^{(k)}$ is invariant under $\sigma _{2}$ so that 
$\omega '$ and ${\sigma _{2}}_{*}(\omega ')$ are regular differential forms of top
degree on $V_{0,1,2}^{(k)}$, without zero. Then for some invertible element $\psi $ 
of $\k[V_{0,1,2}^{(k)}]$, $\omega ' = \psi {\sigma _{2}}_{*}(\omega ')$. Let 
$O_{2}$ be the set of elements $(x,a_{i,j},1\leq i \leq k-1,1\leq j\leq \rg)$ of 
${\goth b}_{\r}\times {\mathrm {M}_{k-1,\rg}}(\k)$ such that 
$$ a_{1,1}\varepsilon _{1}(x) + \cdots + a_{1,\rg} \varepsilon _{\rg}(x) \in 
{\goth b}_{\r}.$$
Then $O_{2}$ is the inverse image of $V_{0,1,2}^{(k)}$ by $\theta _{0}$. As a result,
$\k[V_{0,1,2}^{(k)}]$ is a polynomial algebra over $\k[V_{0,1,2}^{(2)}]$ since for 
$k=2$, $O_{2}$ is the inverse image by $\theta _{0}$ of $V_{0,1,2}^{(2)}$. Hence 
$\psi $ is in $\k[V_{0,1,2}^{(2)}]$ since $\psi $ is invertible.

(iii) Since the restriction of $\sigma _{2}$ to $V_{0,1,2}^{(k)}$ is an involution,
$$ {\sigma _{2}}_{*}(\omega ') = (\psi \rond \sigma _{2}) \omega ' = 
(\psi \rond \sigma _{2})\psi {\sigma _{2}}_{*}(\omega '),$$
whence $(\psi \rond \sigma _{2})\psi = 1$.
\end{proof}

\begin{coro}\label{cisc3}
The function $\psi $ is invariant under the action of $B$ in $V_{0,1,2}^{(k)}$ and for
some sequence $m_{\alpha },\alpha \in {\cal R}_{+}$ in ${\Bbb Z}$, 
$$ \psi (\poi x1{,\ldots,}{k}{}{}{}) = \pm \prod_{\alpha \in {\cal R}_{+}}
(\alpha (x_{1})\alpha (x_{2})^{-1})^{m_{\alpha }},$$
for all $(\poi x1{,\ldots,}{k}{}{}{})$ in ${\goth h}_{\r}^{2}\times {\goth h}^{k-2}$.
\end{coro}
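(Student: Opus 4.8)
The plan is to exploit the factorization of $\psi$ through the two-variable situation established in Lemma~\ref{lisc3},(ii), together with the explicit description of the isomorphism $\theta_0$ and the structure of regular functions on $\sqx G{{\goth b}_\r}$. First I would pull $\psi$ back along $\theta_0$ to an invertible regular function on the open subset $O_2$ of ${\goth b}_\r\times {\mathrm M}_{k-1,\rg}(\k)$; since Lemma~\ref{lisc3},(ii) tells us $\psi$ already lives on $V_{0,1,2}^{(2)}$, it suffices to work with $k=2$, i.e. with an invertible function on the open subset $O_2\subset {\goth b}_\r\times \k^{\rg}$ consisting of pairs $(x,a)$ with $\sum_j a_j\varepsilon_j(x)\in{\goth b}_\r$. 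The complement of $O_2$ is the zero locus of the polynomial $x\mapsto \prod_{\alpha\in{\cal R}_+}\alpha(\overline{\sum_j a_j\varepsilon_j(x)})$ (up to a nonzero scalar), read off from the discriminant: an element of ${\goth b}$ is regular iff the product of the values of the positive roots on its semisimple part is nonzero. Hence $O_2$ is a principal open subset of ${\goth b}_\r\times\k^\rg$ whose defining equation factors as a product of $\alpha(x)$'s (defining ${\goth b}_\r$ itself, by Lemma~\ref{lxv},(i), whose complement in ${\goth b}$ is $\bigcup_\alpha\ker\alpha$ union lower-dimensional strata — more precisely ${\goth b}\setminus{\goth b}_\r$ has codimension $1$ only along the $\ker\alpha+{\goth u}$) times a further factor. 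An invertible regular function on such a principal open subset of an affine space is, up to a nonzero scalar, a monomial in the irreducible factors of the defining equation; grouping the factors shows $\psi$ is, up to sign, a product of powers of $\alpha(x_1)$ and of the analogous linear forms coming from the second component. This already forces $\psi$ to be $B$-invariant, since each $\alpha(x_i)$ transforms by the character $\alpha$ of $B$ but the identity $\omega'=\psi\,{\sigma_2}_*(\omega')$ on the $B$-invariant form $\omega'$ (which is $B$-invariant because it comes from a volume form on ${\goth b}_\r$ via the $B$-equivariant $\theta_0$, up to the fixed differential-form character of $B$ which cancels between the two sides) forces the total $B$-weight of $\psi$ to be trivial.

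Next, to pin down the precise shape on ${\goth h}_\r^2\times{\goth h}^{k-2}$, I would restrict $\psi$ to the locus where $x_1,x_2\in{\goth h}_\r$ and the remaining components lie in ${\goth h}$. On this locus $\theta_0^{-1}$ becomes the identification $\varepsilon_j(x)$ of a basis of ${\goth h}={\goth g}^{x_1}$ for $x_1\in{\goth h}_\r$, so the $a_{i,j}$ coordinates are linear, and the defining equation of $O_2$ restricted here becomes $\prod_{\alpha\in{\cal R}_+}\alpha(x_2)$ times $\prod_{\alpha\in{\cal R}_+}\alpha(x_1)$ (the nilpotent parts vanish on ${\goth h}$). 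Therefore the monomial expression for $\psi$ restricts to $\pm\prod_{\alpha\in{\cal R}_+}\alpha(x_1)^{p_\alpha}\alpha(x_2)^{q_\alpha}$ for integers $p_\alpha,q_\alpha$. The symmetry relation $\psi\,(\psi\rond\sigma_2)=1$ of Lemma~\ref{lisc3},(iii) interchanges $x_1\leftrightarrow x_2$ and forces $p_\alpha+q_\alpha=0$, i.e. $q_\alpha=-p_\alpha=:-m_\alpha$, giving exactly the claimed formula $\psi=\pm\prod_{\alpha\in{\cal R}_+}(\alpha(x_1)\alpha(x_2)^{-1})^{m_\alpha}$.

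The main obstacle I anticipate is making rigorous the claim that an invertible regular function on the relevant open subset of ${\goth b}_\r\times\k^\rg$ (or on $\sqx G{{\goth b}_\r}\times\k^\rg$, before reducing to $k=2$) must be a monomial in the root factors. This requires knowing the divisor class group / unit group of that open subset: one needs that the complement is a union of prime divisors each defined by one of the $\alpha$'s, and that the ambient space has trivial Picard group and trivial units modulo scalars. Passing through $\theta_0$ reduces everything to ${\goth b}_\r$ (an affine space minus a hyperplane arrangement together with the fact, from Lemma~\ref{lxv},(i), that we only lose codimension-one pieces along the reflection hyperplanes pulled back from ${\goth h}$), where the units are well understood: $\k[{\goth b}_\r]^\times/\k^\times$ is the free abelian group on the $\alpha\in{\cal R}_+$. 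The one subtlety to check carefully is that the extra factor cutting out $O_2$ inside ${\goth b}_\r\times\k^\rg$ — namely $\prod_\alpha\alpha(\overline{\sum_j a_j\varepsilon_j(x)})$ — does not contribute, because its restriction to ${\goth h}_\r^2\times{\goth h}^{k-2}$ is $\prod_\alpha\alpha(x_2)$, already accounted for, and more invariantly because $\psi$ was shown in Lemma~\ref{lisc3},(ii) to descend to $V_{0,1,2}^{(2)}$, on which the only relevant divisors are the pullbacks of $\ker\alpha$ under the two projections to ${\goth b}_\r$. Once the unit group is identified, $B$-invariance and the involution identity are purely formal.
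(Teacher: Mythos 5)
Your endgame is the right one and coincides with the paper's: restrict $\psi$ to ${\goth h}_{\r}^{2}\times {\goth h}^{k-2}$, which is the complement in the affine space ${\goth h}^{k}$ of the hyperplane arrangement $\prod_{\alpha \in {\cal R}_{+}}\alpha (x_{1})\alpha (x_{2})=0$, conclude that the restricted unit is $a\prod_{\alpha }\alpha (x_{1})^{m_{\alpha }}\alpha (x_{2})^{n_{\alpha }}$ with $a\in \k^{*}$, and then use $\psi (\psi \rond \sigma _{2})=1$ to force $n_{\alpha }=-m_{\alpha }$ and $a=\pm 1$. That is exactly the published argument, and it needs nothing more than the unit group of that torus locus. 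The problem is the route you take to get there, which rests on several false premises. First, ${\goth b}_{\r}$ is \emph{not} an affine space minus a hyperplane arrangement: by Lemma~\ref{lxv},(i) its complement in ${\goth b}$ has codimension at least $2$ (there are no codimension-one strata along $\ker \alpha +{\goth u}$ --- a generic element there is regular), so $\k[{\goth b}_{\r}]=\k[{\goth b}]$ and $\k[{\goth b}_{\r}]^{\times }/\k^{\times }$ is \emph{trivial}, not free abelian on ${\cal R}_{+}$. (This triviality is precisely Lemma~\ref{lisc3},(i), and it is what the paper uses.) Second, an element of ${\goth b}$ is regular if and only if its centralizer has dimension $\rg$, not if and only if $\prod_{\alpha }\alpha $ is nonzero on its semisimple part --- the latter characterizes \emph{regular semisimple}; a principal nilpotent is regular with all roots vanishing on its semisimple part. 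Consequently your description of $O_{2}$ as a principal open subset cut out by $\prod_{\alpha }\alpha (\overline{\textstyle\sum_{j}a_{j}\varepsilon _{j}(x)})$ is wrong, and the "global monomial" form of $\psi $ on all of $V_{0,1,2}^{(2)}$, from which you propose to deduce both the $B$-invariance and the formula, is unsupported.

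The repair is to drop the global claim entirely. For the displayed formula, apply the unit-group argument only after restricting to ${\goth h}_{\r}^{2}\times {\goth h}^{k-2}$, where it is legitimate; this is what the paper does. For the $B$-invariance, no knowledge of the shape of $\psi $ is needed: since $\omega $ has no zero, $g.\omega =p_{g}\omega $ with $p_{g}$ an invertible element of $\k[V_{0,1}^{(k)}]$, hence $p_{g}\in \k^{*}$ by Lemma~\ref{lisc3},(i); because $\sigma _{2}$ is $B$-equivariant the same constant $p_{g}$ satisfies $g.{\sigma _{2}}_{*}(\omega )=p_{g}{\sigma _{2}}_{*}(\omega )$, and applying $g$ to $\omega '=\psi \,{\sigma _{2}}_{*}(\omega ')$ cancels $p_{g}$ and gives $g.\psi =\psi $. (Your parenthetical about the "differential-form character cancelling" is this argument in disguise, but note that the constancy of $p_{g}$ --- i.e.\ Lemma~\ref{lisc3},(i), which again depends on ${\goth b}_{\r}$ being a \emph{big} open subset --- is what makes the cancellation work; and the functions $x\mapsto \alpha (\overline{x})$ on ${\goth b}$ are in fact $B$-invariant, they do not transform by the character $\alpha $.)
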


\begin{proof}
First of all, since $V_{0,1}^{(k)}$ and $V_{0,2}^{(k)}$ are invariant under the action of
$B$ in ${\goth X}_{0,k}$, so is $V_{0,1,2}^{(k)}$. Let $g$ be in $B$. Since $\omega $ has 
no zero, $g.\omega =p_{g}\omega $ for some invertible element $p_{g}$ of 
$\k[V_{0,1}^{(k)}]$. By Lemma~\ref{lisc3},(i), $p_{g}$ is in $\k^{*}$. Since 
$\sigma _{2}$ is a $B$-equivariant isomorphism from $V_{0,1}^{(k)}$ onto $V_{0,2}^{(k)}$, 
$$ g.{\sigma _{2}}_{*}(\omega )=p_{g}{\sigma _{2}}_{*}(\omega ) \quad  \text{and} \quad
p_{g} \omega ' = g.\omega ' = (g.\psi ) g.{\sigma _{2}}_{*}(\omega ') = 
p_{g} (g.\psi ) {\sigma _{2}}_{*}(\omega '),$$ 
whence $g.\psi =\psi $.

The open subset ${\goth h}_{\r}^{2}$ of ${\goth h}^{2}$ is the complement of the 
nullvariety of the function
$$ (x,y) \longmapsto \prod_{\alpha \in {\cal R}_{+}} \alpha (x)\alpha (y).$$
Then, by Lemma~\ref{lisc3},(ii), for some $a$ in $\k^{*}$ and for some 
sequences $m_{\alpha },\alpha \in {\cal R}_{+}$ and 
$n_{\alpha },\alpha \in {\cal R}_{+}$ in ${\Bbb Z}$,
$$ \psi (\poi x1{,\ldots,}{k}{}{}{}) = a \prod_{\alpha \in {\cal R}_{+}}
\alpha (x_{1})^{m_{\alpha }}\alpha (x_{2})^{n_{\alpha }},$$
for all $(\poi x1{,\ldots,}{k}{}{}{})$ in ${\goth h}_{\r}^{2}\times {\goth h}^{k-2}$.
Then, by Lemma~\ref{lisc3},(iii),
$$ a^{2} \prod_{\alpha \in {\cal R}_{+}} \alpha (x)^{m_{\alpha }+n_{\alpha }}
\alpha (y)^{m_{\alpha }+n_{\alpha }} = 1,$$ 
for all $(x,y)$ in ${\goth h}_{\r}^{2}$. Hence $a^{2}=1$ and $m_{\alpha }+n_{\alpha }=0$ 
for all $\alpha $ in ${\cal R}_{+}$.
\end{proof}

For $\alpha $ a positive root, denote by ${\goth h}_{\alpha }$ the kernel of $\alpha $
and set:
$$ V_{\alpha } := {\goth h}_{\alpha } \oplus {\goth g}^{\alpha } .$$
Denote by $\thetaup _{\alpha }$ the map
$$ \xymatrix{ \k \ar[rr]^{\thetaup _{\alpha }} && X}, \qquad 
t \longmapsto \exp(t\ad x_{\alpha }).{\goth h} .$$
According to~\cite[Ch. VI, Theorem 1]{Sh}, $\thetaup _{\alpha }$ has a regular extension 
to ${\Bbb P}^{1}(\k)$. Set $Z_{\alpha } := \thetaup _{\alpha }({\Bbb P}^{1}(\k))$.
Denote again by $\alpha $ the element of ${\goth b}^{*}$ extending $\alpha $ and equal to
$0$ on ${\goth u}$.

\begin{lemma}\label{l2isc3}
Let $\alpha $ be in ${\cal R}_{+}$ and let $x_{0}$ and $y_{0}$ be subregular in 
${\goth h}_{\alpha }$. Set:
$$ E := \k x_{0} \oplus \k H_{\alpha } \oplus {\goth g}^{\alpha }, \quad
E_{*} := x_{0} \oplus \k H_{\alpha } \oplus {\goth g}^{\alpha } , \quad
E_{*,1} := x_{0} \oplus \k H_{\alpha } \oplus ({\goth g}^{\alpha }\setminus \{0\}),
\quad 
E_{*,2} = y_{0} \oplus \k H_{\alpha } \oplus ({\goth g}^{\alpha }\setminus \{0\}).$$

{\rm (i)} For $x$ in $E_{*}$, the centralizer of $x$ in ${\goth b}$ is contained in 
${\goth h}_{\alpha }+E$.

{\rm (ii)} For $V$ subspace of dimension $\rg$ of ${\goth h}_{\alpha }+E$, $V$ is in 
$X$ if and only if it is in $Z_{\alpha }$.

{\rm (iii)} The intersection of $E_{*,1}\times E_{*,2}$ and ${\goth X}_{0,2}$ is the 
nullvariety of the function 
$$ (x,y) \longmapsto \dv {x_{-\alpha }}y \alpha (x) - \dv {x_{-\alpha }}x \alpha (y) $$
on $E_{*,1}\times E_{*,2}$.
\end{lemma}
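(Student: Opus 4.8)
The plan is to prove the three parts in order. Two structural facts are used throughout: by Lemma~\ref{lisc1},(i), a pair $(x,y)$ lies in ${\goth X}_{0,2}$ precisely when some $u\in X$ contains both $x$ and $y$; and, since $X$ is the closure of $B.{\goth h}$ in $\ec {Gr}g{}{}{\rg}$, every $u\in X$ is an $\rg$-dimensional abelian subalgebra of ${\goth b}$. I also record that, $x_{0}$ being in ${\goth h}_{\alpha}$, one has ${\goth h}_{\alpha}+E={\goth h}\oplus {\goth g}^{\alpha}$. For (i): subregularity of $x_{0}$ in ${\goth h}_{\alpha}$ means only $\pm \alpha$ vanish at $x_{0}$, so ${\goth g}^{x_{0}}={\goth h}_{\alpha}\oplus {\goth s}$, where ${\goth s}:={\goth g}^{-\alpha}\oplus \k H_{\alpha}\oplus {\goth g}^{\alpha}$ is the ${\goth {sl}}_2$-subalgebra attached to $\alpha$, and $x_{0}$ centralizes ${\goth s}$. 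Writing $x=x_{0}+z$ with $z$ in the Borel $\k H_{\alpha}\oplus {\goth g}^{\alpha}$ of ${\goth s}$, I would use the $\ad x$-stable decomposition ${\goth g}={\goth g}^{x_{0}}\oplus {\goth m}$ with ${\goth m}:=\bigoplus _{\gamma \in {\cal R}\setminus \{\pm \alpha\}}{\goth g}^{\gamma}$ (stable because $\ad x_{0}$ acts by the nonzero scalar $\gamma (x_{0})$ on ${\goth g}^{\gamma}$, and $\ad {\goth s}$ preserves every $\alpha$-string, each of which for $\gamma \neq \pm \alpha$ avoids ${\goth h}$ and ${\goth g}^{\pm \alpha}$). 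This splits ${\goth b}^{x}$ as $\bigl({\goth b}\cap ({\goth g}^{x_{0}})^{x}\bigr)\oplus \bigl({\goth b}\cap {\goth m}^{x}\bigr)$; the first summand lies in ${\goth b}\cap {\goth g}^{x_{0}}={\goth h}\oplus {\goth g}^{\alpha}={\goth h}_{\alpha}+E$, and the second I would kill by examining $\ad x=\gamma (x_{0})\,\mathrm{id}+\ad z$ on each $\alpha$-string inside ${\goth m}$, using $\gamma (x_{0})\neq 0$ and that $z$ lies in a Borel of ${\goth s}$, to conclude ${\goth m}^{x}\cap {\goth b}=0$. This last ${\goth {sl}}_2$-module bookkeeping is the step I expect to cost the most care, and is the main obstacle of the lemma.

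For (ii), I would compute $\thetaup _{\alpha}(t)=\exp(t\,\ad x_{\alpha}).{\goth h}={\goth h}_{\alpha}\oplus \k(H_{\alpha}-2tx_{\alpha})$, so that $Z_{\alpha}=\thetaup _{\alpha}({\Bbb P}^{1}(\k))$ is exactly the family of $\rg$-dimensional subspaces of ${\goth h}_{\alpha}+E={\goth h}\oplus {\goth g}^{\alpha}$ containing ${\goth h}_{\alpha}$; since $\thetaup _{\alpha}(\k)\subseteq B.{\goth h}\subseteq X$ and $X$ is closed, $Z_{\alpha}\subseteq X$, giving one implication. For the converse, take $V\in X$ with $\dim V=\rg$ and $V\subseteq {\goth h}\oplus {\goth g}^{\alpha}$; then $V$ is an abelian subalgebra. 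If $V\not\supseteq {\goth h}_{\alpha}$ then $V\not\subseteq {\goth h}$, so $V\cap {\goth h}$ is a hyperplane of ${\goth h}$ and $V=(V\cap {\goth h})\oplus \k w$ for some $w$ of nonzero ${\goth g}^{\alpha}$-component; the relation $[V\cap {\goth h},w]\subseteq V$ together with $V$ abelian forces $\alpha$ to vanish on $V\cap {\goth h}$, whence $V\cap {\goth h}={\goth h}_{\alpha}$, contradicting $V\not\supseteq {\goth h}_{\alpha}$. Hence $V\supseteq {\goth h}_{\alpha}$ and $V\in Z_{\alpha}$.

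For (iii), I would introduce coordinates $x=x_{0}+b_{1}H_{\alpha}+c_{1}x_{\alpha}$ and $y=y_{0}+b_{2}H_{\alpha}+c_{2}x_{\alpha}$ with $c_{1},c_{2}\in \k^{*}$, $b_{1},b_{2}\in \k$, and note that $\alpha (x)=2b_{1}$, $\alpha (y)=2b_{2}$, $\dv {x_{-\alpha}}{x}=c_{1}$, $\dv {x_{-\alpha}}{y}=c_{2}$ (because $\alpha$ kills ${\goth h}_{\alpha}$ and ${\goth u}$, while $x_{-\alpha}$ is orthogonal to ${\goth h}+{\goth u}$ and $\dv {x_{-\alpha}}{x_{\alpha}}=1$), so the function in the statement equals $2(b_{1}c_{2}-b_{2}c_{1})$. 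If $(x,y)\in {\goth X}_{0,2}$, choose $u\in X$ with $x,y\in u$; then $u$ is abelian and contained in ${\goth b}$, so $u\subseteq {\goth b}^{x}\cap {\goth b}^{y}\subseteq {\goth h}_{\alpha}+E$ by (i), hence $u\in Z_{\alpha}$ by (ii), and imposing $x,y\in u$, with $u={\goth h}_{\alpha}\oplus \k(H_{\alpha}-2tx_{\alpha})$ or $u=V_{\alpha}$, yields $c_{i}=-2tb_{i}$ for $i=1,2$ or $b_{1}=b_{2}=0$, in either case $b_{1}c_{2}=b_{2}c_{1}$. Conversely, if $b_{1}c_{2}=b_{2}c_{1}$, then either both $b_{i}$ vanish and $V_{\alpha}\in X$ contains $x$ and $y$, or neither does and $\thetaup _{\alpha}(-c_{1}/(2b_{1}))=\thetaup _{\alpha}(-c_{2}/(2b_{2}))\in X$ contains $x$ and $y$; thus $(x,y)\in {\goth X}_{0,2}$. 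Assembling these identifies the intersection with the stated nullvariety.
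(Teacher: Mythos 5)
Your parts (ii) and (iii) are fine. In (ii) you argue directly that an $\rg$-dimensional abelian subspace of ${\goth h}\oplus {\goth g}^{\alpha }$ either equals ${\goth h}$ or must contain ${\goth h}_{\alpha }$; this is a clean, self-contained variant of the paper's argument, which instead routes the Cartan-subalgebra case of (ii) through (i). Your coordinate computation in (iii) is the same as the paper's (and, like the paper's, it leans on (i) for the forward inclusion).

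The genuine gap is exactly at the step you flagged in (i): the claim ${\goth m}^{x}\cap {\goth b}=0$ is false for certain $x$ in $E_{*}$, so the ${\goth {sl}}_{2}$-bookkeeping cannot be completed. Writing $x=x_{0}+bH_{\alpha }+cx_{\alpha }$, on the $\alpha $-string through $\gamma \in {\cal R}\setminus \{\pm \alpha \}$ the operator $\ad x$ is triangular (in a weight basis ordered by increasing weight) with diagonal entries $\gamma (x_{0})+b(\gamma (H_{\alpha })+2j)$; whenever one of these vanishes at a $j$ with $\gamma +j\alpha $ a positive root, the kernel of $\ad x$ on the string contains a nonzero vector supported on positive root spaces, hence a nonzero element of ${\goth b}\cap {\goth m}$. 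Concretely, in ${\goth {sl}}_{3}$ with $\alpha =\epsilon _{1}-\epsilon _{2}$, take $x_{0}=\mathrm{diag}(1,1,-2)$ (subregular in ${\goth h}_{\alpha }$) and $x=x_{0}+3H_{\alpha }+cx_{\alpha }=\mathrm{diag}(4,-2,-2)+cE_{12}\in E_{*}$: then $E_{23}-\tfrac{c}{6}E_{13}$ centralizes $x$, lies in ${\goth b}$, and is not in ${\goth h}_{\alpha }+E$. So assertion (i), read for all $x$ in $E_{*}$, fails at these exceptional values of the $H_{\alpha }$-coefficient. For what it is worth, the paper's own proof of (i) passes over the same point: it asserts that an element of $E_{*}$ that is not regular semisimple lies in $x_{0}+{\goth g}^{\alpha }$, i.e.\ has $b=0$, and the example above (semisimple, not regular, $b=3$) contradicts precisely that. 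You have therefore located the real crux of the lemma, but your proposal does not — and, as stated, cannot — close it; any repair must exclude the finitely many bad values of $b$ (and then one has to check that such a generic form of (i) still yields (iii), e.g.\ by using whichever of $x$, $y$ has a good coefficient, together with a closure/irreducibility argument over the codimension-two bad locus).
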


\begin{proof}
(i) If $x$ is regular semisimple, its component on $H_{\alpha }$ is different from $0$
so that ${\goth g}^{x}=\thetaup _{\alpha }(t)$ for some $t$ in $\k$. Suppose that 
$x$ is not regular semisimple. Then $x$ is in $x_{0}+{\goth g}^{\alpha }$. Hence
${\goth g}^{x}\cap {\goth b}$ is contained in ${\goth h}_{\alpha }+E$ since so is 
${\goth g}^{x_{0}}\cap {\goth b}$.

(ii) All element of $Z_{\alpha }$ is contained in ${\goth h}_{\alpha }+E$. Let $V$ be an
element of $X$, contained in ${\goth h}_{\alpha }+E$. According 
to~\cite[Corollary 4.3]{CZ}, $V$ is an algebraic commutative subalgebra of dimension 
$\rg$ of ${\goth b}$. By (i), $V=\thetaup _{\alpha }(t)$ for some $t$ in $\k$ if $V$ is a
Cartan subalgebra. Otherwise, $x_{\alpha }$ is in $V$. Then 
$V=\thetaup _{\alpha }(\infty )$ since $\thetaup _{\alpha }(\infty )$ is the centralizer 
of $x_{\alpha }$ in ${\goth h}_{\alpha }+E$.

(iii) Let $(x,y)$ be in $E_{*,1}\times E_{*,2}\cap {\goth X}_{0,2}$. According to 
Lemma~\ref{lisc1},(i), for some $V$ in $X$, $x$ and $y$ are in $V$. By (i) and (ii), 
$V=\thetaup _{\alpha }(t)$ for some $t$ in ${\Bbb P}^{1}(\k)$. For $t$ in $\k$, 
$$ x = x_{0}+s(H_{\alpha }-2t x_{\alpha }) \quad  \text{and} \quad 
y = y_{0} + s'(H_{\alpha } - 2t x_{\alpha })$$
for some $s$, $s'$ in $\k$, whence the equality of the assertion. For 
$t=\infty $, $$ x = x_{0}+sx_{\alpha } \quad  \text{and} \quad 
y = y_{0} + s'x_{\alpha } $$
for some $s$, $s'$ in $\k$ so that $\alpha (x)=\alpha (y) = 0$. Conversely,
let $(x,y)$ be in $E_{*,1}\times E_{*,2}$ such that
$$ \dv {x_{-\alpha }}y \alpha (x) - \dv {x_{-\alpha }}x \alpha (y) = 0 .$$
If $\alpha (x)=0$ then $\alpha (y) = 0$ and $x$ and $y$ are in 
$V_{\alpha }=\thetaup _{\alpha }(\infty )$. If $\alpha (x)\neq 0$, then 
$\alpha (y) \neq 0$ and
$$x \in \thetaup _{\alpha } (-\frac{\dv {x_{-\alpha }}x}{\alpha (x)}) 
\quad  \text{and} \quad
y \in \thetaup _{\alpha } (-\frac{\dv {x_{-\alpha }}x}{\alpha (x)}) ,$$
whence the assertion.
\end{proof}

\begin{prop}\label{pisc3}
There exists on $V_{0}^{(k)}$ a regular differential form of top degree without zero.
\end{prop}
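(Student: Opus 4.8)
The goal is to glue the two forms produced in Lemma~\ref{lisc2},(i) — one on $V_{0,1}^{(k)}$, one on $V_{0,2}^{(k)}$ — into a single nowhere-vanishing regular top form on the union $V_{0}^{(k)}=V_{0,1}^{(k)}\cup V_{0,2}^{(k)}$. Fix a regular top form $\omega$ without zero on $V_{0,1}^{(k)}$, and let $\omega_{2}:={\sigma_{2}}_{*}(\omega)$ be the corresponding form on $V_{0,2}^{(k)}=\sigma_{2}(V_{0,1}^{(k)})$, also without zero. On the overlap $V_{0,1,2}^{(k)}$ we have $\omega'=\psi\,{\sigma_{2}}_{*}(\omega')$ with $\psi$ an invertible regular function, computed in Corollary~\ref{cisc3}: on ${\goth h}_{\r}^{2}\times{\goth h}^{k-2}$ it is $\pm\prod_{\alpha\in{\cal R}_{+}}(\alpha(x_{1})\alpha(x_{2})^{-1})^{m_{\alpha}}$ for integers $m_{\alpha}$. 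The two forms patch together iff $\psi$ extends to an invertible function across $V_{0,1}^{(k)}$ (equivalently, iff all $m_{\alpha}=0$, up to sign), so the entire content is to show $m_{\alpha}=0$ for every $\alpha$.

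First I would reduce to a single positive root. The integer $m_{\alpha}$ is read off from the order of vanishing/pole of $\psi$ along the divisor of $\alpha(x_{1})$ inside $V_{0,1}^{(k)}$ — i.e. the hypersurface where the first component fails to be regular semisimple in the direction of $\alpha$. Concretely, restrict to the slice studied in Lemma~\ref{l2isc3}: pick $x_{0},y_{0}$ subregular in ${\goth h}_{\alpha}$ and work on $E_{*,1}\times E_{*,2}\times{\goth g}^{k-2}$-type neighbourhoods, where by Lemma~\ref{l2isc3},(iii) the variety ${\goth X}_{0,2}$ is cut out by the explicit equation $\dv{x_{-\alpha}}{y}\alpha(x)-\dv{x_{-\alpha}}{x}\alpha(y)=0$. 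On this slice one can write down $\theta_{0}$ and $\sigma_{2}$ explicitly, compute $\omega$ and ${\sigma_{2}}_{*}(\omega)$ in coordinates, and thereby compute the local order of $\psi$ along $\{\alpha(x_{1})=0\}$. Because the defining equation above is symmetric under swapping the roles of $\alpha(x)$ and $\alpha(y)$ (with the pairings $\dv{x_{-\alpha}}{\cdot}$), the Jacobian factors are balanced and $\psi$ has neither a zero nor a pole along that divisor; that forces $m_{\alpha}=0$.

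Once all $m_{\alpha}=0$, Corollary~\ref{cisc3} gives $\psi=\pm 1$ on ${\goth h}_{\r}^{2}\times{\goth h}^{k-2}$; since $\psi$ is $B$-invariant (Corollary~\ref{cisc3}) and $B.({\goth h}_{\r}^{2}\times{\goth h}^{k-2})$ is dense in $V_{0,1,2}^{(k)}$, we get $\psi=\varepsilon$ identically for a fixed sign $\varepsilon\in\{\pm1\}$. Then $\varepsilon\,\omega$ on $V_{0,1}^{(k)}$ and $\omega_{2}$ on $V_{0,2}^{(k)}$ agree on the overlap (after rescaling $\omega$ by $\varepsilon$, using $\omega'=\varepsilon\,{\sigma_{2}}_{*}(\omega')$), hence glue to a regular top form $\omega_{0}$ on $V_{0}^{(k)}$. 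It is nowhere zero because each piece is.

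The main obstacle is the explicit local computation of $\psi$ near the "bad" divisor $\{\alpha(x_{1})=0\}$, i.e. showing it has no zero and no pole there. This is where Lemma~\ref{l2isc3} is designed to be used: the subregular slice $E_{*,1}\times E_{*,2}$ linearizes ${\goth X}_{0,2}$ enough to carry out a one-variable degeneration analysis (letting $\alpha(x_{1})\to 0$), and the symmetry of the cutting equation between the two components is exactly what makes the pushforward Jacobian a unit. Everything else — the reduction to $k=2$ for the relevant coordinate, the $B$-invariance, and the final gluing — is formal given the results already established in Sections~\ref{xv} and~\ref{isc}.
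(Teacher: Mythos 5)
Your overall skeleton agrees with the paper's: reduce everything to proving $m_{\alpha}=0$ for every positive root, then glue $\omega $ and $\pm {\sigma _{2}}_{*}(\omega )$ along $V_{0,1,2}^{(k)}$; the reduction to $k=2$ and the appeal to the subregular slice of Lemma~\ref{l2isc3} are also the paper's moves. The gap is in the one step that carries all the content. You assert that because the equation $\dv {x_{-\alpha }}y\alpha (x)-\dv {x_{-\alpha }}x\alpha (y)=0$ cutting out ${\goth X}_{0,2}$ in $E_{*,1}\times E_{*,2}$ is symmetric in the two factors, ``the Jacobian factors are balanced and $\psi $ has neither a zero nor a pole along that divisor; that forces $m_{\alpha }=0$.'' This inference fails. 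On that slice the defining relation gives $\alpha (x)/\alpha (y)=\dv {x_{-\alpha }}x/\dv {x_{-\alpha }}y$, and $\dv {x_{-\alpha }}x$, $\dv {x_{-\alpha }}y$ are units there (the ${\goth g}^{\alpha }$-components are nonzero by the very definition of $E_{*,1}$ and $E_{*,2}$). Hence $(\alpha (x)/\alpha (y))^{m_{\alpha }}$ is a unit on the slice for \emph{every} value of $m_{\alpha }$, so the absence of zeros and poles of $\psi $ along $\{\alpha (x)=0\}$ --- which is automatic, $\psi $ being invertible on all of $V_{0,1,2}^{(2)}$ --- places no constraint on $m_{\alpha }$. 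Likewise the swap symmetry is already fully exploited in Lemma~\ref{lisc3},(iii): it only yields $\psi (\psi \rond \sigma _{2})=1$, i.e. the antisymmetry $n_{\alpha }=-m_{\alpha }$ already built into Corollary~\ref{cisc3}. The function $\alpha (x_{1})/\alpha (x_{2})$ satisfies every constraint you invoke and is not constant, so your mechanism cannot single out $m_{\alpha }=0$.

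What the paper actually does at this point is genuinely computational. It identifies the unit group of $\k[E_{*,1}\times E_{*,2}\cap V_{0,1,2}^{(2)}]$ to write $\psi =\pm \dv {x_{-\alpha }}x^{m}\dv {x_{-\alpha }}y^{-m}=\pm \alpha (x)^{m}\alpha (y)^{-m}$ on the slice, equates this with the full product $\pm \prod_{\gamma }(\gamma (x)/\gamma (y))^{m_{\gamma }}$ for $x$ in $x_{0}+\k^{*}H_{\alpha }$ and $y$ in $y_{0}+\k^{*}H_{\alpha }$ (Equality~(\ref{eqisc3})), expands $\gamma (x)=\gamma (x_{0})+\frac{1}{2}\alpha (x)\gamma (H_{\alpha })$, and runs a lowest-degree-term and degree-count analysis in $(\alpha (x),\alpha (y))$, crucially using that the subregular points $x_{0},y_{0}$ may be varied in ${\goth h}_{\alpha }$. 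This first gives $m=\pm m_{\alpha }$ together with a product identity over the roots $\gamma \neq \alpha $, and only after treating the cases $m=m_{\alpha }$ and $m=-m_{\alpha }$ separately does one reach the contradiction forcing $m_{\alpha }=0$. None of this is present in, or replaceable by, the symmetry heuristic; to complete your argument you would need to supply essentially all of it.
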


\begin{proof}
According to Corollary~\ref{cisc3}, it suffices to prove $m_{\alpha }=0$ for all 
$\alpha $ in ${\cal R}_{+}$. Indeed, if so, by Corollary~\ref{cisc3}, $\psi =\pm 1$ on 
the open subset $B.({\goth h}_{\r}^{2}\times {\goth h}^{k-2})$ of $V_{0}^{(k)}$ so 
that $\psi =\pm 1$ on $V_{0,1,2}^{(k)}$. Then, by Lemma~\ref{lisc3},(ii), $\omega $ and 
$\pm {\sigma _{2}}_{*}(\omega )$ have the same restriction to $V_{0,1,2}^{(k)}$ so that
there exists a regular differential form of top degree $\tilde{\omega }$ on $V_{0}^{(k)}$
whose restrictions to $V_{0,1}^{(k)}$ and $V_{0,2}^{(k)}$ are $\omega $ and 
$\pm {\sigma _{2}}_{*}(\omega )$ respectively. Moreover, $\tilde{\omega }$ has no zero
since so has $\omega $.

Since $\psi $ is in $\k[V_{0,1,2}^{(2)}]$ by Lemma~\ref{lisc3},(ii), we can suppose $k=2$.
Let $\alpha $ be in ${\cal R}_{+}$, $E$, $E_{*}$,$E_{*,1}$, $E_{*,2}$ as in 
Lemma~\ref{l2isc3}. Suppose $m_{\alpha }\neq 0$. A contradiction is expected. According 
to Lemma~\ref{l2isc3},(iii), the restriction of $\psi $ to 
$E_{*,1}\times E_{*,2}\cap V_{0,1,2}^{(2)}$ is given by 
$$ \psi (x,y) = a \dv {x_{-\alpha }}x^{m}\dv {x_{-\alpha }}y^{n} ,$$  
with $a$ in $\k^{*}$ and $(m,n)$ in ${\Bbb Z}^{2}$ since $\psi $ is an invertible element
of $\k[V_{0,1,2}^{(2)}]$. According to Lemma~\ref{lisc3},(iii), $n=-m$ and $a=\pm 1$.
Interchanging the role of $x$ and $y$, we can suppose $m$ in ${\Bbb N}$. For $(x,y)$ in 
$E_{*,1}\times E_{*,2}\cap V_{0,1,2}^{(2)}$ such that $\alpha (x)\neq 0$, 
$\alpha (y)\neq 0$ and 
$$ \psi (x,y) = \pm \dv {x_{-\alpha }}x^{m} 
(\frac{\dv {x_{-\alpha }}x \alpha (y)}{\alpha (x)})^{-m} = 
\pm \alpha (x)^{m} \alpha (y)^{-m} .$$ 
As a result, by Corollary~\ref{cisc3}, for $x$ in $x_{0}+\k^{*} H_{\alpha }$ and 
$y$ in $y_{0}+\k^{*}H_{\alpha }$, 
\begin{eqnarray}\label{eqisc3}
\pm \alpha (x)^{m}\alpha (y)^{-m} = \pm \prod_{\gamma \in {\cal R}_{+}}
\gamma (x)^{m_{\gamma }}\gamma (y)^{-m_{\gamma }} .
\end{eqnarray}
For $\gamma $ in ${\cal R}_{+}$, 
$$ \gamma (x) = \gamma (x_{0}) + \frac{1}{2}\alpha (x)\gamma (H_{\alpha }) 
\quad  \text{and} \quad
\gamma (y) = \gamma (y_{0}) + \frac{1}{2}\alpha (y)\gamma (H_{\alpha }) .$$
Since $m$ is in ${\Bbb N}$,  
\begin{eqnarray}\label{eq2isc3}
\pm \alpha (x)^{m} \prod_{\mycom{\gamma \in {\cal R}_{+}}{m_{\gamma }>0}}
(\gamma (y_{0}) + \frac{1}{2}\alpha (y)\gamma (H_{\alpha }))^{m_{\gamma }}
\prod_{\mycom{\gamma \in {\cal R}_{+}}{m_{\gamma }< 0}}
(\gamma (x_{0}) + \frac{1}{2}\alpha (x)\gamma (H_{\alpha }))^{-m_{\gamma }} =  \\
\nonumber \pm \alpha (y)^{m} \prod_{\mycom{\gamma \in {\cal R}_{+}}{m_{\gamma }>0}}
(\gamma (x_{0}) + \frac{1}{2}\alpha (x)\gamma (H_{\alpha }))^{m_{\gamma }}
\prod_{\mycom{\gamma \in {\cal R}_{+}}{m_{\gamma } < 0}}
(\gamma (y_{0}) + \frac{1}{2}\alpha (y)\gamma (H_{\alpha }))^{-m_{\gamma }} .
\end{eqnarray}
For $m_{\alpha }$ positive, the terms of lowest degree in $(\alpha (x),\alpha (y))$ of 
left and right sides are
$$ \pm \alpha (x)^{m} \alpha (y)^{m_{\alpha }}
\prod_{\mycom{\gamma \in {\cal R}_{+}\setminus \{\alpha \}}{m_{\gamma }>0}} 
\gamma (y_{0})^{m_{\gamma }} 
\prod_{\mycom{\gamma \in {\cal R}_{+}\setminus \{\alpha \}}{m_{\gamma }<0}} 
\gamma (x_{0})^{-m_{\gamma }} 
\quad  \text{and} \quad 
\pm \alpha (y)^{m} \alpha (x)^{m_{\alpha }} 
\prod_{\mycom{\gamma \in {\cal R}_{+}\setminus \{\alpha \}}{m_{\gamma }>0}} 
\gamma (x_{0})^{m_{\gamma }} 
\prod_{\mycom{\gamma \in {\cal R}_{+}\setminus \{\alpha \}}{m_{\gamma }<0}} 
\gamma (y_{0})^{-m_{\gamma }} $$
respectively and for $m_{\alpha }$ negative, the terms of lowest degree in 
$(\alpha (x),\alpha (y))$ of left and right sides are
$$ \pm \alpha (x)^{m+m_{\alpha }} 
\prod_{\mycom{\gamma \in {\cal R}_{+}\setminus \{\alpha \}}{m_{\gamma }>0}} 
\gamma (y_{0})^{m_{\gamma }} 
\prod_{\mycom{\gamma \in {\cal R}_{+}\setminus \{\alpha \}}{m_{\gamma }<0}} 
\gamma (x_{0})^{-m_{\gamma }} 
\quad  \text{and} \quad 
\pm \alpha (y)^{m+m_{\alpha }} 
\prod_{\mycom{\gamma \in {\cal R}_{+}\setminus \{\alpha \}}{m_{\gamma }>0}} 
\gamma (x_{0})^{m_{\gamma }} 
\prod_{\mycom{\gamma \in {\cal R}_{+}\setminus \{\alpha \}}{m_{\gamma }<0}} 
\gamma (y_{0})^{-m_{\gamma }} $$
respectively. From the equality of these terms, we deduce $m = \pm m_{\alpha }$ and
$$ \prod_{\mycom{\gamma \in {\cal R}_{+}\setminus \{\alpha \}}{m_{\gamma }>0}} 
\gamma (y_{0})^{m_{\gamma }} 
\prod_{\mycom{\gamma \in {\cal R}_{+}\setminus \{\alpha \}}{m_{\gamma }<0}} 
\gamma (x_{0})^{-m_{\gamma }} =
\pm \prod_{\mycom{\gamma \in {\cal R}_{+}\setminus \{\alpha \}}{m_{\gamma }>0}} 
\gamma (x_{0})^{m_{\gamma }} 
\prod_{\mycom{\gamma \in {\cal R}_{+}\setminus \{\alpha \}}{m_{\gamma }<0}} 
\gamma (y_{0})^{-m_{\gamma }} .$$
Since the last equality does not depend on the choice of subregular elements 
$x_{0}$ and $y_{0}$ in ${\goth h}_{\alpha }$, this equality remains true for all
$(x_{0},y_{0})$ in ${\goth h}_{\alpha }\times {\goth h}_{\alpha }$. As a result,
as the degrees in $\alpha (x)$ of the left and right sides of Equality (\ref{eq2isc3})
are the same,
\begin{eqnarray}\label{eq3isc3}
m - \sum_{\mycom{\gamma \in {\cal R}_{+}}{m_{\gamma }< 0\;  \text{and} \; 
\gamma (H_{\alpha })\neq 0}} m_{\gamma } = 
\sum_{\mycom{\gamma \in {\cal R}_{+}}{m_{\gamma } > 0 \;  \text{and} \; 
\gamma (H_{\alpha })\neq 0}} m_{\gamma } .
\end{eqnarray}

Suppose $m=m_{\alpha }$. By Equality (\ref{eqisc3}), 
$$ \prod_{\gamma \in {\cal R}_{+}\setminus \{\alpha \}} 
\gamma (x)^{m_{\gamma }}\gamma (y)^{-m_{\gamma }} = \pm 1.$$
Since this equality does not depend on the choice of the subregular elements $x_{0}$ and 
$y_{0}$ in ${\goth h}_{\alpha }$, it holds for all $(x,y)$ in 
${\goth h}_{\r}\times {\goth h}_{\r}$. Hence $m_{\gamma }=0$ for all $\gamma $ in 
${\cal R}_{+}\setminus \{\alpha \}$ and $m=0$ by Equality (\ref{eq3isc3}). It is 
impossible since $m_{\alpha }\neq 0$. Hence $m=-m_{\alpha }$. Then, by 
Equality (\ref{eqisc3})
$$ \prod_{\gamma \in {\cal R}_{+}\setminus \{\alpha \}} 
\gamma (x)^{m_{\gamma }}\gamma (y)^{-m_{\gamma }} = 
\pm \alpha (x)^{2m}\alpha (y)^{-2m} .$$
Since this equality does not depend on the choice of the subregular elements $x_{0}$ and 
$y_{0}$ in ${\goth h}_{\alpha }$, it holds for all $(x,y)$ in 
${\goth h}_{\r}\times {\goth h}_{\r}$. Then $m=0$, whence the contradiction.
\end{proof}

\subsection{} \label{isc4}
For $2\leq k'\leq k$, the projection
$$ \xymatrix{ {\goth g}^{k} \ar[rr] && {\goth g}^{k'}}, \qquad
(\poi x1{,\ldots,}{k}{}{}{}) \longmapsto (\poi x1{,\ldots,}{k'}{}{}{})$$
induces the projections 
$$ \xymatrix{ {\cal C}^{(k)} \ar[rr] && {\cal C}^{(k')}} , \qquad
\xymatrix{ V_{j}^{(k)} \ar[rr] && V_{j}^{(k')}} .$$
Set:
$$ V_{1,2}^{(k)} := V_{1}^{(k)} \cap V_{2}^{(k)} .$$

\begin{lemma}\label{lisc4}
Let $\omega $ be a regular differential form of top degree on $V_{1}^{(k)}$, without 
zero. Denote by $\omega '$ its restriction to $V_{1,2}^{(k)}$.

{\rm (i)} For $\varphi $ in $\k[V_{1}^{(k)}]$, if $\varphi $ has no zero then $\varphi $
is in $\k^{*}$.

{\rm (ii)} For some invertible element $\psi $ of $\k[V_{1,2}^{(2)}]$, 
$\omega ' = \psi {\sigma _{2}}_{*}(\omega ')$.

{\rm (iii)} The function $\psi (\psi \rond \sigma _{2})$ on $V_{1,2}^{(k)}$ is equal 
to $1$. 
\end{lemma}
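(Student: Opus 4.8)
The plan is to repeat, with ${\goth g}$ in place of ${\goth b}$, the proof of Lemma~\ref{lisc3}: one replaces ${\goth X}_{0,k}$, $V_{0,1}^{(k)}$, $V_{0,1,2}^{(k)}$, ${\goth b}_{\r}$ by ${\cal C}^{(k)}$, $V_{1}^{(k)}$, $V_{1,2}^{(k)}$, ${\goth g}_{\r}$ throughout, and invokes Lemma~\ref{lisc2},(ii) wherever the original argument uses Lemma~\ref{lisc2},(i). The existence of a nowhere-vanishing regular top form $\omega$ on $V_{1}^{(k)}$ is exactly the content of Lemma~\ref{lisc2},(ii), which also supplies the isomorphism $\theta$ from ${\goth g}_{\r}\times\mathrm{M}_{k-1,\rg}(\k)$ onto $V_{1}^{(k)}$ together with an explicit inverse.

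For (i), pulling $\varphi$ back along $\theta$ reduces the claim to showing that a unit of $\k[{\goth g}_{\r}\times\mathrm{M}_{k-1,\rg}(\k)]=\k[{\goth g}_{\r}][a_{i,j}]$ is a nonzero scalar. The one extra ingredient, playing here the role taken by Lemma~\ref{lxv},(i) in the original argument, is that ${\goth g}_{\r}$ is a big open subset of the vector space ${\goth g}$ --- equivalently, that the non-regular locus ${\goth g}\setminus{\goth g}_{\r}$ has codimension at least $2$ in ${\goth g}$, which is classical (the codimension is in fact $3$). Since ${\goth g}$ is smooth, this gives $\k[{\goth g}_{\r}]=\k[{\goth g}]$, a polynomial algebra; hence $\k[{\goth g}_{\r}][a_{i,j}]$ is a polynomial algebra and its units lie in $\k^{*}$, so $\varphi\in\k^{*}$.

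For (ii), since $\sigma_{2}$ interchanges $V_{1}^{(k)}$ and $V_{2}^{(k)}$, the open set $V_{1,2}^{(k)}$ is $\sigma_{2}$-stable, so $\omega'$ and ${\sigma_{2}}_{*}(\omega')$ are two nowhere-vanishing regular top forms on it and $\omega'=\psi\,{\sigma_{2}}_{*}(\omega')$ for a unique unit $\psi\in\k[V_{1,2}^{(k)}]$. To see that $\psi\in\k[V_{1,2}^{(2)}]$, I would identify $\theta^{-1}(V_{1,2}^{(k)})$ with the open subset $O_{2}$ of ${\goth g}_{\r}\times\mathrm{M}_{k-1,\rg}(\k)$ cut out by $\sum_{j=1}^{\rg}a_{1,j}\varepsilon_{j}(x)\in{\goth g}_{\r}$; as this condition involves only $x$ and the first row, $\k[V_{1,2}^{(k)}]$ is a polynomial algebra over $\k[V_{1,2}^{(2)}]$ in the variables $a_{i,j}$ with $2\le i\le k-1$, and therefore its units lie in $\k[V_{1,2}^{(2)}]$.

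For (iii), since $\sigma_{2}$ restricts to an involution of $V_{1,2}^{(k)}$, applying ${\sigma_{2}}_{*}$ to $\omega'=\psi\,{\sigma_{2}}_{*}(\omega')$ gives ${\sigma_{2}}_{*}(\omega')=(\psi\rond\sigma_{2})\,\omega'=(\psi\rond\sigma_{2})\,\psi\,{\sigma_{2}}_{*}(\omega')$, whence $\psi\,(\psi\rond\sigma_{2})=1$. The only point that is not a formal transcription of the proof of Lemma~\ref{lisc3} is recording that the non-regular locus of ${\goth g}$ has codimension at least $2$; beyond that I anticipate no genuine obstacle.
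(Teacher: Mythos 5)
Your proposal is correct and is essentially the paper's own proof: the paper disposes of Lemma~\ref{lisc4} in one line by saying it follows from Lemma~\ref{lisc2},(ii) via the arguments of Lemma~\ref{lisc3}, which is exactly the substitution you carry out. Your one added remark --- that ${\goth g}\setminus{\goth g}_{\r}$ has codimension at least $2$ (in fact $3$), so that $\k[{\goth g}_{\r}]=\k[{\goth g}]$ plays the role of Lemma~\ref{lxv},(i) --- is precisely the ingredient the paper leaves implicit, and it is correctly identified and justified.
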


\begin{proof}
Following the arguments of the proof of Lemma~\ref{lisc3}, the lemma results from
Lemma~\ref{lisc2},(ii).
\end{proof}

\begin{coro}\label{cisc4}
The function $\psi $ is invariant under the action of $G$ in $V_{1,2}^{(k)}$ and for
some sequence $m_{\alpha },\alpha \in {\cal R}_{+}$ in ${\Bbb Z}$, 
$$ \psi (\poi x1{,\ldots,}{k}{}{}{}) = \pm \prod_{\alpha \in {\cal R}_{+}}
(\alpha (x_{1})\alpha (x_{2})^{-1})^{m_{\alpha }},$$
for all $(\poi x1{,\ldots,}{k}{}{}{})$ in ${\goth h}_{\r}^{2}\times {\goth h}^{k}$.
\end{coro}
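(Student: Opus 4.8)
The plan is to transcribe, almost word for word, the proof of Corollary~\ref{cisc3}, replacing the Borel subgroup $B$ by $G$, the varieties $V_{0,j}^{(k)}$ and $V_{0,1,2}^{(k)}$ by $V_{j}^{(k)}$ and $V_{1,2}^{(k)}$, and Lemma~\ref{lisc3} by Lemma~\ref{lisc4}. All the ingredients are already in place: by \S\ref{isc2} the sets $V_{1}^{(k)}$ and $V_{2}^{(k)}$, hence $V_{1,2}^{(k)}$, are $G$-invariant open subsets of ${\cal C}^{(k)}$, and $\sigma _{2}$ restricts to a $G$-equivariant isomorphism from $V_{1}^{(k)}$ onto $V_{2}^{(k)}$, hence to a $G$-equivariant involution of $V_{1,2}^{(k)}$ interchanging the first two components; and Lemma~\ref{lisc4} supplies the three facts about $\psi $ and the units of $\k[V_{1}^{(k)}]$ that drive the argument.

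First I would prove that $\psi $ is $G$-invariant. Let $g$ be in $G$. Since $\omega $ has no zero on $V_{1}^{(k)}$, one has $g.\omega =p_{g}\,\omega $ with $p_{g}$ a nonvanishing element of $\k[V_{1}^{(k)}]$, so that $p_{g}\in \k^{*}$ by Lemma~\ref{lisc4},(i). As $\sigma _{2}$ is $G$-equivariant, $g.{\sigma _{2}}_{*}(\omega )=p_{g}\,{\sigma _{2}}_{*}(\omega )$, whence on $V_{1,2}^{(k)}$
$$ p_{g}\,\omega ' = g.\omega ' = (g.\psi )\,g.{\sigma _{2}}_{*}(\omega ') = p_{g}\,(g.\psi )\,{\sigma _{2}}_{*}(\omega ') .$$
Since ${\sigma _{2}}_{*}(\omega ')$ has no zero and $p_{g}\in \k^{*}$, this forces $g.\psi =\psi $, that is, $\psi $ is invariant under the action of $G$ in $V_{1,2}^{(k)}$.

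It then remains to compute $\psi $ on ${\goth h}_{\r}^{2}\times {\goth h}^{k-2}$, the subset of ${\goth h}^{k}$ whose first two components are regular semisimple. This set is contained in ${\goth h}^{k}\subseteq {\cal C}^{(k)}$ and its first two components lie in ${\goth g}_{\r}$, so it is contained in $V_{1,2}^{(k)}$. Now $\k[{\goth h}_{\r}^{2}\times {\goth h}^{k-2}]$ is the localization of the polynomial ring $\k[{\goth h}^{k}]$ at $\prod_{\alpha \in {\cal R}_{+}}\alpha (x_{1})\alpha (x_{2})$; since $\k[{\goth h}^{k}]$ is factorial and the linear forms $\alpha (x_{1})$, $\alpha (x_{2})$, $\alpha \in {\cal R}_{+}$, are pairwise non-associate prime elements, the units of that ring are the scalar multiples of monomials in these forms. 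As $\psi $ is an invertible element of $\k[V_{1,2}^{(k)}]$ by Lemma~\ref{lisc4},(ii), its restriction has this shape: for some $a$ in $\k^{*}$ and some sequences $m_{\alpha },\alpha \in {\cal R}_{+}$ and $n_{\alpha },\alpha \in {\cal R}_{+}$ in ${\Bbb Z}$,
$$ \psi (\poi x1{,\ldots,}{k}{}{}{}) = a\prod_{\alpha \in {\cal R}_{+}}\alpha (x_{1})^{m_{\alpha }}\alpha (x_{2})^{n_{\alpha }} $$
on ${\goth h}_{\r}^{2}\times {\goth h}^{k-2}$. Finally, by Lemma~\ref{lisc4},(iii), $\psi \,(\psi \rond \sigma _{2})=1$, and $\sigma _{2}$ interchanges $x_{1}$ and $x_{2}$ on this set; hence $a^{2}\prod_{\alpha \in {\cal R}_{+}}(\alpha (x_{1})\alpha (x_{2}))^{m_{\alpha }+n_{\alpha }}=1$ identically, so $a^{2}=1$ and $n_{\alpha }=-m_{\alpha }$ for all $\alpha $, which is the asserted formula.

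I do not expect a genuine obstacle: the argument is a mechanical adaptation of that of Corollary~\ref{cisc3}, Lemma~\ref{lisc4} having been obtained by the same reasoning as Lemma~\ref{lisc3}. The only step deserving a line of justification is the description of the units of $\k[{\goth h}_{\r}^{2}\times {\goth h}^{k-2}]$ through factoriality of the polynomial ring, which is precisely the fact used tacitly in the proof of Corollary~\ref{cisc3}.
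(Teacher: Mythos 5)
Your proposal is correct and follows exactly the route the paper takes: the paper's own proof of Corollary~\ref{cisc4} consists of the single sentence that it results from Lemma~\ref{lisc4} by the arguments of the proof of Corollary~\ref{cisc3}, and your transcription (replacing $B$ by $G$, $V_{0,j}^{(k)}$ by $V_{j}^{(k)}$, and invoking the $G$-equivariance of $\sigma _{2}$ established in Subsection~\ref{isc2}) is precisely that adaptation. Your explicit justification of the unit group of $\k[{\goth h}_{\r}^{2}\times {\goth h}^{k-2}]$ via factoriality is the step the paper leaves tacit, and it is correct.
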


\begin{proof}
The corollary results from Lemma~\ref{lisc4} by the arguments of the proof of 
Corollary~\ref{cisc3}.
\end{proof}

\begin{prop}\label{pisc4}
There exists on $V^{(k)}$ a regular differential form of top degree without zero.
\end{prop}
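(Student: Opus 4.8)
The strategy mirrors that of Proposition~\ref{pisc3}, but now for the $G$-orbit picture rather than the $B$-orbit picture. By Corollary~\ref{cisc4}, it suffices to prove $m_{\alpha}=0$ for all $\alpha$ in ${\cal R}_{+}$: once this is known, $\psi=\pm 1$ on the open subset $G.({\goth h}_{\r}^{2}\times {\goth h}^{k-2})$ of $V^{(k)}$, hence $\psi=\pm 1$ on $V_{1,2}^{(k)}$, so by Lemma~\ref{lisc4},(ii) the forms $\omega$ on $V_{1}^{(k)}$ and $\pm{\sigma_{2}}_{*}(\omega)$ on $V_{2}^{(k)}$ agree on the overlap $V_{1,2}^{(k)}$ and glue to a regular form $\tilde{\omega}$ of top degree on $V^{(k)}=V_{1}^{(k)}\cup V_{2}^{(k)}$, without zero since $\omega$ has none.

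To show $m_{\alpha}=0$, I would reduce to $k=2$ (legitimate because $\psi$ already lies in $\k[V_{1,2}^{(2)}]$ by Lemma~\ref{lisc4},(ii)) and then exploit that ${\goth X}_{0,2}$ sits inside ${\cal C}^{(2)}$ via $\iota_{2}$: the function $\psi$ on $V_{1,2}^{(2)}$, being $G$-invariant by Corollary~\ref{cisc4}, is determined by its restriction to $V_{0,1,2}^{(2)}\subset {\goth X}_{0,2}$. More precisely, every point of $V_{1,2}^{(2)}$ is $G$-conjugate to a point whose first component lies in ${\goth b}_{\r}$, and the argument of Corollary~\ref{cisc3}/Proposition~\ref{pisc3} applies verbatim on the slices $E_{*,1}\times E_{*,2}$ of Lemma~\ref{l2isc3}: restricting $\psi$ to $E_{*,1}\times E_{*,2}\cap{\goth X}_{0,2}$ and using Lemma~\ref{l2isc3},(iii), the same degree-comparison in $(\alpha(x),\alpha(y))$ forces $m=\pm m_{\alpha}$ and then, by specializing to $(x,y)$ in ${\goth h}_{\r}\times{\goth h}_{\r}$ (which is allowed because the relevant identities do not depend on the choice of subregular $x_{0},y_{0}$ in ${\goth h}_{\alpha}$), one gets in either case $m=0$, contradicting $m_{\alpha}\neq 0$.

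The one genuinely new point, compared with the proof of Proposition~\ref{pisc3}, is the passage between the $G$-invariant function on ${\cal C}^{(2)}$ and its restriction to ${\goth X}_{0,2}$: I need that $V_{0,1,2}^{(2)}$ meets the generic $G$-orbit in $V_{1,2}^{(2)}$ and that the $E_{*,1}\times E_{*,2}$ slices, which live in ${\goth X}_{0,2}$, still see enough of $V_{1,2}^{(2)}$. This follows from ${\goth g}_{\r}=G({\goth b}_{\r})$ and from Lemma~\ref{lisc2},(ii)--(iii), which identify $V_{1}^{(k)}$ with ${\goth g}_{\r}\times\mathrm{M}_{k-1,\rg}(\k)$ compatibly with the corresponding identification of $V_{0,1}^{(k)}$; in particular the restriction of $\psi$ from $V_{1,2}^{(2)}$ to $V_{0,1,2}^{(2)}$ loses no information. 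I expect this bookkeeping — checking that the computation of Proposition~\ref{pisc3} transports without change and that the $G$-invariance genuinely reduces everything to the $B$-picture — to be the main (though routine) obstacle; the algebra of comparing lowest-degree terms is then identical to what was already done.
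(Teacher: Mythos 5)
Your proposal is correct and follows essentially the same route as the paper: reduce via Corollary~\ref{cisc4} to showing $m_{\alpha}=0$, then observe that $V_{0,1,2}^{(k)}\subset V_{1,2}^{(k)}$ so that the slice computation of Proposition~\ref{pisc3} on $E_{*,1}\times E_{*,2}\cap{\goth X}_{0,2}$ applies verbatim to the restriction of the $G$-invariant $\psi$, and finally glue $\omega$ and $\pm{\sigma_{2}}_{*}(\omega)$ over $V_{1,2}^{(k)}$. The paper's own proof is exactly this, stated in two sentences; your extra bookkeeping about $G$-invariance and ${\goth g}_{\r}=G({\goth b}_{\r})$ is harmless and fills in the same details.
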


\begin{proof}
As in the proof of Proposition~\ref{pisc3}, it suffices to prove that 
$m_{\alpha }=0$ for all $\alpha $ in ${\cal R}_{+}$ since 
$G.({\goth h}_{\r}^{2}\times {\goth h}^{k-2})$ is a dense open subset of 
$V^{(k)}$. As $V_{0,1,2}^{(k)}$ is contained in $V_{1,2}^{(k)}$, $m_{\alpha }=0$
by the proof of Proposition~\ref{pisc3}.  
\end{proof}

\subsection{} \label{isc5}
For $2\leq k'\leq k$, the projection
$$ \xymatrix{ {\cal X}^{k} \ar[rr] && {\cal X}^{k'}}, \qquad
(\poi x1{,\ldots,}{k}{}{}{},\poi y1{,\ldots,}{k}{}{}{}) \longmapsto 
(\poi x1{,\ldots,}{k'}{}{}{},\poi y1{,\ldots,}{k'}{}{}{})$$
induces the projections
$$ \xymatrix{ {\cal C}_{\x}^{(k)} \ar[rr] && {\cal C}_{\x}^{(k')}}, \qquad  
\xymatrix{ W_{j}^{(k)} \ar[rr] && W_{j}^{(k')}} .$$
Set:
$$ W_{1,2}^{(k)} := W_{1}^{(k)} \cap W_{2}^{(k)} .$$
According to Corollary~\ref{cisc2},(iii), $W_{1,2}^{(k)}$ is equal to 
$G.\iota _{\x,k}(V_{0,1,2}^{(k)})$.

\begin{lemma}\label{lisc5}
Let $\omega $ be a regular differential form of top degree on $W_{1}^{(k)}$, without 
zero. Denote by $\omega '$ its restriction to $W_{1,2}^{(k)}$.

{\rm (i)} For $\varphi $ in $\k[W_{1}^{(k)}]$, if $\varphi $ has no zero then $\varphi $
is in $\k^{*}$.

{\rm (ii)} For some invertible element $\psi $ of $\k[W_{1,2}^{(2)}]$, 
$\omega ' = \psi {\sigma _{2}}_{*}(\omega ')$.

{\rm (iii)} The function $\psi (\psi \rond \sigma _{2})$ on $W_{1,2}^{(k)}$ is equal 
to $1$. 
\end{lemma}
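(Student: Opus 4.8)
The plan is to imitate the proofs of Lemma~\ref{lisc3} and Lemma~\ref{lisc4}, since the statement of Lemma~\ref{lisc5} is the exact analogue for the isospectral varieties $W_{j}^{(k)}$ in place of $V_{0,j}^{(k)}$ and $V_{j}^{(k)}$. The three items are proved exactly as before once the correct structural facts about the $W$'s are in place. First I would prove (i): by Lemma~\ref{lisc2},(iii) there is an isomorphism $\theta _{\x}$ from $\sqx G{{\goth b}_{\r}}\times {\mathrm {M}_{k-1,\rg}}(\k)$ onto $W_{1}^{(k)}$, so that if $\varphi $ is an invertible element of $\k[W_{1}^{(k)}]$ then $\varphi \rond \theta _{\x}$ is an invertible regular function on $\sqx G{{\goth b}_{\r}}\times {\mathrm {M}_{k-1,\rg}}(\k)$; since $\sqx G{{\goth b}_{\r}}$ is a big open subset of the normal variety $\sqx G{{\goth b}}$ by Lemma~\ref{lxv},(ii), its ring of regular functions equals $\k[\sqx G{{\goth b}}]=\k[{\cal X}]$, whose invertible elements are the nonzero constants because ${\cal X}$ is irreducible with a fixed point of $\k^{*}$ (the grading coming from the cone structure); hence $\varphi $ is in $\k^{*}$.

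For (ii), the open subset $W_{1,2}^{(k)}$ is invariant under $\sigma _{2}$, as noted in Subsection~\ref{isc2}, so $\omega '$ and ${\sigma _{2}}_{*}(\omega ')$ are both regular differential forms of top degree on $W_{1,2}^{(k)}$ without zero, whence $\omega ' = \psi {\sigma _{2}}_{*}(\omega ')$ for some invertible $\psi $ in $\k[W_{1,2}^{(k)}]$; the point is to show $\psi $ actually lies in $\k[W_{1,2}^{(2)}]$. For this I would use the remark following Subsection~\ref{isc5} that $W_{1,2}^{(k)}=G.\iota _{\x,k}(V_{0,1,2}^{(k)})$, together with the isomorphism $\theta _{\x}$: pulling back along $\theta _{\x}$, the set $\theta _{\x}^{-1}(W_{1,2}^{(k)})$ is of the form $\sqx G{O_{2}}$ where $O_{2}\subset {\goth b}_{\r}\times {\mathrm {M}_{k-1,\rg}}(\k)$ is the same open subset appearing in the proof of Lemma~\ref{lisc3},(ii) (the locus where the first "extra" vector $\sum_{j}a_{1,j}\varepsilon _{j}(x)$ lies in ${\goth b}_{\r}$), so that $\k[W_{1,2}^{(k)}]$ is a polynomial algebra over $\k[W_{1,2}^{(2)}]$ in the variables $a_{i,j}$ with $i\geq 3$; since $\psi $ is invertible, it must lie in the degree-zero part $\k[W_{1,2}^{(2)}]$. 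Finally (iii) is formal: the restriction of $\sigma _{2}$ to $W_{1,2}^{(k)}$ is an involution, so applying ${\sigma _{2}}_{*}$ to $\omega ' = \psi {\sigma _{2}}_{*}(\omega ')$ gives ${\sigma _{2}}_{*}(\omega ') = (\psi \rond \sigma _{2})\omega ' = (\psi \rond \sigma _{2})\psi {\sigma _{2}}_{*}(\omega ')$, whence $(\psi \rond \sigma _{2})\psi = 1$.

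The main obstacle, as in Lemma~\ref{lisc3}, is item (ii): one needs the precise identification of $\k[W_{1,2}^{(k)}]$ as a polynomial ring over $\k[W_{1,2}^{(2)}]$, which in turn rests on carefully transporting the description of $O_{2}$ through the $G$-quotient and through $\theta _{\x}$ and $\chi_{\n}$. Everything else is a direct translation of the earlier arguments — the proof can simply be stated as: \emph{following the arguments of the proof of Lemma~\ref{lisc3}, the lemma results from Lemma~\ref{lisc2},(iii) and Proposition~\ref{pxv},(ii)}, exactly as Lemma~\ref{lisc4} was deduced from Lemma~\ref{lisc2},(ii).
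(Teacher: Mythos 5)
Your proposal is correct and follows exactly the route the paper takes: the paper's own proof is the one-line statement that the lemma results from Lemma~\ref{lisc2},(iii) by the arguments of the proof of Lemma~\ref{lisc3}, and your expansion of that (using the isomorphism $\theta_{\x}$ for (i), the identification of $\k[W_{1,2}^{(k)}]$ as a polynomial algebra over $\k[W_{1,2}^{(2)}]$ for (ii), and the involution property of $\sigma_{2}$ for (iii)) is the intended argument. The only blemish is a harmless indexing slip: with the convention $1\leq i\leq k-1$ the extra polynomial variables in (ii) are the $a_{i,j}$ with $i\geq 2$, not $i\geq 3$.
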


\begin{proof}
Following the arguments of the proof of Lemma~\ref{lisc3}, the lemma results from
Lemma~\ref{lisc2},(iii).
\end{proof}

\begin{coro}\label{cisc5}
The function $\psi $ is invariant under the action of $G$ in $W_{1,2}^{(k)}$ and for
some sequence $m_{\alpha },\alpha \in {\cal R}_{+}$ in ${\Bbb Z}$, 
$$ \psi \rond \iota _{\x,k}(\poi x1{,\ldots,}{k}{}{}{}) = 
\pm \prod_{\alpha \in {\cal R}_{+}}
(\alpha (x_{1})\alpha (x_{2})^{-1})^{m_{\alpha }},$$
for all $(\poi x1{,\ldots,}{k}{}{}{})$ in ${\goth h}_{\r}^{2}\times {\goth h}^{k}$.
\end{coro}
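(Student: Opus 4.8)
The plan is to follow the proof of Corollary~\ref{cisc3} essentially verbatim, replacing Lemma~\ref{lisc3} by Lemma~\ref{lisc5} and the Borel subgroup $B$ by the whole group $G$; the only new points to check are the $G$-equivariance of the relevant maps, all of which is already recorded in Subsection~\ref{isc2}.

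First I would prove that $\psi $ is $G$-invariant. Since $V_{1}^{(k)}$ and $V_{2}^{(k)}$ are $G$-stable open subsets of ${\cal C}^{(k)}$ and $\eta $ is $G$-equivariant, $W_{1}^{(k)}=\eta ^{-1}(V_{1}^{(k)})$ and $W_{2}^{(k)}=\eta ^{-1}(V_{2}^{(k)})$ are $G$-stable, hence so is $W_{1,2}^{(k)}$. For $g$ in $G$, $g.\omega = p_{g}\omega $ for some invertible $p_{g}$ in $\k[W_{1}^{(k)}]$, and $p_{g}\in \k^{*}$ by Lemma~\ref{lisc5},(i). By the computations of Subsection~\ref{isc2}, $\sigma _{2}$ is a $G$-equivariant automorphism of ${\cal X}^{k}$ restricting to an isomorphism from $W_{1}^{(k)}$ onto $W_{2}^{(k)}$; hence $g.{\sigma _{2}}_{*}(\omega )=p_{g}{\sigma _{2}}_{*}(\omega )$. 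Combining this with $\omega '=\psi \,{\sigma _{2}}_{*}(\omega ')$ from Lemma~\ref{lisc5},(ii) gives $p_{g}\omega '=g.\omega '=(g.\psi )\,p_{g}{\sigma _{2}}_{*}(\omega ')$, whence $g.\psi =\psi $, exactly as in Corollary~\ref{cisc3}.

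It then remains to identify $\psi \rond \iota _{\x,k}$ on the locus ${\goth h}_{\r}^{2}\times {\goth h}^{k-2}$. By Corollary~\ref{cisc2},(iii), $W_{1,2}^{(k)}=G.\iota _{\x,k}(V_{0,1,2}^{(k)})$, so it suffices to work along $\iota _{\x,k}$. For $(x_{1},\ldots,x_{k})$ in ${\goth h}^{k}$ one has $\overline{x_{i}}=x_{i}$, so $\iota _{\x,k}(x_{1},\ldots,x_{k})=(x_{1},\ldots,x_{k},x_{1},\ldots,x_{k})$, and this point lies in $W_{1,2}^{(k)}$ as soon as $x_{1},x_{2}$ are regular, because $(x_{1},x_{1})$ and $(x_{2},x_{2})$ then lie in ${\cal X}_{\r}$. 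By Lemma~\ref{lisc5},(ii), $\psi $ belongs to $\k[W_{1,2}^{(2)}]$, so $\psi \rond \iota _{\x,2}$ is a regular invertible function on ${\goth h}_{\r}^{2}$, which is the complement in ${\goth h}^{2}$ of the nullvariety of $(x,y)\mapsto \prod_{\alpha \in {\cal R}_{+}}\alpha (x)\alpha (y)$. Hence $\psi \rond \iota _{\x,2}(x,y)=a\prod_{\alpha \in {\cal R}_{+}}\alpha (x)^{m_{\alpha }}\alpha (y)^{n_{\alpha }}$ for some $a\in \k^{*}$ and integers $m_{\alpha },n_{\alpha }$. Finally, since $\sigma _{2}\rond \iota _{\x,2}(x,y)=\iota _{\x,2}(y,x)$, Lemma~\ref{lisc5},(iii) forces $a^{2}=1$ and $n_{\alpha }=-m_{\alpha }$ for all $\alpha $, which is the announced formula.

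I do not anticipate a real obstacle: the argument is a word-for-word transcription of that of Corollary~\ref{cisc3} (and of Corollary~\ref{cisc4}). The only mild subtleties are bookkeeping ones — checking that $\sigma _{2}$ is $G$-equivariant on ${\cal X}^{k}$ and carries $W_{1}^{(k)}$ to $W_{2}^{(k)}$, and that $\psi $ descends from $\k[W_{1,2}^{(k)}]$ to $\k[W_{1,2}^{(2)}]$ — both of which are supplied by Subsection~\ref{isc2} and Lemma~\ref{lisc5},(ii) respectively, so no genuinely new input is required.
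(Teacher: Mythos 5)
Your proposal is correct and follows exactly the route the paper takes: the paper's own proof of Corollary~\ref{cisc5} is a one-line reduction to "the arguments of the proof of Corollary~\ref{cisc3}" via Lemma~\ref{lisc5} and the identity $W_{1,2}^{(k)}=G.\iota _{\x,k}(V_{0,1,2}^{(k)})$, and your write-up is a faithful expansion of precisely those arguments ($G$-invariance from Lemma~\ref{lisc5},(i) and the $G$-equivariance of $\sigma _{2}$, then the monomial form of an invertible function on ${\goth h}_{\r}^{2}$ together with Lemma~\ref{lisc5},(iii)). No gaps.
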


\begin{proof}
Since $W_{1,2}^{(k)}=G.\iota _{\x,k}(V_{0,1,2}^{(k)})$, the corollary results from 
Lemma~\ref{lisc5} by the arguments of the proof of Corollary~\ref{cisc3}. 
\end{proof}

\begin{prop}\label{pisc5}
There exists on $W^{(k)}$ a regular differential form of top degree without zero.
\end{prop}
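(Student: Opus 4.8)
The plan is to imitate the proofs of Propositions~\ref{pisc3} and~\ref{pisc4}. By Lemma~\ref{lisc2},(iii) there is a regular differential form $\omega $ of top degree on $W_{1}^{(k)}$ without zero, and since $\sigma _{2}(W_{1}^{(k)})=W_{2}^{(k)}$ by the remarks of Subsection~\ref{isc2}, the form ${\sigma _{2}}_{*}(\omega )$ is a regular differential form of top degree on $W_{2}^{(k)}$ without zero. As $W^{(k)}=W_{1}^{(k)}\cup W_{2}^{(k)}$ is a union of two smooth open subsets of ${\cal C}_{\x}^{(k)}$, hence a smooth variety, a regular differential form of top degree on $W^{(k)}$ without zero is obtained by gluing $\omega $ and $\pm {\sigma _{2}}_{*}(\omega )$ along $W_{1,2}^{(k)}=W_{1}^{(k)}\cap W_{2}^{(k)}$, as soon as the invertible function $\psi $ of Lemma~\ref{lisc5},(ii) (defined by $\omega '=\psi {\sigma _{2}}_{*}(\omega ')$, with $\omega '$ the restriction of $\omega $ to $W_{1,2}^{(k)}$) is equal to $\pm 1$ on $W_{1,2}^{(k)}$. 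Since $\psi $ is an element of $\k[W_{1,2}^{(2)}]$ by Lemma~\ref{lisc5},(ii), it suffices to prove $\psi =\pm 1$ on $W_{1,2}^{(2)}$; so I may assume $k=2$.

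By Corollary~\ref{cisc5}, $\psi $ is invariant under $G$ and there are integers $m_{\alpha }$, $\alpha \in {\cal R}_{+}$, with
$$\psi \rond \iota _{\x,2}(x_{1},x_{2})=\pm \prod _{\alpha \in {\cal R}_{+}}\bigl(\alpha (x_{1})\alpha (x_{2})^{-1}\bigr)^{m_{\alpha }}$$
for all $(x_{1},x_{2})$ in ${\goth h}_{\r}^{2}$; it is therefore enough to show that $m_{\alpha }=0$ for all $\alpha $ in ${\cal R}_{+}$. Indeed, granting this, $\psi \rond \iota _{\x,2}=\pm 1$ on the dense subset $B.{\goth h}_{\r}^{2}$ of $V_{0,1,2}^{(2)}$, hence on $V_{0,1,2}^{(2)}$ by continuity, so $\psi =\pm 1$ on $W_{1,2}^{(2)}=G.\iota _{\x,2}(V_{0,1,2}^{(2)})$ by $G$-invariance, with constant sign since $W_{1,2}^{(2)}$ is irreducible. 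Set $\psi _{1}:=\psi \rond \iota _{\x,2}$, a regular function on $V_{0,1,2}^{(2)}$. The key remark is that $\psi _{1}$ satisfies the three properties used for the function $\psi $ in the proof of Proposition~\ref{pisc3}. First, $\psi _{1}$ is an invertible element of $\k[V_{0,1,2}^{(2)}]$, because $\iota _{\x,2}$ is a closed embedding of ${\goth b}^{2}$ into ${\cal B}_{\x}^{(2)}$ and $\iota _{\x,2}(V_{0,1,2}^{(2)})\subseteq W_{1,2}^{(2)}$. Second, $\psi _{1}$ is invariant under the diagonal action of $B$ on $V_{0,1,2}^{(2)}$, because $\iota _{\x,2}$ is $B$-equivariant (since $\overline{g.x}=\overline{x}$ for $g$ in $B$ and $x$ in ${\goth b}$, $B$ acting trivially on the ${\goth h}$-factors of ${\cal X}^{2}$) while $\psi $ is $G$-invariant. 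Third, $\psi _{1}\,(\psi _{1}\rond \sigma _{2})=1$, by Lemma~\ref{lisc5},(iii) together with $\iota _{\x,2}\rond \sigma _{2}=\sigma _{2}\rond \iota _{\x,2}$.

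With these three properties, I would run the argument of Proposition~\ref{pisc3} with $\psi _{1}$ in place of its function $\psi $. For $\alpha $ in ${\cal R}_{+}$, with $E$, $E_{*}$, $E_{*,1}$, $E_{*,2}$ as in Lemma~\ref{l2isc3}, the description of $E_{*,1}\times E_{*,2}\cap {\goth X}_{0,2}$ in Lemma~\ref{l2isc3},(iii) and the invertibility of $\psi _{1}$ give that the restriction of $\psi _{1}$ to $E_{*,1}\times E_{*,2}\cap V_{0,1,2}^{(2)}$ equals $\pm \dv {x_{-\alpha }}x^{m}\dv {x_{-\alpha }}y^{-m}$ for some $m$ in ${\Bbb Z}$, the relation $\psi _{1}(\psi _{1}\rond \sigma _{2})=1$ forcing opposite exponents and a scalar $\pm 1$; transporting this, via the $B$-invariance of $\psi _{1}$, to $B$-conjugates lying in ${\goth h}$ and combining with Corollary~\ref{cisc5} produces the identity~(\ref{eqisc3}), and comparison of the terms of lowest degree in $(\alpha (x),\alpha (y))$, while the subregular elements $x_{0},y_{0}$ range over ${\goth h}_{\alpha }$, forces $m_{\gamma }=0$ for all $\gamma $, in particular $m_{\alpha }=0$. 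Once $m_{\alpha }=0$ for every $\alpha $ in ${\cal R}_{+}$, gluing $\omega $ and $\pm {\sigma _{2}}_{*}(\omega )$ produces the desired regular differential form of top degree on $W^{(k)}$, which has no zero because $\omega $ has none. The step requiring most care is this transport, namely checking that $\psi \rond \iota _{\x,k}$ is an invertible $B$-invariant function with $\psi _{1}(\psi _{1}\rond \sigma _{2})=1$; once that is granted, the delicate arithmetic is already available in the proof of Proposition~\ref{pisc3}, so no new computation is needed.
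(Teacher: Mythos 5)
Your proposal is correct and follows essentially the same route as the paper: the paper's own proof likewise reduces to showing $m_{\alpha }=0$ via Corollary~\ref{cisc5} and then invokes the identity $W_{1,2}^{(k)}=G.\iota _{\x,k}(V_{0,1,2}^{(k)})$ to transfer the question to the argument of Proposition~\ref{pisc3}. You merely make explicit the verification (invertibility, $B$-invariance, and the relation $\psi _{1}(\psi _{1}\rond \sigma _{2})=1$ for $\psi \rond \iota _{\x,k}$) that the paper leaves implicit, which is a faithful elaboration rather than a different proof.
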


\begin{proof}
As in the proof of Proposition~\ref{pisc3}, it suffices to prove that 
$m_{\alpha }=0$ for all $\alpha $ in ${\cal R}_{+}$ since 
$G.\iota _{\x,k}({\goth h}_{\r}^{2}\times {\goth h}^{k-2})$ is a dense open subset of 
$W^{(k)}$. As $W_{1,2}^{(k)} = G.\iota _{\x,k}(V_{0,1,2}^{(k)})$, $m_{\alpha }=0$
by the proof of Proposition~\ref{pisc3}.  
\end{proof}

\subsection{} \label{isc6}
Recall that $(G.X)_{\n}$ is the normalization of $G.X$. Denote by ${\cal E}_{\n}^{(k)}$ 
the following fiber products:
$$ \xymatrix{ {\cal E}_{\n}^{(k)} \ar[rr] ^{\nu _{\n,k}} \ar[d]_{\pi_{\n,k}}&& 
{\cal E}^{(k)} \ar[d]^{\pi } \\ (G.X)_{\n} \ar[rr]_{\nu } && G.X} $$
with $\nu $ the normalization morphism, $\pi _{\n,k}$, $\nu _{\n,k}$ the restriction 
maps. 

\begin{lemma}\label{lisc6}
The variety ${\cal E}_{\n}^{(k)}$ is the normalization of ${\cal E}^{(k)}$ and 
$\nu _{\n,k}$ is the normalization morphism. 
\end{lemma}

\begin{proof}
Since ${\cal E}^{(k)}$ is a vector bundle over $G.X$, ${\cal E}_{\n}^{(k)}$ is a vector 
bundle over $(G.X)_{\n}$. Then ${\cal E}_{\n}^{(k)}$ is normal since so is $(G.X)_{\n}$. 
Moreover, the fields of rational functions on ${\cal E}_{\n}^{(k)}$ and ${\cal E}^{(k)}$ 
are equal and the comorphism of $\nu _{\n,k}$ induces the morphism identity of this field
so that $\nu _{\n,k}$ is the normalization morphism. 
\end{proof}

Denote by $\widetilde{{\goth X}_{0,k}}$, $\widetilde{{\cal C}^{(k)}}$, 
$\widetilde{{\cal C}_{\x}^{(k)}}$ the normalizations of ${\goth X}_{0,k}$, 
${\cal C}^{(k)}$, ${\cal C}_{\x}^{(k)}$ respectively. Let $\lambda _{0,k}$, 
$\lambda _{k}$, $\lambda _{*,k}$ be the respective normalization morphisms.

\begin{lemma}\label{l2isc6}
{\rm (i)} There exists a projective birational morphism $\tau _{\n,0,k}$ from 
${\cal E}_{0}^{(k)}$ onto $\widetilde{{\goth X}_{0,k}}$ such that 
$\tau _{0,k}=\lambda _{0,k}\rond \tau _{\n,0,k}$. Moreover,
$\tau _{\n,0,k}^{-1}(\lambda _{0,k}^{-1}(V_{0}^{(k)}))$ is a smooth big open subset of 
${\cal E}_{0}^{(k)}$ and the restriction of $\tau _{\n,0,k}$ to this subset is an 
isomorphism onto $\lambda _{0,k}^{-1}(V_{0}^{(k)})$.

{\rm (ii)} There exists a projective birational morphism $\tau _{\n,k}$ from 
${\cal E}_{\n}^{(k)}$ onto $\widetilde{{\cal C}^{(k)}}$ such that 
$\tau _{k}\rond \nu _{\n,k}=\lambda _{k}\rond \tau _{\n,k}$. Moreover,
$\tau _{\n,k}^{-1}(\lambda _{k}^{-1}(V^{(k)}))$ is a smooth big open subset of 
${\cal E}_{\n}^{(k)}$ and the restriction of $\tau _{\n,k}$ to this subset is an 
isomorphism onto $\lambda _{k}^{-1}(V^{(k)})$.

{\rm (iii)} There exists a projective birational morphism $\tau _{\n,*,k}$ from 
$\sqx G{{\cal E}_{0}^{(k)}}$ onto $\widetilde{{\cal C}_{\x}^{(k)}}$ such that 
$\tau _{*,k}=\lambda _{*,k}\rond \tau _{\n,*,k}$. Moreover,
$\tau _{\n,*,k}^{-1}(\lambda _{*,k}^{-1}(W^{(k)}))$ is a smooth big open subset of 
$\sqx G{{\cal E}_{0}^{(k)}}$ and the restriction of $\tau _{\n,*,k}$ to this subset is an 
isomorphism onto $\lambda _{*,k}^{-1}(W^{(k)})$.
\end{lemma}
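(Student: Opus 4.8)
The plan is to build the three normalization morphisms by combining the birational projective morphisms $\tau_{0,k}$, $\tau_k$, $\tau_{*,k}$ already produced in Lemma~\ref{lisc1} and Corollary~\ref{cisc2} with the universal property of normalization, and then to identify a smooth big open subset of the (Gorenstein, hence normal) source over which the morphism restricts to an isomorphism. Concretely, for (i), $\tau_{0,k}\colon {\cal E}_{0}^{(k)}\to {\goth X}_{0,k}$ is projective and birational by Lemma~\ref{lisc1},(i) and Corollary~\ref{cisc2},(i), and ${\cal E}_{0}^{(k)}$ is normal by Proposition~\ref{pmv},(i) (it is Gorenstein). By the universal property of the normalization $\lambda_{0,k}\colon \widetilde{{\goth X}_{0,k}}\to {\goth X}_{0,k}$, the morphism $\tau_{0,k}$ factors uniquely through $\widetilde{{\goth X}_{0,k}}$, giving $\tau_{\n,0,k}\colon {\cal E}_{0}^{(k)}\to\widetilde{{\goth X}_{0,k}}$ with $\tau_{0,k}=\lambda_{0,k}\rond\tau_{\n,0,k}$. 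This $\tau_{\n,0,k}$ is still birational (same function fields) and projective: since $\lambda_{0,k}$ is finite (hence separated) and $\tau_{0,k}$ is projective, $\tau_{\n,0,k}$ is proper, and being a morphism into a quasi-projective variety with projective fibers it is projective; alternatively one invokes the standard cancellation property for projective morphisms.

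The second half of each assertion is the identification of the good open set. For (i), set $U:=\lambda_{0,k}^{-1}(V_{0}^{(k)})\subset\widetilde{{\goth X}_{0,k}}$. By Lemma~\ref{lisc2},(i) the open subset $V_{0}^{(k)}=V_{0,1}^{(k)}\cup V_{0,2}^{(k)}$ of ${\goth X}_{0,k}$ is smooth, hence normal, so $\lambda_{0,k}$ restricts to an isomorphism over $V_{0}^{(k)}$; thus $U\cong V_{0}^{(k)}$ is smooth. By Corollary~\ref{cisc2},(i) the restriction of $\tau_{0,k}$ to $\tau_{0,k}^{-1}(V_{0,j}^{(k)})$ is an isomorphism onto $V_{0,j}^{(k)}$ for $j=1,2$; gluing these, the restriction of $\tau_{0,k}$ to $\tau_{0,k}^{-1}(V_{0}^{(k)})$ is an isomorphism onto $V_{0}^{(k)}$, and since $\tau_{0,k}^{-1}(V_{0}^{(k)})=\tau_{\n,0,k}^{-1}(U)$, the restriction of $\tau_{\n,0,k}$ to this set is an isomorphism onto $U$. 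Finally, $\tau_{0,k}^{-1}(V_{0}^{(k)})$ is a big open subset of ${\cal E}_{0}^{(k)}$ by Lemma~\ref{l2isc2},(i), so its image $\tau_{\n,0,k}^{-1}(U)$ is a big open subset of $\widetilde{{\goth X}_{0,k}}$; but since $\tau_{\n,0,k}^{-1}(U)\cong U$ is open in ${\cal E}_{0}^{(k)}$ as well (it equals $\tau_{0,k}^{-1}(V_{0}^{(k)})$), it is a smooth big open subset of ${\cal E}_{0}^{(k)}$, which is the form stated.

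Part (ii) runs identically, replacing ${\cal E}_{0}^{(k)}$ by its normalization ${\cal E}_{\n}^{(k)}$: by Lemma~\ref{lisc6}, $\nu_{\n,k}$ is the normalization morphism of ${\cal E}^{(k)}$ and ${\cal E}_{\n}^{(k)}$ is normal; composing with the projective birational $\tau_k$ of Lemma~\ref{lisc1},(ii) and Corollary~\ref{cisc2},(ii), $\tau_k\rond\nu_{\n,k}$ is projective birational from the normal variety ${\cal E}_{\n}^{(k)}$ onto ${\cal C}^{(k)}$, hence factors uniquely through $\lambda_k\colon\widetilde{{\cal C}^{(k)}}\to{\cal C}^{(k)}$ as $\lambda_k\rond\tau_{\n,k}$; then use Lemma~\ref{lisc2},(ii), Corollary~\ref{cisc2},(ii) and Lemma~\ref{l2isc2},(ii) for the big-open-isomorphism claim. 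Part (iii) is the same with $\sqx G{{\cal E}_{0}^{(k)}}$ in place of the source — it is normal because ${\cal E}_{0}^{(k)}$ is Gorenstein by Proposition~\ref{pmv},(i) and the associated bundle over the smooth variety $G/B$ is then Gorenstein by Proposition~\ref{pmv},(iii), hence normal — and one invokes Lemma~\ref{lisc1},(iii), Corollary~\ref{cisc2},(iii), Lemma~\ref{lisc2},(iii) and Lemma~\ref{l2isc2},(iii) together with the identification $W_{1,2}^{(k)}=G.\iota_{\x,k}(V_{0,1,2}^{(k)})$. The only genuinely delicate point is the cancellation argument showing that the factored morphisms $\tau_{\n,\bullet}$ inherit projectivity from $\tau_\bullet$ through the finite normalization maps; this is standard (a morphism is projective if it is proper and its target admits an ample line bundle pulling back to a relatively ample one, or by [EGA II] cancellation for projective morphisms along a separated base change), so the rest is bookkeeping across the three cases.
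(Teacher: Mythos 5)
Your proof follows the paper's argument essentially verbatim: factor the projective birational morphisms of Lemma~\ref{lisc1} and Corollary~\ref{cisc2} through the normalizations via the universal property (the sources being normal), deduce projectivity of the factored maps by a cancellation/base-extension argument, and identify the smooth big open subsets using Lemma~\ref{lisc2}, Corollary~\ref{cisc2} and Lemma~\ref{l2isc2}. One small correction: normality of ${\cal E}_{0}^{(k)}$ and $\sqx G{{\cal E}_{0}^{(k)}}$ should not be inferred from Gorenstein-ness alone (Gorenstein does not imply normal); the paper obtains it directly from the normality of $X$ and $\sqx GX$ (these being vector bundles over normal bases), or one can use that the cited varieties have rational singularities, which does entail normality.
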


\begin{proof}
(i) According to Corollary~\ref{cisc2},(i), $\tau _{0,k}$ is a
birational morphism from ${\cal E}_{0}^{(k)}$ onto ${\goth X}_{0,k}$ and 
${\cal E}_{0}^{(k)}$ is a normal variety since so is $X$ by~\cite[Theorem 1.1]{C1}. Hence
it factorizes through $\lambda _{0,k}$ so that for some birational morphism 
$\tau _{\n,0,k}$ from ${\cal E}_{0}^{(k)}$ to $\widetilde{{\goth X}_{0,k}}$, 
$\tau _{0,k}=\lambda _{0,k}\rond \tau _{\n,0,k}$, whence the 
commutative diagram:
$$ \xymatrix{ & \ar[ld]_{\tau _{\n,0,k}} 
{\cal E}_{0}^{(k)} \ar[rd]^{\tau _{0,k}} &  \\ 
\widetilde{{\goth X}_{0,k}} \ar[rr]_{\lambda _{0,k}} && {\goth X}_{0,k}} .$$
According to Lemma~\ref{lisc1},(i), $\tau _{0,k}$ is a projective morphism. Hence 
so is $\tau _{\n,0,k}$ since it deduces from $\tau _{0,k}$ by base extension
\cite[Ch. II, Exercise 4.9]{Ha}. 

According to Lemma~\ref{l2isc2},(i), $\tau _{0,k}^{-1}(V_{0}^{(k)})$ is a big open subset
of ${\cal E}_{0}^{(k)}$. Moreover, we have the commutative diagram
$$ \xymatrix{ & \tau _{0,k}^{-1}(V_{0}^{(k)}) \ar[ld]_{\tau _{\n,0,k}}  
\ar[rd]^{\tau _{0,k}} \\ \lambda _{0,k}^{-1}(V_{0}^{(k)}) \ar[rr]_{\lambda _{0,k}} &&
V_{0}^{(k)}} .$$ 
By Lemma~\ref{lisc2},(i), $V_{0}^{(k)}$ is a smooth open subset of ${\goth X}_{0,k}$
so that $\lambda _{0,k}$ is an isomorphism from $\lambda _{0,k}^{-1}(V_{0}^{(k)})$ 
onto $V_{0}^{(k)}$. By Corollary~\ref{cisc2},(i), $\tau _{0,k}$ is an isomorphism 
from $\tau _{0,k}^{-1}(V_{0}^{(k)})$ onto $V_{0}^{(k)}$ so that 
$\tau _{0,k}^{-1}(V_{0}^{(k)})$ is a smooth open subset of ${\cal E}_{0}^{(k)}$. As a 
result, $\tau _{\n,0,k}$ is an isomorphism from $\tau _{0,k}^{-1}(V_{0}^{(k)})$ onto 
$\lambda _{0,k}^{-1}(V_{0}^{(k)})$.

(ii) According to Corollary~\ref{cisc2},(ii), $\tau _{k}\rond \nu _{\n,k}$ is a
birational morphism from ${\cal E}_{\n}^{(k)}$ onto ${\cal C}^{(k)}$ and ${\cal E}_{\n}^{(k)}$ is
a normal variety by Lemma~\ref{lisc6},(i). Hence it factorizes
through $\lambda _{k}$ so that for some birational morphism $\tau _{\n,k}$ from 
${\cal E}_{\n}^{(k)}$ to $\widetilde{{\cal C}^{(k)}}$, 
$\tau _{k}\rond \nu _{\n,k}=\lambda _{k}\rond \tau _{\n,k}$, whence the 
commutative diagram:
$$ \xymatrix{ {\cal E}_{\n}^{(k)} \ar[rr]^{\nu _{\n,k}} \ar[d]_{\tau _{\n,k}} &&
{\cal E}^{(k)} \ar[d]^{\tau _{k}} \\ \widetilde{{\cal C}^{(k)}} \ar[rr]_{\lambda _{k}}
&& {\cal C}^{(k)}} .$$
According to Lemma~\ref{lisc1},(i), $\tau _{k}$ is a projective morphism. Hence 
so is $\tau _{\n,k}$ since it deduces from $\tau _{k}$ by base extension
\cite[Ch. II, Exercise 4.9]{Ha}. 

According to Lemma~\ref{l2isc2},(ii), 
$\tau _{\n,k}^{-1}(\lambda _{k}^{-1}(V^{(k)}))$ is a big open subset of 
${\cal E}_{\n}^{(k)}$ since $\nu _{\n,k}$ is a finite morphism. Moreover, we have the 
commutative diagram
$$ \xymatrix{ \tau _{\n,k}^{-1}(\lambda _{k}^{-1}(V^{(k)})) 
\ar[rr]^{\nu _{\n,k}} \ar[d]_{\tau _{\n,k}} && \tau _{k}^{-1}(V^{(k)}) 
\ar[d]^{\tau _{k}} \\ \lambda _{k}^{-1}(V^{(k)}) \ar[rr]_{\lambda _{k}} &&
V^{(k)}} .$$ 
By Lemma~\ref{lisc2},(ii), $V^{(k)}$ is a smooth open subset of ${\cal C}^{(k)}$
so that $\lambda _{k}$ is an isomorphism from $\lambda _{k}^{-1}(V^{(k)})$ 
onto $V^{(k)}$. By Corollary~\ref{cisc2},(ii), $\tau _{k}$ is an isomorphism 
from $\tau _{k}^{-1}(V^{(k)})$ onto $V^{(k)}$ so that $\tau _{k}^{-1}(V^{(k)})$ is
a smooth open subset of ${\cal E}^{(k)}$ and $\nu _{\n,k}$ is an isomorphism from
$\tau _{\n,k}^{-1}(\lambda _{k}^{-1}(V^{(k)}))$ onto 
$\tau _{k}^{-1}(V^{(k)})$. As a result, $\tau _{\n,k}$ is an isomorphism 
from $\tau _{\n,k}^{-1}(\lambda _{k}^{-1}(V^{(k)}))$ onto 
$\lambda _{k}^{-1}(V^{(k)})$ and 
$\tau _{\n,k}^{-1}(\lambda _{k}^{-1}(V^{(k)}))$ is a smooth open subset of 
${\cal E}_{\n}^{(k)}$.
 
(iii) According to Corollary~\ref{cisc2},(iii), $\tau _{*,k}$ is a
birational morphism from $\sqx G{{\cal E}_{0}^{(k)}}$ onto ${\cal C}_{\x}^{(k)}$ and 
$\sqx G{{\cal E}_{0}^{(k)}}$ is a normal variety as a vector bundle over 
$\sqx GX$ which is normal by Proposition~\ref{pmv}. Hence it factorizes through 
$\lambda _{*,k}$ so that for some birational morphism $\tau _{\n,*,k}$ from 
$\sqx G{{\cal E}_{0}^{(k)}}$ to $\widetilde{{\cal C}_{\x}^{(k)}}$, 
$\tau _{*,k}=\lambda _{*,k}\rond \tau _{\n,*,k}$, whence the commutative diagram:
$$ \xymatrix{ & \ar[ld]_{\tau _{\n,*,k}} 
\sqx G{{\cal E}_{0}^{(k)}} \ar[rd]^{\tau _{*,k}} & \\ \widetilde{{\cal C}_{\x}^{(k)}} 
\ar[rr]_{\lambda _{*,k}} && {\cal C}_{\x}^{(k)}} .$$
According to Lemma~\ref{lisc1},(i), $\tau _{*,k}$ is a projective morphism. Hence 
so is $\tau _{\n,*,k}$ since it deduces from $\tau _{*,k}$ by base extension
\cite[Ch. II, Exercise 4.9]{Ha}. 

According to Lemma~\ref{l2isc2},(iii), $\tau _{*,k}^{-1}(W^{(k)})$ is a big open subset 
of $\sqx G{{\cal E}_{0}^{(k)}}$. Moreover, we have the commutative diagram
$$ \xymatrix{ & \tau _{*,k}^{-1}(W^{(k)}) \ar[ld]_{\tau _{\n,*,k}} 
\ar[rd]^{\tau _{*,k}} &  \\ \lambda _{*,k}^{-1}(W^{(k)}) \ar[rr]_{\lambda _{*,k}} &&
W^{(k)}} .$$ 
By Lemma~\ref{lisc2},(iii), $W^{(k)}$ is a smooth open subset of ${\cal C}_{\x}^{(k)}$
so that $\lambda _{*,k}$ is an isomorphism from $\lambda _{*,k}^{-1}(W^{(k)})$ 
onto $W^{(k)}$. By Corollary~\ref{cisc2},(i), $\tau _{*,k}$ is an isomorphism 
from $\tau _{*,k}^{-1}(W^{(k)})$ onto $W^{(k)}$ so that $\tau _{*,k}^{-1}(W^{(k)})$ is
a smooth open subset of $\sqx G{{\cal E}_{0}^{(k)}}$. As a result, $\tau _{\n,*,k}$ is an
isomorphism from $\tau _{*,k}^{-1}(W^{(k)})$ onto $\lambda _{*,k}^{-1}(W^{(k)})$.
\end{proof}

Let ${\goth Y}$ be one of the three varieties $\widetilde{{\goth X}}_{0,k}$, 
$\widetilde{{\cal C}^{(k)}}$, $\widetilde{{\cal C}_{\x}^{(k)}}$ and set:
$$ {\goth Z} := \left \{ \begin{array}{ccc} {\cal E}_{0}^{(k)} & \mbox{ if } & 
{\goth Y} = \widetilde{{\goth X}_{0,k}} \\
{\cal E}_{\n}^{(k)} & \mbox{ if } & {\goth Y} = \widetilde{{\cal C}^{(k)}} \\
\sqx G{{\cal E}_{0}^{(k)}} & \mbox{ if } & {\goth Y} = \widetilde{{\cal C}_{\x}^{(k)}} 
\end{array} \right. , \qquad \tauup := \left \{ \begin{array}{ccc}
\tau _{\n,0,k} & \mbox{ if } & {\goth Y} = \widetilde{{\goth X}_{0,k}} \\
\tau _{\n,k} & \mbox{ if } & {\goth Y} = \widetilde{{\cal C}^{(k})} \\
\tau _{\n,*,k} & \mbox{ if } & {\goth Y} = \widetilde{{\cal C}_{\x}^{(k})} \end{array}
\right. ,$$
$$ {\goth T} := \left \{ \begin{array}{ccc} 
X & \mbox{ if } & {\goth Y} = \widetilde{{\goth X}_{0,k}} \\ 
G.X_{\n} & \mbox{ if } & {\goth Y} = \widetilde{{\cal C}^{(k})} \\ 
\sqx G{X} & \mbox{ if } & {\goth Y} = \widetilde{{\cal C}_{\x}^{(k})} \end{array}
\right. , \qquad \piup := \left \{ \begin{array}{ccc}
\xymatrix{ {\cal E}_{0}^{(k)} \ar[r] & X} & \mbox{ if } &
{\goth Y} = \widetilde{{\goth X}_{0,k}} \\
\xymatrix{ {\cal E}_{\n}^{(k)} \ar[r] & (G.X)_{\n}} & \mbox{ if } &
{\goth Y} = \widetilde{{\cal C}^{(k)}} \\
\xymatrix{ \sqx G{{\cal E}_{\n}^{(k)}} \ar[r] & \sqx G{X}} & \mbox{ if } &
{\goth Y} = \widetilde{{\cal C}_{\x}^{(k)}} \end{array}  \right. ,$$
where the arrow is the bundle projection in the last three equalities.

\begin{prop}\label{pisc6}
{\rm (i)} The morphism $\tauup $ is a projective birational morphism.

{\rm (ii)} The set ${\goth Z}_{\loc}$ is the inverse image of ${\goth T}_{\loc}$ by 
$\piup $.

{\rm (iii)} For some smooth big open subset ${\goth O}$ of ${\goth Z}_{\loc}$, the 
restriction of $\tauup $ to ${\goth O}$ is an isomorphism onto a smooth big open subset
of ${\goth Y}$.

{\rm (iv)} The sheaves $\Omega _{{\goth Y}_{\loc}}$ and $\Omega _{{\goth Z}_{\loc}}$ have 
a global section without $0$.
\end{prop}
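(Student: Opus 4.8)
The plan is to deduce all four assertions from the results of Subsection~\ref{isc6}, handling the three possible values of ${\goth Y}$ in parallel. Assertion~(i) is precisely Lemma~\ref{l2isc6},(i)--(iii). For (ii), I would first remark that $\piup$ is, in each of the three cases, the projection of a vector bundle: this is clear when ${\goth Y}=\widetilde{{\goth X}_{0,k}}$ and when ${\goth Y}=\widetilde{{\cal C}^{(k)}}$ (with ${\cal E}_{\n}^{(k)}$ a vector bundle over $(G.X)_{\n}$ by Lemma~\ref{lisc6}), and when ${\goth Y}=\widetilde{{\cal C}_{\x}^{(k)}}$ it follows from the fact that ${\cal E}_{0}^{(k)}$ is a $B$-equivariant vector bundle over the $B$-variety $X$, so that $\sqx G{{\cal E}_{0}^{(k)}}$ is a vector bundle over $\sqx GX$. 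Then I would invoke the local observation that a vector bundle over a variety $Z'$ is smooth at a point exactly when $Z'$ is smooth at its image under the bundle projection, since locally the bundle is a product $Z'\times\k^{m}$; this yields ${\goth Z}_{\loc}=\piup^{-1}({\goth T}_{\loc})$.

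For (iii), I would put $O'$ equal to $\lambda_{0,k}^{-1}(V_{0}^{(k)})$, $\lambda_{k}^{-1}(V^{(k)})$, $\lambda_{*,k}^{-1}(W^{(k)})$ respectively, and ${\goth O}:=\tauup^{-1}(O')$. Lemma~\ref{l2isc6} already says that ${\goth O}$ is a smooth big open subset of ${\goth Z}$ and that $\tauup$ restricts to an isomorphism of ${\goth O}$ onto $O'$; since ${\goth Z}_{\loc}$ is a dense open subset of the irreducible variety ${\goth Z}$, it follows that ${\goth O}\subseteq{\goth Z}_{\loc}$ and that ${\goth O}$ is big in ${\goth Z}_{\loc}$. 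The point still to be established is that $O'$ is a smooth big open subset of ${\goth Y}$. For smoothness, Lemma~\ref{lisc2} shows that $V_{0}^{(k)}$, $V^{(k)}$, $W^{(k)}$ are smooth, and $O'$ is isomorphic to one of them through the normalization morphism. For bigness, writing $Y_{0}$, $\tau$, $V$ for the appropriate one of ${\goth X}_{0,k}$, ${\cal C}^{(k)}$, ${\cal C}_{\x}^{(k)}$, of $\tau_{0,k}$, $\tau_{k}$, $\tau_{*,k}$, and of $V_{0}^{(k)}$, $V^{(k)}$, $W^{(k)}$, the morphism $\tau$ is surjective (being projective by Lemma~\ref{lisc1} and birational by Corollary~\ref{cisc2}, hence of closed dense image), and $\tau^{-1}(Y_{0}\setminus V)$ has codimension at least $2$ in ${\goth Z}$ by Lemma~\ref{l2isc2}; therefore
$$ \dim(Y_{0}\setminus V)=\dim\tau(\tau^{-1}(Y_{0}\setminus V))\leq\dim\tau^{-1}(Y_{0}\setminus V)\leq\dim{\goth Z}-2=\dim Y_{0}-2, $$
the last equality by birationality of $\tau$, so that $Y_{0}\setminus V$ has codimension at least $2$. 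As the normalization morphism is finite, ${\goth Y}\setminus O'$ then has codimension at least $2$ in ${\goth Y}$.

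For (iv), I would start from the regular differential form $\omega$ of top degree without zero on $V_{0}^{(k)}$, $V^{(k)}$ or $W^{(k)}$ provided by Propositions~\ref{pisc3}, \ref{pisc4} and \ref{pisc5}, transport it to $O'$ through the normalization isomorphism, and extend it, by Lemma~\ref{lars},(ii), over the codimension $\geq 2$ complement of $O'$ in the smooth variety ${\goth Y}_{\loc}$: the result is a global section of the invertible sheaf $\Omega_{{\goth Y}_{\loc}}$, and it has no zero, since otherwise its zero locus would be a divisor contained in a set of codimension $\geq 2$. Pulling this section back along the isomorphism $\tauup_{|{\goth O}}$ from ${\goth O}$ onto $O'$, and extending it once more from the big open subset ${\goth O}$ of the smooth variety ${\goth Z}_{\loc}$, gives the desired section of $\Omega_{{\goth Z}_{\loc}}$ without zero.

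I do not expect a real obstacle here: the proposition is essentially a repackaging of Subsection~\ref{isc6}. The one mildly delicate point is the bigness of $O'$ in part~(iii) --- that is, bounding the codimension of the image of a codimension $\geq 2$ closed set under a birational projective morphism --- together with the care needed, in each of the three cases, to keep track of which of the preimages (under $\tau$, under the normalization morphism, under $\tauup$) is the one known to be big.
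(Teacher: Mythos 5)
Your proposal is correct and follows essentially the same route as the paper, which proves each part by citing exactly the lemmas you invoke (Lemma~\ref{l2isc6} for (i) and (iii), the local triviality of the vector bundle $\piup$ for (ii), and Lemma~\ref{lars} together with Propositions~\ref{pisc3}--\ref{pisc5}, Lemma~\ref{lisc2} and Lemma~\ref{l2isc2} for (iv)). The only difference is that you spell out the dimension count showing that $O'$ is big in ${\goth Y}$, a detail the paper leaves implicit, and your argument for it is sound.
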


\begin{proof}
(i) The assertion results from Lemma~\ref{l2isc6}.

(ii) As a polynomial algebra over an algebra $A$ is regular if and only if so is $A$, 
${\goth Z}_{\loc} = \piup ^{-1}({\goth T}_{\loc})$ since ${\goth Z}$ is a vector bundle 
over ${\goth T}$. 

(iii) The assertion results from Lemma~\ref{l2isc6} and Lemma~\ref{l2isc2}.

(iv) For ${\goth Y}=\widetilde{{\goth X}_{0,k}}$, the assertion results from 
Lemma~\ref{lars}, Proposition~\ref{pisc3}, Lemma~\ref{lisc2},(i) and 
Lemma~\ref{l2isc2},(i). For ${\goth Y}=\widetilde{{\cal C}^{(k)}}$, the assertion results
from Lemma~\ref{lars}, Proposition~\ref{pisc4}, Lemma~\ref{lisc2},(ii) and 
Lemma~\ref{l2isc2},(ii). For ${\goth Y}=\widetilde{{\cal C}_{\x}^{(k)}}$, the assertion 
results from Lemma~\ref{lars}, Proposition~\ref{pisc5}, Lemma~\ref{lisc2},(iii) and 
Lemma~\ref{l2isc2},(iii).
\end{proof}

\section{Rational singularities} \label{rs}
Let $k\geq 2$ be an integer and let ${\goth Y}$, ${\goth Z}$, ${\goth T}$, 
$\tauup $, $\piup $ be as in Proposition~\ref{pisc6}. Denote by $\iota $ the canonical
embeddings $\xymatrix{ {\goth Y}_{\loc} \ar[r] & {\goth Y}}$ and 
$\xymatrix{ {\goth Z}_{\loc} \ar[r] & {\goth Z}}$. According to~\cite{Hi}, there exists a
desingularization $\Gamma $ of ${\goth T}$ with morphism $\thetaup $ such that the 
restriction of $\thetaup $ to $\thetaup ^{-1}({\goth T}_{\loc})$ is an isomorphism onto 
${\goth T}_{\loc}$. Let $\widetilde{{\goth Z}}$ be the following fiber product
$$ \xymatrix{ \widetilde{{\goth Z}} \ar[rr]^{\overline{\thetaup }} 
\ar[d]_{\overline{\piup }} && {\goth Z} \ar[d]^{\piup }\\ 
\Gamma \ar[rr]_{\thetaup } && {\goth T}}$$
with $\overline{\thetaup }$ and $\overline{\pi }$ the restriction maps so that 
$\widetilde{{\goth Z}}$ is a vector bundle of rank $k\rg$ over $\Gamma $ and 
$\overline{\piup }$ is the bundle projection. Moreover, $\overline{\thetaup }$ is 
projective and birational so that $\widetilde{{\goth Z}}$ is a desingularization of 
${\goth Z}$ and ${\goth Y}$ by Proposition~\ref{pisc6},(i). 

\begin{prop}\label{prs}
Suppose ${\goth Y}=\widetilde{{\goth X}_{0,k}}$ or $\widetilde{{\cal C}_{\x}^{(k)}}$.

{\rm (i)} The variety ${\goth Z}$ is Gorenstein with rational singularities. Moreover,
its canonical bundle is free of rank one.

{\rm (ii)} The variety ${\goth Y}$ is Gorenstein with rational singularities. Moreover,
its canonical bundle is free of rank one.
\end{prop}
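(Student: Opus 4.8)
The plan is to reduce both assertions to Proposition~\ref{pisc6} and to the fact --- available from Proposition~\ref{pmv},(i) and~(iii) --- that the base ${\goth T}$ ($=X$ when ${\goth Y}=\widetilde{{\goth X}_{0,k}}$, and $=\sqx G{X}$ when ${\goth Y}=\widetilde{{\cal C}_{\x}^{(k)}}$) is Gorenstein with rational singularities. This is exactly why these two cases are treated here, while ${\goth Y}=\widetilde{{\cal C}^{(k)}}$, for which ${\goth T}=(G.X)_{\n}$, is postponed (see Corollary~\ref{crs}). Only standard homological properties of Gorenstein varieties and of rational singularities are used, of the kind recorded in Lemma~\ref{lsi} and Lemma~\ref{lars}.

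For~(i), I would first note that ${\goth Z}$ is a vector bundle over the Gorenstein variety with rational singularities ${\goth T}$, hence is itself Gorenstein with rational singularities, by the argument already used in the proof of Proposition~\ref{pmv} (Lemma~\ref{lsi},(i) and~(iv)). In particular ${\goth Z}$ is normal and Cohen--Macaulay, its dualizing sheaf $\omega_{{\goth Z}}$ is invertible, and it restricts to $\Omega_{{\goth Z}_{\loc}}$ on the smooth locus. By Proposition~\ref{pisc6},(ii) and the normality of ${\goth T}$, the open subset ${\goth Z}_{\loc}=\piup^{-1}({\goth T}_{\loc})$ is big in ${\goth Z}$; by Proposition~\ref{pisc6},(iv), $\Omega_{{\goth Z}_{\loc}}$ has a global section without zero. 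Such a section trivializes the invertible sheaf $\omega_{{\goth Z}}$ over the big open subset ${\goth Z}_{\loc}$, and hence over all of ${\goth Z}$, since a nowhere-vanishing section of an invertible sheaf on a big open subset of a normal variety extends, together with its inverse, to the whole variety (Lemma~\ref{lars}). Therefore $\omega_{{\goth Z}}$ is free of rank one, which is~(i).

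For~(ii), I would keep the notation of the beginning of Section~\ref{rs}, so that $\widetilde{{\goth Z}}$ is smooth and both $\overline{\thetaup}\colon\widetilde{{\goth Z}}\to{\goth Z}$ and $\tauup\circ\overline{\thetaup}\colon\widetilde{{\goth Z}}\to{\goth Y}$ are projective and birational. Since ${\goth Z}$ has rational singularities one has $R\overline{\thetaup}_{*}\an{\widetilde{{\goth Z}}}{}=\an{{\goth Z}}{}$, while Grauert--Riemenschneider vanishing on the smooth variety $\widetilde{{\goth Z}}$ together with Grothendieck duality gives $\overline{\thetaup}_{*}\Omega_{\widetilde{{\goth Z}}}=\omega_{{\goth Z}}$ and $R^{i}\overline{\thetaup}_{*}\Omega_{\widetilde{{\goth Z}}}=0$ for $i>0$. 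Feeding this into the Leray spectral sequence of $\tauup\circ\overline{\thetaup}$, using Grauert--Riemenschneider vanishing for the projective birational morphism $\tauup\circ\overline{\thetaup}$ from the smooth variety $\widetilde{{\goth Z}}$, and invoking the triviality $\omega_{{\goth Z}}\cong\an{{\goth Z}}{}$ from~(i), I would get $R^{i}\tauup_{*}\an{{\goth Z}}{}=R^{i}\tauup_{*}\omega_{{\goth Z}}=R^{i}(\tauup\circ\overline{\thetaup})_{*}\Omega_{\widetilde{{\goth Z}}}=0$ for $i>0$; since moreover $\tauup_{*}\an{{\goth Z}}{}=\an{{\goth Y}}{}$, ${\goth Y}$ being normal and $\tauup$ projective birational, it follows that $R(\tauup\circ\overline{\thetaup})_{*}\an{\widetilde{{\goth Z}}}{}=\an{{\goth Y}}{}$, that is, ${\goth Y}$ has rational singularities and in particular is Cohen--Macaulay. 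Finally, Grothendieck duality applied to the desingularizations $\overline{\thetaup}$ of ${\goth Z}$ and $\tauup\circ\overline{\thetaup}$ of ${\goth Y}$, both now known to have rational singularities, gives $\omega_{{\goth Y}}=(\tauup\circ\overline{\thetaup})_{*}\Omega_{\widetilde{{\goth Z}}}=\tauup_{*}\bigl(\overline{\thetaup}_{*}\Omega_{\widetilde{{\goth Z}}}\bigr)=\tauup_{*}\omega_{{\goth Z}}\cong\tauup_{*}\an{{\goth Z}}{}=\an{{\goth Y}}{}$, using~(i) once more; hence $\omega_{{\goth Y}}$ is invertible and free of rank one, so ${\goth Y}$ is Gorenstein, which is~(ii).

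The substantive ingredient is entirely in~(i): the triviality of the canonical bundle of ${\goth Z}$, which rests on Proposition~\ref{pisc6},(iv), hence on the nowhere-vanishing forms of top degree produced in Propositions~\ref{pisc3} and~\ref{pisc5}. I expect the passage from~(i) to~(ii) to be essentially formal, the two points requiring attention being: first, that the forms live only on the big open loci ${\goth Z}_{\loc}$ and ${\goth Y}_{\loc}$, so one must extend sections of invertible sheaves across the singular locus (whence the role of the normality of ${\goth T}$ and of the bigness statements in Proposition~\ref{pisc6},(ii)--(iii)); and second, that in~(ii) one must push $\Omega_{\widetilde{{\goth Z}}}$ down only as far as ${\goth Z}$, where it becomes the trivial sheaf $\omega_{{\goth Z}}$, before descending to ${\goth Y}$, so that Grauert--Riemenschneider vanishing can be converted into the vanishing of $R^{>0}\tauup_{*}\an{{\goth Z}}{}$ --- note that $\Omega_{\widetilde{{\goth Z}}}$ itself is not trivial, since the canonical bundle of the desingularization $\Gamma$ of ${\goth T}$ need not be.
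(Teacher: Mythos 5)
Your proof is correct. Part (i) is essentially the paper's own argument: ${\goth Z}$ is Gorenstein with rational singularities as a vector bundle over ${\goth T}$ (Lemma~\ref{lsi} and Proposition~\ref{pmv}), and the nowhere-vanishing section of $\Omega_{{\goth Z}_{\loc}}$ provided by Proposition~\ref{pisc6},(iv) trivializes the canonical module; the paper phrases this as the freeness of $\iota_{*}(\Omega_{{\goth Z}_{\loc}})$ via Lemma~\ref{l2ars} together with the identification of the canonical module of a variety with rational singularities from \cite[p.50]{KK}, which amounts to the same extension-across-a-big-open-subset argument you give. For part (ii), however, you take a genuinely different route. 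The paper does not descend rational singularities cohomologically: it takes the nowhere-vanishing form $\omega$ on ${\goth Y}_{\loc}$ from Proposition~\ref{pisc6},(iv), uses Proposition~\ref{pisc6},(iii) to regard $\tauup^{*}(\omega)$ as a section of $\Omega_{{\goth Z}_{\loc}}$ over a big open subset, extends it to the resolution $\widetilde{{\goth Z}}$ by (i) and \cite[p.50]{KK}, and then applies \cite[Lemma 2.3]{Hin}, which delivers the Gorenstein property and rational singularities of ${\goth Y}$ simultaneously from the fact that multiplication by the extended form pushes down to an isomorphism $\an {{\goth Y}}{}\rightarrow \iota_{*}(\Omega_{{\goth Y}_{\loc}})$. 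You instead prove rational singularities of ${\goth Y}$ directly from those of ${\goth Z}$ by Leray and Grauert--Riemenschneider, the key point being that the triviality $\omega_{{\goth Z}}\cong \an {{\goth Z}}{}$ from (i) converts $R^{>0}\tauup_{*}\omega_{{\goth Z}}=0$ into $R^{>0}\tauup_{*}\an {{\goth Z}}{}=0$, and you then obtain the Gorenstein property from $\omega_{{\goth Y}}\cong\tauup_{*}\omega_{{\goth Z}}\cong\an {{\goth Y}}{}$. Your version of (ii) uses only (i) as input --- it bypasses both the nowhere-vanishing form on ${\goth Y}_{\loc}$ and the big-open-subset isomorphism of Proposition~\ref{pisc6},(iii) --- and replaces Hinich's lemma by the standard facts that rational singularities imply Cohen--Macaulay, that a Cohen--Macaulay variety with invertible dualizing sheaf is Gorenstein, and that $\overline{\thetaup}_{*}\Omega_{\widetilde{{\goth Z}}}=\omega_{{\goth Z}}$ for rational singularities (the paper's Lemma~\ref{l3ars}); what the paper's packaging buys in exchange is a single cited criterion that treats Gorenstein and rational singularities at once and keeps the three varieties of Theorem~\ref{tint} on a uniform footing through Proposition~\ref{pisc6}.
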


\begin{proof}
(i) According to~\cite[Theorem 1.1]{C1}, $X$ has rational singularities. Then, by 
Lemma~\ref{lsi},(iii), so has $\sqx GX$ as a fiber bundle over a smooth variety with 
fibers having rational singularities. As a result, by Lemma~\ref{lsi},(iv), ${\goth Z}$ 
has rational singularities as a vector bundle over a variety having rational 
singularities. Moreover, ${\goth T}$ is Gorenstein by Proposition~\ref{pmv},(i) and 
(iii). Then so is ${\goth Z}$ as a vector bundle over ${\goth T}$ by 
Lemma~\ref{lsi},(i). By Proposition~\ref{pisc6}, $\Omega _{{\goth Z}_{\loc}}$ has a 
global section without zero. Then, by Lemma~\ref{l2ars}, 
$\iota _{*}(\Omega _{{\goth Z}_{\loc}})$ is a free module of rank one. Since ${\goth Z}$ 
has rational singularities, the canonical module of ${\goth Z}$ is equal to  
$\iota _{*}(\Omega _{{\goth Z}_{\loc}})$ by~\cite[p.50]{KK}, whence the assertion.

(ii) By Proposition~\ref{pisc6}, $\Omega _{{\goth Y}_{\loc}}$ has a global 
section without zero. Denote it by $\omega $. By Proposition~\ref{pisc6},(iii), 
$\tauup ^{*}(\omega )$ is a local section of $\Omega _{{\goth Z}_{\loc}}$ above a big 
open subset of ${\goth Z}$. So by (i) and ~\cite[p.50]{KK}, $\tauup ^{*}(\omega )$ has a 
regular extension to $\widetilde{{\goth Z}}$. Denote it by $\widetilde{\omega }$ and 
by $\mu $ the morphism 
$$\xymatrix{ \an {\widetilde{{\goth Z}}}{} \ar[rr]^{\mu } && 
\Omega _{\widetilde{{\goth Z}}}}, 
\qquad \varphi \longmapsto \varphi \widetilde{\omega } .$$
Since $\omega $ has no zero, by Lemma~\ref{l2ars}, 
$\tauup \rond \overline{\thetaup }_{*}\mu $ is an isomorphism from $\an {{\goth Y}}{}$ 
onto $\Omega _{{\goth Y}_{\loc}}$ and $\iota _{*}(\Omega _{{\goth Y}_{\loc}})$ is a free 
module of rank one. As a result, by~\cite[Lemma 2.3]{Hin}, ${\goth Y}$ is Gorenstein with
rational singularities. Then, by~\cite[p.50]{KK}, the canonical module of ${\goth Y}$ is 
equal to $\iota _{*}(\Omega _{{\goth Y}_{\loc}})$, whence the assertion. 
\end{proof}

\begin{coro}\label{crs}
{\rm (i)} The variety $\widetilde{{\cal C}^{(k)}}$ is Gorenstein with rational 
singularities. Moreover its canonical module is free of rank one.

{\rm (ii)} The variety ${\cal E}_{\n}^{(k)}$ is Gorenstein with rational 
singularities. Moreover its canonical module is free of rank one.

{\rm (iii)} The varieties ${\cal E}_{\n}$ and $(G.X)_{\n}$ are Gorenstein with rational 
singularities.
\end{coro}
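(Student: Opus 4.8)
The plan is to deduce the corollary from the two cases ${\goth Y}=\widetilde{{\goth X}_{0,k}}$ and ${\goth Y}=\widetilde{{\cal C}_{\x}^{(k)}}$ already settled in Proposition~\ref{prs}, together with a descent argument along finite morphisms. Proposition~\ref{prs} does not apply verbatim to ${\goth Y}=\widetilde{{\cal C}^{(k)}}$, ${\goth Z}={\cal E}_{\n}^{(k)}$, ${\goth T}=(G.X)_{\n}$, since that proof needs ${\goth T}$ to be Gorenstein with rational singularities, which for $(G.X)_{\n}$ is precisely part of what we are proving. So the first step will be to establish, by a covering argument, that ${\cal E}_{\n}^{(k)}$, $\widetilde{{\cal C}^{(k)}}$, $(G.X)_{\n}$ and ${\cal E}_{\n}$ have rational singularities, and only then to recover the Gorenstein property and the canonical modules.

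First I would note that each of these varieties receives a finite surjective morphism from a variety already known to have rational singularities. The morphism $\sqx GX\to G.X$ is proper, and generically finite since both varieties have dimension $2n$; hence it factors through the normalization as a finite surjective morphism $\sqx GX\to (G.X)_{\n}$. Pulling this back along the vector bundles ${\cal E}_{\n}\to (G.X)_{\n}$ and ${\cal E}_{\n}^{(k)}\to (G.X)_{\n}$ gives finite surjective morphisms $\sqx G{{\cal E}_{0}}\to {\cal E}_{\n}$ and $\sqx G{{\cal E}_{0}^{(k)}}\to {\cal E}_{\n}^{(k)}$, since $\sqx G{{\cal E}_{0}^{(k)}}$ is the fibre product $\sqx GX\times _{G.X}{\cal E}^{(k)}$. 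Likewise $\eta $ is the restriction of the finite morphism ${\cal X}^{k}\to {\goth g}^{k}$, so $\eta \rond \lambda _{*,k}$ is a finite surjective morphism from $\widetilde{{\cal C}_{\x}^{(k)}}$ onto ${\cal C}^{(k)}$ which, $\widetilde{{\cal C}_{\x}^{(k)}}$ being normal, factors through a finite surjective morphism $\widetilde{{\cal C}_{\x}^{(k)}}\to \widetilde{{\cal C}^{(k)}}$ (it is the quotient by the diagonal action of $W({\cal R})$ on the ${\goth h}^{k}$-factor). Now $\sqx GX$ and $\sqx G{{\cal E}_{0}}$ are Gorenstein with rational singularities by Proposition~\ref{pmv},(iii); $\sqx G{{\cal E}_{0}^{(k)}}$ is so by Proposition~\ref{prs},(i) applied with ${\goth Y}=\widetilde{{\cal C}_{\x}^{(k)}}$; and $\widetilde{{\cal C}_{\x}^{(k)}}$ is so by Proposition~\ref{prs},(ii). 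Since in characteristic zero rational singularities descend along a finite surjective morphism onto a normal variety --- the trace map splits the structure sheaf of the base off the pushforward of the structure sheaf of the source, so one is in the situation of Boutot's theorem --- the four varieties ${\cal E}_{\n}$, ${\cal E}_{\n}^{(k)}$, $(G.X)_{\n}$, $\widetilde{{\cal C}^{(k)}}$ all have rational singularities.

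Next I would recover the Gorenstein property. Take ${\goth Y}$ equal to ${\cal E}_{\n}^{(k)}$ or to $\widetilde{{\cal C}^{(k)}}$. Each is normal and smooth in codimension one: $(G.X)_{\n}$ is smooth in codimension one because $G.X$ is so by~\cite[Theorem 1.2]{CZ} and normalization changes nothing in codimension one, hence so is the vector bundle ${\cal E}_{\n}^{(k)}$ over it; and $\widetilde{{\cal C}^{(k)}}$ is smooth in codimension one since ${\cal C}^{(k)}$ is. Thus ${\goth Y}_{\loc}$ is a big open subset of ${\goth Y}$. Having rational singularities, ${\goth Y}$ is Cohen-Macaulay and its dualizing module equals $\iota _{*}(\Omega _{{\goth Y}_{\loc}})$ by~\cite[p.50]{KK}. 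By Proposition~\ref{pisc6},(iv), $\Omega _{{\goth Y}_{\loc}}$ has a global section without zero; regarded as a section of the reflexive rank-one sheaf $\iota _{*}(\Omega _{{\goth Y}_{\loc}})$ that does not vanish on the big open set ${\goth Y}_{\loc}$, it generates this sheaf, so the canonical module of ${\goth Y}$ is free of rank one. A normal Cohen-Macaulay variety with invertible dualizing module is Gorenstein, which proves (i) and (ii). Finally $(G.X)_{\n}$ is Gorenstein, being a variety over which the Gorenstein variety ${\cal E}_{\n}^{(k)}$ is a vector bundle (Lemma~\ref{lsi},(i)), and then ${\cal E}_{\n}$ is Gorenstein as a vector bundle over $(G.X)_{\n}$; together with the rational singularities obtained above, this gives (iii).

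The main obstacle is the descent of rational singularities along the finite surjective morphisms $\sqx GX\to (G.X)_{\n}$ and $\widetilde{{\cal C}_{\x}^{(k)}}\to \widetilde{{\cal C}^{(k)}}$; this is where characteristic zero and the normality of the targets enter, through the trace splitting. Two points in the setup need care: that $\sqx GX\to G.X$ is genuinely finite (it is proper, so it suffices that its fibres be finite, which follows from the description of $X$ recalled in Section~\ref{mv}), and that a proper quasi-finite morphism from a normal variety onto a normal variety is finite, so that the factorizations through the normalizations are indeed finite. The remaining verifications --- that ${\goth Y}_{\loc}$ is big open and that the form of Proposition~\ref{pisc6},(iv) lives on it --- are routine given the codimension-one smoothness statements.
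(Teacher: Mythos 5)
Your part (i) is sound and is essentially the paper's own argument: $\widetilde{{\cal C}^{(k)}}$ is the quotient of $\widetilde{{\cal C}_{\x}^{(k)}}$ by $W({\cal R})$, Boutot's theorem gives rational singularities, and the nowhere-vanishing form of Proposition~\ref{pisc6},(iv) gives the free canonical module and the Gorenstein property. The gap is in parts (ii) and (iii): the morphism $\sqx GX\rightarrow G.X$, $\overline{(g,u)}\mapsto g(u)$, is proper and generically finite but it is \emph{not} quasi-finite, so it does not factor through a finite surjective morphism onto $(G.X)_{\n}$, nor do its pullbacks to ${\cal E}_{\n}$ and ${\cal E}_{\n}^{(k)}$. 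Concretely, take ${\goth g}={\goth {sl}}_{3}$ with simple roots $\alpha ,\beta $, let ${\goth p}={\goth b}\oplus {\goth g}^{-\alpha }$ be the minimal parabolic subalgebra attached to $\alpha $ and ${\goth m}={\goth g}^{\beta }\oplus {\goth g}^{\alpha +\beta }$ its nilpotent radical. Since $[x_{\beta },x_{\alpha +\beta }]=0$, one has $\exp(s\ad (x_{\beta }+x_{\alpha +\beta }))(h)=h-s(\beta (h)x_{\beta }+(\alpha +\beta )(h)x_{\alpha +\beta })$ for $h$ in ${\goth h}$, and letting $s$ tend to infinity shows that ${\goth m}$ is a limit of $B$-conjugates of ${\goth h}$, hence lies in $X$. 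But ${\goth m}$ is normalized by the parabolic subgroup $P$ with Lie algebra ${\goth p}$, so the fibre of $\sqx GX\rightarrow G.X$ over ${\goth m}$ contains $P/B\cong {\Bbb P}^{1}(\k)$. (The description recalled in Section~\ref{mv} only gives finiteness of the fibres over the big open subset $G.X'$.) Consequently the trace-splitting descent you invoke is unavailable, and rational singularities of $(G.X)_{\n}$, ${\cal E}_{\n}$ and ${\cal E}_{\n}^{(k)}$ --- which is the entire nontrivial content of (ii) and (iii) --- is left unproved. The rest of your argument (Cohen--Macaulayness, identification of the dualizing module with $\iota _{*}(\Omega _{{\goth Y}_{\loc}})$, freeness from the nowhere-vanishing section, and the passage between a vector bundle and its base) is fine once rational singularities are granted, except that the transfer from ${\cal E}_{\n}^{(k)}$ Gorenstein to $(G.X)_{\n}$ Gorenstein uses Lemma~\ref{lsi},(ii), not (i).

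The paper closes exactly this gap by running the argument in the opposite direction: once $\widetilde{{\cal C}^{(k)}}$ is known to have rational singularities, the pullback under $\tau _{\n,k}$ of the nowhere-vanishing top form on $\widetilde{{\cal C}^{(k)}}_{\loc}$ extends to a regular form on the resolution $\widetilde{{\goth Z}}$ of ${\goth Z}={\cal E}_{\n}^{(k)}$ by~\cite[p.50]{KK}; multiplication by this form, pushed down, identifies $\an {{\goth Z}}{}$ with $\iota _{*}(\Omega _{{\goth Z}_{\loc}})$, and Hinich's criterion~\cite[Lemma 2.3]{Hin} then yields that ${\cal E}_{\n}^{(k)}$ is Gorenstein with rational singularities; $(G.X)_{\n}$ and ${\cal E}_{\n}$ follow by Lemma~\ref{lsi},(ii) and (iv). To salvage your scheme you would need either an argument of this type or a splitting of $\an {(G.X)_{\n}}{}\rightarrow {\mathrm R}\mu _{*}\an {\sqx GX}{}$ in the derived category, which is not automatic for a proper morphism that is merely generically finite.
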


\begin{proof}
In the proof, we suppose ${\goth Y}=\widetilde{{\cal C}^{(k)}}$.

(i) According to~\cite[Proposition 5.8,(ii)]{CZ}, $\widetilde{{\cal C}^{(k)}}$ is the 
categorical quotient of $\widetilde{{\cal C}_{\x}^{(k)}}$ by the action of $W({\cal R})$.
Hence, by \cite[Th\'eor\`eme]{Bout} and Proposition~\ref{prs},(ii), 
${\goth Y}=\widetilde{{\cal C}^{(k)}}$ has rational singularities. By Proposition
\ref{pisc6},(iv), $\Omega _{{\goth Y}_{\loc}}$ has a global section without zero. 
Then, by Lemma~\ref{l2ars}, $\iota _{*}(\Omega _{{\goth Y}_{\loc}})$ is a free module of 
rank one. Since ${\goth Y}$ has rational singularities, the canonical module of 
${\goth Y}$ is equal to $\iota _{*}(\Omega _{{\goth Z}_{\loc}})$ by~\cite[p.50]{KK}. 
Moreover, ${\goth Y}$ is Cohen-Macaulay. So, by Lemma~\ref{l3ars},
$\iota _{*}(\Omega _{{\goth Z}_{\loc}})$ has finite injective dimension, whence
${\goth Y}$ is Gorenstein.

(ii) Denote by $\omega $ a global section of $\Omega _{{\goth Y}_{\loc}}$ without zero. 
By Proposition~\ref{pisc6},(iii) and Lemma~\ref{l2ars}, $\Omega _{{\goth Z}_{\loc}}$ has
a global section without zero whose restriction to a big open subset of 
${\goth Z}_{\loc}$ is equal to the restriction of $\tauup ^{*}(\omega )$. Denote it by 
$\omega '$. Since ${\goth Y}$ has rational singularities, 
$\tauup \rond \overline{\thetaup }^{*}(\omega )$ has a regular extension 
to $\widetilde{{\goth Z}}$ by~\cite[p.50]{KK}. Denote it by $\widetilde{\omega }$.
Then the restriction of $\widetilde{\omega }$ to 
$\overline{\thetaup }^{-1}({\goth Z}_{\loc})$ is equal to 
$\overline{\thetaup }^{*}(\omega ')$. Let $\mu $ be the morphism 
$$\xymatrix{ \an {\widetilde{{\goth Z}}}{} \ar[rr]^{\mu } && 
\Omega _{\widetilde{{\goth Z}}}}, 
\qquad \varphi \longmapsto \varphi \widetilde{\omega } .$$
Since $\omega '$ has no zero, by Lemma~\ref{l2ars}, 
$\overline{\thetaup }_{*}\mu $ is an isomorphism from $\an {{\goth Z}}{}$ 
onto $\Omega _{{\goth Z}_{\loc}}$ and $\iota _{*}(\Omega _{{\goth Z}_{\loc}})$ is free of
rank one. As a result, by~\cite[Lemma 2.3]{Hin}, ${\goth Z}$ is Gorenstein with rational 
singularities. Then, by~\cite[p.50]{KK}, the canonical module of ${\goth Z}$ is equal to 
$\iota _{*}(\Omega _{{\goth Z}_{\loc}})$, whence the assertion.

(iii) Since ${\goth Z}$ is a vector bundle over ${\goth T}=(G.X)_{\n}$, $(G.X)_{\n}$
is Gorenstein with rational singularities by (ii) and Lemma~\ref{lsi},(ii) and (iv). Then
so is ${\cal E}_{\n}$ as a vector bundle over $(G.X)_{\n}$ by 
Lemma~\ref{lsi},(i) and (iv).
\end{proof}

Summarizing the results, Theorem~\ref{tint} results from Proposition~\ref{prs},(ii),
and Corollary~\ref{crs},(i) and (iii). According to~\cite{Ric}, 
${\cal C}^{(2)}$ is the commuting variety of ${\goth g}$ and according 
to~\cite[Theorem 1.1]{C}, ${\cal C}^{(2)}$ is normal, whence:

\begin{coro}\label{c2rs}
The commuting variety of ${\goth g}$ is Gorenstein with rational singularities. 
Moreover, its canonical module is free of rank $1$.
\end{coro}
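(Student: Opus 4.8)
The plan is to deduce this immediately from Corollary~\ref{crs},(i) together with the two external inputs quoted just above the statement. First I would note that $k=2$ is allowed, since throughout this section the standing hypothesis is $k\geq 2$. By~\cite{Ric} the variety ${\cal C}^{(2)}$ is precisely the commuting variety $\{(x,y)\in{\goth g}\times{\goth g}\mid [x,y]=0\}$ of ${\goth g}$, and by~\cite[Theorem 1.1]{C} it is normal. Consequently the normalization morphism $\lambda_{2}\colon\widetilde{{\cal C}^{(2)}}\to{\cal C}^{(2)}$ is an isomorphism.

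Next I would apply Corollary~\ref{crs},(i) with $k=2$: the normalization $\widetilde{{\cal C}^{(2)}}$ is Gorenstein with rational singularities and its canonical module is free of rank one. Since $\lambda_{2}$ is an isomorphism, these properties transport to ${\cal C}^{(2)}$ itself; being Gorenstein and having rational singularities are preserved under isomorphism, and the canonical module of ${\cal C}^{(2)}$ is carried isomorphically to that of $\widetilde{{\cal C}^{(2)}}$, hence is free of rank one. This yields the stated conclusion for the classical commuting variety.

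There is essentially no obstacle in this last step beyond invoking the normality result~\cite[Theorem 1.1]{C} and the identification~\cite{Ric}. The genuine work lies in Corollary~\ref{crs},(i), which rests on Proposition~\ref{prs} and on the existence of a nowhere-vanishing regular differential form of top degree on the relevant smooth loci (Propositions~\ref{pisc3}--\ref{pisc5}); once $k=2$ is fed into that machinery and normality of ${\cal C}^{(2)}$ is known, the statement for the commuting variety follows formally.
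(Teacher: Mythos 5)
Your proposal is correct and follows exactly the paper's own route: the paper likewise identifies ${\cal C}^{(2)}$ with the commuting variety via \cite{Ric}, invokes normality from \cite[Theorem 1.1]{C} so that ${\cal C}^{(2)}$ coincides with its normalization, and then applies Corollary~\ref{crs},(i). Nothing is missing.
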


\section{Normality} \label{nor}
Let $k$ be a positive integer. The goal of this section is to prove that ${\goth X}_{0,k}$
is a normal variety. Consider the desingularization $(\Gamma ,\thetaup )$ of $X$ as in 
Section~\ref{rs}. For simplicity of the notations, for $k$ positive integer, we denote
by $\piup _{k}$ the bundle projection $\xymatrix{ {\cal E}_{0}^{(k)}\ar[r] & X}$ and  by 
$F^{(k)}$ the fiber product
$$ \xymatrix{ F^{(k)} \ar[d]_{\overline{\piup _{k}}} \ar[rr]^{\thetaup _{k}} && 
{\cal E}_{0}^{(k)} \ar[d]^{\piup _{k}} \\ \Gamma \ar[rr]_{\thetaup } && X}$$ 
with $\thetaup _{k}$ and $\overline{\piup _{k}}$ the restriction morphisms.

\subsection{} \label{nor1}
Let $F^{*}$ be the dual of the vector bundle $F^{(1)}$ over $\Gamma $. 

\begin{lemma}\label{lnor1}
Let ${\cal F}^{*}$ be the sheaf of local sections of $F^{*}$. For $i>0$ and 
for $j\geq 0$, ${\mathrm {H}}^{i}(\Gamma ,\sy j{{\cal F}^{*}})=0$.
\end{lemma}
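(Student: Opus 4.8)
The plan is to reduce the statement to a cohomology vanishing for a sheaf on the smooth projective-over-$X$ variety $\Gamma$, and then to invoke rational singularities of $X$ together with a Grauert–Riemenschneider-type vanishing. First I would unwind the definitions: $F^{(1)}$ is the pullback to $\Gamma$ of the vector bundle ${\cal E}_{0}$ over $X$ under $\thetaup$, so $F^{*}$ is the pullback of ${\cal E}_{0}^{*}$, and $\sy j{{\cal F}^{*}}$ is the pullback of $\sy j{{\cal E}_{0}^{*}}$, the sheaf of local sections of the $j$-th symmetric power of the dual tautological bundle on $X$. Since $\thetaup\colon\Gamma\to X$ is a desingularization that is an isomorphism over $X_{\loc}$ and $X$ has rational singularities by \cite[Theorem 1.1]{C1}, the Leray spectral sequence for $\thetaup$ and the vanishing $\mathrm{R}^{q}\thetaup_{*}\an{\Gamma}{}=0$ for $q>0$ (together with $\thetaup_{*}\an{\Gamma}{}=\an{X}{}$ by normality) will let me replace ${\mathrm{H}}^{i}(\Gamma,\sy j{{\cal F}^{*}})$ by ${\mathrm{H}}^{i}(X,\sy j{{\cal E}_{0}^{*}})$, \emph{provided} one also controls $\mathrm{R}^{q}\thetaup_{*}(\thetaup^{*}\sy j{{\cal E}_{0}^{*}})$. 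By the projection formula this last sheaf is $\sy j{{\cal E}_{0}^{*}}\otimes\mathrm{R}^{q}\thetaup_{*}\an{\Gamma}{}$, which vanishes for $q>0$; so the reduction to $X$ is clean.

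\textbf{Vanishing on $X$.} The core is therefore to show ${\mathrm{H}}^{i}(X,\sy j{{\cal E}_{0}^{*}})=0$ for all $i>0$, $j\geq 0$. Here I would use the explicit description of $X$ as the closure in $\ec{Gr}g{}{}{\rg}$ of the $B$-orbit of ${\goth h}$, and the birational projective morphism $\tau_{0}\colon{\cal E}_{0}\to{\goth b}$ of Lemma~\ref{lmv},(i), whose total space ${\cal E}_{0}$ is the variety resolving ${\goth b}$. One route: the affine morphism $\pi_{0}\colon{\cal E}_{0}\to X$ gives $\pi_{0,*}\an{{\cal E}_{0}}{}=\bigoplus_{j\geq 0}\sy j{{\cal E}_{0}^{*}}$ (the symmetric algebra of the dual bundle), so $\bigoplus_{j}{\mathrm{H}}^{i}(X,\sy j{{\cal E}_{0}^{*}})={\mathrm{H}}^{i}({\cal E}_{0},\an{{\cal E}_{0}}{})$, and the latter vanishes for $i>0$ because ${\cal E}_{0}$ has rational singularities (Proposition~\ref{pmv},(i)) and $\tau_{0}$ is a projective birational morphism onto the affine — hence cohomologically trivial — space ${\goth b}$; indeed for a variety with rational singularities admitting a proper birational map to an affine variety, all higher cohomology of the structure sheaf vanishes. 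This is the cleanest formulation and it is the step I would write out carefully, since it is where rational singularities of the ambient bundle are genuinely used.

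\textbf{The main obstacle.} The delicate point is the passage $\bigoplus_{j}{\mathrm{H}}^{i}(X,\sy j{{\cal E}_{0}^{*}})={\mathrm{H}}^{i}({\cal E}_{0},\an{{\cal E}_{0}}{})$ and, downstream, the vanishing of the right-hand side: one must know that $\tau_{0}$, being projective birational onto ${\goth b}$ with ${\cal E}_{0}$ having rational singularities, forces ${\mathrm{H}}^{i}({\cal E}_{0},\an{{\cal E}_{0}}{})={\mathrm{H}}^{i}({\goth b},\an{{\goth b}}{})=0$ for $i>0$ — this follows from $\mathrm{R}^{q}\tau_{0,*}\an{{\cal E}_{0}}{}=0$ for $q>0$ (rational singularities of ${\goth b}$, which is smooth, via a desingularization factoring through ${\cal E}_{0}$) combined with $\tau_{0,*}\an{{\cal E}_{0}}{}=\an{{\goth b}}{}$ and the affineness of ${\goth b}$. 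I would assemble this from Proposition~\ref{pmv},(i), Lemma~\ref{lmv},(i), and the standard fact (usable here via \cite{KK} or \cite{Hi}) that rational singularities propagate along such resolutions; the grading is respected throughout because $\tau_{0}$ and $\pi_{0}$ are equivariant for the scaling ${\Bbb G}_{\mathrm m}$-action on the fibers, so the decomposition into homogeneous pieces ${\mathrm{H}}^{i}(X,\sy j{{\cal E}_{0}^{*}})$ is preserved, giving the vanishing for each fixed $j$ separately. Finally, pulling back to $\Gamma$ via the reduction in the first paragraph yields ${\mathrm{H}}^{i}(\Gamma,\sy j{{\cal F}^{*}})=0$ for all $i>0$ and $j\geq 0$, as claimed.
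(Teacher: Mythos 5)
Your argument is correct, and it rests on the same key identity as the paper's proof — namely that the direct sum over $j$ of the cohomology of the symmetric powers of the dual bundle is the cohomology of the structure sheaf of the total space of the bundle, which maps projectively and birationally onto the smooth affine variety ${\goth b}$ — but you execute it by a detour through $X$ rather than directly on $\Gamma$. The paper simply observes that $\overline{\piup _{1}}_{*}(\an {F^{(1)}}{})=\es S{{\cal F}^{*}}$, so that $\bigoplus _{j}{\mathrm {H}}^{i}(\Gamma ,\sy j{{\cal F}^{*}})={\mathrm {H}}^{i}(F^{(1)},\an {F^{(1)}}{})$, and then notes that $F^{(1)}$ is smooth (a vector bundle over the smooth $\Gamma $) and is a desingularization of ${\goth b}$ by Lemma~\ref{lmv},(i), so the right-hand side vanishes for $i>0$ by Elkik's theorem; no rational-singularities input beyond the smoothness of $\Gamma $ and ${\goth b}$ is needed. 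You instead first use the Leray spectral sequence and the projection formula for $\thetaup $, which requires ${\mathrm {R}}^{q}\thetaup _{*}\an {\Gamma }{}=0$, i.e.\ that $X$ has rational singularities, and then repeat the total-space argument for ${\cal E}_{0}$ over $X$, which requires that ${\cal E}_{0}$ has rational singularities (Proposition~\ref{pmv},(i), hence \cite[Theorem 1.1]{C1}). Both of these extra inputs are available in the paper, and each step of your reduction (projection formula for the locally free $\sy j{{\cal E}_{0}^{*}}$, affineness of $\pi _{0}$, factoring a desingularization of ${\cal E}_{0}$ through $\tau _{0}$ to get ${\mathrm {R}}^{q}\tau _{0,*}\an {{\cal E}_{0}}{}=0$) is sound, so your proof is valid; it is just longer and logically more demanding than the paper's one-step version, which works with the bundle on $\Gamma $ precisely so that the total space is already smooth and no rational-singularities hypothesis has to be invoked.
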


\begin{proof}
Since $\overline{\piup _{1}}$ is the bundle projection of the vector bundle $F^{(1)}$ 
over $\Gamma $, $\an {F^{(1)}}{}$ is equal to 
$\overline{\piup _{1}}^{*}(\es S{{\cal F}^{*}})$ so that
$$ \overline{\piup _{1}}_{*}(\an {F}{}) = \es S{{\cal F}^{*}}$$
As a result, for $i\geq 0$,
$$ {\mathrm {H}}^{i}(F^{(1)},\an {F^{(1)}}{}) = 
{\mathrm {H}}^{i}(\Gamma ,\es S{{\cal F}^{*}}) = 
\bigoplus _{j\in {\Bbb N}} {\mathrm {H}}^{i}(\Gamma ,\sy j{{\cal F}^{*}}) $$
According to Lemma~\ref{lmv},(i), $F^{(1)}$ is a desingularization of the 
smooth variety ${\goth b}$. Hence by~\cite{El}, 
$${\mathrm {H}}^{i}(F^{(1)},\an {F^{(1)}}{})=0$$ for $i>0$, whence 
$${\mathrm {H}}^{i}(\Gamma ,\sy j{{\cal F}^{*}})=0$$ 
for $i>0$ and $j\geq 0$. 
\end{proof}

According to the identification of ${\goth g}$ and ${\goth g}^{*}$ by the bilinear form
$\dv ..$, ${\goth b}_{-}$ identifies with ${\goth b}^{*}$. Denote by $F_{-}$ the 
orthogonal complement to $F^{(1)}$ in $\Gamma \times {\goth b}_{-}$ so that $F_{-}$ is
a vector bundle of rank $n$ over $\Gamma $. Let ${\cal F}_{-}$ be the sheaf of 
local sections of $F_{-}$.

\begin{coro}\label{cnor1}
Let ${\cal J}_{0}$ be the ideal of $\tk {\k}{\an {\Gamma }{}}{\ec Sb{}-{}}$ generated 
by ${\cal F}_{-}$. Then, for $i\geq 0$, ${\mathrm {H}}^{i}(\Gamma ,{\cal J}_{0})=0$
and ${\mathrm {H}}^{i}(\Gamma ,{\cal F}_{-})=0$.
\end{coro}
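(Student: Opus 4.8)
Recall the exact sequence of vector bundles on $\Gamma $
$$ \xymatrix{ 0 \ar[r] & {\cal F}_{-} \ar[r] & \tk {\k}{\an {\Gamma }{}}{{\goth b}_{-}}
\ar[r] & {\cal F}^{*} \ar[r] & 0 } ,$$
obtained by dualizing the inclusion $F^{(1)}\hookrightarrow \Gamma \times {\goth b}$ and
using the identification ${\goth b}_{-}\simeq {\goth b}^{*}$. The plan is to deduce the
vanishing of ${\mathrm {H}}^{i}(\Gamma ,{\cal J}_{0})$ from Lemma~\ref{lnor1} by a
filtration argument on the symmetric algebra, and then to extract the vanishing of
${\mathrm {H}}^{i}(\Gamma ,{\cal F}_{-})$ as the degree-one graded piece.

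\begin{proof}
Since $F^{(1)}$ is a subbundle of the trivial bundle $\Gamma \times {\goth b}$,
dualizing and using ${\goth b}_{-}\simeq {\goth b}^{*}$ gives the short exact sequence of
locally free $\an {\Gamma }{}$-modules
$$ \xymatrix{ 0 \ar[r] & {\cal F}_{-} \ar[r] & \tk {\k}{\an {\Gamma }{}}{{\goth b}_{-}}
\ar[r] & {\cal F}^{*} \ar[r] & 0 } .$$
Passing to symmetric algebras, $\tk {\k}{\an {\Gamma }{}}{\ec Sb{}-{}}$ carries the
increasing filtration by the ideals ${\cal F}_{-}^{p}\tk {\k}{\an {\Gamma }{}}
{\ec Sb{}-{}}$, $p\geq 0$, whose $p$-th graded piece is
$\sy p{{\cal F}_{-}}\otimes _{\an {\Gamma }{}}\es S{{\cal F}^{*}}$. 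In particular
${\cal J}_{0}$, being the ideal generated by ${\cal F}_{-}$, is the subsheaf
$\bigoplus _{p\geq 1}{\cal F}_{-}^{p}\tk {\k}{\an {\Gamma }{}}{\ec Sb{}-{}}/\cdots $;
more precisely, for each integer $d\geq 0$ the degree-$d$ part of ${\cal J}_{0}$ with
respect to the grading of $\ec Sb{}-{}$ has a finite filtration whose successive
quotients are the sheaves $\sy p{{\cal F}_{-}}\otimes _{\an {\Gamma }{}}\sy {d-p}
{{\cal F}^{*}}$ with $1\leq p\leq d$. Each such quotient is a direct summand of a tensor
power of $\tk {\k}{\an {\Gamma }{}}{{\goth b}_{-}}$ tensored with a symmetric power of
${\cal F}^{*}$, hence is a direct sum of copies of $\sy j{{\cal F}^{*}}$ for various
$j\geq 0$, up to twisting by the trivial bundle; concretely, $\sy p{{\cal F}_{-}}$ is a
subbundle of $\bigwedge ^{p}$-type pieces of $\tk {\k}{\an {\Gamma }{}}{{\goth b}_{-}}$,
so by an exact-sequence induction using the above short exact sequence one reduces the
cohomology of $\sy p{{\cal F}_{-}}\otimes _{\an {\Gamma }{}}\sy {d-p}{{\cal F}^{*}}$ to
that of sheaves $\sy j{{\cal F}^{*}}$. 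By Lemma~\ref{lnor1},
${\mathrm {H}}^{i}(\Gamma ,\sy j{{\cal F}^{*}})=0$ for all $i>0$ and $j\geq 0$. Running
the long exact cohomology sequences attached to the filtration of the degree-$d$ part of
${\cal J}_{0}$, one obtains ${\mathrm {H}}^{i}(\Gamma ,({\cal J}_{0})_{d})=0$ for all
$i>0$, and summing over $d$ gives ${\mathrm {H}}^{i}(\Gamma ,{\cal J}_{0})=0$ for $i>0$.
For $i=0$ this is automatic, so ${\mathrm {H}}^{i}(\Gamma ,{\cal J}_{0})=0$ for all
$i\geq 0$. Finally, the degree-one graded piece of ${\cal J}_{0}$ (for the grading of
$\ec Sb{}-{}$) is exactly ${\cal F}_{-}$, which is a direct summand of
$({\cal J}_{0})$ as an $\an {\Gamma }{}$-module, hence
${\mathrm {H}}^{i}(\Gamma ,{\cal F}_{-})=0$ for all $i\geq 0$.
\end{proof}

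The main obstacle is the bookkeeping in the filtration step: one must check that every
successive quotient $\sy p{{\cal F}_{-}}\otimes _{\an {\Gamma }{}}\sy {d-p}{{\cal F}^{*}}$
has vanishing higher cohomology, and the clean way to see this is to resolve
$\sy p{{\cal F}_{-}}$ by symmetric powers of the trivial bundle $\tk {\k}{\an {\Gamma }{}}
{{\goth b}_{-}}$ using the short exact sequence above (a Koszul-type resolution), which
reduces everything to Lemma~\ref{lnor1}. Everything else is a routine chase of long exact
sequences; no new geometric input beyond Lemma~\ref{lnor1} is needed.
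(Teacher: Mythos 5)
Your strategy differs from the paper's, and as written it has a genuine gap at its central step. The paper does not filter ${\cal J}_{0}$ by powers of ${\cal F}_{-}$ at all: it observes that, since $F_{-}$ is the orthogonal complement of $F^{(1)}$ in $\Gamma \times {\goth b}_{-}$, the ideal ${\cal J}_{0}$ is precisely the ideal of definition of the subbundle $F^{(1)}$ inside $\Gamma \times {\goth b}$, i.e.\ the kernel of the restriction map, which yields the single short exact sequence
$$ 0 \longrightarrow {\cal J}_{0} \longrightarrow \tk {\k}{\an {\Gamma }{}}{\ec Sb{}-{}}
\longrightarrow \es S{{\cal F}^{*}} \longrightarrow 0 ,$$
and then reads everything off the associated long exact sequence, using Lemma~\ref{lnor1}, the vanishing of ${\mathrm {H}}^{i}(\Gamma ,\an {\Gamma }{})$ for $i>0$, and the identification ${\mathrm {H}}^{0}(\Gamma ,\es S{{\cal F}^{*}})=\es S{{\goth b}_{-}}$ coming from the fact that $F^{(1)}$ is a desingularization of ${\goth b}$. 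One short exact sequence, no filtration.

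The gap in your argument is the claim that each graded piece $\sy p{{\cal F}_{-}}\otimes _{\an {\Gamma }{}}\sy {d-p}{{\cal F}^{*}}$ of your filtration has vanishing higher cohomology deducible from Lemma~\ref{lnor1}. This is false for $p\geq 2$. The sheaf ${\cal F}_{-}$ is a subbundle, not a quotient or a direct summand, of the trivial bundle $\tk {\k}{\an {\Gamma }{}}{{\goth b}_{-}}$, so resolving $\sy p{{\cal F}_{-}}$ via the sequence $0\to {\cal F}_{-}\to \tk {\k}{\an {\Gamma }{}}{{\goth b}_{-}}\to {\cal F}^{*}\to 0$ is a Koszul-type resolution of length $p$, and each step shifts the cohomological degree by one: what one can extract is ${\mathrm {H}}^{i}=0$ only for $i>p$, not for all $i>0$. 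This degree shift is exactly what the paper's Proposition~\ref{pnor1},(ii) and Corollary~\ref{c2nor1} record, and even those statements rest on Proposition~\ref{prs}, a much deeper input than Lemma~\ref{lnor1}. Concretely, for ${\goth g}={\goth {sl}}_{2}$ one has $\Gamma =X\cong {\Bbb P}^{1}$ and ${\cal F}_{-}\cong {\cal O}_{{\Bbb P}^{1}}(-1)$, so ${\mathrm {H}}^{1}(\Gamma ,\sy 2{{\cal F}_{-}})={\mathrm {H}}^{1}({\Bbb P}^{1},{\cal O}_{{\Bbb P}^{1}}(-2))\neq 0$. The vanishing of ${\mathrm {H}}^{1}$ of the degree-two part of ${\cal J}_{0}$ is therefore not a termwise phenomenon; it comes from a cancellation in the long exact sequence of the filtration, which your argument never exhibits. (A smaller point: ${\mathrm {H}}^{0}(\Gamma ,{\cal J}_{0})=0$ is not ``automatic'' either; it uses the surjectivity of $F^{(1)}\to {\goth b}$.) The repair is to drop the filtration and use the displayed short exact sequence directly; your final observation that ${\cal F}_{-}$ is the degree-one graded summand of ${\cal J}_{0}$ is correct and is exactly how the paper concludes.
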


\begin{proof}
Since $F_{-}$ is the orthogonal complement to $F^{(1)}$ in 
$\Gamma \times {\goth b}_{-}$, ${\cal J}_{0}$ is the ideal of definition of
$F^{(1)}$ in $\tk {\k}{\an {\Gamma }{}}{\es S{{\goth b}_{-}}}$ whence a short 
exact sequence
$$ 0 \longrightarrow {\cal J}_{0} \longrightarrow 
\tk {\k}{\an {\Gamma }{}}{\ec Sb{}-{}} \longrightarrow \es S{{\cal F}^{*}} 
\longrightarrow 0$$
and whence a cohomology long exact sequence
$$ \cdots \longrightarrow {\mathrm {H}}^{i}(\Gamma ,\es S{{\cal F}^{*}})
\longrightarrow {\mathrm {H}}^{i+1}(\Gamma ,{\cal J}_{0}) 
\longrightarrow 
{\mathrm {H}}^{i+1}(\Gamma ,\tk {\k}{\an {\Gamma }{}}{\ec Sb{}-{}}) \longrightarrow 
\cdots .$$
Then, by Lemma~\ref{lnor1}, from the equality
$$ {\mathrm {H}}^{i}(\Gamma ,\tk {\k}{\an {\Gamma }{}}{\ec Sb{}-{}}) = 
\tk {\k}{\es S{{\goth b}_{-}}}{\mathrm {H}}^{i}(\Gamma ,\an {\Gamma }{})$$
for all $i$, we deduce ${\mathrm {H}}^{i}(\Gamma ,{\cal J}_{0})=0$ for 
$i\geq 2$. Moreover, since $\Gamma $ is an irreducible projective variety, 
${\mathrm {H}}^{0}(\Gamma ,\an {\Gamma }{})=\k$ and since $F^{(1)}$ is a desingularization of 
${\goth b}$, 
${\mathrm {H}}^{0}(\Gamma ,\es S{{\cal F}^{*}})=\es S{{\goth b}_{-}}$ so that the
map
$${\mathrm {H}}^{0}(\Gamma ,\tk {\k}{\an {\Gamma }{}}\es S{{\goth b}_{-}})
\longrightarrow {\mathrm {H}}^{0}(\Gamma , \es S{{\cal F}^{*}})$$
is an isomorphism. Hence ${\mathrm {H}}^{i}(\Gamma ,{\cal J}_{0})=0$ for $i=0,1$.
The gradation on $\ec Sb{}-{}$ induces a gradation on 
$\tk {\k}{\an {\Gamma }{}}{\ec Sb{}-{}}$ so that ${\cal J}_{0}$ is a graded ideal. Since 
${\cal F}_{-}$ is the subsheaf of local sections of degree $1$ of ${\cal J}_{0}$, 
it is a direct factor of ${\cal J}_{0}$, whence the corollary.
\end{proof}

\begin{prop}\label{pnor1}
Let $k,l$ be nonnegative integers. 

{\rm (i)} For all positive integer $i$,
${\mathrm {H}}^{i}(\Gamma ,({\cal F}^{*})^{\tens k})=0$.

{\rm (ii)} For all positive integer $i$,
$$ {\mathrm {H}}^{i+l}(\Gamma ,\tk {\an {\Gamma }{}}{{\cal F}_{-}^{\tens l}}
({\cal F}^{*})^{\tens k}) = 0 .$$
\end{prop}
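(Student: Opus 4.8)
The plan is to derive both statements from the short exact sequence of locally free $\an\Gamma{}$-modules
$$ 0 \longrightarrow {\cal F}_{-} \longrightarrow \tk{\k}{\an\Gamma{}}{{\goth b}_{-}} \longrightarrow {\cal F}^{*} \longrightarrow 0 , $$
whose middle term is a trivial bundle. It holds because $F_{-}$ is by definition the orthogonal complement of $F^{(1)}$ in $\Gamma\times{\goth b}_{-}$, while $\dv ..$ identifies ${\goth b}^{*}$ with ${\goth b}_{-}$ (one has ${\goth b}^{\perp}={\goth u}$ and ${\goth g}={\goth b}_{-}\oplus{\goth u}$), so that ${\cal F}^{*}$ is the quotient $(\tk{\k}{\an\Gamma{}}{{\goth b}_{-}})/{\cal F}_{-}$. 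The cohomological inputs I would use are: by Lemma~\ref{lnor1}, ${\mathrm H}^{i}(\Gamma,\sy j{{\cal F}^{*}})=0$ for all $i>0$ and $j\geq 0$, hence in particular (taking $j=0$) ${\mathrm H}^{i}(\Gamma,\an\Gamma{})=0$ for $i>0$; by Corollary~\ref{cnor1}, ${\mathrm H}^{i}(\Gamma,{\cal F}_{-})=0$ for \emph{every} $i\geq 0$, and, since ${\cal J}_{0}=\bigoplus_{d\geq 1}({\cal J}_{0})_{d}$ with $({\cal J}_{0})_{d}=\ker(\tk{\k}{\an\Gamma{}}{\sy d{{\goth b}_{-}}}\to\sy d{{\cal F}^{*}})$, also ${\mathrm H}^{i}(\Gamma,({\cal J}_{0})_{d})=0$ for all $i\geq 0$ and $d\geq 0$. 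Below, a tensor product with no subscript is taken over $\an\Gamma{}$.

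It is convenient to merge both assertions into the single claim $S(l,k)$: ${\mathrm H}^{m}(\Gamma,\tk{\an\Gamma{}}{{\cal F}_{-}^{\tens l}}({\cal F}^{*})^{\tens k})=0$ for every $m>l$; then (i) is $S(0,k)$ and (ii) is $S(l,k)$. I would prove $S(l,k)$ for all $l,k\geq 0$ by induction on $k$. For the inductive step ($k\geq 1$), tensoring the displayed sequence with ${\cal F}_{-}^{\tens l}\otimes({\cal F}^{*})^{\tens(k-1)}$ produces
$$ 0 \to {\cal F}_{-}^{\tens(l+1)}\otimes({\cal F}^{*})^{\tens(k-1)} \to {\goth b}_{-}\otimes_{\k}\bigl({\cal F}_{-}^{\tens l}\otimes({\cal F}^{*})^{\tens(k-1)}\bigr) \to {\cal F}_{-}^{\tens l}\otimes({\cal F}^{*})^{\tens k} \to 0 , $$
and in the associated long exact sequence the cohomology of ${\cal F}_{-}^{\tens l}\otimes({\cal F}^{*})^{\tens k}$ in degree $m$ is trapped between ${\goth b}_{-}\otimes_{\k}{\mathrm H}^{m}(\Gamma,{\cal F}_{-}^{\tens l}\otimes({\cal F}^{*})^{\tens(k-1)})$, which vanishes for $m>l$ by $S(l,k-1)$, and ${\mathrm H}^{m+1}(\Gamma,{\cal F}_{-}^{\tens(l+1)}\otimes({\cal F}^{*})^{\tens(k-1)})$, which vanishes for $m+1>l+1$ by $S(l+1,k-1)$; hence $S(l,k)$ follows.

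This reduces everything to the base case $k=0$, namely $S(l,0)$: ${\mathrm H}^{m}(\Gamma,{\cal F}_{-}^{\tens l})=0$ for $m>l$, which is entangled with the auxiliary vanishing $A(l)$: ${\mathrm H}^{i}(\Gamma,({\cal F}^{*})^{\tens l})=0$ for $i>0$ (note $A(l)=S(0,l)$). To handle these I would use two Koszul-type resolutions obtained from the displayed sequence. The $l$-th $\an\Gamma{}$-tensor power of the two-term complex $[\,{\cal F}_{-}\to\tk{\k}{\an\Gamma{}}{{\goth b}_{-}}\,]$ is quasi-isomorphic to $({\cal F}^{*})^{\tens l}$, so it yields an exact sequence $0\to{\cal F}_{-}^{\tens l}\to\cdots\to({\cal F}^{*})^{\tens l}\to 0$ whose intermediate terms are direct sums of trivial-bundle twists of ${\cal F}_{-}^{\tens j}$, $0\leq j\leq l$; dually, the two-term complex $[\,\tk{\k}{\an\Gamma{}}{{\goth b}_{-}}\to{\cal F}^{*}\,]$ has kernel ${\cal F}_{-}$ and vanishing cokernel, so its $l$-th tensor power gives an exact sequence resolving ${\cal F}_{-}^{\tens l}$ by direct sums of trivial-bundle twists of $({\cal F}^{*})^{\tens(l-j)}$, $0\leq j\leq l$. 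Chasing cohomology along these two sequences, using ${\mathrm H}^{i}(\Gamma,\an\Gamma{})=0$ for $i>0$ and the total acyclicity of ${\cal F}_{-}$, would reduce $A(l)$ and $S(l,0)$ to $A(j)$ and $S(j,0)$ for $j<l$ up to a single residual interdependence, which I would break with the Koszul resolution of the totally acyclic sheaf ${\cal J}_{0}$ by the exterior powers $\bigwedge^{j}{\cal F}_{-}\otimes_{\k}\ec Sb{}-{}$ ($0\leq j\leq n$) — equivalently, with the acyclicity ${\mathrm H}^{i}(F^{(1)},\an{F^{(1)}}{})=0$ for $i>0$ coming from~\cite{El}, as in the proof of Lemma~\ref{lnor1} — which controls ${\mathrm H}^{\bullet}(\Gamma,\bigwedge^{j}{\cal F}_{-})$ and hence ${\mathrm H}^{\bullet}(\Gamma,{\cal F}_{-}^{\tens l})$. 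Arranging this simultaneous induction on $l$ so that the Koszul complex of ${\cal J}_{0}$ supplies the one missing vanishing is the crux; once the case $k=0$ is settled, the induction on $k$ above finishes the proof.
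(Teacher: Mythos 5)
Your reduction of (ii) to (i)-type statements via the short exact sequence $0\to{\cal F}_{-}\to{\cal O}_{\Gamma}\otimes_{\k}{\goth b}_{-}\to{\cal F}^{*}\to 0$ is sound, and it is essentially the paper's own mechanism for part (ii); the problem is where you send the induction. You reduce everything to the edge $k=0$, i.e.\ to ${\mathrm H}^{m}(\Gamma,{\cal F}_{-}^{\otimes l})=0$ for $m>l$, and you correctly observe that this and ${\mathrm H}^{>0}(\Gamma,({\cal F}^{*})^{\otimes l})=0$ are equivalent to one another through the two tensor-power resolutions --- but the device you propose to break that circularity does not do so. The Koszul resolution of ${\cal J}_{0}$ (equivalently, Elkik's vanishing for the desingularization $F^{(1)}$ of ${\goth b}$) yields the total acyclicity of the \emph{exterior} powers $\bigwedge^{j}{\cal F}_{-}$; but in characteristic $0$, $\bigwedge^{j}{\cal F}_{-}$ is a direct \emph{summand} of ${\cal F}_{-}^{\otimes j}$, so its acyclicity says nothing about the complementary summands. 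Concretely, for $l=2$: all the relations extractable from the sequence together with the acyclicity of ${\cal O}_{\Gamma}$, ${\cal F}_{-}$, ${\cal J}_{0}$, ${\mathrm S}^{j}{\cal F}^{*}$ and $\bigwedge^{2}{\cal F}_{-}$ collapse to the single chain of isomorphisms ${\mathrm H}^{m}(\Gamma,({\cal F}^{*})^{\otimes 2})\cong{\mathrm H}^{m+2}(\Gamma,{\mathrm S}^{2}{\cal F}_{-})\cong{\mathrm H}^{m+2}(\Gamma,{\cal F}_{-}^{\otimes 2})$ for $m>0$, and nothing in your list of inputs forces this common group to vanish: Lemma~\ref{lnor1} controls ${\mathrm S}^{k}{\cal F}^{*}$ but not $\bigwedge^{k}{\cal F}^{*}$, hence not $({\cal F}^{*})^{\otimes k}$.

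The missing ingredient is exactly what the paper uses to prove (i) directly: $F^{(k)}$ is a desingularization of $\widetilde{{\goth X}_{0,k}}$ (Proposition~\ref{pisc6},(i)), which has rational singularities by Proposition~\ref{prs},(ii) --- the main theorem, established by the differential-form arguments of the earlier sections. Hence ${\mathrm H}^{i}(F^{(k)},{\cal O}_{F^{(k)}})=0$ for $i>0$; this group equals $\bigoplus_{q}{\mathrm H}^{i}(\Gamma,{\mathrm S}^{q}(({\cal F}^{*})^{\oplus k}))$, and the multidegree-$(1,\dots,1)$ summand of the $q=k$ piece is precisely $({\cal F}^{*})^{\otimes k}$, which gives (i) as a direct-summand vanishing. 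This input for every $k$, not just the $k=1$ Elkik vanishing, is indispensable and is not recoverable by formal manipulation of the short exact sequence; once you grant it, your induction on $k$ becomes superfluous, since the paper's induction on the ${\cal F}_{-}$-exponent already yields (ii) from (i).
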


\begin{proof}
(i) According to Lemma~\ref{lnor1}, we can suppose $k>1$. Denote by $F^{*}_{k}$ the 
restriction to the diagonal of $\Gamma ^{k}$ of the vector bundle ${F^{*}}^{k}$ over
$\Gamma ^{k}$. Identifying $\Gamma $ with the diagonal of $\Gamma ^{k}$, 
$F^{*}_{k}$ is a vector bundle over $\Gamma $. Since $F^{*}$ is the dual of 
the vector bundle $F^{(1)}$ over $\Gamma $, $F_{k}^{*}$ is the dual of the vector
bundle $F^{(k)} $ over $\Gamma $. Let $\psi _{k}$ be the bundle projection of
$F_{k}^{*}$ and let ${\cal F}_{k}^{*}$ be the sheaf of local sections of 
$F_{k}^{*}$. Then $\an {F^{(k)}}{}$ is equal to 
$\psi _{k}^{*}(\es S{{\cal F}_{k}^{*}})$ and since $F^{(k)}$ is a vector bundle over 
$\Gamma $, for all nonnegative integer $i$,
$${\mathrm {H}}^{i}(F^{(k)},\an {F^{(k)}}{}) = 
{\mathrm {H}}^{i}(\Gamma ,\es S{{\cal F}^{*}_{k}}) =
\bigoplus _{q\in {\Bbb N}} {\mathrm {H}}^{i}(\Gamma ,\sy q{{\cal F}^{*}_{k}}) .$$
According to Proposition~\ref{prs},(ii), for $i>0$, the left hand side is equal to $0$ 
since $F^{(k)}$ is a desingularization of $\widetilde{{\goth X}_{0,k}}$ by 
Proposition~\ref{pisc6},(i). As a result, for $i>0$,
$$ {\mathrm {H}}^{i}(\Gamma,\sy k{{\cal F}_{k}^{*}})) = 0 .$$
The decomposition of ${\cal F}_{k}^{*}$ as a direct sum of $k$ copies isomorphic to
${\cal F}^{*}$ induces a multigradation of $\es S{{\cal F}_{k}^{*}}$. Denoting  
by ${\cal S}_{\poi j1{,\ldots,}{k}{}{}{}}$ the subsheaf of multidegree 
$(\poi j1{,\ldots,}{k}{}{}{})$, we have
$$ \sy k{{\cal F}_{k}^{*}} = \bigoplus _{(\poi j1{,\ldots,}{k}{}{}{})\in {\Bbb N}^{k}
\atop {\poi j1{+\cdots +}{k}{}{}{}}=k} {\cal S}_{\poi j1{,\ldots,}{k}{}{}{}} 
\quad  \text{and} \quad {\cal S}_{1,\ldots,1} = ({\cal F}^{*})^{\tens k} .$$
Hence for $i>0$,
$$ 0 = {\mathrm {H}}^{i}(\Gamma ,\sy k{{\cal S}_{k}^{*}}) = 
\bigoplus _{(\poi j1{,\ldots,}{k}{}{}{})\in {\Bbb N}^{k}
\atop {\poi j1{+\cdots +}{k}{}{}{}}=k} 
{\mathrm {H}}^{i}(\Gamma ,{\cal S}_{\poi j1{,\ldots,}{k}{}{}{}})$$
whence the assertion.

(ii) Let $k$ be a nonnegative integer. Prove by induction on $j$ that for $i>0$ and 
for $l\geq j$,
\begin{eqnarray}\label{eqnor1}
{\mathrm {H}}^{i+j}(\Gamma ,
\tk {\an {\Gamma }{}}{{\cal F}_{-}^{\tens j}}({\cal F}^{*})^{\tens (k+l-j)}) =0 .
\end{eqnarray}
By (i) it is true for $j=0$. Suppose $j>0$ and (\ref{eqnor1}) true for $j-1$ and for
all $l\geq j-1$. From the short exact sequence of $\an {\Gamma }{}$-modules
$$ 0 \longrightarrow {\cal F}_{-} \longrightarrow \tk {\k}{\an {\Gamma }{}}{\goth b}_{-} 
\longrightarrow {\cal F}^{*} \longrightarrow 0$$
we deduce the short exact sequence of $\an {\Gamma }{}$-modules 
$$ 0 \longrightarrow 
\tk {\an {\Gamma }{}}{{\cal F}_{-}^{\tens j}}({\cal F}^{*})^{\tens (k+l-j)} 
\longrightarrow \tk {\k}{{\goth b}_{-}}\tk {\an {\Gamma }{}}{{\cal F}_{-}^{\tens (j-1)}}
({\cal F}^{*})^{\tens (k+l-j)} \longrightarrow 
\tk {\an {\Gamma }{}}{{\cal F}_{-}^{\tens (j-1)}}({\cal F}^{*})^{\tens (k+l-j+1)} 
\longrightarrow 0 .$$
From the cohomology long exact sequence deduced from this short exact sequence, we have 
the exact sequence
\begin{eqnarray*}
{\mathrm {H}}^{i+j-1}(\Gamma ,
\tk {\an {\Gamma }{}}{{\cal F}_{-}^{\tens (j-1)}}({\cal F}^{*})^{\tens (k+l-j+1)})
\longrightarrow {\mathrm {H}}^{i+j}(\Gamma ,
\tk {\an {\Gamma }{}}{{\cal F}_{-}^{\tens j}}({\cal F}^{*})^{\tens (k+l-j)}) \\ 
\longrightarrow 
{\mathrm {H}}^{i+j}(\Gamma ,\tk {\k}{{\goth b}_{-}}
\tk {\an {\Gamma }{}}{{\cal F}_{-}^{\tens (j-1)}}({\cal F}^{*})^{\tens (k+l-j)})
\end{eqnarray*}
for all positive integer $i$. By induction hypothesis, the first term equals $0$ for 
all $i>0$. Since 
$${\mathrm {H}}^{i+j}(\Gamma ,\tk {\k}{{\goth b}_{-}}
\tk {\an {\Gamma }{}}{{\cal F}_{-}^{\tens (j-1)}}({\cal F}^{*})^{\tens (k+l-j)}) =
\tk {\k}{{\goth b}_{-}}{\mathrm {H}}^{i+j}(\Gamma ,
\tk {\an {\Gamma }{}}{{\cal F}_{-}^{\tens (j-1)}}({\cal F}^{*})^{\tens (k+l-j)}),$$
the last term of the last exact sequence equals $0$ by induction hypothesis again, 
whence Equality (\ref{eqnor1}) and whence the assertion for $j=l$.
\end{proof}

The following corollary results from Proposition~\ref{pnor1},(ii) and 
Proposition~\ref{pco1}.

\begin{coro}\label{c2nor1}
For $k$ positive integer and for $l=(\poi l1{,\ldots,}{k}{}{}{})$ in ${\Bbb N}^{k}$,
$${\mathrm {H}}^{i+\vert l \vert}(\Gamma ,\tk {\an {\Gamma }{}}
{\ex {l_{1}}{{\cal F}_{-}}}\tk {\an {\Gamma }{}}{\cdots }\ex {l_{k}}{{\cal F}_{-}}) = 0$$
for all positive integer $i$.
\end{coro}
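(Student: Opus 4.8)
The plan is to derive the corollary by combining the cohomology vanishing of Proposition~\ref{pnor1},(ii) with the multilinear reduction supplied by Proposition~\ref{pco1}, which realises a product of exterior powers as a direct factor of a single tensor power. Since $\k$ has characteristic $0$, for each $j\in\{1,\ldots,k\}$ the antisymmetrisation idempotent $\frac{1}{l_{j}!}\sum_{\sigma\in\mathfrak{S}_{l_{j}}}\mathrm{sgn}(\sigma)\sigma$ of the group algebra of $\mathfrak{S}_{l_{j}}$ acts $\an{\Gamma}{}$-linearly on ${\cal F}_{-}^{\tens l_{j}}$, by permutation of the tensor factors, with image $\ex{l_{j}}{{\cal F}_{-}}$. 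Forming the product of these $k$ idempotents on ${\cal F}_{-}^{\tens l_{1}}\otimes_{\an{\Gamma}{}}\cdots\otimes_{\an{\Gamma}{}}{\cal F}_{-}^{\tens l_{k}}={\cal F}_{-}^{\tens \vert l\vert}$ realises $\tk {\an {\Gamma }{}}{\ex {l_{1}}{{\cal F}_{-}}}\tk {\an {\Gamma }{}}{\cdots }\ex {l_{k}}{{\cal F}_{-}}$ as an $\an{\Gamma}{}$-module direct factor of ${\cal F}_{-}^{\tens \vert l\vert}$; this is the content I would extract from Proposition~\ref{pco1}.

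I would then invoke Proposition~\ref{pnor1},(ii) with the integer $k$ there specialised to $0$ and the integer $l$ there specialised to $\vert l\vert=l_{1}+\cdots+l_{k}$. Since $({\cal F}^{*})^{\tens 0}=\an{\Gamma}{}$, this yields
$${\mathrm {H}}^{i+\vert l\vert}(\Gamma,{\cal F}_{-}^{\tens \vert l\vert})=0$$
for every positive integer $i$. As sheaf cohomology is additive on finite direct sums and an $\an{\Gamma}{}$-linear idempotent of a coherent sheaf induces an idempotent on each of its cohomology groups, the group ${\mathrm {H}}^{i+\vert l\vert}(\Gamma,\tk {\an {\Gamma }{}}{\ex {l_{1}}{{\cal F}_{-}}}\tk {\an {\Gamma }{}}{\cdots }\ex {l_{k}}{{\cal F}_{-}})$ is a direct factor of ${\mathrm {H}}^{i+\vert l\vert}(\Gamma,{\cal F}_{-}^{\tens \vert l\vert})=0$, hence is zero. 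This is precisely the assertion.

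I do not expect a genuine obstacle here: the substantive input is the vanishing already established in Proposition~\ref{pnor1},(ii) — obtained there by an induction built on the short exact sequence $0\to{\cal F}_{-}\to\tk {\k}{\an {\Gamma }{}}{\goth b}_{-}\to{\cal F}^{*}\to 0$ together with Proposition~\ref{prs},(ii) — whereas Proposition~\ref{pco1} is a formal characteristic-zero statement about multilinear algebra. The only point deserving a moment's care is the cohomological degree bookkeeping: one must observe that the shift $i+\vert l\vert$ demanded by the corollary coincides with the shift $i+l$ of Proposition~\ref{pnor1},(ii) after the substitution $l=\vert l\vert$, and that passing from ${\cal F}_{-}^{\tens \vert l\vert}$ to an $\an{\Gamma}{}$-direct factor costs nothing in cohomological degree.
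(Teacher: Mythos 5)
Your argument is correct and is essentially the paper's own proof, which is the one-line assertion that Corollary~\ref{c2nor1} ``results from Proposition~\ref{pnor1},(ii) and Proposition~\ref{pco1}'': the substantive content of Proposition~\ref{pco1} used here is exactly the characteristic-zero fact you spell out, namely that $\tk {\an {\Gamma }{}}{\ex {l_{1}}{{\cal F}_{-}}}\tk {\an {\Gamma }{}}{\cdots }\ex {l_{k}}{{\cal F}_{-}}$ is an $\an {\Gamma }{}$-module direct factor of ${\cal F}_{-}^{\tens \vert l \vert}$, and the vanishing input is Proposition~\ref{pnor1},(ii) specialised to $k=0$ and tensor power $\vert l \vert$. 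Your degree bookkeeping ($i+\vert l\vert$ versus $i+l$ after the substitution $l=\vert l\vert$) is the right point to check, and it checks out.
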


\subsection{} \label{nor2}
By definition, $F^{(k)}$ is a closed subvariety of $\Gamma \times {\goth b}^{k}$. Denote 
by $\varrho $ the canonical projection from $\Gamma \times {\goth b}^{k}$ to 
$\Gamma $, whence the diagram
$$\xymatrix{ F^{(k)} \ar@{^{(}->}[r] \ar[rd]_{\overline{\piup _{k}}}
&  \Gamma \times {\goth b}^{k} \ar[d]^{\varrho } \\
&  \Gamma }$$
For $j=1,\ldots,k$, denote by ${\goth S}_{j,k}$ the set of injections from 
$\{1,\ldots,j\}$ to $\{1,\ldots,k\}$ and for $\sigma $ in ${\goth S}_{j,k}$, set:
$$ {\cal K}_{\sigma } := \tk {\an {\Gamma }{}}{{\cal M}_{1}}
\tk {\an {\Gamma }{}}{\cdots }{\cal M}_{k} \mbox{ with }
{\cal M}_{i} := \left \{ \begin{array}{ccc} 
\tk {\k}{\an {\Gamma }{}}\ec Sb{}-{} & \mbox{ if } & i \not \in \sigma (\{1,\ldots,j\}) \\
{\cal J}_{0} & \mbox{ if } & i \in \sigma (\{1,\ldots,j\}) \end{array} \right. $$
For $j$ in $\{1,\ldots,k\}$, the direct sum of the ${\cal K}_{\sigma }$'s is denoted by
${\cal J}_{j,k}$ and for $\sigma $ in ${\goth S}_{1,k}$, ${\cal K}_{\sigma }$ is also 
denoted by ${\cal K}_{\sigma (1),k}$.

\begin{lemma}\label{lnor2}
Let ${\cal J}$ be the ideal of definition of $F^{(k)}$ in 
$\an {\Gamma \times {\goth b}^{k}}{}$.

{\rm (i)} The ideal $\varrho _{*}({\cal J})$ of 
$\tk {\k}{\an {\Gamma }{}}\es S{{\goth b}_{-}^{k}}$ is the sum of 
$\poi {{\cal K}}{1,k}{,\ldots,}{k,k}{}{}{}$.

{\rm (ii)} There is an exact sequence of $\an {\Gamma }{}$-modules
$$ 0 \longrightarrow {\cal J}_{k,k} \longrightarrow {\cal J}_{k-1,k} \longrightarrow 
\cdots \longrightarrow {\cal J}_{1,k} \longrightarrow \varrho _{*}({\cal J}) 
\longrightarrow 0 $$

{\rm (iii)} For $i>0$, ${\mathrm {H}}^{i}(\Gamma \times {\goth b}^{k},{\cal J})=0$ if 
${\mathrm {H}}^{i+j}(\Gamma ,{\cal J}_{0}^{\tens j})=0$ for $j=1,\ldots,k$.
\end{lemma}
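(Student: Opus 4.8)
Throughout I would descend to $\Gamma$ along the affine projection $\varrho$. Write $A:=\an{\Gamma}{}$ and $R:=A\otimes _{\k}\ec Sb{}-{}$, identifying ${\goth b}^{*}$ with ${\goth b}_{-}$, so that $\varrho _{*}\an{\Gamma\times {\goth b}^{k}}{}=R^{\otimes _{A}k}$. Since $F^{(1)}$ is a vector subbundle of $\Gamma\times {\goth b}$ and $F^{(k)}$ is the $k$-fold fibre product of $F^{(1)}$ over $\Gamma$, both $R$ and $\overline{R}:=R/{\cal J}_{0}=\es S{{\cal F}^{*}}$ are graded $A$-algebras whose homogeneous components are locally free of finite rank, hence $A$-flat; this flatness is used everywhere. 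For (i): ${\cal J}$ is generated by the pullbacks of ${\cal J}_{0}$ under the $k$ projections $\Gamma\times {\goth b}^{k}\to \Gamma\times {\goth b}$, and since $\varrho$ is affine, $\varrho _{*}$ is exact on quasi-coherent sheaves and $\varrho _{*}({\cal J})$ is the ideal of $R^{\otimes _{A}k}$ generated by $\varrho _{*}$ of these pullbacks; $A$-flatness of $R$ identifies the pushforward of the pullback of ${\cal J}_{0}$ in the $i$-th slot with ${\cal K}_{i,k}$, whence $\varrho _{*}({\cal J})=\sum _{i=1}^{k}{\cal K}_{i,k}$.

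For (ii), the first step is the distributivity of the lattice of ideals generated by the ${\cal K}_{i,k}$. Writing ${\cal K}_{S}$ for ${\cal K}_{\sigma}$ with ${\rm im}\,\sigma=S$ (well defined), one deduces from $0\to {\cal J}_{0}\to R\to \overline{R}\to 0$, by tensoring over $A$ and using that $\overline{R}$ is $A$-flat (so ${\rm Tor}_{1}^{A}(\overline{R},-)=0$), the identities ${\cal K}_{S}\cap {\cal K}_{T}={\cal K}_{S\cup T}$ and then ${\cal K}_{S}\cap ({\cal K}_{S'}+{\cal K}_{S''})={\cal K}_{S\cup S'}+{\cal K}_{S\cup S''}$. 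Granting this, the complex of (ii) is the standard (iterated Mayer--Vietoris, or ordered-simplicial) resolution of a sum of ideals with distributive lattice: in place $k-j$ it is ${\cal J}_{j,k}=\bigoplus _{\sigma\in {\goth S}_{j,k}}{\cal K}_{\sigma}$, the differential being the alternating sum of the inclusions ${\cal K}_{{\rm im}\,\sigma'}\supseteq {\cal K}_{{\rm im}\,\sigma}$ obtained by deleting a point from ${\rm im}\,\sigma$. Its exactness is a purely homological fact, provable by induction on $k$ via the Mayer--Vietoris sequence $0\to ({\textstyle\sum _{i<k}}{\cal K}_{i,k})\cap {\cal K}_{k,k}\to ({\textstyle\sum _{i<k}}{\cal K}_{i,k})\oplus {\cal K}_{k,k}\to \varrho _{*}({\cal J})\to 0$ combined with distributivity (which turns the left term into $\overline{R}$- and ${\cal J}_{0}$-tensored copies of the $(k-1)$-variant), or else reduced to the local monomial model where ${\cal J}_{0}=(f_{1},\dots,f_{n})R$ with $R$ a polynomial ring over $A$; alternatively one quotes Proposition~\ref{pco1}. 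Globalising the local model is legitimate because the ${\cal K}_{\sigma}$ are $A$-flat.

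For (iii): since $\varrho$ is affine, ${\mathrm H}^{i}(\Gamma\times {\goth b}^{k},{\cal J})={\mathrm H}^{i}(\Gamma,\varrho _{*}({\cal J}))$. Splitting the exact complex of (ii) into short exact sequences (equivalently, via the hypercohomology spectral sequence of that resolution) reduces the vanishing of ${\mathrm H}^{i}(\Gamma,\varrho _{*}({\cal J}))$ to the vanishing of the groups ${\mathrm H}^{\bullet}(\Gamma,{\cal J}_{j,k})$, $j=1,\dots,k$, in the degrees dictated by the place of ${\cal J}_{j,k}$. Now ${\cal K}_{\sigma}$ with $|{\rm im}\,\sigma|=j$ is, as an $A$-module, ${\cal J}_{0}^{\otimes _{A}j}\otimes _{\k}(\ec Sb{}-{})^{\otimes (k-j)}$, so ${\mathrm H}^{p}(\Gamma,{\cal J}_{j,k})$ is a finite direct sum of copies of ${\mathrm H}^{p}(\Gamma,{\cal J}_{0}^{\otimes _{A}j})\otimes _{\k}(\cdots)$; together with ${\mathrm H}^{\bullet}(\Gamma,{\cal J}_{0}\otimes _{A}R^{\otimes m})={\mathrm H}^{\bullet}(\Gamma,{\cal J}_{0})\otimes _{\k}(\ec Sb{}-{})^{\otimes m}=0$ from Corollary~\ref{cnor1} (which disposes of the $j=1$ contribution and of the extra $R$-slots), the reduction lands exactly on the hypotheses ${\mathrm H}^{i+j}(\Gamma,{\cal J}_{0}^{\otimes j})=0$, $j=1,\dots,k$.

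The main obstacle is part (ii): establishing the distributivity of the lattice of the ${\cal K}_{\sigma}$ and deducing from it the exactness of the simplicial resolution, while keeping in mind that everything in sight is a sheaf of $A$-modules, so that the $A$-flatness of $R$ and $\overline{R}$ is precisely what lets the local model glue. Once (i) and (ii) are available, part (iii) is routine apart from the careful bookkeeping of cohomological degrees through the iterated long exact sequences, the only nonformal input being the vanishing recorded in Corollary~\ref{cnor1}.
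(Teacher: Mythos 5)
Your overall architecture is sound and parts (i) and (iii) follow essentially the same route as the paper: for (i), pushing forward along the affine morphism $\varrho$ and identifying the slot-wise ideals with the ${\cal K}_{i,k}$; for (iii), splitting the resolution of (ii) into short exact sequences and feeding in the hypothesis together with Corollary~\ref{cnor1}. For (ii) you take a genuinely different route. The paper does not argue via distributivity of the lattice of the ${\cal K}_{S}$: it identifies the complex ${\cal J}_{\bullet,k}$ with the sheaf of local sections of the fibrewise complex $C_{\bullet}^{(k)}(\Gamma\times\ec Sb{}-{},J)$ constructed in Appendix~\ref{co2}, whose acyclicity in positive degree is proved directly by induction on $k$ (Lemma~\ref{lco2}, an explicit cycle-killing argument), and then concludes by the graded Nakayama lemma. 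Your approach — prove ${\cal K}_{S}\cap{\cal K}_{T}={\cal K}_{S\cup T}$ and full distributivity via ${\rm Tor}_{1}^{A}(\overline{R},-)=0$, then invoke the exactness of the \v{C}ech-type complex of a distributive family of ideals (or reduce to the local model where ${\cal J}_{0}$ is generated by part of a coordinate system) — is a legitimate alternative and arguably more conceptual: it makes explicit where the flatness of $\es S{{\cal F}^{*}}$ over $\an{\Gamma}{}$ enters, whereas the paper's inductive construction hides this in the fibrewise computation. The trade-off is that you must actually prove the lattice identities and the general exactness statement, neither of which is in the paper, whereas the paper's Lemma~\ref{lco2} is self-contained.

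Three points to tighten. First, your fallback ``alternatively one quotes Proposition~\ref{pco1}'' is a misattribution: that proposition produces direct-factor decompositions of tensor powers for cohomology vanishing and says nothing about exactness of the complex in (ii); it cannot replace the distributivity argument. Second, in (i) the single substantive assertion — that the ideal of definition of the \emph{reduced} subvariety $F^{(k)}$ is generated by the pullbacks of ${\cal J}_{0}$ — is stated without justification; the paper supplies the missing step by observing that the candidate ideal $\sum_{i}{\cal K}_{i,k}$ is prime (its quotient is $\es S{{\cal F}_{k}^{*}}$, locally a polynomial algebra over the smooth irreducible $\Gamma$), so it coincides with the radical ideal cutting out $F^{(k)}$. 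Third, your claim in (iii) that the reduction ``lands exactly on the hypotheses'' deserves a line of verification: unwinding the length-$k$ resolution, the vanishing of ${\mathrm H}^{i}(\Gamma,\varrho_{*}({\cal J}))$ requires ${\mathrm H}^{i+j-1}(\Gamma,{\cal J}_{j,k})=0$ for $j=1,\ldots,k$, i.e.\ ${\mathrm H}^{i+j-1}(\Gamma,{\cal J}_{0}^{\tens j})=0$, which is a shift of one from the stated hypothesis ${\mathrm H}^{i+j}(\Gamma,{\cal J}_{0}^{\tens j})=0$ (the case $j=1$ being covered by Corollary~\ref{cnor1}). The paper's own proof glosses over the same index bookkeeping, so you are in good company, but since you invoke the degrees only implicitly you should write out the chain of long exact sequences and check that the vanishing you actually have (here, Lemma~\ref{lnor3} in the application) covers the degrees you actually need.
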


\begin{proof}
(i) Let ${\cal J}_{k}$ be the sum of $\poi {{\cal K}}{1,k}{,\ldots,}{k,k}{}{}{}$. Since 
${\cal J}_{0}$ is the ideal of $\tk {\k}{\an {\Gamma }{}}\ec Sb{}-{}$ generated by 
${\cal F}_{-}$, ${\cal J}_{k}$ is a prime ideal of 
$\tk {\k}{\an {\Gamma }{}}\es S{{\goth b}_{-}^{k}}$. Moreover, ${\cal F}_{-}$ is the 
sheaf of local sections of the orthogonal complement to $F$ in 
$\Gamma \times {\goth b}_{-}$. Hence ${\cal J}_{k}$ is the ideal of definition
of $F^{(k)}$ in $\tk {\k}{\an {\Gamma }{}}\es S{{\goth b}_{-}^{k}}$, whence the assertion.

(ii) For $a$ a local section of ${\cal J}_{j,k}$ and for $\sigma $ in ${\goth S}_{j,k}$, 
denote by $a_{\sigma (1),\ldots,\sigma (j)}$ the component of $a$ on 
${\cal K}_{\sigma }$. Let $\dd $ be the map ${\cal J}_{j,k}\rightarrow {\cal J}_{j-1,k}$
such that
$$ \dd a_{\poi i1{,\ldots,}{j}{}{}{}} = \sum_{l=1}^{j} (-1)^{l+1} 
a_{\poi i1{,\ldots,}{l-1}{}{}{},\poi i{l+1}{,\ldots,}{j}{}{}{}}$$
Then by (i), we have an augmented complex
$$ 0 \longrightarrow {\cal J}_{k,k} \stackrel{\dd}\longrightarrow {\cal J}_{k-1,k} 
\stackrel{\dd}\longrightarrow \cdots 
\stackrel{\dd}\longrightarrow {\cal J}_{1,k} \longrightarrow \varrho _{*}({\cal J})
\longrightarrow 0 .$$
Let $J$ be the subbundle of the trivial bundle $\Gamma \times \ec Sb{}-{}$ such that the
fiber at $x$ is the ideal of $\ec Sb{}-{}$ generated by the fiber $F_{-,x}$ of $F_{-}$
at $x$. Then ${\cal J}_{0}$ is the sheaf of local sections of $J$ and the above augmented
complex is the sheaf of local sections of the augmented complex of vector bundles over 
$\Gamma $,
$$ 0 \longrightarrow C_{k}^{(k)}(\Gamma \times \ec Sb{}-{},J)\longrightarrow \cdots 
\longrightarrow C_{1}^{(k)}(\Gamma \times \ec Sb{}-{},J) \rightarrow J \longrightarrow 0$$
defined as in Subsection~\ref{co2}. According to Lemma~\ref{lco2} and Remark~\ref{rco2}, 
this complex is acyclic, whence the assertion by Nakayama Lemma since $J$, $\ec Sb{}-{}$ 
and the complex are graded.

(iii) Let $i$ be a positive integer such that 
${\mathrm {H}}^{i+j}(\Gamma ,{\cal J}_{0}^{\tens j})=0$ for $j=1,\ldots,k$. Then for 
$j=1,\ldots,k$ and for $\sigma $ in ${\goth S}_{j,k}$, 
${\mathrm {H}}^{i+j}(\Gamma ,{\cal K}_{\sigma })=0$ since ${\cal K}_{\sigma }$ is 
isomorphic to a sum of copies of ${\cal J}_{0}^{\tens j}$. Moreover, 
${\mathrm {H}}^{i}(\Gamma ,{\cal K}_{l,k})=0$ for $l=1,\ldots,k$ since 
${\mathrm {H}}^{i}(\Gamma ,{\cal J}_{0})=0$ by Corollary~\ref{cnor1}. Hence by (ii),
since ${\mathrm {H}}^{\bullet}$ is an exact $\delta $-functor, 
${\mathrm {H}}^{i}(\Gamma ,\varrho _{*}({\cal J}))=0$, whence the assertion since 
$\varrho $ is an affine morphism.
\end{proof}

\subsection{} \label{nor3}
For $k$ positive integer, for $j$ nonnegative integer and for 
$l=(\poi l1{,\ldots,}{k}{}{}{})$ in ${\Bbb N}^{k}$, set:
$$ {\cal M}_{j,l} := \tk {\an {\Gamma }{}}{{\cal J}_{0}^{\tens j}}\tk {\an {\Gamma }{}}
{\ex {l_{1}}{{\cal F}_{-}}}\tk {\an {\Gamma }{}}{\cdots }\ex {l_{k}}{{\cal F}_{-}}$$

\begin{lemma}\label{lnor3}
Let $k$ be a positive integer and let $(j,l)$ be in ${\Bbb N}\times {\Bbb N}^{k}$.

{\rm (i)} The $\an {\Gamma }{}$-module ${\cal J}_{0}$ is locally free.

{\rm (ii)} For $j>0$, there is an exact sequence
\begin{eqnarray*}
0 \longrightarrow \tk {\k}{\ec Sb{}-{}}{\cal M}_{j-1,(n,l)} \longrightarrow 
\tk {\k}{\ec Sb{}-{}}{\cal M}_{j-1,(n-1,l)}\longrightarrow \cdots \\ \longrightarrow 
\tk {\k}{\ec Sb{}-{}}{\cal M}_{j-1,(1,l)} \longrightarrow {\cal M}_{j,l}
\longrightarrow 0
\end{eqnarray*}

{\rm (iii)} For $i>0$, ${\mathrm {H}}^{i+j+\vert l \vert}(\Gamma ,{\cal M}_{j,l})=0$.
\end{lemma}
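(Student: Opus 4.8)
The three assertions are tightly linked and (ii), (iii) should be proved together by a double induction, with (i) being the easy base ingredient. First I would establish (i): the sheaf ${\cal J}_{0}$ was described in the proof of Corollary~\ref{cnor1} as the sheaf of sections of the subbundle $J$ of $\Gamma \times \ec Sb{}-{}$ whose fiber at $x$ is the ideal generated by $F_{-,x}$; since $F_{-}$ is a subbundle of the trivial bundle $\Gamma\times{\goth b}_{-}$, the ideal it generates in each fiber has locally constant Hilbert function, so $J$ is a (graded, infinite-rank but locally free in each degree) vector bundle over $\Gamma$ and ${\cal J}_{0}$ is locally free. Equivalently, degree by degree, ${\cal J}_{0}$ in degree $m$ is the image of ${\cal F}_{-}\otimes \sy{m-1}{(\tk{\k}{\an\Gamma{}}{{\goth b}_{-}})}$, and the Koszul-type resolution from Lemma~\ref{lnor2},(ii)/Subsection~\ref{co2} exhibits ${\cal J}_{0}$ as having a finite locally free resolution, hence (being a subsheaf of a locally free sheaf with locally free cokernel graded pieces) it is locally free.

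For (ii), the point is to resolve the extra tensor factor ${\cal J}_{0}$ in ${\cal M}_{j,l}=\tk{\an\Gamma{}}{{\cal J}_{0}^{\tens j}}(\cdots)$ by the Koszul-type complex of Subsection~\ref{co2} applied to the pair $(\Gamma\times\ec Sb{}-{},J)$. That complex reads
$$ 0 \longrightarrow C_{n}^{(1)}(\Gamma\times\ec Sb{}-{},J)\longrightarrow \cdots \longrightarrow C_{1}^{(1)}(\Gamma\times\ec Sb{}-{},J) \longrightarrow J \longrightarrow 0,$$
and by Lemma~\ref{lco2} and Remark~\ref{rco2} it is acyclic; here $C_{p}^{(1)}=\tk{\k}{\ec Sb{}-{}}\ex{p}{{\cal F}_{-}}$ up to the relevant shift, so in multidegree notation $C_{p}^{(1)}$ contributes the factor $\tk{\k}{\ec Sb{}-{}}\ex{p}{{\cal F}_{-}}$, matching the terms $\tk{\k}{\ec Sb{}-{}}{\cal M}_{j-1,(p,l)}$. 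Tensoring this acyclic complex of locally free $\an\Gamma{}$-modules over $\an\Gamma{}$ with the locally free sheaf ${\cal J}_{0}^{\tens(j-1)}\otimes\ex{l_{1}}{{\cal F}_{-}}\otimes\cdots\otimes\ex{l_{k}}{{\cal F}_{-}}$ preserves acyclicity, and the rightmost term $J\otimes(\text{that sheaf})={\cal J}_{0}\otimes(\cdots)={\cal M}_{j,l}$ is exactly what we want resolved; this gives the displayed exact sequence of (ii).

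Finally (iii) is proved by descending through the resolution of (ii), by induction on $j$. For $j=0$, ${\cal M}_{0,l}=\tk{\an\Gamma{}}{\ex{l_{1}}{{\cal F}_{-}}}\cdots\ex{l_{k}}{{\cal F}_{-}}$ and ${\mathrm H}^{i+|l|}(\Gamma,{\cal M}_{0,l})=0$ for $i>0$ is precisely Corollary~\ref{c2nor1}. For the inductive step, break the exact sequence of (ii) into short exact sequences and chase the cohomology long exact sequences: each intermediate term is $\tk{\k}{\ec Sb{}-{}}{\cal M}_{j-1,(p,l)}$ for $1\le p\le n$, and since ${\goth b}_{-}$ is a fixed vector space one has ${\mathrm H}^{q}(\Gamma,\tk{\k}{\ec Sb{}-{}}{\cal M}_{j-1,(p,l)})=\tk{\k}{\ec Sb{}-{}}{\mathrm H}^{q}(\Gamma,{\cal M}_{j-1,(p,l)})$; by the induction hypothesis applied to $(j-1,(p,l))$ (note $|(p,l)|=|l|+p$) this vanishes in the degrees $q=i+j+|l|$ with $i>0$ coming up in the chase. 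Keeping careful track of the homological shift — the $p$-th term of the length-$n$ resolution contributes in cohomological degree $\ge 1$ above ${\cal M}_{j,l}$, absorbing the discrepancy between $|l|+p$ and $|l|+j$ exactly — one concludes ${\mathrm H}^{i+j+|l|}(\Gamma,{\cal M}_{j,l})=0$ for all $i>0$. The main obstacle is bookkeeping: making sure the index shifts in the resolution of (ii), the multidegree weights $|l|$ versus $|(p,l)|$, and the induction on $j$ all line up so that the vanishing ranges match; the geometric input (acyclicity of the Koszul-type complex, local freeness of ${\cal J}_{0}$, and Corollary~\ref{c2nor1}) is already in hand.
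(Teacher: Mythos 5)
Your overall strategy coincides with the paper's: for (i) a local-constancy/local-splitting argument, for (ii) the Koszul resolution of ${\cal J}_{0}$ tensored with the locally free sheaf ${\cal J}_{0}^{\tens (j-1)}\otimes \ex {l_{1}}{{\cal F}_{-}}\otimes \cdots \otimes \ex {l_{k}}{{\cal F}_{-}}$, and for (iii) exactly the induction on $j$ with base case Corollary~\ref{c2nor1} and the degree bookkeeping $\vert (p,l)\vert = \vert l \vert + p$ absorbing the homological shift of the $p$-th term of the resolution. Parts (i) and (iii) are fine as you argue them (for (i) the paper instead splits ${\goth b}_{-}=F_{-,y}\oplus M$ locally and writes ${\cal J}_{0}\vert _{V}$ explicitly as $\tk {\k}{\ec S{}{{\cal F}}{-,V}+}\es SM$, but your constant-fiber-rank argument on the reduced variety $\Gamma $ reaches the same conclusion).

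There is, however, a concrete misstep in your justification of (ii). You attribute the exactness of the resolution to Lemma~\ref{lco2} and Remark~\ref{rco2} applied to $C_{\bullet}^{(1)}(\Gamma \times \ec Sb{}-{},J)$, and you claim $C_{p}^{(1)}=\tk {\k}{\ec Sb{}-{}}\ex p{{\cal F}_{-}}$. This is not what that complex is: by the defining recursion of Subsection~\ref{co2}, $C_{\bullet}^{(1)}(V,W)$ is the two-term complex $0\rightarrow W\rightarrow V\rightarrow 0$ (with $C_{i}^{(1)}=0$ for $i\geq 2$), and the complexes $C_{\bullet}^{(n)}$ are built to handle the sum of ideals appearing in Lemma~\ref{lnor2},(ii); they never produce exterior powers. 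The acyclicity you actually need is that of the genuine Koszul complex
$$ 0 \longrightarrow \tk {\k}{\ec Sb{}-{}}\ex n{{\cal F}_{-}} \longrightarrow \cdots \longrightarrow \tk {\k}{\ec Sb{}-{}}{\cal F}_{-} \longrightarrow {\cal J}_{0} \longrightarrow 0 ,$$
and for that one must observe that ${\cal F}_{-}$ is locally a rank-$n$ direct summand of $\tk {\k}{\an {\Gamma }{}}{\goth b}_{-}$, so that a local frame of ${\cal F}_{-}$ extends to a basis of ${\goth b}_{-}$ and is therefore a regular sequence in $\tk {\k}{\an {\Gamma }{}}{\ec Sb{}-{}}$; exactness then follows from the standard Koszul criterion. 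This is precisely the input the paper supplies and your write-up omits; once it is inserted, tensoring with the locally free sheaf from (i) preserves exactness and the rest of your argument goes through.
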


\begin{proof}
(i) Let $x$ be in $\Gamma $ and let $F_{-,x}$ be the fiber at $x$ of the vector bundle 
$F_{-}$ over $\Gamma $. Then $F_{-,x}$ is a subspace of dimension $n$ of ${\goth b}_{-}$.
Let $M$ be a complement to $F_{-,x}$ in ${\goth b}_{-}$. Since the map $y\mapsto F_{-,y}$
is a regular map from $\Gamma $ to $\ec {Gr}b{}-{n}$, for all $y$ in an open neighborhood 
$V$ of $x$ in $\Gamma $, 
$${\goth b}_{-} = F_{-,x} \oplus M $$
Denoting by ${\cal F}_{-,V}$ the restriction of ${\cal F}_{-}$ to $V$, we have 
$$ \tk {\k}{\an {V}{}}{\goth b}_{-} = {\cal F}_{-,V} \oplus \tk {\k}{\an V{}}M$$ 
so that 
$$ \tk {\k}{\an V{}}\ec Sb{}-{} = \tk {\k}{\ec S{}{{\cal F}}{{-,V}}{}}\es SM$$
whence 
$$ {\cal J}_{0} \left \vert _{V} \right. = \tk {\k}{\ec S{}{{\cal F}}{-,V}+}\es SM .$$
As a result, ${\cal J}_{0}$ is locally free since so is ${\cal F}_{-}$.

(ii) Since ${\cal J}_{0}$ is the ideal of $\tk {\k}{\an {\Gamma }{}}\ec Sb{}-{}$ generated
by the locally free module ${\cal F}_{-}$ of rank $n$ and since ${\cal F}_{-}$ is locally
generated by a regular sequence of the algebra $\tk {\k}{\an {\Gamma }{}}\ec Sb{}-{}$, 
having $n$ elements, we have an exact Koszul complex
$$ 0 \longrightarrow \tk {\k}{\ec Sb{}-{}}\ex n{{\cal F}_{-}} \longrightarrow \cdots 
\longrightarrow \tk {\k}{\ec Sb{}-{}}{\cal F}_{-} \longrightarrow {\cal J}_{0}
\longrightarrow 0$$
whence a complex
\begin{eqnarray*}
0 \longrightarrow \tk {\k}{\ec Sb{}-{}}
\tk {\an {\Gamma }{}}{\ex n{{\cal F}_{-}}}{\cal M}_{j-1,l}  \longrightarrow \cdots 
\longrightarrow \tk {\k}{\ec Sb{}-{}}\tk {\an {\Gamma }{}}{{\cal F}_{-}}{\cal M}_{j-1,l}
\\ \longrightarrow \tk {\an {\Gamma }{}}{{\cal J}_{0}}{\cal M}_{j-1,l}\longrightarrow 0 .
\end{eqnarray*}
According to (i), ${\cal M}_{j-1,l}$ is a locally free module. Hence this complex
is acyclic.

(iii) Prove the assertion by induction on $j$. According to Corollary~\ref{c2nor1}, 
it is true for $j=0$. Suppose that it is true for $j-1$. According to the induction 
hypothesis, for all positive integer $i$ and for $p=1,\ldots,n$, 
$${\mathrm {H}}^{i+j-1+p+\vert l \vert}(\Gamma ,\tk {\k}{\ec Sb{}-{}}{\cal M}_{j-1,(p,l)})
= \tk {\k}{\ec Sb{}-{}}{\mathrm {H}}^{i+j-1+p+\vert l \vert}(\Gamma ,{\cal M}_{j-1,(p,l)})
= 0 .$$
Then, according to (ii), ${\mathrm {H}}^{i+j+\vert l \vert}(\Gamma ,{\cal M}_{j,l})=0$ 
for all positive integer $i$ since ${\mathrm {H}}^{\bullet}$ is an exact 
$\delta $-functor.
\end{proof}

\begin{prop}\label{pnor3}
The variety ${\goth X}_{0,k}$ is Gorenstein with rational singularities and its canonical
module is free of rank $1$. Moreover the ideal of definition of ${\goth X}_{0,k}$ in 
$\ec Sb{}-{}^{\tens k}$ is the space of global sections of ${\cal J}$. 
\end{prop}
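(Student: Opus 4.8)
I want to prove Proposition~\ref{pnor3} by showing that ${\goth X}_{0,k}$ is \emph{normal}, whence by Lemma~\ref{lint} it coincides with $\widetilde{{\goth X}_{0,k}}$, and then the Gorenstein property, the rational singularities and the freeness of rank $1$ of the canonical module follow immediately from Proposition~\ref{prs} (applied to ${\goth Y}=\widetilde{{\goth X}_{0,k}}$). The last sentence of the statement -- that the ideal of definition of ${\goth X}_{0,k}$ in $\ec Sb{}-{}^{\tens k}$ is $\mathrm{H}^{0}(\Gamma,{\cal J})$ -- will be a byproduct of the same cohomological computation.

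First I would recall the geometric picture assembled in Section~\ref{nor}: $F^{(k)}$ is the pullback of ${\cal E}_{0}^{(k)}$ to the desingularization $\Gamma$ of $X$, so $F^{(k)}$ is a desingularization of $\widetilde{{\goth X}_{0,k}}$ (by Proposition~\ref{pisc6},(i)) and $\tau_{0,k}\rond\thetaup_{k}$ maps $F^{(k)}$ onto ${\goth X}_{0,k}\subset{\goth b}^{k}$. Identifying ${\goth b}^{*}$ with ${\goth b}_{-}$, the coordinate ring $\ec Sb{}-{}^{\tens k}=\es S{({\goth b}_{-})^{k}}$ surjects onto $\k[{\goth X}_{0,k}]$, and $F^{(k)}$ sits inside $\Gamma\times{\goth b}^{k}$ cut out by the sheaf of ideals ${\cal J}$ studied in Lemma~\ref{lnor2}. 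The key point is the exact sequence of Lemma~\ref{lnor2},(ii),
\[
0 \longrightarrow {\cal J}_{k,k} \longrightarrow {\cal J}_{k-1,k}\longrightarrow \cdots \longrightarrow {\cal J}_{1,k}\longrightarrow \varrho_{*}({\cal J})\longrightarrow 0,
\]
together with the vanishing ${\mathrm H}^{i+j}(\Gamma,{\cal J}_{0}^{\tens j})=0$ for $i>0$ and $j=1,\dots,k$, which is exactly the content of Lemma~\ref{lnor3},(iii) specialized to $l=0$ (each ${\cal K}_{\sigma}$ being a direct sum of copies of ${\cal J}_{0}^{\tens j}$, this also gives ${\mathrm H}^{i+j}(\Gamma,{\cal K}_{\sigma})=0$, and ${\mathrm H}^{i}(\Gamma,{\cal K}_{l,k})=0$ from Corollary~\ref{cnor1}). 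Feeding these vanishings into the spectral sequence of the above resolution, as in Lemma~\ref{lnor2},(iii), yields ${\mathrm H}^{i}(\Gamma\times{\goth b}^{k},{\cal J})=0$ for all $i>0$.

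Next I would exploit the structure sheaf sequence $0\to{\cal J}\to\an{\Gamma\times{\goth b}^{k}}{}\to\an{F^{(k)}}{}\to 0$. Its long exact cohomology sequence, combined with ${\mathrm H}^{i}(\Gamma\times{\goth b}^{k},{\cal J})=0$ for $i>0$ and the K\"unneth formula ${\mathrm H}^{i}(\Gamma\times{\goth b}^{k},\an{}{})={\mathrm H}^{i}(\Gamma,\an{\Gamma}{})\otimes_{\k}\es S{{\goth b}_{-}^{k}}$ (which is $0$ for $i>0$ since $\Gamma$ is projective, and $\es S{{\goth b}_{-}^{k}}$ for $i=0$), gives: ${\mathrm H}^{0}(\Gamma\times{\goth b}^{k},\an{F^{(k)}}{})=\es S{{\goth b}_{-}^{k}}/{\mathrm H}^{0}(\Gamma\times{\goth b}^{k},{\cal J})$ as a quotient of the polynomial ring $\ec Sb{}-{}^{\tens k}=\k[{\goth b}^{k}]$, and moreover this quotient is a ring of global sections of a desingularization of ${\goth X}_{0,k}$. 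By Lemma~\ref{lint}, ${\mathrm H}^{0}(F^{(k)},\an{F^{(k)}}{})$ is the integral closure of $\k[{\goth X}_{0,k}]$ in its fraction field. Now the map $\k[{\goth X}_{0,k}]\to{\mathrm H}^{0}(F^{(k)},\an{F^{(k)}}{})$ factors the surjection $\k[{\goth b}^{k}]\twoheadrightarrow\k[{\goth X}_{0,k}]$; since $\k[{\goth b}^{k}]/{\mathrm H}^{0}(\Gamma\times{\goth b}^{k},{\cal J})$ already surjects onto $\k[{\goth X}_{0,k}]$ and equals the integral closure, the composite forces $\k[{\goth X}_{0,k}]\cong{\mathrm H}^{0}(F^{(k)},\an{F^{(k)}}{})$, i.e. ${\goth X}_{0,k}$ is normal and ${\cal J}$'s global sections are precisely the ideal of definition. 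Once normality is known, $\widetilde{{\goth X}_{0,k}}={\goth X}_{0,k}$ and Proposition~\ref{prs},(ii) delivers the Gorenstein property with rational singularities and canonical module free of rank $1$.

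\textbf{The main obstacle.} The delicate point is the vanishing ${\mathrm H}^{i}(\Gamma\times{\goth b}^{k},{\cal J})=0$, which rests on the whole tower of homological lemmas of Section~\ref{nor}: Lemma~\ref{lnor1} (using Elkik's theorem \cite{El} that a desingularization of the smooth affine variety ${\goth b}$ has vanishing higher cohomology of the structure sheaf), Corollary~\ref{cnor1}, Proposition~\ref{pnor1} (which bootstraps from ${\mathrm H}^{i}(F^{(k)},\an{}{})=0$, itself coming from Proposition~\ref{prs},(ii) applied to the variety $\widetilde{{\goth X}_{0,k}}$), Corollary~\ref{c2nor1}, and finally Lemma~\ref{lnor3}. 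Assembling the indices correctly in Lemma~\ref{lnor2},(iii) -- tracking that a length-$j$ piece of the resolution contributes in cohomological degree shifted by $j$, exactly matching the $i+j$ in the hypothesis -- is where the argument must be handled with care. Everything else is formal: the K\"unneth formula, the long exact sequence, and the identification via Lemma~\ref{lint}.
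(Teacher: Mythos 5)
Your proposal is correct and follows essentially the same route as the paper: establish ${\mathrm H}^{i}(\Gamma\times{\goth b}^{k},{\cal J})=0$ for $i>0$ via Lemma~\ref{lnor2},(iii) and Lemma~\ref{lnor3},(iii) (the case $l=0$ giving ${\mathrm H}^{i+j}(\Gamma,{\cal J}_{0}^{\tens j})=0$), feed this into the structure-sheaf sequence of $F^{(k)}$ to get $\k[{\goth X}_{0,k}]=\k[\widetilde{{\goth X}_{0,k}}]$ and the identification of the defining ideal with ${\mathrm H}^{0}(\Gamma\times{\goth b}^{k},{\cal J})$, and conclude by Proposition~\ref{prs},(ii). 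The only cosmetic difference is that you foreground normality as the goal while the paper states it as the equality of coordinate rings; the content is identical.
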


\begin{proof}
From the short exact sequence, 
$$0 \longrightarrow {\cal J} \longrightarrow \an {\Gamma \times {\goth b}^{k}}{}
\longrightarrow \an {F^{(k)}}{} \longrightarrow 0$$
we deduce the long exact sequence
$$ \cdots \longrightarrow {\mathrm {H}}^{i}(\Gamma \times {\goth b}^{k},{\cal J}) 
\longrightarrow \tk {\k}{\ec Sb{}-{}^{\tens k}}{\mathrm {H}}^{i}(\Gamma ,\an {\Gamma }{}) 
\longrightarrow {\mathrm {H}}^{i}(F^{(k)},\an {F^{(k)}}{}) \longrightarrow 
{\mathrm {H}}^{i+1}(\Gamma \times {\goth b}^{k},{\cal J}) \longrightarrow \cdots $$
According to Proposition~\ref{pxv},(i), ${\mathrm {H}}^{i}(\Gamma ,\an {\Gamma }{})=0$ 
for $i>0$ and according to Lemma~\ref{lnor2},(iii) and Lemma~\ref{lnor3},(iii),
${\mathrm {H}}^{i}(\Gamma \times {\goth b}^{k},{\cal J})=0$ for $i>0$. Hence,   
${\mathrm {H}}^{i}(F^{(k)},\an {F^{(k)}}{})=0$ for $i>0$, whence the short exact sequence
$$ 0 \longrightarrow {\mathrm {H}}^{0}(\Gamma \times {\goth b}^{k},{\cal J})
\longrightarrow \ec Sb{}-{}^{\tens k} \longrightarrow 
{\mathrm {H}}^{0}(F^{(k)},\an {F^{(k)}}{}) \longrightarrow 0$$
As $F^{(k)}$ is a desingularization of ${\goth X}_{0,k}$, 
$\k[\widetilde{{\goth X}_{0,k}}]$ is the space of global sections of $\an {F^{(k)}}{}$ by
Lemma~\ref{lint}. Then $\k[{\goth X}_{0,k}]=\k[\widetilde{{\goth X}_{0,k}}]$ since the 
image of $\ec Sb{}-{}^{\tens k}$ is contained in $\k[{\goth X}_{0,k}]$, whence the 
proposition by Proposition~\ref{prs},(ii).
\end{proof}

\begin{coro}\label{cnor3}
{\rm (i)} The normalization morphism of ${\cal C}_{\x}^{(k)}$ is a homeomorphism.

{\rm (ii)} The normalization morphism of ${\cal C}^{(k)}$ is a homeomorphism.
\end{coro}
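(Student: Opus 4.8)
The plan is to reduce both statements to the normality of ${\goth X}_{0,k}$ established in Proposition~\ref{pnor3}, together with the equivariance properties collected in Subsection~\ref{isc2}. Recall that $\lambda _{*,k}\colon \widetilde{{\cal C}_{\x}^{(k)}}\to {\cal C}_{\x}^{(k)}$ is the normalization morphism, that $\tau _{*,k}=\lambda _{*,k}\rond \tau _{\n,*,k}$ by Lemma~\ref{l2isc6},(iii), and that $\tau _{*,k}$ is projective birational from $\sqx G{{\cal E}_{0}^{(k)}}$ onto ${\cal C}_{\x}^{(k)}$ by Corollary~\ref{cisc2},(iii). Since a normalization morphism is always finite and birational, to prove (i) it suffices to show it is injective on points, i.e. that ${\cal C}_{\x}^{(k)}$ is unibranch at every point. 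The key observation is that $\sqx G{{\cal E}_{0}^{(k)}}$ is irreducible and that its image under $\tau _{*,k}$ covers ${\cal C}_{\x}^{(k)}$; thus it is enough to control the fibres of $\tau _{*,k}$.

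First I would set up the diagram relating $\sqx G{{\cal E}_{0}^{(k)}}$, $\widetilde{{\cal C}_{\x}^{(k)}}$ and ${\cal C}_{\x}^{(k)}$. By Corollary~\ref{cisc2},(iii) the restriction of $\tau _{*,k}$ over $W^{(k)}$ is an isomorphism, and by Lemma~\ref{l2isc2},(iii) the set $\tau _{*,k}^{-1}(W^{(k)})$ is a big open subset of $\sqx G{{\cal E}_{0}^{(k)}}$. Hence $W^{(k)}$ is a smooth open subset of ${\cal C}_{\x}^{(k)}$ over which the normalization morphism is already an isomorphism; the only issue is the complement ${\cal C}_{\x}^{(k)}\setminus W^{(k)}$. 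I would then appeal to Proposition~\ref{pnor3}: since ${\goth X}_{0,k}$ is normal, $\sqx G{{\goth X}_{0,k}}$ is normal (fibre bundle over the smooth $G/B$ with normal fibres), and by the description ${\cal C}_{\x}^{(k)}=G.\iota _{\x,k}({\goth h}^{k})$'s closure together with Lemma~\ref{lisc1},(iii) and the fact that ${\cal C}^{(k)}$ is the image of ${\cal E}^{(k)}$, one sees that the finite birational morphism $\lambda _{*,k}$ must be bijective: a finite birational morphism onto a variety whose generic point has a normal model realised \emph{inside} the target, over a big open set, forces the branches to coincide. Concretely, $\lambda _{*,k}$ is a finite morphism which is an isomorphism over the big open set $W^{(k)}$; since $\widetilde{{\cal C}_{\x}^{(k)}}$ is irreducible, $\lambda _{*,k}$ has connected fibres, and being finite it is therefore bijective, hence a homeomorphism.

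For (ii) I would argue identically with $\eta$ replaced: by~\cite[Proposition 5.8,(ii)]{CZ}, $\widetilde{{\cal C}^{(k)}}$ is the categorical quotient of $\widetilde{{\cal C}_{\x}^{(k)}}$ by $W({\cal R})$, while ${\cal C}^{(k)}$ is the quotient of ${\cal C}_{\x}^{(k)}$ by $W({\cal R})$ via $\eta$; since taking quotients by a finite group commutes with the finite bijective map $\lambda _{*,k}$, the induced map $\lambda _{k}\colon \widetilde{{\cal C}^{(k)}}\to {\cal C}^{(k)}$ is again finite and bijective on underlying topological spaces. Alternatively, and more directly, one applies the same reasoning to $\tau _{k}\colon {\cal E}^{(k)}\to {\cal C}^{(k)}$ using Corollary~\ref{cisc2},(ii) and Lemma~\ref{l2isc2},(ii): $\tau _{k}$ is projective birational, an isomorphism over the big open $V^{(k)}$, and factors through $\lambda _{k}$; since ${\cal E}_{\n}^{(k)}$ is irreducible and normal (Lemma~\ref{lisc6}), $\lambda _{k}$ is finite with connected fibres, hence bijective. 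I expect the main obstacle to be the bookkeeping needed to verify that the branches of ${\cal C}_{\x}^{(k)}$ (resp. ${\cal C}^{(k)}$) outside $W^{(k)}$ (resp. $V^{(k)}$) are genuinely controlled — i.e. that the finite morphism $\lambda _{*,k}$ cannot split a point into several in the complement of the big open locus; here the decisive input is the irreducibility of the normalization combined with Zariski connectedness of fibres of finite morphisms between irreducible varieties together with properness, which is exactly what makes ``birational + finite + isomorphism over a dense open'' upgrade to ``homeomorphism.''
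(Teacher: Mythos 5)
Your overall strategy (reduce to the normality of ${\goth X}_{0,k}$ from Proposition~\ref{pnor3} and show the finite birational morphism $\lambda _{*,k}$ is injective) points in the right direction, but the step you use to get injectivity is based on a false principle. You assert that because $\widetilde{{\cal C}_{\x}^{(k)}}$ is irreducible and $\lambda _{*,k}$ is finite, birational and an isomorphism over the big open subset $W^{(k)}$, its fibres are connected and hence it is bijective. This is exactly what fails for a non-normal target: the normalization of a nodal cubic is finite, birational and an isomorphism off the node, yet is $2$-to-$1$ over the node; gluing two points of a normal surface gives the same phenomenon with the non-isomorphism locus in codimension $2$. Zariski's connectedness theorem gives connected fibres for a proper birational morphism onto a \emph{normal} variety -- and the normality of ${\cal C}_{\x}^{(k)}$ is precisely what is not available (if it were, there would be nothing to prove). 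So ``finite $+$ birational $+$ isomorphism over a big open set'' does not upgrade to ``homeomorphism,'' and your argument for (i), as well as your ``more direct'' variant for (ii), does not close.

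The missing ingredient, which is the heart of the paper's proof, is an \emph{external} source of connectedness for the fibres over ${\cal C}_{\x}^{(k)}$ itself. The paper obtains it as follows: by \cite[Proposition 3.4]{CZ} the normalization morphism of the ambient variety ${\cal B}_{\x}^{(k)}$ is a homeomorphism; since $\gamma _{\x}\colon \sqx G{{\goth b}^{k}}\rightarrow {\cal B}_{\x}^{(k)}$ factors through that normalization and is projective birational onto a normal variety there, Zariski's Main Theorem gives that the fibres of $\gamma _{\x}$ are connected. Restricting to $\sqx G{{\goth X}_{0,k}}=\gamma _{\x}^{-1}({\cal C}_{\x}^{(k)})$ preserves this connectedness. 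Then Proposition~\ref{pnor3} makes $\sqx G{{\goth X}_{0,k}}$ normal, so the restriction of $\gamma _{\x}$ factors through $\lambda _{*,k}$ via a surjection $\widetilde{\gamma _{\x}}$, and $\lambda _{*,k}^{-1}(x)=\widetilde{\gamma _{\x}}(\gamma _{\x}^{-1}(x))$ is a connected finite set, hence a point. Your proposal never invokes \cite[Proposition 3.4]{CZ} or any substitute for it, and without such an input the connectedness you need cannot be established. For (ii), your first suggestion (fibres of $\eta $ are single $W({\cal R})$-orbits, combined with bijectivity of $\lambda _{*,k}$) does match the paper's argument, but it depends on (i) being repaired first.
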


\begin{proof}
(i) As ${\goth X}_{0,k}$ is contained in ${\goth b}^{k}$, we deduce the commutative 
diagram
$$\xymatrix{ \sqx G{{\goth X}_{0,k}} \ar@{^{(}->}[r] \ar[d] & \sqx G{{\goth b}^{k}} 
\ar[d]^{\gamma _{\x}} \\ {\cal C}_{\x}^{(k)} \ar@{^{(}->}[r] & {\cal B}_{\x}^{(k)}}$$
According to~\cite[Proposition 3.4]{CZ}, the normalization morphism of 
${\cal B}_{\x}^{(k)}$ is a homeomorphism. Then since $\sqx G{{\goth b}^{k}}$ is a 
desingularization of ${\cal B}_{\x}^{(k)}$, the fibers of $\gamma _{\x}$ are connected by
Zariski Main Theorem \cite[\S 9]{Mu}. Then so are the fibers of the restriction of 
$\gamma _{\x}$ to $\sqx G{{\goth X}_{0,k}}$ since $\sqx G{{\goth X}_{0,k}}$ is the 
inverse image of ${\cal C}_{\x}^{(k)}$. According to Proposition~\ref{pnor3}, 
$\sqx G{{\goth X}_{0,k}}$ is a normal variety. Moreover, the restriction of 
$\gamma _{\x}$ to $\sqx G{{\goth X}_{0,k}}$ is projective and birational, 
whence the commutative diagram
$$\xymatrix{ \sqx G{{\goth X}_{0,k}} \ar[rr]^{\widetilde{\gamma _{\x}}} 
\ar[rd]_{\gamma _{\x}} &&
\widetilde{{\cal C}_{\x}^{(k)}} \ar[ld]^{\lambda _{*,k}} \\ & {\cal C}_{\x}^{(k)} & }$$
with $\lambda _{*,k}$ the normalization morphism. For $x$ in ${\cal C}_{\x}^{(k)}$, 
$\lambda _{*,k}^{-1}(x) = \widetilde{\gamma _{\x}}(\gamma _{\x}^{-1}(x))$. Hence 
$\lambda _{*,k}$ is injective since the fibers of $\gamma _{\x}$ are connected, whence 
the assertion since $\lambda _{*,k}$ is closed as a finite morphism.

(ii) Denote again by $\eta $ the restriction of $\eta $ to ${\cal C}_{\x}^{(k)}$. We 
have a commutative diagram
$$\xymatrix{ \widetilde{{\cal C}_{\x}^{(k)}} \ar[d]_{\widetilde{\eta }}
\ar[r]^{\lambda _{*,k}} & 
{\cal C}_{\x}^{(k)}\ar[d]^{\eta } \\
\widetilde{{\cal C}^{(k)}} \ar[r]_{\lambda _{k}} & {\cal C}^{(k)}}$$
with $\lambda _{k}$ the normalization morphism. According to~\cite[Proposition 5.8]{CZ}, 
all fiber of $\eta $ or $\widetilde{\eta }$ is one single $W({\cal R})$-orbit and by (i), 
$\lambda _{*,k}$ is bijective. Hence $\lambda _{k}$ is bijective, whence the assertion 
since $\lambda _{k}$ is closed as a finite morphism.
\end{proof}

\appendix

\section{Notations} \label{no}
In this appendix, $V$ is a finite dimensional vector space. Denote by $\es SV$ and 
$\ex {}V$ respectively the symmetric and exterior algebras of $V$. For all integer $i$, 
$\sy iV$ and $\ex iV$ are the subspaces of degree $i$ for the usual gradation of $\es SV$
and $\ex {}V$ respectively. In particular, $\sy iV$ and $\ex iV$ are equal to zero for 
$i$ negative.\\ 

$\bullet$ For $l$ positive integer, denote by ${\goth S}_{l}$ the group of 
permutations of $l$ elements.

$\bullet$ For $m$ positive integer and for $l=(\poi l1{,\ldots,}{m}{}{}{})$ in 
${\Bbb N}^{m}$, set:
\begin{eqnarray*}
\vert l \vert := & \poi l1{+\cdots +}{m}{}{}{} \\
\sy lV := & \tk {\k}{\sy {l_{1}}V}\tk {\k}{\cdots }\sy {l_{m}}V \\
\ex lV := & \tk {\k}{\ex {l_{1}}V}\tk {\k}{\cdots }\ex {l_{m}}V .
\end{eqnarray*}

$\bullet$ For $k$ positive integer and for $l=(\poi l1{,\ldots,}{m}{}{}{})$ in 
${\Bbb N}^{m}$ such that $1\leq \vert l \vert \leq k$, denote by $V^{\tens k}$ the 
$k$-th tensor power of $V$ and by ${\goth S}_{l}$ the direct product
$\poi {{\goth S}}{l_{1}}{\times \cdots \times }{l_{m}}{}{}{}$. The group ${\goth S}_{l}$
has a natural action on $V^{\tens k}$ given by 
\begin{eqnarray*}
(\poi {\sigma }1{,\ldots,}{m}{}{}{}).(\poi v1{\tens \cdots \tens}{k}{}{}{}) = &
\poi v{\sigma _{1}(1)}{\tens \cdots \tens}{\sigma _{1}(l_{1})}{}{}{} 
 \tens \poi v{l_{1}+\sigma _{2}(1)}{\tens \cdots \tens}{l_{1}+\sigma _{2}(l_{2})}{}{}{}
\\ & \tens
\cdots \tens \poi v{\vert l \vert-l_{m}+\sigma _{m}(1)}{\tens \cdots \tens}
{\vert l \vert-l_{m}+\sigma _{m}(l_{m})}{}{}{}  \tens 
\poi v{\vert l \vert +1}{\tens \cdots \tens }{k}{}{}{} .
\end{eqnarray*}
The map 
$$ a \longmapsto \pi _{k,l}(a) := \prod_{j=1}^{m} \frac{1}{l_{j}!}
\sum_{\sigma \in {\goth S}_{l}} \sigma .a$$
is a projection from $V^{\tens k}$ onto $(V^{\tens k})^{{\goth S}_{l}}$. Moreover, the 
restriction to $(V^{\tens k})^{{\goth S}_{l}}$ of the canonical map from $V^{\tens k}$ to 
$\tk {\k}{\sy lV}V^{\tens (k-\vert l \vert)}$ is an isomorphism of vector spaces.
 
\section{Some complexes} \label{co}
Let $X$ be a smooth algebraic variety. For ${\cal M}$ a coherent $\an X{}$-module and for 
$k$ positive integer, denote by ${\cal M}^{\tens k}$ the $k$-th tensor power of 
${\cal M}$. According to Notations~\ref{no}, for all $l$ in ${\Bbb N}^{m}$ such that 
$\vert l \vert \leq k$, there is an action of ${\goth S}_{l}$ on ${\cal M}^{\tens k}$.
Moreover, $\sy l{{\cal M}}$ and $\ex l{{\cal M}}$ are coherent modules defined by the
same formulas as in Notations~\ref{no}.
\subsection{} \label{co1}
Let ${\cal E}$ and ${\cal M}$ be locally free $\an {X}{}$-modules.

\begin{prop}\label{pco1}
Let $i$ be a positive integer and suppose that
$$ {\mathrm {H}}^{i+j}(X,\tk {\an X{}}{{\cal E}^{\tens k}}{\cal M}) = 0$$
for all nonnegative integers $j,k$.

{\rm (i)} For all positive integers $m$ and $k$ and for all $l$ in ${\Bbb N}^{m}$ such 
that $\vert l \vert\leq k$,
$$ {\mathrm {H}}^{i}(X,\tk {\an X{}}
{\tk {\an X{}}{\sy {l}{{\cal E}}}{\cal E}^{\tens (k-\vert l \vert)}}{\cal M}) = 0 .$$

{\rm (ii)} For all positive integers $n_{1}$, $n_{2}$, $k$ and for all $(l,m)$ in 
${\Bbb N}^{n_{1}}\times {\Bbb N}^{n_{2}}$ such that $\vert l \vert+\vert m \vert\leq k$,
$$ {\mathrm {H}}^{i}(X,\tk {\an X{}}
{\tk {\an X{}}{\sy {l}{{\cal E}}}\tk {\an X{}}{\ex m{{\cal E}}}
{\cal E}^{\tens (k-\vert l \vert-\vert m \vert)}}{\cal M}) = 0 .$$
\end{prop}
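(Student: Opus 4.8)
The plan is to deduce both statements from the hypothesis by a combination of two standard devices: first, realizing symmetric and exterior powers as direct summands of tensor powers via the projectors $\pi_{k,l}$ introduced in Notations~\ref{no}; and second, for the exterior powers, running a Koszul-type resolution argument (the same mechanism that underlies the complexes of Section~\ref{co}) that expresses $\tk{\an X{}}{\ex m{{\cal E}}}{\cal N}$ in terms of $\tk{\k}{\cdots}{\cal E}^{\tens(\cdot)}\tens {\cal N}$ with a shift in cohomological degree that is exactly compensated by the length of the complex. Throughout, ${\cal E}$ and ${\cal M}$ being locally free is what makes all the tensor products here exact and lets one split summands off cohomology.

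For part (i), I would argue as follows. Fix $m$, $k$, and $l \in {\Bbb N}^{m}$ with $\vert l\vert \le k$. By the last sentence of Notations~\ref{no}, the canonical map from $({\cal E}^{\tens k})^{{\goth S}_l}$ to $\tk{\an X{}}{\sy l{{\cal E}}}{\cal E}^{\tens(k-\vert l\vert)}$ is an isomorphism of locally free $\an X{}$-modules, and $({\cal E}^{\tens k})^{{\goth S}_l}$ is a direct summand of ${\cal E}^{\tens k}$ cut out by the idempotent $\pi_{k,l}$ (the argument is purely formal, so it sheafifies). Tensoring with ${\cal M}$ preserves this direct-sum decomposition since ${\cal M}$ is locally free, so
$$ \tk{\an X{}}{\tk{\an X{}}{\sy l{{\cal E}}}{\cal E}^{\tens(k-\vert l\vert)}}{\cal M}$$
is a direct summand of $\tk{\an X{}}{{\cal E}^{\tens k}}{\cal M}$. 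Hence its $i$-th cohomology is a direct summand of ${\mathrm H}^i(X,\tk{\an X{}}{{\cal E}^{\tens k}}{\cal M})$, which vanishes by hypothesis (the case $j=0$); this gives the claim. Note that here no degree shift is needed, which is why the symmetric case costs nothing.

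For part (ii), I would first reduce to a single exterior factor by induction on $n_2$ and then handle $\ex m{{\cal E}}$ for a single integer $m$. The key input is the exact Koszul complex of locally free modules
$$ 0 \longrightarrow \ex m{{\cal E}} \longrightarrow \tk{\k}{\ex{m}{V}}\an X{} \longrightarrow \cdots $$
— more precisely, the standard resolution of $\ex m{{\cal E}}$ whose $p$-th term is a sum of copies of ${\cal E}^{\tens(m-p)}$ (this is the local picture used already in the proof of Lemma~\ref{lnor3},(ii) and set up abstractly in Subsection~\ref{co2}). Tensoring this acyclic complex (over $\an X{}$) with the locally free module $\tk{\an X{}}{\tk{\an X{}}{\sy l{{\cal E}}}\tk{\an X{}}{\ex{m'}{{\cal E}}}{\cal M}}$ — where $m'$ collects the remaining exterior factors and the extra plain tensor factors of ${\cal E}$ are absorbed into the index bookkeeping — keeps it acyclic, and then the hypercohomology spectral sequence, together with part (i) applied term by term (each term being of the shape treated in (i), up to the vanishing hypothesis absorbing the extra ${\cal E}$-factors and the shift), forces ${\mathrm H}^i$ of the left end to vanish. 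The bookkeeping of degrees is what makes this work: the $p$-th term of the complex contributes in cohomological degree $i+p$, but by part (i) (applied with the vanishing hypothesis ranging over all $j$) those groups all vanish, so the complex computes $0$ in degree $i$. The main obstacle, and the only place requiring care, is precisely this index management — checking that after splitting off summands via $\pi_{k,l}$ and running the Koszul resolution, every intermediate module really is of the form covered by the hypothesis ${\mathrm H}^{i+j}(X,\tk{\an X{}}{{\cal E}^{\tens k}}{\cal M})=0$ for suitable $j$ and $k$, so that part (i) genuinely applies at each stage. Once that is arranged, the vanishing propagates formally because ${\mathrm H}^{\bullet}$ is an exact $\delta$-functor.
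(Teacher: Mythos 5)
Your part (i) is correct and is exactly the paper's argument: $\pi _{k,l}$ is an idempotent of ${\cal E}^{\tens k}$ whose image is isomorphic to $\tk {\an X{}}{\sy l{{\cal E}}}{\cal E}^{\tens (k-\vert l \vert)}$, so after tensoring with the locally free ${\cal M}$ the module in question is a direct summand of $\tk {\an X{}}{{\cal E}^{\tens k}}{\cal M}$, and the hypothesis with $j=0$ gives the vanishing. No issue there.

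Part (ii), however, rests on a resolution that does not exist in the stated generality. The complex you invoke, ``$0\rightarrow \ex m{{\cal E}}\rightarrow \tk {\k}{\ex mV}\an X{}\rightarrow \cdots$, whose $p$-th term is a sum of copies of ${\cal E}^{\tens (m-p)}$,'' presupposes an ambient vector space $V$ and an embedding of ${\cal E}$ into a trivial bundle; but in Proposition~\ref{pco1} the sheaf ${\cal E}$ is an arbitrary locally free module on a smooth variety, with no such trivialization given. The Koszul-type complexes you are modelling this on (Subsection~\ref{co2}, Lemma~\ref{lnor3},(ii)) live in a genuinely different situation --- a subbundle ${\cal F}_{-}$ of the trivial bundle $\Gamma \times {\goth b}_{-}$ --- and they resolve the \emph{ideal} generated by a subbundle in terms of its exterior powers, not an exterior power in terms of tensor powers. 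The ``index management'' you flag as the only delicate point is therefore not a bookkeeping issue but the missing mathematical content: without a concrete acyclic complex whose terms are of the form covered by the hypothesis, the spectral-sequence argument has nothing to run on.

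The repair is much simpler and is what the paper does. Since $\k$ has characteristic $0$, the antisymmetrizer
$$ a \longmapsto \frac{1}{m!}\sum_{\sigma \in {\goth S}_{m}} \varepsilon (\sigma )\, \sigma .a $$
is an idempotent of ${\cal E}^{\tens m}$ with image isomorphic to $\ex m{{\cal E}}$, so $\ex m{{\cal E}}$ is a direct summand of ${\cal E}^{\tens m}$; by induction on the number of exterior factors, $\ex m{{\cal E}}$ for $m\in {\Bbb N}^{n_{2}}$ is a direct summand of ${\cal E}^{\tens \vert m \vert}$. Combining this with the symmetrizer of part (i), the whole module in (ii) is a direct summand of $\tk {\an X{}}{{\cal E}^{\tens k}}{\cal M}$, and the vanishing follows from the hypothesis with $j=0$ --- no resolution, no degree shift, and no spectral sequence are needed.
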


\begin{proof}
(i) Since $\pi _{k,l}({\cal E}^{\tens k})$ is isomorphic to 
$\tk {\an X{}}{\sy l{{\cal E}}}{\cal E}^{\tens (k-\vert l \vert)}$ and since $\pi _{k,l}$
is a projector of ${\cal E}^{\tens k}$, 
$\tk {\an X{}}{\sy l{{\cal E}}}{\cal E}^{\tens (k-\vert l \vert)}$ is isomorphic to 
a direct factor of ${\cal E}^{\tens k}$ and $\tk {\an X{}}{\sy l{{\cal E}}}
\tk {\an X{}}{{\cal E}^{\tens (k-\vert l \vert)}}{\cal M}$ is isomorphic to 
a direct factor of $\tk {\an X{}}{{\cal E}^{\tens k}}{\cal M}$, whence the assertion.

(ii) Denoting by $\varepsilon (\sigma )$ the signature of the element $\sigma $
of the symmetric group ${\goth S}_{m}$, the map
$$\begin{array}{ccc}
{\cal E}^{\tens m} \longrightarrow {\cal E}^{\tens m} &&
a \mapsto \frac{1}{m!} \sum_{\sigma \in {\goth S}_{m}}\varepsilon (\sigma ) 
\sigma .a \end{array}$$
is a projection from ${\cal E}^{\tens m}$ onto a submodule of ${\cal E}^{\tens m}$ 
isomorphic to $\ex m{\cal E}$. So, $\ex m{{\cal E}}$ is isomorphic to a direct factor 
of ${\cal E}^{\tens m}$. Then, by induction on $m$, for $l$ in ${\Bbb N}^{m}$, 
$\ex l{{\cal E}}$ is isomorphic to a direct factor of ${\cal E}^{\tens \vert l \vert}$.
As a result, according to (i), for all positive integers $n_{1}$, $n_{2}$, $k$ and for 
all $(l,m)$ in ${\Bbb N}^{n_{1}}\times {\Bbb N}^{n_{2}}$ such that 
$\vert l \vert+\vert m \vert\leq k$, 
$\tk {\an X{}}{\tk {\an X{}}{\sy {l}{{\cal E}}}\tk {\an X{}}{\ex m{{\cal E}}}
{\cal E}^{\tens (k-\vert l \vert-\vert m \vert)}}{\cal M}$ is isomorphic to a 
direct factor of $\tk {\an X{}}{{\cal E}^{\tens k}}{\cal M}$, whence the assertion.
\end{proof}

\subsection{} \label{co2}
Let $W$ be a subspace of $V$ and set $E := V/W$. Let $C_{\bullet}^{(n)}(V,W)$, 
$n=1,2,\ldots$ be the sequence of graded spaces over ${\Bbb N}$ defined by the induction
relations:
$$ \begin{array}{ccccc}
C_{0}^{(1)}(V,W) := V && C_{1}^{(1)}(V,W) := W && C_{i}^{(1)}(V,W) := 0 
\end{array}$$
$$\begin{array}{ccc}
C_{0}^{(n)}(V,W) := V^{\tens n} && C_{j}^{(n)}(V,W) := 
\tk {\k}{C_{j}^{(n-1)}(V,W)}V \oplus \tk {\k}{C_{j-1}^{(n-1)}(V,W)}W 
\end{array} $$
for $i\geq 2$ and $j\geq 1$. 

\begin{lemma}\label{lco2}
Let $n$ be a positive integer. There exists a graded differential of degree $-1$ on 
$C_{\bullet}^{(n)}(V,W)$ such that the complex so defined has no homology in positive
degree.
\end{lemma}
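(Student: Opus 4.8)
The plan is to identify $C_\bullet^{(n)}(V,W)$ with the $n$-th tensor power of a single two-term complex. Let $K_\bullet$ be the complex of $\k$-vector spaces with $K_0 = V$, $K_1 = W$, $K_i = 0$ for $i \neq 0,1$, whose differential $K_1 \to K_0$ is the inclusion $\iota$ of $W$ into $V$; thus $K_\bullet = C_\bullet^{(1)}(V,W)$. Comparing with the recursion relations, one checks at once that for $n \geq 2$ the graded space $C_\bullet^{(n)}(V,W)$ is the tensor product complex $C_\bullet^{(n-1)}(V,W) \otimes_\k K_\bullet$: in degree $j$ the latter is $C_j^{(n-1)} \otimes K_0 \oplus C_{j-1}^{(n-1)} \otimes K_1 = C_j^{(n-1)} \otimes V \oplus C_{j-1}^{(n-1)} \otimes W$, which is precisely the defining formula (and in degree $0$ it is $V^{\otimes(n-1)} \otimes V = V^{\otimes n}$). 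Endowing $K_\bullet$ with the differential $\iota$ and $C_\bullet^{(n)}(V,W)$, inductively on $n$, with the tensor-product differential — on $C_j^{(n-1)} \otimes V$ it is $\dd^{(n-1)} \otimes \mathrm{id}_V$, on $C_{j-1}^{(n-1)} \otimes W$ it is $\mathrm{id} \otimes \iota$ together with $(-1)^{j-1}\dd^{(n-1)} \otimes \mathrm{id}_W$ — one obtains a graded differential of degree $-1$ on $C_\bullet^{(n)}(V,W)$.

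The key point is that $K_\bullet$ is the mapping cone of $\iota : W \to V$, the spaces $W$ and $V$ being viewed as complexes concentrated in degree $0$; hence $C_\bullet^{(n)}(V,W)$ is the mapping cone of $\mathrm{id} \otimes \iota : C_\bullet^{(n-1)}(V,W) \otimes_\k W \to C_\bullet^{(n-1)}(V,W) \otimes_\k V$. The associated long exact homology sequence is
$$ \cdots \longrightarrow \mathrm{H}_j(C_\bullet^{(n-1)}) \otimes_\k W \longrightarrow \mathrm{H}_j(C_\bullet^{(n-1)}) \otimes_\k V \longrightarrow \mathrm{H}_j(C_\bullet^{(n)}) \longrightarrow \mathrm{H}_{j-1}(C_\bullet^{(n-1)}) \otimes_\k W \longrightarrow \cdots , $$
where I have used that $V$ and $W$ are flat $\k$-modules in order to pull the tensor factors out of the homology.

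I would then prove by induction on $n$ the slightly stronger statement that $\mathrm{H}_0(C_\bullet^{(n)}) \cong E^{\otimes n}$ and $\mathrm{H}_j(C_\bullet^{(n)}) = 0$ for $j > 0$. The case $n = 1$ holds because $\iota$ is injective. For the inductive step: when $j \geq 2$ both neighbours of $\mathrm{H}_j(C_\bullet^{(n)})$ in the long exact sequence vanish by the induction hypothesis; when $j = 1$ the term $\mathrm{H}_1(C_\bullet^{(n-1)}) \otimes V$ vanishes and the connecting map identifies $\mathrm{H}_1(C_\bullet^{(n)})$ with the kernel of $\mathrm{H}_0(C_\bullet^{(n-1)}) \otimes W \to \mathrm{H}_0(C_\bullet^{(n-1)}) \otimes V$, which by the induction hypothesis is $\mathrm{id}_{E^{\otimes(n-1)}} \otimes \iota$ and hence injective, so $\mathrm{H}_1(C_\bullet^{(n)}) = 0$; and $\mathrm{H}_0(C_\bullet^{(n)})$ is the cokernel of that same map, namely $E^{\otimes(n-1)} \otimes E = E^{\otimes n}$. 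The only delicate point is keeping the mapping-cone description compatible with the grading and sign conventions in the recursion, and noticing that one must carry $\mathrm{H}_0 = E^{\otimes n}$ along in the induction in order to dispatch the $j = 1$ case; beyond that it is the standard Künneth formalism over a field.
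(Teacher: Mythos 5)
Your proposal is correct in substance, but the mechanism of the inductive step is genuinely different from the paper's. Both arguments induct on $n$ and equip $C_{\bullet}^{(n)}(V,W)$ with what is in effect the same differential: the paper's formula $\delta(a\otimes v, b\otimes w) = (\dd a \otimes v + (-1)^{j}b\otimes w, \dd b \otimes w)$ is precisely the tensor-product differential on $C_{\bullet}^{(n-1)}(V,W)\otimes_{\k}K_{\bullet}$ that you describe. Where you diverge is in proving acyclicity. The paper takes an arbitrary cycle of positive degree, expands it in a basis of $V$ adapted to $W$, uses the cycle condition to see that the $W$-components are $\pm\dd$ of the $V$-components, subtracts an explicit boundary to kill them, and then applies the induction hypothesis componentwise; it never needs to know $H_{0}$. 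You instead recognize $C_{\bullet}^{(n)}$ as the mapping cone of $\mathrm{id}\otimes\iota$ and run the long exact sequence, which forces you to strengthen the induction to carry $H_{0}(C_{\bullet}^{(n)})\cong (V/W)^{\otimes n}$ along -- as you correctly note, that is what dispatches the $j=1$ case. Your route is more structural and yields the identification of $H_{0}$ as a bonus (information the paper extracts separately, by a Nakayama argument, when it applies the lemma in Lemma~\ref{lnor2}); the paper's route is more elementary and needs no K\"unneth input. It is also compatible with Remark~\ref{rco2}, since the K\"unneth isomorphism over a field requires no finiteness hypothesis.

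One small slip, which is a transcription error rather than a gap: as literally written your differential does not square to zero in odd degrees. In the Koszul sign convention the factor $(-1)^{j-1}$ must sit on the component $\mathrm{id}\otimes\iota\colon C_{j-1}^{(n-1)}\otimes W\to C_{j-1}^{(n-1)}\otimes V$, not on the component $\dd^{(n-1)}\otimes\mathrm{id}_{W}$; one checks that with your placement $\delta^{2}(b\otimes w)=(1+(-1)^{j-1})\,\dd b\otimes\iota(w)$, which is nonzero for $j$ odd. With the sign in the right place (this is the paper's $(-1)^{j}b\otimes w$ term up to an overall convention), the mapping-cone identification and the rest of your argument go through unchanged.
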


\begin{proof}
Prove the lemma by induction on $n$. For $n=1$, $\dd $ is given by the inclusion map
$\xymatrix{W \ar[r]&V}$. Suppose that $C_{\bullet}^{(n-1)}(V,W)$ has a differential 
$\dd $ verifying the conditions of the lemma. For $j>0$, denote by $\delta $ the 
linear map
$$\xymatrix{C_{j}^{(n)}(V,W) \ar[rr] && C_{j-1}^{(n)}(V,W)}, \qquad 
(a\tens v , b\tens w) \longmapsto (\dd a \tens v + (-1)^{j}b\tens w,\dd b \tens w) $$
with $a,b,v,w$ in $C_{j}^{(n-1)}(V,W)$, $C_{j-1}^{(n-1)}(V,W)$, $V$, $W$ respectively.
Then $\delta $ is a graded differential of degree $-1$. Let $c$ be a cycle of positive
degree $j$ of $C_{\bullet}^{(n)}(V,W)$. Then $c$ has an expansion
$$ c = (\sum_{i=1}^{d} a_{i}\tens v_{i},\sum_{i=1}^{d'} b_{i}\tens v_{i})$$  
with $\poi v1{,\ldots,}{d}{}{}{}$ a basis of $V$ such that $\poi v1{,\ldots,}{d'}{}{}{}$
is a basis of $W$ and with $\poi a1{,\ldots,}{d}{}{}{}$ and $\poi b1{,\ldots,}{d'}{}{}{}$
in $C_{j}^{(n-1)}(V,W)$ and $C_{j-1}^{(n-1)}(V,W)$ respectively. Since $c$ is a cycle,
$$ \sum_{i=1}^{d} \dd a_{i}\tens v_{i} + (-1)^{j} \sum_{i=1}^{d'} b_{i}\tens v_{i} = 0$$
Hence $b_{i} = (-1)^{j+1}\dd a_{i}$ for $i=1,\ldots,d'$ so that 
$$ c + \delta (0,\sum_{i=1}^{d'} (-1)^{j} a_{i}\tens v_{i}) = 
(\sum_{i=1}^{d} a_{i}\tens v_{i} + \sum_{i=1}^{d'}  a_{i}\tens v_{i},
\sum_{i=1}^{d'} (b_{i}\tens v_{i} + (-1)^{j}\dd a_{i}\tens v_{i}) ) = 
(\sum_{i=1}^{d} a_{i}\tens v_{i} + \sum_{i=1}^{d'} a_{i}\tens v_{i},0) .$$
So we can suppose $\poi b1{ = \cdots =}{d'}{}{}{} = 0$. Then 
$\poi a1{,\ldots,}{d}{}{}{}$ are cycles of degree $j$ of $C_{\bullet}^{(n-1)}(V,W)$. 
By induction hypothesis, they are boundaries of $C_{\bullet}^{(n-1)}(V,W)$ so that 
$c$ is a boundary of $C_{\bullet}^{(n)}(V,W)$, whence the lemma.  
\end{proof}

\begin{rema}~\label{rco2}
The results of this subsection remain true for $V$ or $W$ of infinite dimension since 
a vector space is an inductive limit of finite dimensional vector spaces.
\end{rema}

\section{Rational Singularities} \label{ars}
Let $X$ be an affine irreducible normal variety. 

\begin{lemma}\label{lars}
Let $Y$ be a smooth big open subset of $X$. 

{\rm (i)} All regular differential form of top degree on $Y$ has a unique regular 
extension to $X_{\loc}$.

{\rm (ii)} Suppose that $\omega $ is a regular differential form of top degree on $Y$, 
without zero. Then the regular extension of $\omega $ to $X_{\loc}$ has no zero.
\end{lemma}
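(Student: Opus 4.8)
The plan is to reduce everything to a codimension argument using the normality of $X$ together with the algebraic Hartogs phenomenon. For part (i), let $\omega$ be a regular differential form of top degree on $Y$. Since $X$ is normal and $Y$ is a big open subset, $X_{\loc}\setminus (X_{\loc}\cap Y)$ has codimension at least $2$ in the smooth variety $X_{\loc}$: indeed $X\setminus Y$ has codimension $\geq 2$ in $X$ by hypothesis, and $X_{\loc}$ is an open subset of $X$. First I would observe that on a smooth variety the sheaf $\Omega$ of top-degree differential forms is locally free of rank $1$, hence in particular reflexive. A reflexive coherent sheaf on a normal variety satisfies the extension property across closed subsets of codimension $\geq 2$: any section defined on the complement of such a subset extends uniquely. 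Applying this to $\Omega_{X_{\loc}}$ and the section $\omega|_{X_{\loc}\cap Y}$, we obtain a unique regular form $\tilde\omega$ of top degree on $X_{\loc}$ restricting to $\omega$ on $X_{\loc}\cap Y$; since $Y$ is irreducible and dense, this $\tilde\omega$ restricts to $\omega$ on all of $Y$, and uniqueness is immediate because two extensions agree on the dense open subset $X_{\loc}\cap Y$.

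For part (ii), suppose $\omega$ has no zero on $Y$, i.e.\ it trivializes $\Omega_Y$; equivalently its divisor $\mathrm{div}(\omega)$ on $Y$ is zero. Let $\tilde\omega$ be the extension from part (i). Since $\Omega_{X_{\loc}}$ is invertible, $\tilde\omega$ determines an effective Cartier (hence Weil) divisor $D=\mathrm{div}(\tilde\omega)$ on $X_{\loc}$, and I claim $D=0$. Every prime divisor of $X_{\loc}$ meets $X_{\loc}\cap Y$: otherwise it would be contained in the codimension $\geq 2$ set $X_{\loc}\setminus Y$, which is impossible for a codimension-$1$ subvariety. Hence the support of $D$, being a union of prime divisors, meets $X_{\loc}\cap Y$; but $D$ restricted to $X_{\loc}\cap Y$ equals $\mathrm{div}(\omega|_{X_{\loc}\cap Y})=0$ because $\omega$ has no zero on $Y$. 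Therefore $D$ has empty support, so $D=0$, which means $\tilde\omega$ generates $\Omega_{X_{\loc}}$ at every point, i.e.\ it has no zero.

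The only genuinely delicate point is the extension step in (i): one must know that a top-degree differential form extends across a codimension $\geq 2$ locus. I expect to justify this either by invoking that $\Omega_{X_{\loc}}\cong\mathscr{O}_{X_{\loc}}$ locally (so the statement reduces to the well-known fact that regular functions on a normal variety extend across codimension $\geq 2$ subsets, which is the algebraic Hartogs lemma), or equivalently by citing that reflexive sheaves on normal varieties are determined by their restriction to any big open subset. Everything else is a routine divisor-theoretic bookkeeping argument, and the irreducibility and normality of $X$ (giving a well-behaved function field and the Hartogs property) are exactly what make it go through.
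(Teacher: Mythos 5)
Your proposal is correct and follows essentially the same route as the paper: part (i) is the local trivialization of $\Omega_{X_{\loc}}$ combined with the algebraic Hartogs extension of the coefficient functions across the codimension~$\geq 2$ complement (the reflexive-sheaf phrasing is just a repackaging of this), and part (ii) is the same observation that the zero locus of the extended form, being of pure codimension $1$ if nonempty, would have to meet the big open subset $Y$, contradicting that $\omega$ has no zero there. No gaps.
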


\begin{proof}
(i) Since $\Omega _{X_{\loc}}$ is a locally free module of rank one, there is an affine 
open cover $\poi O1{,\ldots,}{k}{}{}{}$ of $X_{\loc}$ such that the restriction of 
$\Omega _{X_{\loc}}$ to $O_{i}$ is a free $\an {O_{i}}{}$-module generated by some 
section $\omega _{i}$. For $i=1,\ldots,k$, set $O'_{i} := O_{i}\cap Y$. Let $\omega $ be 
a regular differential form of top degree on $Y$. For $i=1,\ldots,k$, for some regular 
function $a_{i}$ on $O'_{i}$, $a_{i}\omega _{i}$ is the restriction of $\omega $ to 
$O'_{i}$. As $Y$ is a big open subset of $X$, $O'_{i}$ is a big open subset of $O_{i}$. 
Hence $a_{i}$ has a regular extension to $O_{i}$ since $O_{i}$ is normal. Denoting again 
by $a_{i}$ this extension, for $1\leq i,j\leq k$, $a_{i}\omega _{i}$ and 
$a_{j}\omega _{j}$ have the same restriction to $O'_{i}\cap O'_{j}$ and $O_{i}\cap O_{j}$ since $\Omega _{X_{\loc}}$ is torsion free as a locally free module. Let $\omega '$ be 
the global section of $\Omega _{X_{\loc}}$ extending the $a_{i}\omega _{i}$'s. Then 
$\omega '$ is a regular extension of $\omega $ to $X_{\loc}$ and this extension is unique
since $Y$ is dense in $X_{\loc}$ and $\Omega _{X_{\loc}}$ is torsion free.

(ii)  Suppose that $\omega $ has no zero. Let $\Sigma $ be the nullvariety of $\omega '$ 
in $X_{\loc}$. If it is not empty, $\Sigma $ has codimension $1$ in $X_{\loc}$. As $Y$ is
a big open subset of $X$, $\Sigma \cap X_{\loc}$ is not empty if so is $\Sigma $. As a 
result, $\Sigma $ is empty.
\end{proof}

Denote by $\iota $ the inclusion morphism $\xymatrix{X_{\loc} \ar[r] & X}$.

\begin{lemma}\label{l2ars}
Suppose that $\Omega _{X_{\loc}}$ has a global section $\omega $ without zero. Then 
the $\an X{}$-module $\iota _{*}(\Omega _{X_{\loc}})$ is free of rank $1$. 
More precisely, the morphism $\theta $:
$$ \xymatrix{ \an X{} \ar[rr]^{\theta } && \iota _{*}(\Omega _{X_{\loc}})}, \qquad
\psi \longmapsto \psi \omega $$
is an isomorphism.
\end{lemma}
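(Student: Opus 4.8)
The plan is to realize $\theta$ as a composition of two isomorphisms, the second being obtained from the first by applying the direct image functor $\iota_*$. First I would record the two structural facts that make the argument work: since $X$ is normal, $X\setminus X_{\loc}$ has codimension at least $2$ in $X$, so $X_{\loc}$ is a big open subset of $X$; and since $X_{\loc}$ is smooth and irreducible, the sheaf $\Omega_{X_{\loc}}$ of regular differential forms of top degree is an invertible $\an {X_{\loc}}{}$-module.

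Next I would use the hypothesis on $\omega$: being a global section of the line bundle $\Omega_{X_{\loc}}$ without zero, $\omega$ trivializes $\Omega_{X_{\loc}}$, i.e. the morphism $\an {X_{\loc}}{}\to\Omega_{X_{\loc}}$, $g\mapsto g\omega$, is an isomorphism of $\an {X_{\loc}}{}$-modules. Applying $\iota_*$ — which, like any functor, carries isomorphisms to isomorphisms — yields an isomorphism $\iota_*(\an {X_{\loc}}{})\to\iota_*(\Omega_{X_{\loc}})$ which on sections over an open subset $U$ of $X$ sends $g$ to $g\omega$.

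It then remains to identify $\an X{}$ with $\iota_*(\an {X_{\loc}}{})$ via the canonical restriction morphism. Here I would invoke normality: for every open $U\subseteq X$, since $U$ is normal and $U\cap X_{\loc}$ is a big open subset of $U$, every regular function on $U\cap X_{\loc}$ extends uniquely to $U$ (the algebraic Hartogs phenomenon already used in the proof of Lemma~\ref{lars}). Hence $\an X{}\to\iota_*(\an {X_{\loc}}{})$ is an isomorphism, and composing it with the isomorphism of the previous paragraph gives precisely $\theta\colon\an X{}\to\iota_*(\Omega_{X_{\loc}})$, $\psi\mapsto\psi\omega$. Therefore $\theta$ is an isomorphism and $\iota_*(\Omega_{X_{\loc}})$ is free of rank $1$.

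The one step deserving genuine care — the natural main obstacle — is the identification $\an X{}=\iota_*(\an {X_{\loc}}{})$, which really uses that $X$ is normal ($S_2$), not merely smooth in codimension one. If one wishes to avoid appealing to this and argue by hand instead, the same conclusion follows by checking $\theta_U$ is bijective for each affine open $U$: injectivity because $\omega$ is nowhere zero and $X$ is reduced and irreducible (so $U\cap X_{\loc}$ is dense in $U$), and surjectivity because an arbitrary top-degree form on $U\cap X_{\loc}$ has the form $f\omega$ with $f$ regular there, and $f$ extends to $U$ by normality since $U\cap X_{\loc}$ is big in $U$.
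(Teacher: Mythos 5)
Your proposal is correct and uses exactly the same two ingredients as the paper's proof: the nowhere-vanishing section $\omega$ trivializes the line bundle $\Omega_{X_{\loc}}$, and normality of $X$ together with $X_{\loc}$ being a big open subset lets regular functions on $U\cap X_{\loc}$ extend to $U$, which is how the paper constructs the inverse of $\theta$ directly. Your functorial packaging via $\an X{}\cong\iota_*(\an {X_{\loc}}{})$ is only a cosmetic variant, and your final ``by hand'' paragraph is essentially the paper's argument verbatim.
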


\begin{proof}
For $\varphi $ a local section of $\iota _{*}(\Omega _{X_{\loc}})$ above the open subset 
$U$ of $X$, for some regular function $\psi $ on $U\cap X_{\loc}$, 
$$ \psi (\omega \left \vert _{U\cap X_{\loc}} \right. ) = \varphi .$$
Since $X$ is normal, so is $U$ and $U\cap X_{\loc}$ is a big open subset of $U$. Hence 
$\psi $ has a regular extension to $U$. As a result, there exists a well defined morphism
from $\iota _{*}(\Omega _{X_{\loc}})$ to $\an X{}$ whose inverse is $\theta $.
\end{proof}

According to \cite{Hi}, $X$ has a desingularization $Z$ with morphism $\tau $ such that 
the restriction of $\tau $ to $\tau ^{-1}(X_{\loc})$ is an isomorphism onto $X_{\loc}$.
Since $Z$ and $X$ are varieties over $\k$, we have the commutative diagram
$$\xymatrix{ Z \ar[rr]^{\tau } \ar[rd]_{p} &&
X \ar[ld]^{q} \\ & {\mathrm {Spec}}(\k) & } .$$
According to ~\cite[V. \S 10.2]{Ha0}, $p^{!}(\k)$ and $q^{!}(\k)$ are dualizing complexes
over $Z$ and $X$ respectively. Furthermore, by ~\cite[VII, 3.4]{Ha0} or 
\cite[4.3,(ii)]{Hin}, $p^{!}(\k)[-\dim Z]$ equals $\Omega _{Z}$. Set 
$\mathpzc{D} := q^{!}(\k)[-\dim Z]$ so that 
$\tau ^{!}(\mathpzc{D})=\Omega _{Z}$ by ~\cite[VII, 3.4]{Ha0} or \cite[4.3,(iv)]{Hin}.
In particular, $\mathpzc{D}$ is dualizing over $X$.

\begin{lemma}\label{l3ars}
Suppose that $X$ has rational singularities. Let ${\cal M}$ be the cohomology in degree
$0$ of $\mathpzc{D}$. Then the $\an X{}$-modules $\tau _{*}(\Omega _{Z})$ and 
${\cal M}$ are isomorphic. In particular, $\tau _{*}(\Omega _{Z})$ has finite injective
dimension.
\end{lemma}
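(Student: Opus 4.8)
The plan is to identify $\tau_*(\Omega_Z)$ with the degree-$0$ cohomology sheaf $\mathcal{M}$ of the dualizing complex $\mathpzc{D}$ by using Grothendieck duality for the resolution $\tau\colon Z\to X$ together with the rational singularities hypothesis. The starting point is the isomorphism $\tau^!(\mathpzc D)=\Omega_Z$ recalled just before the statement. Applying $R\tau_*$ and Grothendieck–Serre duality $R\tau_*\,R\mathscr{H}om_Z(\mathcal F,\tau^!\mathpzc D)\simeq R\mathscr{H}om_X(R\tau_*\mathcal F,\mathpzc D)$ with $\mathcal F=\an Z{}$ gives
$$ R\tau_*\Omega_Z \;\simeq\; R\mathscr{H}om_X(R\tau_*\an Z{},\mathpzc D). $$
The rational singularities hypothesis means precisely that the natural map $\an X{}\to R\tau_*\an Z{}$ is an isomorphism in the derived category (this is the standard formulation; since $X$ is normal, $\tau_*\an Z{}=\an X{}$, and the higher direct images vanish). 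Therefore the right-hand side reduces to $R\mathscr{H}om_X(\an X{},\mathpzc D)=\mathpzc D$, so $R\tau_*\Omega_Z\simeq\mathpzc D$ in the derived category.

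**Next I would** extract cohomology sheaves. On the one hand, since $Z$ is smooth, $\Omega_Z$ is an invertible sheaf concentrated in degree $0$; more importantly, $\tau$ is a proper birational morphism from a smooth variety, so by the Grauert–Riemenschneider vanishing theorem $R^i\tau_*\Omega_Z=0$ for $i>0$. Hence $R\tau_*\Omega_Z$ is the single sheaf $\tau_*\Omega_Z$ placed in degree $0$. On the other hand, $\mathcal{M}$ is by definition the degree-$0$ cohomology of $\mathpzc D$. Comparing degree-$0$ cohomology sheaves on both sides of the quasi-isomorphism $R\tau_*\Omega_Z\simeq\mathpzc D$ yields the desired isomorphism $\tau_*(\Omega_Z)\simeq\mathcal M$ of $\an X{}$-modules.

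**The final clause** — that $\tau_*(\Omega_Z)$ has finite injective dimension — follows from the identification with $\mathcal M$: the complex $\mathpzc D$ is dualizing over $X$, hence has finite injective dimension, so each of its cohomology sheaves lies in a bounded complex of injectives. But one must be slightly careful: an individual cohomology sheaf of a finite-injective-dimension complex need not itself have finite injective dimension in general. However, here $X$ has rational singularities, hence is Cohen–Macaulay, so the dualizing complex $\mathpzc D$ is (up to shift) a single sheaf — the canonical module $\omega_X$ — concentrated in one degree; with the normalization chosen in the excerpt ($\mathpzc D=q^!(\k)[-\dim Z]$ and $\dim Z=\dim X$), that degree is $0$. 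Thus $\mathpzc D\simeq\mathcal M=\omega_X$, which over a Cohen–Macaulay ring has finite injective dimension equal to $\dim X$. Therefore $\tau_*(\Omega_Z)$ has finite injective dimension.

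**The main obstacle** I anticipate is bookkeeping rather than conceptual: keeping track of the degree shifts in $\mathpzc D=q^!(\k)[-\dim Z]$ and verifying that, under the Cohen–Macaulay hypothesis coming from rational singularities, $\mathpzc D$ genuinely collapses to a sheaf in degree $0$ (so that "$\mathcal M$" captures all of $\mathpzc D$), and then invoking Grauert–Riemenschneider in exactly the right form to see that $R\tau_*\Omega_Z$ likewise collapses to a degree-$0$ sheaf. Once both sides are known to be sheaves in degree $0$, the duality isomorphism does the rest, and the finite injective dimension is a formal property of the dualizing module over a Cohen–Macaulay variety.
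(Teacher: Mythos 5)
Your proposal is correct and follows essentially the same route as the paper: Grothendieck duality for the proper morphism $\tau$, the rational-singularities identification $R\tau _{*}(\an Z{})\simeq \an X{}$, and Grauert--Riemenschneider vanishing to collapse $R\tau _{*}(\Omega _{Z})$ to a sheaf in degree $0$. The only difference is cosmetic: you apply the duality isomorphism with test object $\an Z{}$ and read off $R\tau _{*}(\Omega _{Z})\simeq \mathpzc{D}$ directly, whereas the paper applies it to $\hhom_{Z}(\Omega _{Z},\Omega _{Z})$ and then invokes biduality to reach the same conclusion.
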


\begin{proof}
Since $\tau $ is a projective morphism, we have the isomorphism 
\begin{eqnarray}\label{eqars}
\xymatrix{{\mathrm {R}}\tau _{*}({\mathrm {R}}\hhom_{Z}(\Omega _{Z},\Omega _{Z}))
\ar[rr] &&  {\mathrm {R}}\hhom_{X}
({\mathrm {R}}(\tau )_{*}(\Omega _{Z}),\mathpzc{D})}
\end{eqnarray}
by ~\cite[VII, 3.4]{Ha0} or \cite[4.3,(iii)]{Hin}. 
Since ${\mathrm {H}}^{i}({\mathrm {R}}\hhom_{Z}(\Omega _{Z},\Omega _{Z}))=\an Z{}$ for
$i=0$ and $0$ for $i>0$, the left hand side of (\ref{eqars}) can be identified with
${\mathrm {R}\tau _{*}}(\an Z{})$. Since $X$ has rational singularities, 
${\mathrm {R}\tau _{*}}(\an Z{})=\an X{}$ and $\mathpzc{D}$ has only cohomology 
in degree. Moreover, by Grauert-Riemenschneider Theorem \cite{GR}, 
${\mathrm {R}}\tau _{*}(\Omega _{Z})$ has only cohomology in degree $0$, whence 
$R\tau _{*}(\Omega _{Z}) = \tau _{*}(\Omega _{Z})$. Then, by (\ref{eqars}), we have
the isomorphism 
$$ \xymatrix{ \an X{} \ar[rr] &&  \hhom_{X} ((\tau )_{*}(\Omega _{Z}),{\cal M})} .$$
As $\mathpzc{D}$ is dualizing, we have the isomorphism 
$$ \xymatrix{ R\tau _{*}(\Omega _{Z}) \ar[rr] && 
{\mathrm {R}}\hhom_{X}({\mathrm {R}}\hhom_{X}(R\tau _{*}(\Omega _{Z}),\mathpzc{D}),
\mathpzc{D})}$$
whence the isomorphism $\xymatrix{ \tau _{*}(\Omega _{Z}) \ar[r] & {\cal M}}$. As a 
result, $\tau _{*}(\Omega _{Z})$ has finite injective dimension since so has 
${\cal M}$. 
\end{proof}

\section{About singularities} \label{si}
In this section we recall a well known result. Let $X$ be a variety and $Y$ a fiber 
bundle over $X$. Denote by $\tau $ the bundle projection.

\begin{lemma}\label{lsi}
{\rm (i)} If $X$ is Gorenstein and the fibers of $\tau $ are Gorenstein, then so is
$Y$.

{\rm (ii)} If $Y$ is a Gorenstein vector bundle over $X$, then $X$ is Gorenstein.

{\rm (iii)} Suppose that $X$ and the fibers of $\tau $ have rational singularities. Then 
so has $Y$.

{\rm (iv)} If $Y$ is a vector bundle over $X$, $X$ has rational singularities if 
and only if so has $Y$.
\end{lemma}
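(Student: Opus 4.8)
The plan is to reduce all four assertions to local statements about polynomial extensions and products of rings. First I would use that both the Gorenstein property and the property of having rational singularities are local on $X$ and descend along faithfully flat (in particular \'etale) morphisms; since the fibre bundles occurring in this paper are \'etale-locally --- indeed Zariski-locally --- trivial, one may assume $X$ affine and $Y=X\times F$, where $F$ denotes the fibre, and for assertions (ii) and (iv) one may further assume $F$ is an affine space, so that $\an Y{}$ is locally a polynomial algebra over $\an X{}$.

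For (i), I would observe that $\tau $ is flat with fibres isomorphic to the Gorenstein variety $F$ over the Gorenstein base $X$, and invoke the standard local criterion: a noetherian local ring which is flat over a Gorenstein local ring with Gorenstein closed fibre is itself Gorenstein. For (ii), a polynomial algebra over a noetherian local ring $A$ is Gorenstein if and only if $A$ is Gorenstein, so the Gorenstein property of $Y$ forces that of $X$.

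For (iii), after reducing to $Y=U\times F$ with $U$, $F$ affine, I would pick desingularizations $\xymatrix{\tilde U \ar[r]^{p} & U}$ and $\xymatrix{\tilde F \ar[r]^{r} & F}$ by \cite{Hi}; since $\k $ has characteristic $0$, $p\mul r$ is again a desingularization, and the K\"unneth formula for higher direct images under the proper morphism $p\mul r$ expresses ${\mathrm R}^{i}(p\mul r)_{*}(\an {\tilde U\times \tilde F}{})$ as a finite sum of terms ${\mathrm R}^{a}p_{*}(\an {\tilde U}{})\otimes _{\k}{\mathrm R}^{b}r_{*}(\an {\tilde F}{})$ with $a+b=i$, which vanishes for $i>0$ since $U$ and $F$ have rational singularities; together with the observation that $Y$ is locally normal and Cohen--Macaulay, this gives rational singularities on $U\times F$ and, by gluing, on $Y$. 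For (iv), the forward implication is the special case of (iii) with $F$ an affine space (so that $r$ is the identity); for the converse I would first note that $X$ is normal and Cohen--Macaulay because locally $\an X{}$ is a direct factor of, and $\an Y{}$ a polynomial algebra over, $\an X{}$, then take a desingularization $\xymatrix{\tilde X \ar[r]^{p} & X}$ and form the fibre product $\tilde X\times _{X}Y$, which is a vector bundle over the smooth variety $\tilde X$, hence smooth, with projective birational structure morphism $q$ onto $Y$, hence a desingularization of $Y$; flatness of $\tau $ gives ${\mathrm R}^{i}q_{*}(\an {\tilde X\times _{X}Y}{})=\tau ^{*}({\mathrm R}^{i}p_{*}(\an {\tilde X}{}))$, so rational singularities of $Y$ force the left side to vanish for $i>0$, and faithful flatness of $\tau $ then forces ${\mathrm R}^{i}p_{*}(\an {\tilde X}{})=0$ for $i>0$, i.e. rational singularities of $X$.

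The main obstacle I expect is the reduction step: making precise that ``fibre bundle'' here means a bundle which is \'etale- or Zariski-locally a product, and checking carefully that the Gorenstein property and rational singularities descend along the relevant faithfully flat base changes --- once this is in place, parts (i) and (ii) are immediate from commutative algebra, and parts (iii) and (iv) are bookkeeping with the K\"unneth formula and flat base change for higher direct images, using Grauert--Riemenschneider \cite{GR} and \cite{Hin} as needed to keep track of Cohen--Macaulayness.
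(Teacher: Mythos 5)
Your proof is correct, and for parts (i) and (ii) it is essentially the paper's own argument: reduce to the Zariski-locally trivial case and apply the standard local criteria for the Gorenstein property under flat local homomorphisms with Gorenstein fibres, respectively under polynomial (or formal power series) extensions, exactly as the paper does via \cite[Proposition 3.1.19]{Br}. Where you genuinely diverge is in (iii) and (iv): the paper, after reducing to $\tau ^{-1}(O)\cong O\times F$, simply \emph{cites} the facts that a product of two varieties with rational singularities has rational singularities and that $O\times \k ^{m}$ has rational singularities if and only if $O$ does (\cite[p.50]{KK}), whereas you actually prove them --- the product case by taking the product of two Hironaka desingularizations and applying the K\"unneth formula to kill ${\mathrm R}^{i}(p\mul r)_{*}$ for $i>0$, and the converse direction of (iv) by pulling the vector bundle back along a desingularization $\tilde X\to X$ to get a smooth desingularization of $Y$, then using flat base change and faithful flatness of $\tau $ to descend the vanishing of higher direct images. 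This buys a self-contained argument at the cost of a little bookkeeping (normality and Cohen--Macaulayness of the product, which you correctly flag); the paper's version is shorter but leans on external references for precisely the two assertions you chose to prove. Both routes are sound.
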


\begin{proof}
Let $y$ be in $Y$, $x:= \tau (y)$ and $F_{x}$ the fiber of $Y$ at $x$. Denote by 
$\widehat{\an Xx{}}$ and $\widehat{\an Yy{}}$ the completions of the local
rings $\an Xx$ and $\an Yy$ respectively.

(i) By hypothesis, $\an Xx$ and $\an {F_{x}}y$ are Gorenstein. Then so is 
$\tk {\k}{\an Xx}\an {F_{x}}y$. So by~\cite[Proposition 3.1.19,(a)]{Br}, $\an Yy$ is
Gorenstein, whence the assertion.

(ii) Since $Y$ is a vector bundle over $X$, $\widehat{\an Yy}$ is a ring of formal series
over $\widehat{\an Xx}$. By~\cite[Proposition 3.1.19,(c)]{Br}, $\widehat{\an Yy{}}$ is
Gorenstein. So, by~\cite[Proposition 3.1.19,(b)]{Br}, $\widehat{\an Xx}$ is Gorenstein. 
Then by \cite[Proposition 3.1.19,(c)]{Br}, $\an Xx$ is Gorenstein, whence the assertion.

(iii) There exists a cover of $X$ by open subsets $O$ such that $\tau ^{-1}(O)$ is
isomorphic to $O\times F$. According to the hypothesis, $O$ and $F$ have rational 
singularities. Then so has $\tau ^{-1}(O)$, whence the assertion since a variety has 
rational singularities if and only it has a cover by open subsets having rational
singularities.

(iv) If $Y$ is a vector bundle over $X$, then there exists a cover of $X$ by open subsets
$O$, such that $\tau ^{-1}(O)$ is isomorphic to $O\times \k^{m}$ with 
$m=\dim Y - \dim X$. According to~\cite[p.50]{KK}, $O\times \k^{m}$ has rational 
singularities if and only if so has $O$, whence the assertion since a variety has 
rational singularities if and only it has a cover by open subsets having rational
singularities.
\end{proof}

\end{document}